\theoremstyle{plain}
\newtheorem{theorem}{Theorem}
\newtheorem*{theorem*}{Theorem}
\newtheorem{lemma}{Lemma}
\newtheorem{proposition}{Proposition}
\newtheorem{corollary}{Corollary}
\theoremstyle{definition}
\newtheorem{definition}{Definition}
\theoremstyle{remark}
\newtheorem{remark}{\sc Remark}
\newtheorem{example}{\sc Example}
\newcommand{\Fun}[2]{\mathsf{Fun}\left({#1}, {#2}\right)}
\newcommand{\Psh}{\mathsf{Psh}}
\newcommand{\categ}[1]{\mathsf{#1}}
\newcommand{\id}{\mathrm{Id}}
\newcommand{\Set}{\mathsf{Set}}
\newcommand{\sSet}{\mathsf{sSet}}
\newcommand{\op}{\mathrm{op}}
\newcommand{\itemt}{\item[$\triangleright$]}
\newcommand{\poubelle}[1]{}
\title{An homotopical description of small presheaves}
\author{Brice Le Grignou}
\email{bricelegrignou@gmail.com}
\date{\today}
\subjclass[2020]{18N10,18N50,18B25}
\keywords{Topos}
\begin{document}

\maketitle

\begin{abstract}
    This article describes the cocompletion of a category $C$ with finite limits as the homotopy category of some equivalence 2-groupoids in coproducts of elements of $C$. This yields a simple link between several definitions of an infinitary pretopos.
\end{abstract}

\setcounter{tocdepth}{1}
\tableofcontents

\section*{Introduction}

Properties of categories may sometimes be encoded as structures on them. An important example is cocompleteness. Indeed, being cocomplete is a property of a category that may also be encoded as the structure of an algebra over the 2-monad of small presheaves $\Psh$ that acts on the 2-category $\mathfrak{Cat}$ of categories and that sends a category $\categ C$ the category of small presheaves on $\categ C$ that is functors
$$
F \in \Fun{\categ C^\op}{\Set}
$$
that are left Kan extensions of functors from a small category. Another related example is the property to have small coproduct; it is also encoded by the 2-monad $\mathcal S$ that transform a category $\categ C$ into the category $\mathcal S (\categ C)$ made up of sets $X$ equipped with a functor $X \to \categ C$. These two 2-monads as well as many monads that encode categories that have some property are lax idempotent monads (\cite{KellyLack}) also called KZ doctrines. Roughly, a KZ-doctrine $M$ is a 2-monad whose whole structure (functoriality, monad product) is deduced from the unit maps
$$
X \to M(X)
$$
by some left Kan extension.

Both 2-monad $\Psh$ and $\mathcal{S}$, that act on the 2-category $\mathfrak{Cat}$ of categories, restricts to the sub-2-category $\mathfrak{Lex}$ of categories with finite limits and left exact functors. In that context, the $\mathcal S$-algebras become categories with finite limits and small coproducts that satisfy some coherence conditions, namely
\begin{enumerate}
    \item coproducts are disjoint;
    \item coproducts are universal.
\end{enumerate}
Moreover, $\Psh$-algebras become $\mathcal S$-algebras 
that have coequalisers of equivalence groupoids (and then are cocomplete) that satisfy some coherence conditions, namely
\begin{enumerate}
    \item equivalence groupoids are effective;
    \item effective epimorphisms are stable through pullbacks.
\end{enumerate}
Such categories are called pretopos.

In this paper, we fill the gap between $\mathcal{S}$ and $\Psh$ in the context of categories with finite limits. Precisely, we describe in concrete terms a KZ-doctrine $\mathcal E_h^{(2)}$ (whose existence is shown for instance in \cite{GarnerLack}) on $\mathfrak{Lex}$ whose algebras are categories with finite limits and coequalisers of equivalence groupoids and so that equivalence groupoids are effective and effective epimorphisms are stable through pullbacks. It is built as follows: from a category $\categ C$, on can build an homotopy category $\mathcal{E}_h(\categ C)$ of equivalence groupoids in $\categ C$ whose
\begin{enumerate}
    \item objects $X$ are equivalence groupoids, that is pairs of objects $(X_0,X_1)$ of $\categ C$ equipped with a map $X_1 \to X_0 \times X_0$ that makes $X_1$ an equivalence relation on $X_0$;
    \item morphisms $f: X \to Y$ are equivalence classes of maps $f_0 : X_0 \to Y_0$ so that the composition
    $$
    X_1 \to X_0 \times X_0 \xrightarrow{f_0 \times f_0} Y_0 \times Y_0
    $$
    factorises through $Y_1$; two such maps $f,g: X \to Y$ being equivalent if
    $$
    (f_0, g_0) : X_0 \to Y_0 \times Y_0
    $$
    factorises through $Y_1$.
\end{enumerate}
The construction $\categ C \mapsto \mathcal{E}_h(\categ C)$ is not a KZ doctrine; this is actually what we call a KZ-machine. This means that $\mathcal{E}_h$ has most attributes of a KZ-doctrine, which allow to talk about $\mathcal{E}_h$-algebra; however, $\mathcal{E}_h(\categ C)$ is not in general a $\mathcal{E}_h$-algebra. To build a KZ doctrine, one needs to go one step further. Indeed, $\mathcal{E}_h^{(2)}(\categ C)$ is the full subcategory of $\mathcal{E}_h\mathcal{E}_h(\categ C)$ spanned by equivalence groupoids $(X_0, X_1)$ so that $X_0$ belong to the subcategory $\categ C \subset \mathcal{E}_h(\categ C)$. 

To conclude, adding colimits to a category $\categ C$ may be thinked of as adding diagrams targeting $\categ C$. This perspective is for instance concretised
by the category of categories fibered in sets over $\categ C$, that is equivalent to presheaves through the Grothendieck construction. In this paper, we concretise this perspective in another way that has an homotopical flavour and in the context of categories with finite limits.

\subsection*{Layout}

In the first section, we introduce the notion of a KZ-machine and recall that of a KZ-doctrine. In the second section, we recall the KZ-doctrine of sums and that of small presheaves on the 2-category of categories and on the 2-category of categories with finite limits. In the third section, we define the KZ-machine of equivalence groupoids and the KZ-doctrine of equivalence 2-groupoids. Finally, the fourth section rebuild the KZ-doctrine of small presheaves by stacking that of sums and that of equivalence 2-groupoids.

\subsection*{Notations and convention}

\begin{enumerate}
    \item Let $\mathcal{U} < \mathcal{U}$ be two universes. An element of $\mathcal{U}$ will be called a small set or just a set. An element of $\mathcal{V}$ is called large and a subset of $\mathcal{V}$ is called very large.
    \item Without further precision, a category will mean a locally small category whose set of object is a subset of $\mathcal{U}$.
    \item We denote $\Delta$ the category of standard simplicies, that is the full subcategory of the category of categories spanned by the posets
    $$
    [n] = 0 < 1 < \cdots < n, \quad n \in \mathbb N.
    $$
    We denote $\Delta_{\leq 1}$ the full subcategory of $\Delta$ spanned by $[0]$ and $[1]$.
    We also use standard notations for simplicial sets (see for instance \cite{GoerssJardine})
    \item Let us consider morphisms $f, g: X \to Y$ and morphisms $f',g' : X' \to Y$ in a category and let us suppose that their exists an implicit order on the pair $\{f,g\}$ so that $f < g$ (this happens for instance if $f$ is denoted $h_0$ and $g$ is denoted $h_1$) and an implicit order on the pair $\{f',g'\}$ so that $f' < g'$. Then, the formula $X \times_Y X'$ will describe the pullback of the cospan diagram
    $$
    X \xrightarrow{g} Y \xleftarrow{f'} X' .
    $$
    Moreover, we get a map $(f,g'): X \times_Y X' \to Y \times Y$.
    
\end{enumerate}

\section{Kock-Zoberlein machines and doctrines}

The goal of this section is to introduce the notion of a KZ-machine and to recall (and redefine from KZ-machines) the notion of a KZ-doctrine on a 2-category.

\subsection{Two-categories}

\begin{definition}
For us, a 2-category will mean a bicategory. A strict 2-category will mean a bicategory whose associativity and unitality are strict; that is a category enriched in categories.
\end{definition}

\begin{definition}
Let $\mathfrak{Cat}$ be the very large (locally large) strict 2-category of categories and let $\mathfrak{Lex}$ be the very large (locally large) strict 2-category whose objects are lex categories (that is categories with finite limits), morphisms are left exact (lex for short) functors and 2-morphisms are natural transformations.
\end{definition}

\begin{definition}\label{defKan}
Let $\mathfrak{C}$ be a 2-category. and let us consider a span diagram
$$
Y \xleftarrow{p} X \xrightarrow{f} Z .
$$
The left Kan extension $p_! (f)$ of $f$ along $p$ is, if it exists, the initial object of the category
$$
\mathfrak{C}(Y, Z) \times_{\mathfrak{C}(X, Z)} \mathfrak{C}(X, Z)_{f/} .
$$
More concretely, this is the data of a morphism $p_! (f) : Y \to Z$ and a 2-morphism $f \to p_! (f) \circ p$ so that, for any morphism $g: Y \to Z$ and any 2-morphism $f \to g \circ p$, there exists a unique 2-morphism $p_!(f) \to g$ so that the previous 2-morphism factors as:
$$
f \to p_! (f) \circ p \to g \circ p .
$$
\end{definition}

\begin{definition}
A left Kan extension $p_!(f)$ is called absolute if (using the notations of the previous Definition \ref{defKan}), for any morphism $g: Z \to Z'$, then $g \circ p_!(f)$ is the left Kan extension of $g \circ f$ along $p$:
$$
p_{!}(g \circ f) = g \circ p_!(f) .
$$
\end{definition}

\begin{definition}
An adjunction in a 2-category $\mathfrak{C}$ is the data of two objects $X,Y$ a morphism $L: X \to Y$ called the left adjoint, a morphism $R : Y \to X$ called the right adjoint, and two 2-morphisms $\eta : \id_{X} \to RL$ and $\epsilon : LR \to \id_Y$ called respectively the unit and the counit and so that the two following maps are identites of morphisms
\begin{align}
    &L \to LRL \to L;
    \\
    &R \to RLR \to R.
\end{align}
\end{definition}

\begin{example}
Let us consider an adjunction $L \dashv R$ in a 2-category $\mathfrak C$. Then $R$ is the left Kan extension of $\id$ along $L$ and it is an absolute Kan extension.
\end{example}

\begin{lemma}\label{lemmaadjKan}
Let us consider a morphism $L: X \to Y$ that has a right adjoint $R$ in such a way that the counit $LR \to \id_Y$ is invertible. Let us also consider a cospan diagram
$$
V \xleftarrow{p} U \xrightarrow{f} Y
$$
so that $Rf$ has a left Kan extension $p_!(Rf)$. Then, the morphism $L \circ p_!(Rf)$ is the left Kan extension of $LR f \simeq f$ along $p$.
\end{lemma}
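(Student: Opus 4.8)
The plan is to prove the general fact that a left adjoint preserves left Kan extensions, and then to use the invertibility of the counit to replace $LRf$ by $f$. Write $\mu : Rf \to p_!(Rf)\circ p$ for the unit $2$-morphism of the given left Kan extension, and $\eta : \id \to RL$, $\epsilon : LR \to \id$ for the unit and counit of $L\dashv R$. I would equip $L\circ p_!(Rf)$ with the candidate unit
\[
\nu : \; LRf \xrightarrow{\; L\mu \;} L\bigl(p_!(Rf)\circ p\bigr) \xrightarrow{\;\sim\;} \bigl(L\circ p_!(Rf)\bigr)\circ p ,
\]
the last arrow being the associator, and claim that $\bigl(L\circ p_!(Rf),\, \nu\bigr)$ is the left Kan extension of $LRf$ along $p$. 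Since $\epsilon$ is invertible, the whiskered $2$-morphism $\epsilon \ast f : LRf \to f$ is an isomorphism, and transporting the universal property of Definition \ref{defKan} along it identifies $\bigl(L\circ p_!(Rf),\, \nu\bigr)$ with the left Kan extension of $f$ along $p$; this is exactly the asserted statement, the isomorphism $LRf \simeq f$ being $\epsilon \ast f$.

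To prove the claim, fix a morphism $g : V \to Y$; it suffices to show that post-composition with $\nu$ (together with the relevant associator) is a bijection from $2$-morphisms $L\circ p_!(Rf) \to g$ to $2$-morphisms $LRf \to g\circ p$. I would exhibit this map as a composite of three bijections: first the hom-category form of the adjunction $L\dashv R$ (the mate correspondence induced by $\eta$ and $\epsilon$), sending $\xi : L\circ p_!(Rf) \to g$ to $(R\xi)\bullet(\eta\ast p_!(Rf)) : p_!(Rf) \to Rg$; then the bijection supplied by the universal property of $p_!(Rf)$, sending $\delta : p_!(Rf) \to Rg$ to $(\delta\ast p)\bullet\mu : Rf \to (Rg)\circ p$; and finally, after the associator $(Rg)\circ p \simeq R(g\circ p)$, the inverse mate correspondence, sending $\psi : Rf \to R(g\circ p)$ to $(\epsilon\ast(g\circ p))\bullet(L\psi) : LRf \to g\circ p$. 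Chasing an arbitrary $\xi$ through this composite and simplifying with the naturality of $\eta$ and $\epsilon$ and the two triangle identities for $L\dashv R$ identifies the composite with $\xi \mapsto (\xi\ast p)\bullet\nu$; hence the latter is a bijection, as required.

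The only genuinely delicate point is the bookkeeping in this last simplification: everything takes place in a bicategory, so the associativity and unit constraints intervene throughout, and the verification is an instance of the calculus of mates. Invertibility of the counit is used in exactly one place, namely to pass from the left Kan extension of $LRf$ to that of $f$; the preservation statement for $L\circ p_!(Rf)$ itself holds for any left adjoint $L$. Nothing else is more than formal, so I do not expect a serious obstacle beyond keeping the coherence $2$-morphisms straight.
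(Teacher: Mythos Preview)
Your argument is correct and follows essentially the same route as the paper. The paper's proof is the compressed version of yours: it records the chain of natural bijections
\[
\hom(L\,p_!(Rf),g)\;\simeq\;\hom(p_!(Rf),Rg)\;\simeq\;\hom(Rf,Rgp)\;\simeq\;\hom(LRf,gp)\;\simeq\;\hom(f,gp),
\]
using the adjunction $L\dashv R$ at the first and third steps, the universal property of $p_!(Rf)$ at the second, and the invertible counit at the last, and leaves the verification that this composite bijection is ``composition with the unit $2$-morphism'' implicit. You make that verification explicit (your chase through the three bijections and the triangle identities), and you are right that this is where the bicategorical coherence bookkeeping lives; the paper simply suppresses it.
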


\begin{proof}
Let us consider a morphism $g : V \to Y$ and a 2-morphism $f \to gp$. We have a sequence of canonical natural isomorphisms
\begin{align*}
    \hom_{\mathfrak C(V,Y)}(L p_!(Rf), g)
    & = \hom_{\mathfrak C(V,X)}(p_!(Rf), R g) 
    \\
    & = \hom_{\mathfrak C(U,X)}(Rf, Rgp)
    \\
    & = \hom_{\mathfrak C(U,Y)}(LRf, gp)
    \\
    & = \hom_{\mathfrak C(U,Y)}(f, gp).
\end{align*}
This proves the result.
\end{proof}

\begin{lemma}\label{lemmaadjfromkan}
Let us consider a morphism $R : X \to Y$ and a left Kan extension $L$ of $\id_X$ along $R$ so that the 2-morphism
$$
\id_X \to LR
$$
is invertible. Let us also suppose that $\id_Y$ is the left Kan extension of $R$ along $R$. Then $L$ is left adjoint to $R$ in such a way that the counit is the previous isomorphism.
\end{lemma}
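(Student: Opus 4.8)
The plan is to build the adjunction data by hand: take the given invertible $2$-morphism (reversed) as the counit, produce the unit from the universal property of the left Kan extension $\id_Y = R_!(R)$, and then verify the two triangle identities using the two universal properties together with the interchange law.

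Concretely, I would write $\iota \colon \id_X \to LR$ for the structure $2$-morphism exhibiting $L$ as $R_!(\id_X)$, which is invertible by hypothesis, and set $\epsilon := \iota^{-1} \colon LR \to \id_X$, the candidate counit. Reading ``$\id_Y$ is the left Kan extension of $R$ along $R$'' as the assertion that $(\id_Y, \id_R)$ is that Kan extension, whiskering by $R$ gives a bijection $\hom_{\mathfrak{C}(Y,Y)}(\id_Y, g) \xrightarrow{\sim} \hom_{\mathfrak{C}(X,Y)}(R, gR)$ for every $g \colon Y \to Y$; applying it to $g = RL$ and the $2$-morphism $R\iota \colon R \to RLR$ produces a unique $\eta \colon \id_Y \to RL$ with $\eta R = R\iota$, the candidate unit. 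The triangle identity on $R$ is then immediate, since $(R\epsilon)\circ(\eta R) = (R\epsilon)\circ(R\iota) = R(\epsilon \circ \iota) = \id_R$. For the triangle identity on $L$, I would invoke the universal property of $L = R_!(\id_X)$ in the form that $\beta \mapsto (\beta R)\circ \iota$ is a bijection $\hom_{\mathfrak{C}(Y,X)}(L, L) \xrightarrow{\sim} \hom_{\mathfrak{C}(X,X)}(\id_X, LR)$, and check that $(\epsilon L)\circ(L\eta)$ and $\id_L$ have the same image. The image of $\id_L$ is $\iota$; the image of $(\epsilon L)\circ(L\eta)$ is $(\epsilon LR)\circ(LR\iota)\circ\iota$ (using $\eta R = R\iota$), and by the interchange law $(\epsilon LR)\circ(LR\iota) = \epsilon \ast \iota = \iota \circ \epsilon = \iota \circ \iota^{-1} = \id_{LR}$, so that image is again $\iota$. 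Hence $(\epsilon L)\circ(L\eta) = \id_L$, which completes the verification that $L \dashv R$ with counit $\epsilon = \iota^{-1}$.

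The main thing to be careful about is the bookkeeping of horizontal versus vertical composites of $2$-morphisms and, since $\mathfrak{C}$ is only a bicategory, the implicit associativity and unit coherence isomorphisms; also the reading of the second hypothesis as carrying the identity structure $2$-cell, without which the first triangle computation would only yield $(R\epsilon)\circ(\eta R)$ equal to that structure cell rather than to $\id_R$. Beyond this bookkeeping, the statement is a purely formal consequence of the two universal properties.
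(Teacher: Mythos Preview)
Your proof is correct and follows essentially the same route as the paper's: both build the unit $\eta$ from the universal property of $\id_Y = R_!(R)$, take $\epsilon = \iota^{-1}$ as counit, verify the $R$-triangle directly from $\eta R = R\iota$, and deduce the $L$-triangle from the universal property of $L = R_!(\id_X)$ after precomposing with $R$. Your treatment is in fact a bit more careful than the paper's---you make the interchange step $(\epsilon LR)\circ(LR\iota)=\iota\circ\epsilon=\id_{LR}$ explicit and you correctly flag that the argument uses the reading of ``$\id_Y = R_!(R)$'' with identity structure $2$-cell.
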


\begin{proof}
The canonical isomorphism $R \simeq RLR$ induces a map by left Kan extension
$$
\id_{Y} = R_!(R) \to RL,
$$
which is the unit of the adjunction.
It is clear that the composite map
$$
R \simeq RLR \to R
$$
is the identity of $R$.

Besides, let us consider the composite map $L \to  LRL \to L$. Precomposing by $R$ yields a composite map $LR \to  LRLR \to LR$ which is actually equal to the composition
$$
LR \xrightarrow{\epsilon^{-1}} \id_X \xrightarrow{\epsilon} LR
$$
and is thus the identity of $LR$. Since $L$ is a left Kan extension of $LR \simeq \id_X$ along $R$, the previous map $L \to  LRL \to L$ is also the identity.
\end{proof}

\subsection{Kock-Zoberlein machines}

\begin{definition}
A Kock-Zolerbein (KZ) machine $P$ on a 2-category $\mathfrak C$ is the data of, for any object $X$, an object $P(X)$ and a map $i_P(X): X \to P(X)$ so that for any morphism $f:X \to Y$, the composition $X \to Y \to P(Y)$ has a left Kan extension $P(f)$ along $i_P(X)$ and so that the map $i_P(Y) \circ f \to P(f) \circ i_P(X)$ is an isomorphism.
$$
\begin{tikzcd}
    P(X)
    \ar[r, "P(f)"]
    & P(Y)
    \\
    X
    \ar[u, "{i_P(X)}"] \ar[r,"f"']
    & Y .
    \ar[u, "{i_P(Y)}"'] \ar[lu, Rightarrow, "\sim"']
\end{tikzcd}
$$
Moreover, we require that for any two morphisms $f: X \to Y$ and $g : Y \to Z$, the canonical map
$$
P(g \circ f) \to P(g) \circ P(f) 
$$
is an isomorphism and that for any object $X$, the map
$$
P(\id_X) \to \id_{P(X)}
$$
is an isomorphism
\end{definition}

\begin{remark}
A KZ machine $P$ on $\mathfrak{C}$ is in particular a pseudo-functor from $\mathfrak{C}$ to $\mathfrak{C}$ and the map $i_P(X) : X \to P(X)$ is natural with respect to $X$.
\end{remark}

\begin{definition}
Given two KZ-machines $P,Q$, we say that $P$ is simpler than $Q$ if for any morphism $f:X \to Y$, the morphism $i_Q(Y) \circ f: X \to Q(Y)$ has a left Kan extension $i_{P,Q}(f)$ along $i_P(X)$ so that the map
$$
i_{P,Q}(f) \circ i_{P}(X) \to i_{Q}(X) \circ f
$$
is an isomorphism. Moreover, for any two morphisms $f: X \to Y$ and $g: Y \to Z$, the natural map
\begin{align*}
i_{P,Q}(g \circ f)  & \to i_{P,Q}(g)  \circ P(f)
\\
i_{P,Q}(g \circ f)  &\to Q(g)  \circ i_{P,Q}(f) 
\end{align*}
are isomorphisms. In particular for any morphism $f$, we have canonical isomorphisms
$$
i_{P,Q}(Y) \circ P(f) \simeq i_{P,Q}(f) \simeq Q(f) \circ i_{P,Q}(X).
$$
\end{definition}

\begin{definition}
Two KZ-machines $P,Q$ are equivalent
if for any $X$, there exists an equivalence $i_{P,Q}(X) : P(X) \to Q(X)$ and a 2-isomorphism
$$
i_{P,Q}(X) \circ i_P(X) \simeq i_Q(X) .
$$
\end{definition}

\begin{proposition}
If $P,Q$ are equivalent, then $P$ is simpler than $Q$, $Q$ is simpler than $P$. Moreover, for any composable morphisms $f :X \to Y$ and $g : Y \to Z$, the canonical 2-morphisms 
\begin{align*}
    &Q(g \circ f) \to i_{P,Q}(g) \circ i_{Q,P}(f)
    \\
    &P(g \circ f) \to i_{Q,P}(g) \circ i_{P,Q}(f)
\end{align*}
are invertible.
\end{proposition}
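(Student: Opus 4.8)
The strategy is to write down explicit candidates for the comparison $1$-cells of ``simpler than'' purely in terms of the pseudofunctors $P,Q$ and the equivalences $i_{P,Q}(X)\colon P(X)\to Q(X)$, $i_{Q,P}(X)\colon Q(X)\to P(X)$ (so that the notation $i_{P,Q}(-)$ is used consistently for objects and morphisms), and then to reduce every required isomorphism to pseudofunctoriality of $P,Q$, the coherence $2$-cells $i_{P,Q}(X)\circ i_P(X)\simeq i_Q(X)$, $i_{P,Q}(X)\circ i_{Q,P}(X)\simeq\id$, and three standard facts about left Kan extensions: (a) post-composition with an equivalence preserves left Kan extensions; (b) the left Kan extension of $h$ along an equivalence $e$ is $h$ composed with a quasi-inverse of $e$; (c) left Kan extensions paste, i.e. $(q\circ p)_!(h)\simeq q_!(p_!(h))$ whenever the right-hand side exists. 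Facts (b) and (c) are routine from the universal property in Definition~\ref{defKan}. Fact (a) is the one that is not quite formal, but it follows from Lemma~\ref{lemmaadjKan}: promote $e\colon Z\to Z'$ to an adjoint equivalence $e\dashv\bar e$ with invertible unit and counit and apply the lemma with $L=e$, $R=\bar e$ and ``$f$''$=e\circ h$; then $Rf\simeq\bar e\,e\,h\simeq h$ has a left Kan extension $p_!(h)$, so $L\circ p_!(Rf)=e\circ p_!(h)$ is the left Kan extension of $LRf\simeq e\circ h$.

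For a morphism $f\colon X\to Y$ put $i_{P,Q}(f):=i_{P,Q}(Y)\circ P(f)\colon P(X)\to Q(Y)$ and symmetrically $i_{Q,P}(f):=i_{Q,P}(Y)\circ Q(f)$. Since $P(f)$ is a left Kan extension of $i_P(Y)\circ f$ along $i_P(X)$, fact (a) shows $i_{P,Q}(f)$ is a left Kan extension of $i_{P,Q}(Y)\circ i_P(Y)\circ f\simeq i_Q(Y)\circ f$ along $i_P(X)$, and the structural $2$-cell $i_{P,Q}(f)\circ i_P(X)\to i_Q(Y)\circ f$ is exactly this composite of isomorphisms; that is the first clause of ``$P$ simpler than $Q$''. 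The cocycle isomorphism $i_{P,Q}(g\circ f)\to i_{P,Q}(g)\circ P(f)$ is obtained by whiskering the pseudofunctoriality isomorphism $P(g\circ f)\simeq P(g)\circ P(f)$ on the left with $i_{P,Q}(Z)$. The remaining ingredient is the pseudonaturality square
$$
Q(g)\circ i_{P,Q}(Y)\ \simeq\ i_{P,Q}(Z)\circ P(g),
$$
which I get by exhibiting both sides as left Kan extensions of $i_Q(Z)\circ g$ along $i_P(Y)$: the right-hand side by fact (a) applied to $P(g)$, the left-hand side by combining fact (c) for $i_P(Y)\simeq i_{Q,P}(Y)\circ i_Q(Y)$ with fact (b) for the equivalence $i_{Q,P}(Y)$ (with quasi-inverse $i_{P,Q}(Y)$), using that $Q(g)$ is a left Kan extension along $i_Q(Y)$. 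Whiskering this square with $P(f)$ and applying pseudofunctoriality yields the second cocycle isomorphism $i_{P,Q}(g\circ f)\to Q(g)\circ i_{P,Q}(f)$, and the chain $i_{P,Q}(Y)\circ P(f)\simeq i_{P,Q}(f)\simeq Q(f)\circ i_{P,Q}(X)$ follows from the definition together with the analogous base-change argument applied to $i_Q(X)\simeq i_{P,Q}(X)\circ i_P(X)$. Exchanging the roles of $P$ and $Q$ gives ``$Q$ simpler than $P$'', with $i_{Q,P}(f)\simeq P(f)\circ i_{Q,P}(X)$.

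For the two remaining $2$-morphisms I substitute the formulas:
$$
i_{P,Q}(g)\circ i_{Q,P}(f)\ =\ i_{P,Q}(Z)\circ P(g)\circ i_{Q,P}(Y)\circ Q(f),
$$
and the pseudonaturality square for $i_{Q,P}$, namely $P(g)\circ i_{Q,P}(Y)\simeq i_{Q,P}(Z)\circ Q(g)$, rewrites this as $i_{P,Q}(Z)\circ i_{Q,P}(Z)\circ Q(g)\circ Q(f)\simeq Q(g)\circ Q(f)\simeq Q(g\circ f)$, using $i_{P,Q}(Z)\circ i_{Q,P}(Z)\simeq\id_{Q(Z)}$ and pseudofunctoriality of $Q$; the statement $P(g\circ f)\to i_{Q,P}(g)\circ i_{P,Q}(f)$ is obtained by the mirror-image computation. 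The part I expect to need the most care — rather than the most ideas — is purely bookkeeping: checking that the isomorphisms manufactured by this chain of universal properties, quasi-inverses and coherence cells are precisely the ``canonical'' comparison $2$-morphisms named in the statement. That is settled by the uniqueness clause in Definition~\ref{defKan}: two $2$-cells between the relevant left Kan extensions that agree after precomposition with the appropriate $i_P(-)$ or $i_Q(-)$ must coincide.
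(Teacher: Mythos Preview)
Your proposal is correct. The paper's own proof is the single word ``Straightforward'', so there is no detailed argument to compare against; your explicit choice $i_{P,Q}(f)=i_{P,Q}(Y)\circ P(f)$ together with the reduction to pseudofunctoriality, the given coherence isomorphisms, and the standard behaviour of left Kan extensions under equivalences (your facts (a)--(c), with (a) justified via Lemma~\ref{lemmaadjKan}) is precisely the kind of unwinding that word is meant to summarise.
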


\begin{proof}
Straightforward.
\end{proof}

\subsection{Algebras over a KZ-machine}

\begin{definition}
Let $P$ be a KZ-machine. An object $Y$ is called $P$-cocomplete (or a $P$-algebra) if any morphism $f: X \to Y$ has a left Kan extension $\overline{f}$ along $i_M(X)$ so that the 2-morphism
$$
f \to \overline{f} \circ i_M(X)
$$
is invertible. Moreover, we require that for any map $g: X' \to X$, the map
$$
\overline{f \circ g} \to \overline{f} \circ P(g)
$$
is an isomorphism.
$$
\begin{tikzcd}
    X
    \ar[r, "f"]\ar[d, "i_P(X)"']
    \ar[rd, Rightarrow, shorten >=18pt, "\sim"]
    & Y
    \\
    P(X)
    \ar[ru, bend right, "\overline{f}"']
    & {}
\end{tikzcd}
$$
\end{definition}

\begin{definition}
For a $P$-cocomplete object $Y$, we denote
$$
m_P(Y) = \overline{\id_Y}.
$$
Then, for any morphism $f : X \to Y$, the map $\overline{f} \to m_P(X) \circ  P(f)$ is an isomorphism.
\end{definition}

\begin{proposition}\label{proppcomp}
Let $X$ be an object of a 2-category $\mathfrak C$ and let $P$ be a KZ machine.
The two assertions are equivalent:
\begin{enumerate}
    \item $X$ is $P$-cocomplete;
    \item the map $i_p(X): X \to P(X)$ has a left adjoint $m_P(X) : P(X) \to X$ so that the counit is invertible.
\end{enumerate}
\end{proposition}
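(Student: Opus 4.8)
The plan is to prove the equivalence by exploiting the characterization of a right adjoint as an absolute left Kan extension, together with Lemmas \ref{lemmaadjKan} and \ref{lemmaadjfromkan} which are precisely designed to convert between the two pictures. I would first fix notation: write $i = i_P(X) : X \to P(X)$ throughout.

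\medskip
\textbf{From (2) to (1).} Suppose $m := m_P(X)$ is left adjoint to $i$ with invertible counit $\epsilon : m i \xrightarrow{\sim} \id_X$. Given any $f : Z \to X$, I want $\overline f := f \circ m : P(Z) \dashrightarrow$ wait — $m$ goes $P(X) \to X$, so I should take $\overline f$ to be the composite obtained from $f$; more precisely, given $f : Z \to X$ and using that $i$ is a KZ unit so $P(f) : P(Z) \to P(X)$ exists with $i_P(X) \circ f \simeq P(f) \circ i_P(Z)$, I set $\overline f := m \circ P(f) : P(Z) \to X$. Then $\overline f \circ i_P(Z) = m \circ P(f) \circ i_P(Z) \simeq m \circ i_P(X) \circ f \xrightarrow{\epsilon} f$, and this 2-morphism is invertible since $\epsilon$ is. To see $\overline f$ is the left Kan extension, I invoke Lemma \ref{lemmaadjKan}: apply it with the cospan $P(Z) \xleftarrow{i_P(Z)} Z \xrightarrow{f} X$ — here the role of "$L$" is played by $m$, the role of "$R$" by $i$ (an adjunction with invertible counit), and $i \circ f = R f$ has left Kan extension $P(f)$ along $i_P(Z)$ because $P$ is a KZ machine; the Lemma then yields that $m \circ P(f)$ is the left Kan extension of $m i f \simeq f$ along $i_P(Z)$. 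The naturality condition $\overline{f \circ g} \simeq \overline f \circ P(g)$ follows from the KZ-machine compatibility $P(f \circ g) \simeq P(f) \circ P(g)$ by composing with $m$.

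\medskip
\textbf{From (1) to (2).} Suppose $X$ is $P$-cocomplete, and let $m := m_P(X) = \overline{\id_X}$, the left Kan extension of $\id_X$ along $i$. By hypothesis the 2-morphism $\id_X \to m \circ i$ is invertible. To apply Lemma \ref{lemmaadjfromkan} (with its "$R$" being our $i : X \to P(X)$ and its "$L$" being $m$), the one remaining input I need is that $\id_{P(X)}$ is the left Kan extension of $i$ along $i$. This is where the full strength of the KZ-machine structure enters: applying the defining property of $P$ to the identity $f = \id_X$ gives that $P(\id_X)$ is the left Kan extension of $i_P(X) \circ \id_X = i$ along $i_P(X)$, and the KZ-machine axioms force $P(\id_X) \simeq \id_{P(X)}$. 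Feeding these two facts into Lemma \ref{lemmaadjfromkan} produces an adjunction $m \dashv i$ whose counit is the given invertible 2-morphism $m i \simeq \id_X$.

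\medskip
The main obstacle I anticipate is bookkeeping around the second condition needed for Lemma \ref{lemmaadjfromkan}, namely that $\id_{P(X)}$ is a left Kan extension of $i$ along itself. One must be careful that "$P(\id_X)$ is a left Kan extension of $i \circ \id_X$" is literally part of the KZ-machine definition, and then that the isomorphism $P(\id_X) \simeq \id_{P(X)}$ is compatible with the Kan-extension 2-cells — this is a coherence check rather than a substantive difficulty, but it is the step where one could most easily make a sloppy error. Everything else is a direct application of the two lemmas, so I would keep the write-up short and point explicitly at Lemmas \ref{lemmaadjKan} and \ref{lemmaadjfromkan} for the two implications.
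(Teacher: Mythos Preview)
Your proposal is correct and follows exactly the same route as the paper: Lemma~\ref{lemmaadjKan} for $(2)\Rightarrow(1)$ and Lemma~\ref{lemmaadjfromkan} for $(1)\Rightarrow(2)$, with the hypothesis ``$\id_{P(X)}$ is the left Kan extension of $i_P(X)$ along itself'' extracted from the KZ-machine axioms applied to $f=\id_X$. The paper's proof is terser (it simply cites the two lemmas), but your expanded version, including the coherence remark about $P(\id_X)\simeq\id_{P(X)}$, is a faithful unpacking of the same argument.
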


\begin{proof}
Let us assume that $X$ is $P$-cocomplete. The fact that $m_P(X)$ is left adjoint to $i_P(X)$ with invertible counit is a direct consequence of Lemma \ref{lemmaadjfromkan}.

Conversely, let us assume that $R = i_P(X)$ has a left adjoint $L$ and that the counit $LR \to \id$ is invertible. Let us consider a map $f: U \to X$. Lemma \ref{lemmaadjKan} tells us that $f$ has a left Kan extension along $i_P(U)$, which is $L \circ P(f)$. It is then straightforward to show from this fact that $X$ is $P$-cocomplete.
\end{proof}

\begin{definition}
A morphism $f: X \to Y$ between two $P$-cocomplete objects is $P$-linear if 
the 2-morphism
$$
 \overline f \to f \circ m_P(X)
$$
is an isomorphism.
\end{definition}

\begin{lemma}\label{lemmadefplinear}
A morphism $f: X \to Y$ between two $P$-cocomplete objects is $P$-linear if and only if for any morphism $g :U \to X$, the map
$$
\overline{f \circ g} \to f \circ \underline g
$$
is an isomorphism.
\end{lemma}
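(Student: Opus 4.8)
The plan is to treat the two implications separately; the ``if'' direction is a one-line specialization. Assume first the stated condition and specialize it to $U=X$ and $g=\id_X$: then $f\circ g\simeq f$ and $\underline{\id_X}=m_P(X)$ by the definition of $m_P(X)$, so the canonical comparison $\overline{f\circ g}\to f\circ\underline g$ becomes a $2$-cell $\overline f\to f\circ m_P(X)$. Unwinding how this comparison arises from the universal property of the left Kan extension $\overline{f\circ g}$ shows it is exactly the $2$-cell in the definition of $P$-linearity (both are the unique factorisation through the relevant left Kan extension), so its invertibility says precisely that $f$ is $P$-linear.

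Conversely, assume $f$ is $P$-linear and fix $g\colon U\to X$. First I would exhibit an invertible $2$-cell $\overline{f\circ g}\to f\circ\underline g$ as the composite
\[
\overline{f\circ g}\;\xrightarrow{\ \sim\ }\;\overline f\circ P(g)\;\xrightarrow{\ \sim\ }\;f\circ m_P(X)\circ P(g)\;\xrightarrow{\ \sim\ }\;f\circ\underline g ,
\]
in which the first isomorphism is the one required by $P$-cocompleteness of $Y$ (for $h\colon U\to X$ the map $\overline{f\circ h}\to\overline f\circ P(h)$ is invertible), the second is the defining $2$-cell of $P$-linearity of $f$ whiskered by $P(g)$, and the third is obtained by whiskering with $f$ the inverse of the canonical isomorphism $\underline g\xrightarrow{\ \sim\ }m_P(X)\circ P(g)$ attached to the $P$-cocomplete object $X$. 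Each factor is invertible, hence so is the composite.

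It then remains to identify this composite with the \emph{canonical} comparison $2$-cell, namely the unique $2$-cell $\overline{f\circ g}\to f\circ\underline g$ compatible with the structure $2$-cells into $Y$ over $i_P(U)$; once this is done the canonical comparison is invertible for every $g$ and we are finished. I would argue this formally: each of the three isomorphisms above is itself the comparison $2$-cell of a left Kan extension (along $i_P(U)$, resp.\ along $i_P(X)$ whiskered by $P(g)$), hence compatible with the relevant structure $2$-cells by construction; a composite of compatible isomorphisms is again compatible; and the uniqueness part of the universal property of $\overline{f\circ g}$ leaves only one compatible $2$-cell. I expect this last bookkeeping to be the only genuine obstacle, reducing entirely to uniqueness of left Kan extensions together with the compatibility of each cited isomorphism with the structure $2$-cells.
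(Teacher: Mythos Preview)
Your proposal is correct and follows essentially the same route as the paper: the ``if'' direction is the specialization $g=\id_X$, and the ``only if'' direction is the three-step factorisation
\[
\overline{f\circ g}\;\simeq\;\overline f\circ P(g)\;\to\;f\circ m_P(X)\circ P(g)\;\simeq\;f\circ\overline g,
\]
which is exactly what the paper writes. The only difference is that you spell out the bookkeeping step identifying this composite with the canonical comparison $2$-cell, whereas the paper leaves that implicit; your extra care there is justified but not a different argument.
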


\begin{proof}
It is clear that a morphism $f$ that satisfies such a property is $P$-linear. Conversely, let us suppose that $f$ is $P$-linear and let $g: U \to X$ be a morphism. The map $\overline{f \circ g} \to f \circ \underline g$ decomposes as
$$
\overline{f \circ g} \simeq \overline f \circ P(g)  \to f \circ m_P(X) \circ P(g) \simeq f \circ \overline{g}.
$$
The middle map is invertible since $f$ is $P$-linear. So the map $\overline{f \circ g} \to f \circ \underline g$ is also invertible.
\end{proof}

\begin{definition}
We define the 2-category $\mathfrak{Alg}_{\mathfrak{C}, P}$ whose objects are $P$-cocomplete objects in $\mathfrak{C}$, morphisms are $P$-linear morphisms and 2-morphisms are those of $\mathfrak{C}$ between $P$-linear morphisms.
\end{definition}

\begin{definition}
Let $P,Q$ be two KZ-machines. One says that $P$ is Morita simpler than $Q$ if $\mathfrak{Alg}_{\mathfrak{C}, P}$ contains $\mathfrak{Alg}_{\mathfrak{C}, Q}$. In other words
\begin{enumerate}
    \item if an object $X$ is $Q$-cocomplete, then it is $P$-cocomplete;
    \item if a morphism $f: X \to Y$ is $Q$-linear, then it is $P$-linear.
\end{enumerate}
\end{definition}

\begin{proposition}\label{propsimpler}
Let $P,Q$ be two KZ-machines. If $P$ is simpler than $Q$, then $P$ is Morita simpler than $Q$.
\end{proposition}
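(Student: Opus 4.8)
The plan is to prove the two defining conditions of being ``Morita simpler'' in turn, relying on the characterisation of cocompleteness by adjunctions (Proposition \ref{proppcomp}) for the first, and on a formal computation with the coherence isomorphisms of ``simpler'' for the second. Throughout I write $i_{P,Q}(X) := i_{P,Q}(\id_X) : P(X) \to Q(X)$; by the definition of ``$P$ simpler than $Q$'' applied to $f = \id_X$ (and using $P(\id_X) \simeq \id_{P(X)}$), this is a left Kan extension of $i_Q(X)$ along $i_P(X)$, equipped with an invertible $2$-cell $i_{P,Q}(X) \circ i_P(X) \simeq i_Q(X)$.

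First I would show that a $Q$-cocomplete object $X$ is $P$-cocomplete. By Proposition \ref{proppcomp} there is a left adjoint $m_Q(X) : Q(X) \to X$ to $i_Q(X)$ with invertible counit. Applying Lemma \ref{lemmaadjKan} to this adjunction and to the cospan $P(X) \xleftarrow{i_P(X)} X \xrightarrow{\id_X} X$ — whose relevant left Kan extension, that of $i_Q(X) \circ \id_X = i_Q(X)$ along $i_P(X)$, exists and is $i_{P,Q}(X)$ — shows that $L := m_Q(X) \circ i_{P,Q}(X)$ is a left Kan extension of $\id_X$ along $i_P(X)$; moreover its structural $2$-cell $\id_X \to L \circ i_P(X)$ is a composite of invertible $2$-cells (the counit of $m_Q(X) \dashv i_Q(X)$ and the structural $2$-cell of $i_{P,Q}(X)$ as a left Kan extension), hence invertible. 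Since in addition $\id_{P(X)}$ is a left Kan extension of $i_P(X)$ along $i_P(X)$ (the KZ-machine axiom $P(\id_X) \simeq \id_{P(X)}$), Lemma \ref{lemmaadjfromkan} applies and yields that $L$ is left adjoint to $i_P(X)$ with invertible counit; by Proposition \ref{proppcomp} again, $X$ is $P$-cocomplete, and along the way we record the identification $m_P(X) \simeq m_Q(X) \circ i_{P,Q}(X)$.

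Next I would show that a $Q$-linear morphism $f : X \to Y$ is $P$-linear. By the previous point $X$ and $Y$ are $P$-cocomplete, so the left Kan extension $\overline f$ of $f$ along $i_P(X)$ exists; applying the coherence clause in the definition of $P$-cocompleteness of $Y$ to $\id_Y$ and to $f$ gives $\overline f = \overline{\id_Y \circ f} \simeq \overline{\id_Y} \circ P(f) = m_P(Y) \circ P(f)$, and the same argument for $Q$ identifies the left Kan extension of $f$ along $i_Q(X)$ with $m_Q(Y) \circ Q(f)$. Chaining the identification $m_P(Y) \simeq m_Q(Y) \circ i_{P,Q}(Y)$ from the first step, the coherence isomorphism $i_{P,Q}(Y) \circ P(f) \simeq i_{P,Q}(f) \simeq Q(f) \circ i_{P,Q}(X)$ coming from the definition of ``simpler'', the $Q$-linearity isomorphism $m_Q(Y) \circ Q(f) \simeq f \circ m_Q(X)$, and finally $m_Q(X) \circ i_{P,Q}(X) \simeq m_P(X)$, one obtains an isomorphism $\overline f \simeq f \circ m_P(X)$. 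It then remains to check that this composite coincides with the canonical comparison $2$-cell $\overline f \to f \circ m_P(X)$ from the definition of $P$-linearity, which is a routine diagram chase using the universal property of $\overline f$ and the compatibility of the $2$-cells involved.

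The main obstacle is the first step: correctly instantiating Lemmas \ref{lemmaadjKan} and \ref{lemmaadjfromkan} and verifying that the relevant structural and counit $2$-cells are genuinely invertible (rather than merely that abstract isomorphisms between the underlying objects exist). Once the adjunction $m_P(X) \dashv i_P(X)$ with invertible counit is established, together with the identification $m_P \simeq m_Q \circ i_{P,Q}$, the second step is a formal computation threading together the coherence isomorphisms supplied by the hypothesis.
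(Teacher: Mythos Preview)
Your proposal is correct and follows essentially the same approach as the paper. The first step is identical: the paper also composes $m_Q(X) \circ i_{P,Q}(X)$, invokes Lemma~\ref{lemmaadjKan} to identify this as the left Kan extension of $\id_X$ along $i_P(X)$, then Lemma~\ref{lemmaadjfromkan} to obtain the adjunction with invertible counit, and finally Proposition~\ref{proppcomp}. For the second step, the paper simply says that the $P$-linearity of a $Q$-linear morphism is ``straightforward using the definition''; your chain of isomorphisms via $m_P \simeq m_Q \circ i_{P,Q}$ and the naturality square $i_{P,Q}(Y) \circ P(f) \simeq Q(f) \circ i_{P,Q}(X)$ is exactly the intended unpacking, and your honest acknowledgment that one must still match the resulting isomorphism with the canonical comparison $2$-cell is appropriate and not something the paper addresses either.
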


\begin{proof}
Let $X$ be a $Q$-cocomplete object. Then, let us denote $f$ the composite morphism 
$$
P(X) \xrightarrow{i_{P,Q}(X)} Q(X) \xrightarrow{m_Q(X)} X .
$$
Since $i_{P,Q}(X)$ is the left Kan extension of $i_Q(X)$ along $i_P(X)$ and since $m_Q(X)$ is left adjoint to $i_Q(X)$ in such a way that the counit is invertible, Lemma \ref{lemmaadjKan} tells us that $f$ is the left Kan extension of $\id_X$ along $i_P(X)$. Then, Lemma \ref{lemmaadjfromkan} tells us that $f$ is left adjoint to $i_P(X)$ so that the counit is invertible. Finally, the fact that $X$ is $P$-cocomplete follows from Proposition \ref{proppcomp}.

Besides, proving that a $Q$-linear morphism $f :X \to Y$ is $P$-linear is straightforward using the definition.
\end{proof}

\begin{definition}
Two KZ machines $P,Q$ are called Morita equivalent if they share the same algebras and the same linear morphisms, that is if an object $X$ is $P$-cocomplete if and only if it is $Q$-cocomplete and if a morphism $f: X \to Y$ between $P$-cocomplete objects is $P$-linear if and only if it is $Q$-linear.
\end{definition}

\subsection{Composition of KZ-machines}

Let $P,Q$ be two KZ-machines on a 2-category $\mathfrak{C}$. One can notice that the following square diagram is commutative up to a canonical 2-isomorphism
$$
\begin{tikzcd}
    P(X)
    \ar[r,"i_Q(P(X))"]
    & QP(X)
    \\
    X
    \ar[r,"i_Q(X)"'] \ar[u,"i_P(X)"]
    & Q(X)
    \ar[u, "Q(i_P(X))"']
\end{tikzcd}
$$
for any object $X$ in $\mathfrak{C}$.

\begin{definition}
Let us denote
$$
i_{QP}(X) = i_Q(P(X)) \circ i_P(X)    
$$
for any object $X$.
\end{definition}

\begin{definition}
One says that $Q$ extends $P$ if for any morphism $f:X \to Y$, the morphism $i_Q(P(Y))$ preserves the left Kan extension $P(f)$ in the sense that the canonical isomorphism
 $$
 i_{QP}(Y) \circ f = i_Q(P(Y)) \circ i_P(Y) \circ f \xrightarrow{\simeq} i_Q(P(Y)) \circ P(f) \circ i_P(X) 
 $$
 makes $i_Q(P(Y)) \circ P(f)$ the left Kan extension of  $i_{QP}(Y) \circ f$ along $i_P(X)$.
\end{definition}

\begin{definition}
One says that the pair $(Q,P)$ is composable if the construction
$$
X \in \mathfrak C \mapsto Q(P(X))
$$
equipped with the morphisms $i_{QP}(X) : X \to Q(P(X))$
is a KZ-machine denoted $QP$ so that for any morphism $f: X \to Y$, we have
$$
QP(f) = Q(P(f)).
$$
In other words, $Q(P(f))$ is the left Kan extension of $i_{QP}(Y) \circ f$ along $i_{QP}(X)$.
\end{definition}

\begin{proposition}\label{propcomposekzpreserve}
 Let us suppose that the pair $(Q,P)$ is composable. Then $Q$ is simpler than $QP$ through the maps
 $$
 i_{Q, QP}(f) : Q(X) \xrightarrow{Q(f)} Q(Y) \xrightarrow{Q(i_P(X))} QP(X) ,
 $$
 for any morphism $f: X \to Y$.
\end{proposition}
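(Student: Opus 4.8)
The plan is to check that the maps
\[
i_{Q,QP}(f):=Q(i_P(Y))\circ Q(f)\colon Q(X)\longrightarrow QP(Y)\qquad (f\colon X\to Y),
\]
which are canonically isomorphic to $QP(f)\circ Q(i_P(X))$ via the coherence isomorphisms of the two machines (so the precise shape of the composite is immaterial), satisfy the clauses defining ``$Q$ is simpler than $QP$'': each $i_{Q,QP}(f)$ is a left Kan extension of $i_{QP}(Y)\circ f$ along $i_Q(X)$ with invertible structure $2$-morphism, and the two comparison maps for composites of morphisms are invertible. Observe that the statement makes sense precisely because $(Q,P)$ is composable, so that $QP$ is a KZ-machine.

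The core of the argument is a reading of the KZ-machine axioms for $Q$: the requirement that $Q(g_1\circ g_2)\to Q(g_1)\circ Q(g_2)$ be invertible says exactly that $Q(g_1)$ preserves the left Kan extension defining $Q(g_2)$. I would apply this with $g_2=f\colon X\to Y$ and $g_1=i_P(Y)\colon Y\to P(Y)$, obtaining a canonical isomorphism $Q(i_P(Y)\circ f)\xrightarrow{\ \sim\ }Q(i_P(Y))\circ Q(f)=i_{Q,QP}(f)$. But $Q(i_P(Y)\circ f)$ is, by the very definition of the machine $Q$ applied to the morphism $i_P(Y)\circ f\colon X\to P(Y)$, the left Kan extension of $i_Q(P(Y))\circ i_P(Y)\circ f$ along $i_Q(X)$; and $i_Q(P(Y))\circ i_P(Y)=Q(i_P(Y))\circ i_Q(Y)$ by naturality of the unit of $Q$, while $i_Q(P(Y))\circ i_P(Y)=i_{QP}(Y)$ by definition. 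Hence $i_{Q,QP}(f)$ is the left Kan extension of $i_{QP}(Y)\circ f$ along $i_Q(X)$, and its structure $2$-morphism $i_{QP}(Y)\circ f\to i_{Q,QP}(f)\circ i_Q(X)$ is invertible: it is $Q(i_P(Y))$ whiskered with the structure $2$-morphism $i_Q(Y)\circ f\to Q(f)\circ i_Q(X)$ of the Kan extension $Q(f)$ — invertible by the KZ-machine axioms — composed with the isomorphisms above.

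For the compatibilities with composition, fix composable $f\colon X\to Y$ and $g\colon Y\to Z$. The comparison $i_{Q,QP}(g\circ f)\to i_{Q,QP}(g)\circ Q(f)$ is $Q(i_P(Z))$ whiskered with the invertible map $Q(g\circ f)\to Q(g)\circ Q(f)$, hence invertible. For $i_{Q,QP}(g\circ f)\to QP(g)\circ i_{Q,QP}(f)$ I would use composability in the form $QP(g)=Q(P(g))$ and chain canonical isomorphisms
\begin{align*}
QP(g)\circ i_{Q,QP}(f)&=Q(P(g))\circ Q(i_P(Y))\circ Q(f)\\
&\simeq Q\bigl(P(g)\circ i_P(Y)\bigr)\circ Q(f)\\
&\simeq Q\bigl(i_P(Z)\circ g\bigr)\circ Q(f)\\
&\simeq Q(i_P(Z))\circ Q(g\circ f)=i_{Q,QP}(g\circ f),
\end{align*}
using successively the comparison isomorphism of $Q$, the isomorphism $i_P(Z)\circ g\simeq P(g)\circ i_P(Y)$ from the KZ-machine structure of $P$, and twice more the comparison isomorphisms of $Q$.

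The step I expect to be the real work is not any of these manipulations but the bookkeeping needed to see that the isomorphisms produced above coincide with the canonical comparison $2$-morphisms demanded by the definition, rather than merely being some isomorphisms. This I would deduce from the uniqueness part of the universal property of left Kan extensions — a $2$-morphism out of a left Kan extension is determined by its restriction along the unit — together with the coherence of the structural isomorphisms of a KZ-machine; it is conceptually clear but notationally heavy. Finally, note that only the last display genuinely uses composability (through $QP(g)=Q(P(g))$); the existence of $i_{Q,QP}(f)$, the invertibility of its structure map, and the first compatibility require only that $P$ and $Q$ be KZ-machines.
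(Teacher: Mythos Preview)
Your proof is correct and follows the same approach as the paper. The paper's argument is extremely terse — it simply observes that $Q(i_P(Y))\circ Q(f)$ is the left Kan extension of $i_{QP}(Y)\circ f = i_Q(P(Y))\circ i_P(Y)\circ f$ along $i_Q(X)$ — and you have unpacked exactly this, together with the composition compatibilities the paper leaves implicit; you also silently correct the typo in the displayed composite (it should read $Q(i_P(Y))$ and target $QP(Y)$).
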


\begin{proof}
It suffices to notice that the map
 $$
 Q(i_P(Y)) \circ Q(f) \to Q(P(f)) \circ Q(i_P(X))
 $$
 is the left Kan extension of
 $$
 i_{QP}(Y) \circ f = i_{Q}(P(Y)) \circ i_P(Y) \circ f
 $$
 along $i_Q(X)$.
\end{proof}

\begin{proposition}\label{propcomposekzextends}
 Let us suppose that $Q$ extends $P$. Then
 the pair $(Q,P)$ is composable. Moreover, $P$ is simpler than $QP$ through the maps
 $$
 i_{P,QP}(f) : P(X) \xrightarrow{P(f)} P(Y) \xrightarrow{i_Q(P(Y))} QP(Y) ,
 $$
 for any morphism $f: X\to Y$.
\end{proposition}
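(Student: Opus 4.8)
The plan is to obtain composability from the extension hypothesis by pasting left Kan extensions, and then to read off the ``simpler than'' statement almost immediately.

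\emph{Composability.} Fix a morphism $f\colon X\to Y$. The hypothesis that $Q$ extends $P$ says exactly that $i_Q(P(Y))\circ P(f)$ is the left Kan extension of $i_{QP}(Y)\circ f$ along $i_P(X)$, with invertible structure $2$-cell. Unravelling the KZ-machine axioms of $Q$ at the morphism $P(f)\colon P(X)\to P(Y)$ shows in turn that $Q(P(f))$ is the left Kan extension of $i_Q(P(Y))\circ P(f)$ along $i_Q(P(X))$, again with invertible structure cell. I would then invoke the elementary fact that left Kan extensions paste along composable $1$-morphisms --- if $p_!(h)$ and $q_!\bigl(p_!(h)\bigr)$ both exist, then $(q\circ p)_!(h)$ exists and equals $q_!\bigl(p_!(h)\bigr)$, a one-line consequence of the representability in Definition \ref{defKan} --- to conclude that $Q(P(f))$ is the left Kan extension of $i_{QP}(Y)\circ f$ along $i_{QP}(X)=i_Q(P(X))\circ i_P(X)$, with structure cell the composite of the two invertible cells above, hence invertible. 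There remains the pseudofunctoriality of $X\mapsto Q(P(X))$, $f\mapsto Q(P(f))$; but this is literally the composite of the pseudofunctors $Q$ and $P$, so one only has to check that the comparison cells $Q(P(g\circ f))\to Q(P(g))\circ Q(P(f))$ and $Q(P(\id))\to\id$ supplied by that composite agree with the ones demanded of a KZ-machine (built from the universal property of the Kan extension), after which they are invertible because those of $P$ and of $Q$ are. This yields that $(Q,P)$ is composable with $QP(f)=Q(P(f))$.

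\emph{The ``simpler than'' claim.} I would take $i_{P,QP}(f)=i_Q(P(Y))\circ P(f)$ as in the statement. The first clause of ``simpler than'' --- that $i_{P,QP}(f)$ be the left Kan extension of $i_{QP}(Y)\circ f$ along $i_P(X)$ with invertible unit --- is once more exactly the extension hypothesis. For the composition clauses, given $f\colon X\to Y$ and $g\colon Y\to Z$, whiskering the isomorphism $P(g\circ f)\xrightarrow{\sim}P(g)\circ P(f)$ coming from the KZ-machine $P$ by $i_Q(P(Z))$ on the left gives an isomorphism $i_{P,QP}(g\circ f)\xrightarrow{\sim}i_{P,QP}(g)\circ P(f)$; composing further with the whiskering by $P(f)$ of the (inverted) structure cell $i_Q(P(Z))\circ P(g)\xrightarrow{\sim}Q(P(g))\circ i_Q(P(Y))$ of $Q$ at $P(g)$ gives an isomorphism $i_{P,QP}(g\circ f)\xrightarrow{\sim}QP(g)\circ i_{P,QP}(f)$. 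After identifying these with the canonical comparison $2$-morphisms of the definition --- a diagram chase through the universal properties --- both are invertible, as required; this parallels the short proof of Proposition \ref{propcomposekzpreserve}.

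\emph{Main obstacle.} The mathematical content is light: everything is pasting of left Kan extensions together with the structure isomorphisms supplied by $P$ and $Q$. The only place where care is genuinely needed is the coherence bookkeeping --- checking that the explicit composites of isomorphisms above coincide with the canonically defined comparison $2$-cells occurring in the definitions of a KZ-machine and of ``$P$ simpler than $Q$''. Each such check reduces to a universal-property argument, so I expect only routine diagram chasing rather than a real difficulty.
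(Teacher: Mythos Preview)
Your proposal is correct and follows essentially the same route as the paper: both establish composability by pasting the Kan extension supplied by the extension hypothesis with the one supplied by the KZ-machine structure of $Q$, and both read off the ``simpler than'' claim directly from the definition of ``$Q$ extends $P$''. You give more detail than the paper on the pseudofunctoriality and composition-clause checks, which the paper leaves implicit.
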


\begin{proof}
To prove that the pair $(Q,P)$ is composable,
it suffices to prove that $Q(P(f))$ is the left Kan extension of the composite map
$$
X \to Y \to QP(Y)
$$
along $i_{QP}(X)$. For that purpose, let us contemplate the following diagram
$$
\begin{tikzcd}
    & QP(X)
    \ar[rd, "Q(P(f))"]
    \\
    P(X)
    \ar[r,"P(f)"] \ar[ru, "{i_Q(P(X))}"]
    & P(Y)
    \ar[r, "{i_Q(P(Y))}"]
    &QP(Y)
    \\
    X
    \ar[rr, "f"'] \ar[u]
    &&Y
    \ar[u, "i_{QP}(Y)"']
\end{tikzcd}
$$
The map $P(X) \to QP(Y)$ is the left Kan extension of the map $i_{QP}(Y)\circ f : X \to QP(Y)$ along $i_P(X)$ and $Q(P(f))$ is the left Kan extension of this left Kan extension along $i_Q(P(X))$. Hence $Q(P(f))$ is the left Kan extension of $i_{QP}(Y)\circ f$ along $i_{QP}(Y) = i_P(P(X)) \circ i_P(X)$.

Finally, the fact that $P$ is simpler than $QP$ follows from the definition of the fact that $Q$ extends $P$.
\end{proof}

\begin{proposition}
Let us assume that the pair $(Q,P)$ is composable. Let us consider an object $X$ and a morphism $f: X \to Y$. Then
\begin{enumerate}
    \item if $X$ is $QP$-cocomplete if it is $P$-cocomplete and $Q$-cocomplete; 
    \item if $f$ is $QP$-linear if it is $P$-linear and $Q$-linear.
\end{enumerate}
One can replace the implications ("if") by equivalences ("if and only if") in the case where $Q$ and extends $P$.
\end{proposition}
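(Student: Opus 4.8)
The plan is to prove the two forward implications by hand and to obtain the two reverse implications directly from the ``simpler than'' results already established.

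\emph{Forward implication in (1).} I would argue through the adjoint characterisation of cocompleteness (Proposition~\ref{proppcomp}) together with the fact that left Kan extensions compose. Suppose $X$ is $P$-cocomplete and $Q$-cocomplete. Then $m_P(X) : P(X) \to X$ is the left Kan extension of $\id_X$ along $i_P(X)$ with invertible unit, and, because $X$ is $Q$-cocomplete, the morphism $m_P(X)$ itself has a left Kan extension $N : QP(X) \to X$ along $i_Q(P(X))$ with invertible unit. Since left Kan extensions compose and $i_{QP}(X) = i_Q(P(X)) \circ i_P(X)$, the morphism $N$ is the left Kan extension of $\id_X$ along $i_{QP}(X)$, and its unit $\id_X \to N \circ i_{QP}(X)$ is invertible, being the pasting of two invertible $2$-cells. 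As $(Q,P)$ is composable, $QP$ is a KZ-machine, so $\id_{QP(X)}$ is the left Kan extension of $i_{QP}(X)$ along itself. Lemma~\ref{lemmaadjfromkan} then produces an adjunction $N \dashv i_{QP}(X)$ with invertible counit, and Proposition~\ref{proppcomp} concludes that $X$ is $QP$-cocomplete.

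\emph{Forward implication in (2).} Here $X$ and $Y$ are in particular $P$- and $Q$-cocomplete, hence $QP$-cocomplete by the previous point, so $QP$-linearity of $f$ makes sense. By Lemma~\ref{lemmadefplinear} it suffices to show that for every $g : U \to X$ the canonical comparison between the left Kan extension of $f \circ g$ along $i_{QP}(U)$ and $f$ postcomposed with the left Kan extension of $g$ along $i_{QP}(U)$ is invertible. Using again that these Kan extensions are computed in two layers, first along $i_P(U)$ and then along $i_Q(P(U))$, one checks that this comparison factors as the comparison coming from $P$-linearity of $f$, left Kan extended along $i_Q(P(U))$, followed by the comparison coming from $Q$-linearity of $f$ applied to the morphism $P(U) \to X$ induced by $g$. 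Both factors are invertible, so $f$ is $QP$-linear.

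\emph{Reverse implications.} Assume moreover that $Q$ extends $P$. Then $Q$ is simpler than $QP$ by Proposition~\ref{propcomposekzpreserve} and $P$ is simpler than $QP$ by Proposition~\ref{propcomposekzextends}, hence by Proposition~\ref{propsimpler} both are Morita simpler than $QP$. By definition this says precisely that every $QP$-cocomplete object is both $Q$- and $P$-cocomplete and that every $QP$-linear morphism is both $Q$- and $P$-linear, which supplies the two ``only if'' halves.

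\emph{Main obstacle.} The delicate part is the $2$-categorical bookkeeping in the forward implications: identifying the composite of the two layered Kan extensions with the Kan extension along $i_{QP}$ equipped with its \emph{canonical} unit and comparison $2$-cells (not merely with something isomorphic to it), and respecting the precomposition-compatibility clauses in the definitions of a cocomplete object and of a linear morphism. Routing the argument through Proposition~\ref{proppcomp} and Lemma~\ref{lemmadefplinear} rather than through the bare definitions is exactly what keeps this bookkeeping under control.
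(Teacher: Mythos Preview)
Your proposal is correct; the difference from the paper is mainly in the forward implication~(1). The paper observes that, since $Q$ is a pseudo-functor, it carries the adjunction $m_P(X) \dashv i_P(X)$ (with invertible counit) to an adjunction $Q(m_P(X)) \dashv Q(i_P(X))$ (still with invertible counit), and then composes this with $m_Q(X) \dashv i_Q(X)$ to obtain $m_Q(X) \circ Q(m_P(X)) \dashv Q(i_P(X)) \circ i_Q(X) \simeq i_{QP}(X)$. This is a one-line use of ``pseudo-functors preserve adjunctions'' and avoids invoking Lemma~\ref{lemmaadjfromkan} altogether. Your route---composing Kan extensions and then recovering the adjunction via Lemma~\ref{lemmaadjfromkan}---arrives at the same left adjoint $m_Q(X) \circ Q(m_P(X))$ but through a longer path. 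For~(2), the paper works directly with the defining comparison $m_{QP}(Y) \circ QP(f) \to f \circ m_{QP}(X)$ rather than going through Lemma~\ref{lemmadefplinear}, but the chain of isomorphisms it writes down is exactly your two-layer factorisation (first $P$-linearity, then $Q$-linearity) specialised to $g = \id_X$. Your treatment of the converse implications is the same as the paper's.
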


\begin{proof}
First, let us suppose that $X$ is $P$-cocomplete and that it is $Q$-cocomplete. Since $Q$ induces a pseudo-functor, then $Q(m_P(X))$ is left adjoint to $Q(i_P(X))$ and the related counit is invertible. Subsequently, $m_Q(X) \circ Q(m_P(X))$ is left adjoint to $Q(i_P(X)) \circ i_Q(X)$ and the related counit is also invertible. So $X$ is $QP$-cocomplete.

Then, let us suppose that $f$ is $P$-linear and $Q$-linear. The composite map
$$
\begin{tikzcd}
    m_{QP}(Y) \circ QP(f)
    \ar[d, equal]
    \\
    m_{Q}(Y) \circ Q(m_P(Y)) \circ QP(f)
    \ar[d, "\simeq"]
    \\
    m_{Q}(Y) \circ Q(f) \circ Q(m_P(X))
    \ar[d, "\simeq"]
    \\
    f \circ m_{Q}(X) \circ Q(m_P(X))
    \ar[d, equal]
    \\
    f \circ m_{QP}(X)
\end{tikzcd}
$$
is an isomorphism as the composition of isomorphisms. So $f$ is $QP$-linear.

The converse assertions in the case where $Q$ extends $P$ follows from the fact that $Q$ and $P$ are simpler than $QP$ (Proposition
\ref{propcomposekzpreserve} and Proposition \ref{propcomposekzextends}).
\end{proof}

\subsection{Kock-Zoberlein doctrines}

\begin{definition}\cite{KellyLack}
A KZ doctrine (also called a lax idempotent monad) $M$ on a 2-category is a KZ machine so that
\begin{enumerate}
    \item for any object $X$, $M(X)$ is $M$-cocomplete;
    \item for any morphism $f: X \to Y$, the morphism $M(f) : M(X) \to M(Y)$ is $M$-linear.
\end{enumerate}
\end{definition}

\begin{remark}
The definition given above is not the usual definition of a KZ-doctrine (see for instance \cite{Walker}). The following proposition \ref{propositionkzdoctrine} shows that the two definitions (the usual one and this one) are equivalent.
\end{remark}

\begin{remark}
The name KZ doctrine comes from the fact that it is in particular a doctrine (that is a 2-monad). For instance, the canonical map
$$
MM(X) \to M(X) ,
$$
is just the left Kan extension of $\id_{M(X)}$ along $i_M(M(X)): M(X) \to MM(X)$.
Moreover, the $M$-cocomplete objects are exactly the $M$-algebras (in the sense of alagebras over a 2-monad). See \cite{MarmolejoWood} for more details.
\end{remark}

Let $M$ be a KZ-doctrine on a 2-category $\mathfrak C$.

\begin{lemma}
For any $M$-cocomplete object $X$, the map $m_P(X) : M(X) \to X$ is $M$-linear.
\end{lemma}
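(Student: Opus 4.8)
The plan is to use that, when $X$ is $M$-cocomplete, the structure map $m_M(X) : M(X) \to X$ is a left adjoint by Proposition~\ref{proppcomp}, together with the general fact that a morphism admitting a right adjoint preserves left Kan extensions. I would first record this fact, proved by the same hom-category computation as in the proof of Lemma~\ref{lemmaadjKan}: if $L : A \to B$ has a right adjoint $R$, and $V \xleftarrow{p} U \xrightarrow{g} A$ is a cospan such that $g$ admits a left Kan extension $p_!(g)$ along $p$, then $L \circ p_!(g)$ is the left Kan extension of $L \circ g$ along $p$. Indeed, for any $h : V \to B$ there are natural isomorphisms
\begin{align*}
\hom_{\mathfrak C(V,B)}(L \circ p_!(g), h)
&\cong \hom_{\mathfrak C(V,A)}(p_!(g), R \circ h) \\
&\cong \hom_{\mathfrak C(U,A)}(g, R \circ h \circ p) \\
&\cong \hom_{\mathfrak C(U,B)}(L \circ g, h \circ p),
\end{align*}
the outer two coming from the adjunction $L \dashv R$ and the middle one from the universal property of $p_!(g)$; one moreover checks that this chain of bijections carries the structure 2-morphism of $p_!(g)$ to that of $L \circ p_!(g)$.

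Then I would apply this. Since $M$ is a KZ doctrine, $M(X)$ is $M$-cocomplete, so by definition $m_M(M(X)) = \overline{\id_{M(X)}}$ is the left Kan extension of $\id_{M(X)}$ along $i_M(M(X)) : M(X) \to MM(X)$. Since $X$ is $M$-cocomplete, Proposition~\ref{proppcomp} provides an adjunction $m_M(X) \dashv i_M(X)$. Applying the fact above with $L = m_M(X)$, $R = i_M(X)$, $p = i_M(M(X))$ and $g = \id_{M(X)}$ shows that $m_M(X) \circ m_M(M(X))$ is the left Kan extension of $m_M(X) \circ \id_{M(X)} = m_M(X)$ along $i_M(M(X))$; in other words $\overline{m_M(X)} \simeq m_M(X) \circ m_M(M(X))$.

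It only remains to see that this isomorphism is the canonical 2-morphism $\overline{m_M(X)} \to m_M(X) \circ m_M(M(X))$ occurring in the definition of $M$-linearity. Both are the comparison 2-morphism between two left Kan extensions of $m_M(X)$ along $i_M(M(X))$ carrying the same structure 2-morphism --- the one obtained by whiskering $m_M(X)$ with the invertible structure 2-morphism of $\overline{\id_{M(X)}}$ --- so they coincide by uniqueness of left Kan extensions, and the comparison is in particular invertible. Thus $m_M(X)$ is $M$-linear. Apart from the preservation fact of the first paragraph, which is elementary, everything is bookkeeping; the point requiring care is matching the structure 2-morphisms so that the isomorphism obtained is precisely the one demanded by the definition of a $P$-linear morphism.
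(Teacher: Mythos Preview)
Your argument is correct and takes a different route from the paper's. You isolate the general fact that a left adjoint preserves left Kan extensions and apply it with $L = m_M(X)$ to the Kan extension $m_M(M(X)) = (i_M(M(X)))_!(\id_{M(X)})$, concluding directly that $m_M(X)\circ m_M(M(X))$ is the Kan extension $\overline{m_M(X)}$. The paper instead writes $\overline{m_M(X)} = m_M(X)\circ M(m_M(X))$, notes that both $m_M(X)\circ M(m_M(X))$ and $m_M(X)\circ m_M(M(X))$ are left adjoints (to $M(i_M(X))\circ i_M(X)$ and $i_M(M(X))\circ i_M(X)$ respectively), and shows that the comparison $\alpha$ between them is the mate of the canonical naturality isomorphism $i_M(M(X))\circ i_M(X)\simeq M(i_M(X))\circ i_M(X)$, hence invertible. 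Your approach packages the work into a reusable lemma and avoids the explicit mate computation; the paper's approach makes the specific adjunctions and the role of naturality of $i_M$ more visible. One small point: your final paragraph is slightly informal --- to be precise, the structure $2$-morphism on $m_M(X)\circ m_M(M(X))$ produced by your preservation lemma is $m_M(X)$ whiskered with the inverse counit $\id_{M(X)}\simeq m_M(M(X))\circ i_M(M(X))$, which is exactly the $2$-morphism used to define the canonical comparison $\overline{m_M(X)}\to m_M(X)\circ m_M(M(X))$; stating this explicitly would complete the argument cleanly.
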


\begin{proof}
Let us denote
\begin{align*}
    L & = m_M(X);
    \\
    R &= i_M(X);
    \\
    L_M &= m_M(M(X));
    \\
    R_M &= i_M(M(X)).
\end{align*}
One can notice that $L$ and $L_M$ are left adjoint to respectively $R$ and $R_M$. Since $M$ is a pseudofunctor, $M(L)$ is left adjoint to $M(R)$. Moreover, composing adjunctions, we obtain that $LL_M$ is left adjoint to $R_MR$ and $LM(L)$ is left adjoint to $M(R)R$. The counits of all these adjunctions are invertible.

Our goal is to show that the canonical 2-morphism $\alpha : LM(L) \to LL_M$ induced by the fact that $LM(L)$ is the left Kan extension of $L$ along $R_M$ and by the canonical isomorphism
$$
LM(L)R_M \simeq LRL \simeq L \simeq LL_M R_M
$$
is an isomorphism. One can check that $\alpha$ decomposes as
$$
LM(L) \to LM(L)R_ML_M \simeq LRLL_M \simeq LL_M .
$$
But the related 2-morphism from $R_MR$ to $M(R)R$ is the canonical isomorphism
$$
R_MR \simeq M(R)R.
$$
So $\alpha$ is invertible.
\end{proof}

\begin{proposition}\label{propositionkzdoctrine}
For any $M$-cocomplete object $Y$ and any morphism $f: X \to Y$, the map $\overline f: M(X) \to Y$ is $M$-linear.
\end{proposition}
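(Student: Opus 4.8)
The plan is to factor $\overline f$ through $M(Y)$ and reduce to two facts that are already available — that $M(f)$ is $M$-linear and that $m_M(Y)$ is $M$-linear — together with the observation that $M$-linear morphisms are stable under composition.

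First I would record that the assertion makes sense: $M(X)$ is $M$-cocomplete by the first axiom defining a KZ-doctrine, and $Y$ is $M$-cocomplete by hypothesis, so asking whether $\overline f : M(X) \to Y$ is $M$-linear is a legitimate condition. Then I would invoke the canonical isomorphism
$$
\overline f \simeq m_M(Y) \circ M(f)
$$
coming from the definition of $m_M$.

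Next, $M(f) : M(X) \to M(Y)$ is $M$-linear by the second axiom defining a KZ-doctrine, and $m_M(Y) : M(Y) \to Y$ is $M$-linear by the lemma just above applied to the $M$-cocomplete object $Y$. It then remains to check that the composite of two composable $M$-linear morphisms between $M$-cocomplete objects is again $M$-linear; I would deduce this directly from Lemma \ref{lemmadefplinear}: if $g : U \to V$ and $h : V \to W$ are $M$-linear with $U,V,W$ all $M$-cocomplete, and if $k : T \to U$ is arbitrary, then
$$
\overline{(h \circ g) \circ k} = \overline{h \circ (g \circ k)} \to h \circ \overline{g \circ k} \to h \circ g \circ \overline k
$$
is an isomorphism — the first arrow because $h$ is $M$-linear, the second because $g$ is $M$-linear — so Lemma \ref{lemmadefplinear} yields that $h \circ g$ is $M$-linear. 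Applying this with $g = M(f)$, $h = m_M(Y)$, $U = M(X)$, $V = M(Y)$, $W = Y$ concludes that $m_M(Y)\circ M(f)\simeq \overline f$ is $M$-linear.

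I do not expect a genuine obstacle here; the only points deserving care are the bookkeeping of which objects are $M$-cocomplete (so that $M$-linearity is meaningful and Lemma \ref{lemmadefplinear} applies at each step) and the elementary verification that $M$-linear morphisms compose, which the excerpt has so far only used implicitly, in asserting that $\mathfrak{Alg}_{\mathfrak{C}, P}$ is a $2$-category.
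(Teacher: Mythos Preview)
Your argument is correct and essentially identical to the paper's: it factors $\overline f \simeq m_M(Y)\circ M(f)$ and appeals to the $M$-linearity of each factor, the only difference being that you explicitly verify closure of $M$-linear morphisms under composition, which the paper leaves implicit. (Note the paper has a small typo, writing $m_M(X)$ where $m_M(Y)$ is meant.)
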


\begin{proof}
It suffices to notice that $\overline{f} = m_M(X) \circ M(f)$ and that $m_M(X)$ and $M(f)$ are $M$-linear maps.
\end{proof}

\begin{proposition}\label{propfreealg}
Let $Y$ be a $M$-cocomplete object and let $X$ be an object. Then, the adjunction
$$
\begin{tikzcd}
\mathfrak{C}(X, Y)
\ar[rr, shift left, "f \mapsto \overline{f}"]
&&  \mathfrak{Alg}_{\mathfrak C, M}(M(X), Y)
\ar[ll, shift left, "- \circ i_M(X)"]
\end{tikzcd}
$$
is an equivalence adjunction, that is the counit and the unit are invertible.
\end{proposition}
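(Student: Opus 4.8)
The plan is to recognise the displayed pair of functors as the hom-adjunction attached to the left Kan extension $\overline{(-)}$ along $i_M(X)$, and then to prove separately that its unit and its counit are invertible; the second of these will be reduced to essential surjectivity of $\overline{(-)}$.

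\textbf{Setting up the adjunction.} First I would check that $f \mapsto \overline f$ is well defined into $\mathfrak{Alg}_{\mathfrak C, M}(M(X),Y)$: for $f : X \to Y$ the left Kan extension $\overline f$ along $i_M(X)$ exists because $Y$ is $M$-cocomplete, and it is $M$-linear by Proposition~\ref{propositionkzdoctrine}. The defining universal property of $\overline f$ supplies, for every morphism $g : M(X) \to Y$, a natural isomorphism $\hom_{\mathfrak C(M(X),Y)}(\overline f, g) \simeq \hom_{\mathfrak C(X,Y)}(f, g \circ i_M(X))$, which already makes $\overline{(-)}$ a functor and exhibits it as left adjoint to $g \mapsto g \circ i_M(X)$. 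Since $\mathfrak{Alg}_{\mathfrak C, M}(M(X),Y)$ is a full sub-$2$-category of $\mathfrak C(M(X),Y)$, this adjunction restricts to one between $\mathfrak C(X,Y)$ and $\mathfrak{Alg}_{\mathfrak C, M}(M(X),Y)$, whose unit at $f$ is the canonical $2$-morphism $f \to \overline f \circ i_M(X)$ and whose counit at $g$ is the canonical $2$-morphism $\overline{g \circ i_M(X)} \to g$.

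\textbf{Unit and counit.} The unit $f \to \overline f \circ i_M(X)$ is invertible for every $f$ directly from the definition of $M$-cocompleteness of $Y$; hence $\overline{(-)}$ is a fully faithful functor between these hom-categories. It remains to show it is essentially surjective, because a fully faithful left adjoint that is essentially surjective is an adjoint equivalence, so that in particular its counit is invertible too. So let $g : M(X) \to Y$ be an $M$-linear morphism; I would exhibit an isomorphism $g \simeq \overline{g \circ i_M(X)}$ through the chain
$$
\overline{g \circ i_M(X)} \;\simeq\; \overline g \circ M(i_M(X)) \;\simeq\; g \circ m_M(M(X)) \circ M(i_M(X)) \;\simeq\; g ,
$$
where the first isomorphism is the compatibility of $\overline{(-)}$ with precomposition (part of the definition of $M$-cocompleteness of $Y$, applied to $g$ and to the map $i_M(X): X \to M(X)$), the second is the $M$-linearity of $g$ relative to the $M$-cocomplete object $M(X)$, and the last uses the identity $m_M(M(X)) \circ M(i_M(X)) \simeq \id_{M(X)}$. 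This last identity is itself extracted from the axioms: the same compatibility of $\overline{(-)}$ with precomposition, now applied to $\id_{M(X)}$ and $i_M(X)$, identifies $m_M(M(X)) \circ M(i_M(X))$ with the left Kan extension of $i_M(X)$ along $i_M(X)$, which by the KZ-machine axioms is $M(\id_X) \simeq \id_{M(X)}$.

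\textbf{Expected obstacle.} The delicate point is the bookkeeping in the last step: one must keep straight which left Kan extension is taken along which unit map (in particular $\overline g$ there is the left Kan extension of $g$ along $i_M(M(X))$, not along $i_M(X)$) and check that the three isomorphisms in the chain are the canonical ones, so that no incoherence is introduced. Everything else—the passage from "$\overline{(-)}$ fully faithful and essentially surjective" to "the given adjunction is an adjoint equivalence", hence both unit and counit invertible—is purely formal and uses no further input from the paper.
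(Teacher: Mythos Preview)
Your argument is correct and essentially coincides with the paper's. The paper dispatches one direction by the definition of $M$-cocompleteness and the other by invoking Lemma~\ref{lemmadefplinear}: for $g$ $M$-linear one has $\overline{g\circ i_M(X)}\simeq g\circ\overline{i_M(X)}\simeq g\circ\id_{M(X)}$. Your chain $\overline{g\circ i_M(X)}\simeq \overline g\circ M(i_M(X))\simeq g\circ m_M(M(X))\circ M(i_M(X))\simeq g$ is exactly the content of that lemma unfolded (compare the proof of Lemma~\ref{lemmadefplinear}), so the only difference is that you inline the lemma rather than quote it.
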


\begin{proof}
By definition of the construction $f \mapsto \overline{f}$, the counit is invertible. The invertibility of the unit follows from the characterisation of an $M$-linear morphism given in Lemma \ref{lemmadefplinear}.
\end{proof}

\begin{definition}
Given a KZ machine $P$, a rectification of $P$ is a KZ doctrine $M$ that is Morita equivalent to $P$.
\end{definition}

\begin{proposition}\label{proprecsimp}
Let $M$ be a KZ-doctrine and let $P$ be a KZ-machine. If $P$ is Morita simpler than $M$, then it is simpler than $M$.
\end{proposition}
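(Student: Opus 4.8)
The plan is to read off the comparison morphisms $i_{P,M}(f)$ from the fact that the free objects $M(Y)$ are $P$-cocomplete, and then to reduce the two required compatibilities to the $P$-linearity of the structure maps $M(g)$, which in turn will follow from the KZ-doctrine axioms together with the Morita hypothesis.

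First I would note that, $M$ being a KZ-doctrine, each object $M(Y)$ is $M$-cocomplete, hence $P$-cocomplete because $P$ is Morita simpler than $M$. Therefore, for every morphism $f : X \to Y$, the morphism $i_M(Y)\circ f : X \to M(Y)$ has a left Kan extension $\overline{i_M(Y)\circ f}$ along $i_P(X)$ with invertible comparison $2$-cell $i_M(Y)\circ f \to \overline{i_M(Y)\circ f}\circ i_P(X)$, by the very definition of a $P$-cocomplete object; I would set $i_{P,M}(f) := \overline{i_M(Y)\circ f}$. This already yields the first clause in the definition of ``$P$ is simpler than $M$''.

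Next I would check the two composition compatibilities for $f : X \to Y$ and $g : Y \to Z$. The isomorphism $i_{P,M}(g\circ f) \to i_{P,M}(g)\circ P(f)$ is exactly the second clause in the definition of a $P$-cocomplete object (that $\overline{h\circ k}\to \overline{h}\circ P(k)$ is invertible), applied with $h = i_M(Z)\circ g$ and $k = f$. For the isomorphism $i_{P,M}(g\circ f)\to M(g)\circ i_{P,M}(f)$, I would use naturality of $i_M$ to rewrite $i_M(Z)\circ g\circ f$ as $M(g)\circ(i_M(Y)\circ f)$, reducing the claim to $\overline{M(g)\circ(i_M(Y)\circ f)}\to M(g)\circ\overline{i_M(Y)\circ f}$; by Lemma \ref{lemmadefplinear} this holds provided $M(g):M(Y)\to M(Z)$ is $P$-linear, which is a legitimate notion here since $M(Y)$ and $M(Z)$ are $P$-cocomplete by the previous paragraph.

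The one point that is not pure bookkeeping, and the place where both hypotheses are genuinely needed, is the $P$-linearity of $M(g)$: since $M$ is a KZ-doctrine, $M(g)$ is $M$-linear, and since $P$ is Morita simpler than $M$, every $M$-linear morphism is $P$-linear. Granting this, the two compatibilities above are established, and specialising them to identity morphisms produces the canonical isomorphisms $i_{P,M}(Y)\circ P(f)\simeq i_{P,M}(f)\simeq M(f)\circ i_{P,M}(X)$ appearing in the definition. I expect no further obstacle, everything else being a direct unwinding of the definitions.
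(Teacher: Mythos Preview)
Your proof is correct and follows essentially the same route as the paper: define $i_{P,M}(f)$ as $\overline{i_M(Y)\circ f}$ using that $M(Y)$ is $P$-cocomplete, then obtain the two compatibilities from $P$-cocompleteness of $M(Z)$ and $P$-linearity of $M(g)$ respectively. You are in fact slightly more explicit than the paper in justifying why $M(Y)$ is $P$-cocomplete and why $M(g)$ is $P$-linear (via $M$-cocompleteness and $M$-linearity together with the Morita hypothesis), points the paper leaves implicit.
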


\begin{proof}
For any morphism $f: X \to Y$, since $M(Y)$ is $P$-cocomplete, one gets a map $i_{P,M}(f): P(X) \to M(Y)$ as the left Kan extension
$$
i_{P,M}(f) = \overline{i_M(Y) \circ f} = \overline{i_M(Y)} \circ P(f)
$$
of $i_M(Y) \circ f$ along $i_P(X)$. For any composable morphisms $f: X \to Y$ and $g: Y \to Z$, then the canonical maps
\begin{align*}
i_{P,M}(g \circ f) &\to i_{P,M}(g) \circ P(f)
\\
i_{P,M}(g \circ f) &\to M(g) \circ  i_{P,M}(f)
\end{align*}
are invertible by $P$-cocompleteness of $M(Z)$ for the first one and by $P$-linearity of $M(g)$ for the second one.
\end{proof}

\subsection{Morita equivalence of KZ-doctrines}

\begin{proposition}\label{propsimpsimpimplieseq}
Let $M,N$ be two KZ-doctrines on a 2-category $\mathfrak{C}$.
The three following assertions are equivalent:
\begin{enumerate}
    \item $M$ and $N$ are Morita equivalent;
    \item $M$ is simpler than $N$ and $N$ is simpler than $M$;
    \item $M$ and $N$ are equivalent.
\end{enumerate}
\end{proposition}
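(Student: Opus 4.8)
The plan is to run the cycle $(3)\Rightarrow(2)\Rightarrow(1)\Rightarrow(2)\Rightarrow(3)$. All of these implications except the last merely assemble results already in the paper, so the substance is concentrated in $(2)\Rightarrow(3)$.

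\emph{Formal implications.} $(3)\Rightarrow(2)$ is exactly the proposition saying that equivalent KZ-machines are mutually simpler. For $(2)\Rightarrow(1)$, Proposition~\ref{propsimpler} turns ``$M$ simpler than $N$'' into ``$M$ Morita simpler than $N$'', and symmetrically, so that $M$ and $N$ are Morita equivalent. For $(1)\Rightarrow(2)$, Morita equivalence says that each of $M,N$ is Morita simpler than the other, and since both are KZ-doctrines Proposition~\ref{proprecsimp} upgrades each ``Morita simpler'' to ``simpler''.

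\emph{The implication $(2)\Rightarrow(3)$.} Assume $M$ is simpler than $N$ and $N$ is simpler than $M$, and fix an object $X$. The relation ``$M$ simpler than $N$'' supplies a morphism $i_{M,N}(X):M(X)\to N(X)$, characterised as the left Kan extension of $i_N(X):X\to N(X)$ along $i_M(X)$, together with an isomorphism $i_{M,N}(X)\circ i_M(X)\simeq i_N(X)$; symmetrically ``$N$ simpler than $M$'' supplies $i_{N,M}(X):N(X)\to M(X)$, the left Kan extension of $i_M(X)$ along $i_N(X)$, with $i_{N,M}(X)\circ i_N(X)\simeq i_M(X)$. The first step is to identify these with cocompleteness extensions: by Proposition~\ref{propsimpler} each of $M,N$ is Morita simpler than the other, so the $N$-cocomplete object $N(X)$ is also $M$-cocomplete and the $M$-cocomplete object $M(X)$ is also $N$-cocomplete; by uniqueness of left Kan extensions, $i_{M,N}(X)$ coincides with the $M$-cocompleteness extension $\overline{i_N(X)}$ and $i_{N,M}(X)$ with the $N$-cocompleteness extension $\overline{i_M(X)}$. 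Then Proposition~\ref{propositionkzdoctrine} makes $i_{N,M}(X)$ an $N$-linear map and $i_{M,N}(X)$ an $M$-linear map; transporting along Morita-simplicity, $i_{N,M}(X)$ is moreover $M$-linear and $i_{M,N}(X)$ moreover $N$-linear, as maps between objects that are simultaneously $M$- and $N$-cocomplete.

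\emph{The key computation and the obstacle.} Apply Lemma~\ref{lemmadefplinear} to the $M$-linear morphism $i_{N,M}(X):N(X)\to M(X)$ and to $i_N(X):X\to N(X)$: the $M$-extension of $i_{N,M}(X)\circ i_N(X)\simeq i_M(X)$ along $i_M(X)$ is canonically isomorphic to $i_{N,M}(X)$ post-composed with the $M$-extension of $i_N(X)$ along $i_M(X)$, that is, to $i_{N,M}(X)\circ i_{M,N}(X)$. Since the $M$-extension of $i_M(X):X\to M(X)$ along $i_M(X)$ is $\id_{M(X)}$ (being $M(\id_X)$ up to the canonical isomorphism of the KZ-machine axioms), this yields $i_{N,M}(X)\circ i_{M,N}(X)\simeq\id_{M(X)}$; the symmetric argument, using the $N$-linearity of $i_{M,N}(X)$, yields $i_{M,N}(X)\circ i_{N,M}(X)\simeq\id_{N(X)}$. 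Hence $i_{M,N}(X)$ is an equivalence with quasi-inverse $i_{N,M}(X)$, and together with $i_{M,N}(X)\circ i_M(X)\simeq i_N(X)$ this exhibits $M$ and $N$ as equivalent KZ-machines. The categorical content here is a short diagram chase; the point I expect to demand the most care is the bookkeeping across the three layers of ``cocomplete/linear'' — verifying that a morphism which is $N$-linear between $N$-cocomplete objects is genuinely $M$-linear between those same objects seen as $M$-cocomplete, and that all the left Kan extensions in play agree up to canonical isomorphism.
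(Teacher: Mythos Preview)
Your proof is correct and follows essentially the same route as the paper's. The paper also reduces to showing $(2)\Rightarrow(3)$ (assuming both $(1)$ and $(2)$), obtains the comparison maps $i_{M,N}(X)$ and $i_{N,M}(X)$, observes they are both $M$- and $N$-linear, and then uses that $i_{N,M}(X)\circ i_{M,N}(X)\circ i_M(X)\simeq i_M(X)$ forces $i_{N,M}(X)\circ i_{M,N}(X)\simeq\id_{M(X)}$; the only cosmetic difference is that the paper packages this last step as an invocation of Proposition~\ref{propfreealg} (the equivalence $\mathfrak{C}(X,Y)\simeq\mathfrak{Alg}_{\mathfrak C,M}(M(X),Y)$), whereas you unfold it through Lemma~\ref{lemmadefplinear}, and the paper leaves the Morita transport of linearity implicit where you spell it out.
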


\begin{proof}
First it is clear that (3) implies (1) (and also (2)). We know from Proposition \ref{proprecsimp} and from Proposition \ref{propsimpler} that (1) and (2) are equivalent. Finally, let us assume (2) and (1) and let us prove (3). Let $X$ be an object. By Proposition \ref{propfreealg}, the maps $i_{M,N}(X): M(X) \to N(X)$ and $i_{N,M}(X): N(X) \to M(X)$ are $M$-linear and $N$-linear. Since
\begin{align*}
    i_{N,M}(X) &\circ i_{M,N}(X) \circ i_M(X) \simeq i_M(X);
    \\
    i_{M,N}(X) &\circ i_{N,M}(X) \circ i_N(X) \simeq i_N(X) ; 
\end{align*}
then Proposition \ref{propfreealg} provides us with 2-isomorphisms
\begin{align*}
    i_{N,M}(X) &\circ i_{M,N}(X) \simeq \id_{M(X)};
    \\
    i_{M,N}(X) &\circ i_{N,M}(X) \simeq \id_{N(X)}.
\end{align*}
So $M$ and $N$ are equivalent.
\end{proof}


\section{KZ doctrines and KZ machines on the 2-categories of categories and lex categories}

In this section, we recall the KZ-doctrine $\mathcal{S}$ of sums and that of small presheaves $\Psh$ on the 2-category $\mathfrak{Cat}$ of categories and on the 2-category $\mathfrak{Lex}$ of lex categories.

See for instance \cite{GarnerLack} for more details.

\subsection{Lex KZ-machines and pointwise KZ-machines}

\begin{definition}
A KZ-machine $P$ on the 2-category $\mathfrak{Cat}$ or on the 2-category $\mathfrak{Lex}$ is called pointwise if for any morphism $f: X \to Y$ the left Kan extension $P(f)$ is pointwise and, in the case where $Y$ is $P$-cocomplete, the left Kan extension $\overline f$ is pointwise.
\end{definition}

Left Kan extensions in $\mathfrak{Lex}$ are often already left Kan extensions in $\mathfrak{Cat}$.

\begin{lemma}\label{lemmafromcattolex}
Let $p: \categ C \to \categ C'$ and $f : \categ C \to \categ D$ be two lex functors. Let us suppose that $f$ has a left Kan extension along $p$ in the 2-category $\mathfrak{Cat}$. If the functor $p_!(f)$ is lex, then it defines a left Kan extension of $f$ along $p$ in the 2-category $\mathfrak{Lex}$
\end{lemma}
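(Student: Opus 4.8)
The plan is to check directly that the universal property of the left Kan extension in $\mathfrak{Cat}$ restricts to the sub-2-category $\mathfrak{Lex}$. Write $g = p_!(f) : \categ C' \to \categ D$ for the left Kan extension in $\mathfrak{Cat}$, together with its structural 2-morphism $\alpha : f \to g \circ p$; by hypothesis $g$ is lex, so $(g,\alpha)$ is at least a candidate object of the comma-type category $\mathfrak{Lex}(\categ C', \categ D) \times_{\mathfrak{Lex}(\categ C, \categ D)} \mathfrak{Lex}(\categ C, \categ D)_{f/}$ appearing in Definition \ref{defKan}. The point is that this candidate is initial there.

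The key observation is that, for lex functors $h : \categ C' \to \categ D$, a 2-morphism $f \to h \circ p$ in $\mathfrak{Lex}$ is exactly the same thing as a 2-morphism $f \to h \circ p$ in $\mathfrak{Cat}$: the 2-morphisms of $\mathfrak{Lex}$ are, by definition, all natural transformations between lex functors, with no extra condition. Hence the hom-set $\mathfrak{Lex}(\categ C',\categ D)_{g\text{-maps}}$ sits inside the corresponding hom-set in $\mathfrak{Cat}$ as a full subcategory (the full subcategory spanned by the lex functors), and likewise for the comma categories over $f$. So, given any lex $h$ and any 2-morphism $\beta : f \to h \circ p$ in $\mathfrak{Lex}$, we may regard $\beta$ as a 2-morphism in $\mathfrak{Cat}$ and invoke the universal property of $(g,\alpha)$ there: it yields a unique 2-morphism $\gamma : g \to h$ in $\mathfrak{Cat}$ with $\beta = (\gamma \ast p) \circ \alpha$. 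Since $g$ and $h$ are both lex, $\gamma$ is automatically a 2-morphism in $\mathfrak{Lex}$, and its uniqueness in $\mathfrak{Cat}$ a fortiori gives uniqueness in $\mathfrak{Lex}$. This is precisely the assertion that $(g,\alpha)$ is the initial object of the category of Definition \ref{defKan} computed in $\mathfrak{Lex}$, i.e.\ that $g$ is the left Kan extension $p_!(f)$ in $\mathfrak{Lex}$.

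There is essentially no obstacle here: the whole argument is the formal remark that a full inclusion of 2-categories, reflecting the hom-categories fully, transports initial objects of the relevant comma category whenever the candidate already lies in the subcategory. The only thing to be careful about is the bookkeeping of the comma category in Definition \ref{defKan} — one must note that the forgetful functor from $\mathfrak{Lex}(\categ C',\categ D) \times_{\mathfrak{Lex}(\categ C,\categ D)} \mathfrak{Lex}(\categ C,\categ D)_{f/}$ to its $\mathfrak{Cat}$ analogue is fully faithful (immediate, since $\mathfrak{Lex} \to \mathfrak{Cat}$ is locally fully faithful) and that the object $(g,\alpha)$ lies in its image (this is exactly the hypothesis that $p_!(f)$ is lex). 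Initiality then descends along any fully faithful functor whose image contains the initial object.
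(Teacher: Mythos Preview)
Your argument is correct and is precisely the ``straightforward'' verification the paper has in mind: since $\mathfrak{Lex} \hookrightarrow \mathfrak{Cat}$ is locally fully faithful, the relevant comma category in $\mathfrak{Lex}$ sits fully faithfully in its $\mathfrak{Cat}$ analogue, and the hypothesis that $p_!(f)$ is lex ensures the initial object of the latter already lies in the former.
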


\begin{proof}
Straightforward.
\end{proof}

\begin{definition}
Let us consider a KZ machine $P$ on $\mathfrak{Cat}$. Then $P$ is called a lex machine in for any lex category $\categ C$, $P(\categ C)$ is lex. Moreover, $i_P(\categ C)$ is lex and for any lex functor $f : C \to D$, $P(f)$ is lex.
\end{definition}

\begin{definition}
Let $M$ be a KZ doctrine on $\mathfrak{Cat}$ that is also a lex machine. They, $M$ is called a lex doctrine if for any lex category $\categ C$, the canonical left Kan extension
$$
m_M(M(\categ C)) : MM(\categ C) \to M(\categ C)
$$
is lex.
\end{definition}

\begin{proposition}\label{proplexmachine}
A lex KZ-machine $P$ on $\mathfrak{Cat}$ induces canonically a KZ-machine on $\mathfrak{Lex}$ that we also denote $P$. Then, $P$-cocomplete objects in $\mathfrak{Lex}$ are in particular lex categories that are $P$-cocomplete in $\mathfrak{Cat}$. Moreover, a lex functor $f: \categ C \to \categ D$ between $P$-cocomplete objects in $\mathfrak{Lex}$ is $P$-linear in $\mathfrak{Lex}$ if and only if it is $P$-linear in $\mathfrak{Cat}$.
\end{proposition}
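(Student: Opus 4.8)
The plan is to show that $P$ restricts verbatim to a KZ-machine on $\mathfrak{Lex}$, and then to transfer both assertions back and forth between $\mathfrak{Lex}$ and $\mathfrak{Cat}$ through the adjoint description of $P$-cocompleteness (Proposition \ref{proppcomp}), exploiting that a $2$-cell of $\mathfrak{Lex}$ is nothing but a natural transformation between lex functors. First I would build the restricted machine: for a lex functor $f\colon \categ C \to \categ D$, the composite $i_P(\categ D)\circ f\colon \categ C \to P(\categ D)$ is lex (composite of lex functors, $P(\categ D)$ being lex because $P$ is a lex machine) and $i_P(\categ C)$ is lex, while its left Kan extension along $i_P(\categ C)$ computed in $\mathfrak{Cat}$ is $P(f)$, which is lex by the lex-machine hypothesis. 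Lemma \ref{lemmafromcattolex} then identifies $P(f)$ with the left Kan extension of $i_P(\categ D)\circ f$ along $i_P(\categ C)$ in $\mathfrak{Lex}$, with the same comparison $2$-isomorphism. The coherence isomorphisms $P(g\circ f)\simeq P(g)\circ P(f)$ and $P(\id)\simeq\id$ hold in $\mathfrak{Lex}$ because they hold in $\mathfrak{Cat}$ and involve the same underlying data, so $P$ is a KZ-machine on $\mathfrak{Lex}$.

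Next I would treat cocompleteness. If $\categ C$ is $P$-cocomplete in $\mathfrak{Lex}$, then by Proposition \ref{proppcomp} the functor $i_P(\categ C)$ admits a left adjoint $m_P(\categ C)\colon P(\categ C)\to\categ C$ in $\mathfrak{Lex}$ with invertible counit; forgetting the lex constraints, the unit, counit and the two triangle identities persist, so $i_P(\categ C)$ has a left adjoint in $\mathfrak{Cat}$ with invertible counit, and Proposition \ref{proppcomp} applied in $\mathfrak{Cat}$ shows that $\categ C$ is $P$-cocomplete there. Moreover, by uniqueness of adjoints the lex functor $m_P(\categ C)$ produced in $\mathfrak{Lex}$ has the same underlying functor as the left adjoint $m_P(\categ C)$ computed in $\mathfrak{Cat}$; combined with the identification $\overline f\simeq m_P(\categ D)\circ P(f)$ of Lemma \ref{lemmaadjKan}, this shows that the left Kan extensions governing the $P$-cocomplete structure are computed by the same underlying functors whether one works in $\mathfrak{Lex}$ or in $\mathfrak{Cat}$.

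Finally, for linearity, let $f\colon \categ C\to\categ D$ be a lex functor between objects that are $P$-cocomplete in $\mathfrak{Lex}$, hence also in $\mathfrak{Cat}$ by the previous step. The comparison $2$-morphism $\overline f\to f\circ m_P(\categ C)$ only involves the functors $P(f)$, $m_P(\categ C)$, $m_P(\categ D)$ and $f$, all with the same underlying functors in both $2$-categories, and the $2$-cell itself is the same natural transformation; hence it is invertible in $\mathfrak{Lex}$ if and only if it is invertible in $\mathfrak{Cat}$, which is the claimed equivalence. The only step that is not pure bookkeeping is the identification of the $\mathfrak{Lex}$-Kan extensions with the $\mathfrak{Cat}$-ones, and this is exactly what the lex-machine hypothesis together with Lemma \ref{lemmafromcattolex} and uniqueness of adjoints is designed to supply; I do not expect any genuine obstacle beyond that.
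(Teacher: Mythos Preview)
Your proposal is correct and follows the same route as the paper, which simply records the result as a direct consequence of Lemma~\ref{lemmafromcattolex}; you have unpacked that one-line proof, additionally invoking Proposition~\ref{proppcomp} to handle the cocompleteness and linearity statements via the adjoint characterisation, which is exactly the intended mechanism.
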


\begin{proof}
This is a direct consequence of Lemma \ref{lemmafromcattolex}.
\end{proof}

\begin{proposition}
Let $M$ be a KZ doctrine on $\mathfrak{Cat}$ that is also a lex machine. If $M$ is a lex doctrine, then the induced KZ-machine $M$ on $\mathfrak{Lex}$ is a KZ-doctrine.
\end{proposition}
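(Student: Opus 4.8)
The plan is to check, directly, the two defining conditions of a KZ-doctrine for the induced KZ-machine $M$ on $\mathfrak{Lex}$: first, that for every lex category $\categ C$ the object $M(\categ C)$ is $M$-cocomplete in $\mathfrak{Lex}$; and second, that for every lex functor $f : \categ C \to \categ D$ the functor $M(f)$ is $M$-linear in $\mathfrak{Lex}$. Both will be obtained by transporting the corresponding facts from $\mathfrak{Cat}$, where $M$ is already a KZ-doctrine, and the only new input is the "lex doctrine" hypothesis.

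For the first condition I would use Proposition \ref{proppcomp}, which characterises $P$-cocompleteness of an object $X$ in an arbitrary 2-category as the existence of a left adjoint to $i_P(X)$ with invertible counit. Since $M$ is a KZ-doctrine on $\mathfrak{Cat}$, the object $M(\categ C)$ is $M$-cocomplete there, so $i_M(M(\categ C)) : M(\categ C) \to MM(\categ C)$ has, in $\mathfrak{Cat}$, a left adjoint $m_M(M(\categ C)) : MM(\categ C) \to M(\categ C)$ whose counit is invertible. Now $i_M(M(\categ C))$ is a lex functor because $M$ is a lex machine ($i_M$ is lex and $M(\categ C)$ is lex), and $m_M(M(\categ C))$ is a lex functor precisely by the hypothesis that $M$ is a lex doctrine. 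Because $\mathfrak{Lex}$ has the same 2-morphisms as $\mathfrak{Cat}$, the unit, the (invertible) counit and the triangle identities all make sense and hold verbatim in $\mathfrak{Lex}$; hence this is an adjunction in $\mathfrak{Lex}$ with invertible counit, and Proposition \ref{proppcomp} applied inside $\mathfrak{Lex}$ yields that $M(\categ C)$ is $M$-cocomplete in $\mathfrak{Lex}$.

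For the second condition I would invoke the last assertion of Proposition \ref{proplexmachine}: a lex functor between $P$-cocomplete objects of $\mathfrak{Lex}$ is $P$-linear in $\mathfrak{Lex}$ if and only if it is $P$-linear in $\mathfrak{Cat}$. The functor $M(f) : M(\categ C) \to M(\categ D)$ is lex since $M$ is a lex machine; its source and target are $M$-cocomplete in $\mathfrak{Lex}$ by the first condition just established; and it is $M$-linear in $\mathfrak{Cat}$ because $M$ is a KZ-doctrine there. Therefore $M(f)$ is $M$-linear in $\mathfrak{Lex}$, and the two conditions together say exactly that the induced $M$ on $\mathfrak{Lex}$ is a KZ-doctrine.

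The main point to be careful about is the bookkeeping of layers: the argument for the first condition is really just the observation that an adjunction with invertible counit in $\mathfrak{Cat}$ between functors that happen to be lex automatically lives in $\mathfrak{Lex}$, and this is where — and the only place where — the "lex doctrine" hypothesis on $m_M(M(\categ C))$ is used; the second condition then formally requires the first, since Proposition \ref{proplexmachine} only speaks about functors between objects that are already known to be $P$-cocomplete in $\mathfrak{Lex}$. Apart from this ordering, the proof is routine.
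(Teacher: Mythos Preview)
Your proof is correct and follows essentially the same route as the paper: you transport the adjunction $m_M(M(\categ C)) \dashv i_M(M(\categ C))$ from $\mathfrak{Cat}$ to $\mathfrak{Lex}$ using the lex doctrine hypothesis (invoking Proposition \ref{proppcomp}), and then conclude $M$-linearity of $M(f)$ in $\mathfrak{Lex}$ via Proposition \ref{proplexmachine}. Your additional remarks on the ordering of the two conditions are accurate and make explicit a dependency the paper leaves implicit.
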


\begin{proof}
For every lex category $\categ C$, the functor
$$
m_M(M(\categ C)) : MM(\categ C) \to M(\categ C)
$$
is both lex and left adjoint to $i_M(M(\categ C))$ in $\mathfrak{Cat}$ so that the counit is invertible. So it left adjoint to $i_M(M(\categ C))$ in $\mathfrak{Lex}$ so that the counit is invertible, and so $M(\categ C)$ is $M$-cocomplete in $\mathfrak{Lex}$.

For every lex functor $f : \categ C \to \categ D$, $M(f)$ is lex and it is $M$-linear in $\mathfrak{Cat}$. So by Proposition \ref{proplexmachine}, it is $M$-linear in $\mathfrak{Lex}$.
\end{proof}

\subsection{The KZ-doctrine of coproducts}

\subsubsection{In categories}

\begin{definition}\label{definensemble}
Let $\categ C$ be a category.
The category $\mathcal{S}(\categ C)$ of $\categ C$-sums is
the category obtained from $\categ C$ by freely adding small coproducts. More concretely,
\begin{itemize}
    \itemt a $\categ C$-sum $X$ is the data of a set $\mathcal{I}_X$ (called the set of indices of $X$) and of a functor
    $$
    i \in \mathcal{I}_X \mapsto X(i) \in \categ C ;
    $$
    \itemt a morphism $f$ of a $\categ C$-sums from $X$ to $Y$ is the data of a function $f_{\mathcal{I}} : \mathcal{I}_X \to \mathcal{I}_Y$ and a natural transformation $X \to Y\circ f_{\mathcal{I}}$ given by maps
    $$
    f(i) : X(i) \to Y(f_{\mathcal{I}}(i)),\  i \in \mathcal{I}_X.
    $$
\end{itemize}
\end{definition}

The construction $\categ C \in \mathfrak{Cat} \mapsto \mathcal{S}(\categ C)$ together with the canonical fully faithful embedding $\categ C \to \mathcal S (\categ C)$ makes $\mathcal{S}$ a pointwise KZ-doctrine.

Let $\categ C$ be a category. The three following assertions are equivalent
\begin{enumerate}
    \item the functor $\categ C \to \mathcal{S}(\categ C)$ has a left adjoint;
    \item $\categ C$ is $\mathcal{S}$-cocomplete in $\mathfrak{Cat}$;
    \item $\categ C$ admits small coproducts.
\end{enumerate}

Moreover, a functor $f: \categ C \to \categ D$ between categories with small coproducts is $\mathcal{S}$-linear if and only if it preserves these coproducts.

\subsubsection{In lex categories}

\begin{lemma}
The KZ-doctrine $\mathcal{S}$ on $\mathfrak{Cat}$ is a lex doctrine.
\end{lemma}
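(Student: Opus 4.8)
To show that $\mathcal{S}$ is a lex doctrine, I need to verify three things in order: (1) $\mathcal{S}$ sends lex categories to lex categories, with $i_{\mathcal{S}}(\categ C)$ lex and $\mathcal{S}(f)$ lex for lex $f$ (so $\mathcal{S}$ is a lex machine); and (2) for any lex category $\categ C$, the canonical left Kan extension $m_{\mathcal{S}}(\mathcal{S}(\categ C)) : \mathcal{S}\mathcal{S}(\categ C) \to \mathcal{S}(\categ C)$ is lex. Since the excerpt already records that $\mathcal{S}$ is a pointwise KZ-doctrine on $\mathfrak{Cat}$, the only real content is the finite-limit bookkeeping.

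\begin{proof}
We must check that $\mathcal{S}$ is a lex machine and that the multiplication $m_{\mathcal{S}}(\mathcal{S}(\categ C))$ is lex for every lex category $\categ C$.

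First, suppose $\categ C$ has finite limits. A finite limit of a diagram of $\categ C$-sums $X_1,\dots$ is computed as follows: the indexing set of the limit is the corresponding finite limit of the indexing sets $\mathcal{I}_{X_j}$ in $\Set$ (which exists), and over a compatible tuple of indices $(i_j)_j$ the value is the finite limit in $\categ C$ of the corresponding diagram of objects $X_j(i_j)$ (which exists since $\categ C$ is lex). One checks directly that this defines a limit cone in $\mathcal{S}(\categ C)$: a cone from a $\categ C$-sum $Z$ consists of an index function into each $\mathcal{I}_{X_j}$, compatible, hence an index function into the limiting set, together with componentwise maps in $\categ C$ assembling, by the universal property in $\categ C$, into the required map. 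Terminal object: the one-element indexing set with value the terminal object of $\categ C$. Hence $\mathcal{S}(\categ C)$ is lex. The embedding $i_{\mathcal{S}}(\categ C) : \categ C \to \mathcal{S}(\categ C)$ sends $c$ to the one-element sum at $c$; since finite limits in $\mathcal{S}(\categ C)$ over one-element index sets are just finite limits in $\categ C$, it preserves finite limits. If $f : \categ C \to \categ D$ is lex, then $\mathcal{S}(f)$ acts as the identity on indexing sets and as $f$ on values, so it preserves the limits just described; thus $\mathcal{S}(f)$ is lex. This shows $\mathcal{S}$ is a lex machine.

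It remains to treat $m := m_{\mathcal{S}}(\mathcal{S}(\categ C)) : \mathcal{S}\mathcal{S}(\categ C) \to \mathcal{S}(\categ C)$. Concretely an object of $\mathcal{S}\mathcal{S}(\categ C)$ is a set $J$ together with a family of $\categ C$-sums $X_j$, $j \in J$; the functor $m$ sends it to the "flattened" $\categ C$-sum whose indexing set is $\coprod_{j\in J}\mathcal{I}_{X_j}$ and whose value at $(j,i)$ is $X_j(i)$. Using the explicit description of finite limits in $\mathcal{S}(-)$ from the previous paragraph, both for $\mathcal{S}(\categ C)$ and for $\mathcal{S}\mathcal{S}(\categ C)$, the claim that $m$ preserves a finite limit reduces to two facts: that the functor $\Set \to \Set$, $(J, (\mathcal{I}_{X_j})_j) \mapsto \coprod_j \mathcal{I}_{X_j}$ — i.e. the Grothendieck-construction/flattening functor on indexed sets — preserves finite limits, which holds because in $\Set$ finite limits commute with the relevant coproducts and a compatible tuple of elements of flattened sets is the same as a tuple of elements over a compatible tuple of $j$'s; and that, fibrewise over such a tuple, the value is literally the same finite limit in $\categ C$ computed two ways. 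Hence $m$ is lex, and $\mathcal{S}$ is a lex doctrine.
\end{proof}

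**Main obstacle.** The only delicate point is unwinding the double indexing: one must check carefully that a compatible family of indices in the flattened sets $\coprod_j \mathcal{I}_{X_j}$ for a finite limit diagram corresponds bijectively to a compatible family of $j$'s together with, over that, a compatible family in the fibres — essentially that "finite limits commute with set-indexed coproducts along the diagonal" in $\Set$. Everything else is routine naturality and the universal property of limits in $\categ C$.
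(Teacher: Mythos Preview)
Your proof is correct and follows essentially the same approach as the paper's: both compute finite limits in $\mathcal{S}(\categ C)$ componentwise (indexing set given by the limit in $\Set$, value at a compatible tuple given by the limit in $\categ C$), and both deduce lexness of $i_{\mathcal S}$, $\mathcal S(f)$, and $m_{\mathcal S}(\mathcal S(\categ C))$ from this description. The only difference is emphasis: the paper works with terminal objects and pullbacks and then declares the remaining checks ``straightforward'', whereas you spell out the flattening argument for $m_{\mathcal S}(\mathcal S(\categ C))$ in more detail; the content is the same.
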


\begin{proof}
Let $\categ C$ be a lex category. Let us show that $\mathcal{S}(\categ C)$ is lex.
The final $\categ C$-sum is given by the final set and the final element of $\categ C$. Moreover, for any cospan diagram of $\categ C$-sums $X \to Z \leftarrow Y$, its pullback is the $\categ C$-sum $X \times_Z Y$ whose set of indices is $\mathcal{I}_X\times_{\mathcal{I}_Z} \mathcal{I}_Y$ and so that for any element $(i,j)$ in this set
$$
(X \times_Z Y)(i,j) = X(i) \times_{Z(k)} Y(j)
$$
where $k$ denotes the common image of $i$ and $j$ in $\mathcal{I}_Z$.

It is then straightforward to check that the functors $\categ C \to \mathcal{S}(\categ C)$ and $\mathcal{S}\mathcal{S}(\categ C) \to \mathcal{S}(\categ C)$ are lex and that $\mathcal{S}(f)$ is lex for any lex functor $f$.
\end{proof}

\begin{lemma}
Let $\categ C$ be a lex category that has small coproducts. Then $\categ C$ is $\mathcal S$-cocomplete in $\mathfrak{Lex}$ if and only if the two following conditions are satisfied:
\begin{enumerate}
    \item coproducts are universal in the sense that the map
    $$
    \coprod_i (X_i \times_Z \to Y) \to (\coprod_i X_i) \times_Z Y
    $$
    is an isomorphism for any cospan diagram $\coprod_i X_i \to Z \leftarrow Y$;
    \item coproducts are disjoint in the sense that the map
    $$
    \emptyset \to X \times_{X \sqcup Y} Y
    $$
    is an isomorphism for any objects $X,Y$.
\end{enumerate}
\end{lemma}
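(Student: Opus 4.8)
The plan is to characterise $\mathcal{S}$-cocompleteness in $\mathfrak{Lex}$ via Proposition~\ref{proppcomp}: since $\categ C$ already has small coproducts, the functor $i_{\mathcal S}(\categ C) : \categ C \to \mathcal S(\categ C)$ has a left adjoint $L$ in $\mathfrak{Cat}$ sending a $\categ C$-sum $X$ to $\coprod_{i \in \mathcal I_X} X(i)$, with invertible counit (this is recorded in the previous subsubsection). By Lemma~\ref{lemmafromcattolex} and Proposition~\ref{proplexmachine}, $\categ C$ is $\mathcal S$-cocomplete in $\mathfrak{Lex}$ if and only if this left adjoint $L$ is a \emph{lex} functor, i.e.\ preserves the terminal object and pullbacks. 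So the whole statement reduces to: $L$ is lex $\iff$ coproducts in $\categ C$ are universal and disjoint.

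For the easy direction, preservation of the terminal object is automatic: the terminal $\categ C$-sum has a one-point index set with value the terminal object of $\categ C$, and $L$ of it is just the terminal object. So $L$ is lex iff it preserves pullbacks. Now unwind: given a cospan of $\categ C$-sums $X \to Z \leftarrow Y$, the formula for pullbacks in $\mathcal S(\categ C)$ recalled in the previous lemma gives $(X\times_Z Y)(i,j) = X(i)\times_{Z(k)} Y(j)$ over the index set $\mathcal I_X \times_{\mathcal I_Z} \mathcal I_Y$; applying $L$ yields $\coprod_{(i,j)} X(i)\times_{Z(k)} Y(j)$, whereas $L(X)\times_{L(Z)} L(Y) = \bigl(\coprod_i X(i)\bigr)\times_{\coprod_k Z(k)}\bigl(\coprod_j Y(j)\bigr)$, and the comparison map $L(X\times_Z Y)\to L(X)\times_{L(Z)} L(Y)$ is precisely the canonical one. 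So $L$ is lex iff this comparison is an isomorphism for all cospans of $\categ C$-sums.

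The key step is then to show this last condition is equivalent to the conjunction of universality and disjointness of coproducts in $\categ C$. One implication is bookkeeping: specialising the cospan (take $Z$ a one-point sum to get universality with a fixed base; take $X, Y$ one-point sums mapping into $X\sqcup Y$ to get disjointness by computing the fibre over two distinct indices, where $X(i)\times_{Z(k)}Y(j)=\emptyset$ because the pullback $\mathcal I_X\times_{\mathcal I_Z}\mathcal I_Y$ of index sets is empty in that component) shows necessity. For the converse, one writes a general cospan's pullback comparison as an iterated application of universality (distribute the coproduct $\coprod_i X(i)$ over the pullback), using disjointness to identify the index set of the resulting coproduct with $\mathcal I_X\times_{\mathcal I_Z}\mathcal I_Y$ and to kill the cross terms; this is where one must be slightly careful about handling the double coproduct over both $\mathcal I_X$ and $\mathcal I_Y$ simultaneously and about the case of non-injective maps of index sets. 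I expect this reduction — carefully showing that universality plus disjointness suffices to compute an arbitrary pullback of sums, rather than just the two special cases — to be the main obstacle; everything else is unwinding definitions already set up in the excerpt.
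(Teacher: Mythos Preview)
Your proposal is correct and is precisely the unwinding that the paper leaves implicit: the paper's proof is literally the single word ``Straightforward.'' Your reduction via Proposition~\ref{proppcomp} to the left adjoint $L:\mathcal S(\categ C)\to\categ C$ being lex, followed by the computation of pullbacks in $\mathcal S(\categ C)$ from the preceding lemma, is exactly the intended argument; the one point you flag as needing care (that universality plus the cross-term condition $\emptyset\simeq X\times_{X\sqcup Y}Y$ also forces coproduct injections to be monic, via $X\simeq X\times_{X\sqcup Y}(X\sqcup Y)\simeq (X\times_{X\sqcup Y}X)\sqcup\emptyset$) is the only genuine subtlety, and you have correctly located it.
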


\begin{proof}
Straightforward.
\end{proof}

\subsection{The KZ-doctrine of small presheaves}

\subsubsection{Small presheaves of categories}

Let $\categ C$ be a category and let $F$ be a presheaf on $\categ C$, that is a functor from $\categ C^\op$ to $\Set$. The following assertions are equivalent:
\begin{enumerate}
    \item $F$ is the left Kan extension of a functor $g : \categ C'^\op \to \Set$ from a small category $\categ C'^\op$ along a functor $p : \categ C'^\op \to \categ C^\op$;
    \item $F$ is the colimit in the large category $\Fun{\categ C}{\Set}$ with small colimits of a small diagram with values in $\categ C$ (seen as a full subcategory of $\Fun{\categ C}{\Set}$ through the Yoneda embedding);
    \item there exists a small category $\categ D$ and a cofinal functor
    $$
    \categ D \to \categ C/F .
    $$
\end{enumerate}

\begin{definition}
A presheaf $F$ on a category $\categ C$ that satisfies the above assertions is called a small presheaf.
Let $\Psh(\categ C)$ be the (locally small) category of small presheaves.
\end{definition}

\begin{remark}
If $\categ C$ is not small but just locally small, then the category of all presheaves is large. However, the full subcategory of small presheaves is locally small.
\end{remark}

\begin{proposition}\cite[Theorem 5.35]{kelly}
The construction $\categ C \mapsto \Psh(\categ C)$ together with the Yoneda fully faithful embedding
$$
i_{\Psh}(\categ C) :\categ C \mapsto \Psh(\categ C)
$$
form a pointwise KZ-doctrine on $\mathfrak{Cat}$ denoted $\Psh$. Moreover the $\Psh$-cocomplete categories are the cocomplete categories and the $\Psh$-linear morphisms are the functors that preserve colimits.
\end{proposition}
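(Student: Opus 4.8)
The plan is to reduce the statement to the universal property of $\Psh(\categ C)$ as the free cocompletion of $\categ C$ under small colimits, which is exactly \cite[Theorem 5.35]{kelly}, and then to apply the formal machinery of the previous subsections. The input I will use from \cite{kelly} is the following: for every category $\categ C$, every cocomplete category $\categ E$ and every functor $g : \categ C \to \categ E$, the pointwise left Kan extension $\mathrm{Lan}_{i_\Psh(\categ C)}(g) : \Psh(\categ C) \to \categ E$ along the Yoneda embedding exists, its value on a small presheaf $F$ being the colimit of $g$ over the comma category $\categ C/F$ (which is legitimate precisely because smallness of $F$ provides a small cofinal subcategory of $\categ C/F$); this extension preserves small colimits; the unit $g \to \mathrm{Lan}_{i_\Psh(\categ C)}(g) \circ i_\Psh(\categ C)$ is invertible (the comma category $\categ C/i_\Psh(\categ C)(c)$ has the terminal object $\mathrm{id}_c$); and, conversely, every small-colimit-preserving functor $\Psh(\categ C) \to \categ E$ is canonically the left Kan extension of its restriction along Yoneda.

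First I would verify that $(\Psh, i_\Psh)$ is a pointwise KZ-machine. For a functor $f : \categ C \to \categ D$ one sets $\Psh(f) := \mathrm{Lan}_{i_\Psh(\categ C)}(i_\Psh(\categ D) \circ f)$, which exists and is pointwise since $\Psh(\categ D)$ is cocomplete, and the required isomorphism $i_\Psh(\categ D) \circ f \xrightarrow{\sim} \Psh(f) \circ i_\Psh(\categ C)$ is the invertibility of the unit. Pseudofunctoriality is then formal: $\Psh(f)$ preserves small colimits, hence $\Psh(g) \circ \Psh(f)$ is again a pointwise left Kan extension of $\Psh(g) \circ i_\Psh(\categ D) \circ f \simeq i_\Psh(\categ E) \circ g \circ f$ along $i_\Psh(\categ C)$, so it is canonically isomorphic to $\Psh(g \circ f)$; and $\Psh(\mathrm{id}_{\categ C})$ is the left Kan extension of $i_\Psh(\categ C)$ along itself, which is $\mathrm{id}_{\Psh(\categ C)}$.

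The main work is the identification of the algebras. If $\categ C$ is $\Psh$-cocomplete, then by Proposition~\ref{proppcomp} it is a reflective subcategory of the cocomplete category $\Psh(\categ C)$ with invertible counit, hence cocomplete; conversely, if $\categ C$ is cocomplete, then the identity of $\categ C$ has a pointwise cocontinuous left Kan extension $\mathrm{colim} : \Psh(\categ C) \to \categ C$ along $i_\Psh(\categ C)$ with invertible unit, and since $\mathrm{id}_{\Psh(\categ C)}$ is the left Kan extension of $i_\Psh(\categ C)$ along itself, Lemma~\ref{lemmaadjfromkan} makes $\mathrm{colim}$ left adjoint to $i_\Psh(\categ C)$ with invertible counit, so $\categ C$ is $\Psh$-cocomplete by Proposition~\ref{proppcomp}. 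For the morphisms: a functor $f : \categ C \to \categ D$ between cocomplete categories is $\Psh$-linear iff (Lemma~\ref{lemmadefplinear}) the map $\overline{f \circ g} \to f \circ \overline{g}$ is invertible for all $g : \categ U \to \categ C$; if $f$ is cocontinuous this holds because $f$ commutes with the comma-category colimits computing $\overline{g}$, while conversely, testing $\Psh$-linearity on a diagram $\categ I \to \categ C$ out of a small category (regarded through the universal property as a colimit of representables in $\Psh(\categ I)$) shows $f$ preserves small colimits. Finally, $\Psh(\categ C)$ is $\Psh$-cocomplete because it is cocomplete, and $\Psh(f)$ is $\Psh$-linear because it is cocontinuous, so $\Psh$ is a KZ-doctrine.

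The one genuinely substantial ingredient is the universal property of $\Psh(\categ C)$ itself — specifically the existence and pointwise description of left Kan extensions along the Yoneda embedding into an arbitrary cocomplete category, which is exactly where the smallness hypothesis on presheaves enters, and the reason for working with $\Psh(\categ C)$ rather than the merely large full presheaf category; this is what is imported from \cite{kelly}. Everything downstream is a formal unwinding of the lemmas of the first section.
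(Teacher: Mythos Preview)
The paper does not actually prove this proposition: it is stated with a bare citation to \cite[Theorem 5.35]{kelly} and no argument is given. Your proposal is a correct and detailed unpacking of precisely that citation, recast through the formal machinery of Section~1 (Proposition~\ref{proppcomp}, Lemma~\ref{lemmaadjfromkan}, Lemma~\ref{lemmadefplinear}); there is nothing in the paper to compare it against beyond the reference itself.

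One small remark on your converse for linear morphisms: the argument via the terminal presheaf on a small index category $\categ I$ is correct, but you should make explicit that the terminal presheaf is indeed small (it is the colimit of the Yoneda embedding over $\categ I$ itself, which is small), since this is exactly the point where the restriction to small presheaves matters.
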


\begin{corollary}
Let $X$ be an object of $\categ C$. Then the functor
$$
F \in \Psh(\categ C) \mapsto F(X) = \hom_{\Psh(\categ C)}(X, F)
$$
preserves colimits.
\end{corollary}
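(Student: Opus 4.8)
The plan is to reduce the statement to the elementary fact that colimits of set-valued functors are computed objectwise. First, by the Yoneda lemma there is a natural isomorphism $F(X) \simeq \hom_{\Psh(\categ C)}(i_{\Psh}(\categ C)(X), F)$, so it is equivalent to show that the evaluation functor $\mathrm{ev}_X \colon \Psh(\categ C) \to \Set$, $F \mapsto F(X)$, preserves colimits. Note that $\Set$ is cocomplete, hence $\Psh$-cocomplete, and by the preceding proposition $\Psh(\categ C)$ is cocomplete as well, so there is no ambiguity in the phrase ``preserves colimits''.

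Second, I would invoke characterisation (2) of small presheaves to see that the fully faithful inclusion $\Psh(\categ C) \hookrightarrow \Fun{\categ C^\op}{\Set}$ preserves and reflects small colimits. Indeed, a small presheaf is by (2) a small colimit, formed in the large category $\Fun{\categ C^\op}{\Set}$, of representables; and a small colimit in $\Fun{\categ C^\op}{\Set}$ of presheaves of this form is again such a small colimit of representables (one reindexes over the Grothendieck construction of the diagram, which stays small), hence again a small presheaf. Therefore $\Psh(\categ C)$ is closed under small colimits in $\Fun{\categ C^\op}{\Set}$, and being full and faithful the inclusion both preserves and reflects them; in particular colimits of small presheaves are computed as in $\Fun{\categ C^\op}{\Set}$.

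Third, in $\Fun{\categ C^\op}{\Set}$ colimits are computed objectwise, so for every object $X$ the functor $\mathrm{ev}_X \colon \Fun{\categ C^\op}{\Set} \to \Set$ preserves all small colimits. Composing with the inclusion of the previous step, $\mathrm{ev}_X \colon \Psh(\categ C) \to \Set$ preserves all small colimits, which is what was wanted.

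There is no genuine obstacle; the only point deserving a word of care is the claim that the inclusion of small presheaves into all presheaves preserves and reflects small colimits, which is exactly where characterisation (2) enters. One can alternatively argue within the formalism of this paper: writing $h^X = \hom_{\categ C}(X, -) \colon \categ C \to \Set$, the left Kan extension $\overline{h^X}$ of $h^X$ along $i_{\Psh}(\categ C)$ is $\Psh$-linear by Proposition \ref{propositionkzdoctrine}, hence colimit preserving, and pointwiseness of $\Psh$ together with the co-Yoneda formula identifies $\overline{h^X}$ with $\mathrm{ev}_X$ --- but this last identification again rests on the fact that the canonical colimit of representables recovering $F$ is computed objectwise, so it is not really a shorter route.
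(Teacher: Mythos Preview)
Your main argument is correct. The paper, however, takes exactly the route you sketch in your closing paragraph and then set aside: it records in one line that $F \mapsto F(X)$ \emph{is} the pointwise left Kan extension of $h^X = \hom_{\categ C}(X,-)$ along the Yoneda embedding, and then relies (implicitly) on the KZ-doctrine formalism already in place --- since $\Set$ is $\Psh$-cocomplete, $\overline{h^X}$ is $\Psh$-linear by Proposition~\ref{propositionkzdoctrine}, hence colimit-preserving.

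Your reason for dismissing that route is not quite right. The identification $\overline{h^X}(F) \simeq F(X)$ does \emph{not} rest on knowing that colimits of presheaves are computed objectwise. The pointwise Kan extension formula gives $\overline{h^X}(F) = \colim_{(c,\alpha)\in \categ C/F}\hom_{\categ C}(X,c)$, a colimit taken in $\Set$, and the co-Yoneda lemma identifies this colimit with $F(X)$ directly from the ordinary Yoneda lemma --- no detour through colimits in $\Fun{\categ C^\op}{\Set}$ is needed. So the paper's argument is genuinely shorter and stays entirely within the abstraction it has just built, whereas your proof steps outside to the ambient functor category and establishes the auxiliary fact that $\Psh(\categ C)$ is closed there under small colimits. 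Both arguments are sound; the paper's is the one that cashes in on the machinery of Section~1.
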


\begin{proof}
It suffices to notice that it is the pointwise left Kan extension of the Yoneda functor
$$
\hom_{\categ C}(X, -): \categ C \to \Set.
$$
\end{proof}

\subsubsection{Small presheaves of lex categories}

\begin{proposition}\cite[Proposition 4.3 and Remark 6.6]{DayLack}
Let $\categ C$ be a category and let $f$ be a functor.
\begin{enumerate}
    \item If $\categ C$ is lex, then so is $\Psh(C)$. Moreover, the functors
    \begin{align*}
    i_{\Psh}(\categ C) &: \categ C \to \Psh(\categ C);
    \\
    m_{\Psh}(\Psh(\categ C)) &: \Psh(\Psh(\categ C)) \to \Psh(\categ C)
    \end{align*}
    are lex.
    \item If $f$ is lex, then so is $\Psh(f)$.
\end{enumerate}
In other words, the KZ-doctrine $\Psh$ on $\mathfrak{Cat}$ is a lex doctrine.
\end{proposition}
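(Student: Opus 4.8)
My plan is to split the statement into four claims, for $\categ C$ a lex category and $f\colon\categ C\to\categ D$ a lex functor: (a) $\Psh(\categ C)$ is lex; (b) $i_{\Psh}(\categ C)$ is lex; (c) $\Psh(f)$ is lex; (d) $m_{\Psh}(\Psh(\categ C))$ is lex. Items (a)--(c) amount to saying that $\Psh$ is a lex machine on $\mathfrak{Cat}$, and, combined with (d) and the already recalled fact that $\Psh$ is a KZ-doctrine, they say that $\Psh$ is a lex doctrine. The one external tool I would invoke is a classical form of Diaconescu's theorem: if $\categ A$ is a \emph{small} lex category and $G\colon\categ A\to\categ E$ is a lex functor into a cocomplete category with finite limits in which colimits are stable under pullback, then the colimit-preserving extension of $G$ along $i_{\Psh}(\categ A)\colon\categ A\to\Psh(\categ A)=\Fun{\categ A^{\op}}{\Set}$ is again lex (see \cite{kelly}, and \cite{DayLack} for the statement as a whole).

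The technical backbone I would set up first is an approximation lemma drawn from the first of the characterisations of small presheaves recalled above: every small presheaf $F$ on $\categ C$ is isomorphic to $\mathrm{Lan}_{j^{\op}}(F\circ j^{\op})$ for the inclusion $j\colon\categ A\hookrightarrow\categ C$ of a small full subcategory — take $\categ A$ to be the full subcategory of $\categ C$ on the objects hit by the functor $p$ witnessing smallness, and use that left Kan extension along the fully faithful $j^{\op}$ is fully faithful — and, since $\categ C$ is lex, $\categ A$ can be enlarged, while staying small and full, until it is closed under finite limits in $\categ C$, hence lex with $j$ lex, without losing the identity $F\simeq\mathrm{Lan}_{j^{\op}}(F\circ j^{\op})$, because Kan extensions of presheaves are pointwise. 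For such a $j$ I would then show that $\mathrm{Lan}_{j^{\op}}\colon\Psh(\categ A)\to\Psh(\categ C)$ is lex and fully faithful: finite limits of presheaves are computed pointwise, and for each $X\in\categ C$ the functor $\Psi\mapsto(\mathrm{Lan}_{j^{\op}}\Psi)(X)$ is the colimit-preserving extension along $i_{\Psh}(\categ A)$ of $a\mapsto\categ C(X,ja)$, which is lex because $j$ and the representable $\categ C(X,-)$ are, so the Diaconescu tool applies. This step is exactly where the hypothesis on $\categ C$ is used, through lex-ness of the Yoneda embedding of $\categ C$.

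Parts (a) and (b) would then follow quickly: given a finite diagram $D\colon\categ J\to\Psh(\categ C)$, pick one small full lex $j\colon\categ A\hookrightarrow\categ C$ with every $D(i)$ in the essential image of $\mathrm{Lan}_{j^{\op}}$; by full faithfulness $D$ lifts to $E\colon\categ J\to\Psh(\categ A)$ with $D\simeq\mathrm{Lan}_{j^{\op}}\circ E$, and since $\mathrm{Lan}_{j^{\op}}$ is lex and fully faithful, $\mathrm{Lan}_{j^{\op}}(\lim E)$ is the limit of $D$, small and pointwise; thus $\Psh(\categ C)$ is lex with pointwise finite limits, and the Yoneda embedding $i_{\Psh}(\categ C)$, already lex into $\Fun{\categ C^{\op}}{\Set}$, stays lex into $\Psh(\categ C)$. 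For (c), I would use that the (pointwise) KZ-doctrine structure gives $\Psh(f)(F)\simeq\mathrm{Lan}_{f^{\op}}F$, so $\Psh(f)\circ\mathrm{Lan}_{j^{\op}}\simeq\mathrm{Lan}_{(f\circ j)^{\op}}\colon\Psh(\categ A)\to\Psh(\categ D)$, which is lex by the same Diaconescu argument applied to $a\mapsto\categ D(Y,fja)$ for $Y\in\categ D$; approximating a finite diagram in $\Psh(\categ C)$ as above then shows $\Psh(f)$ preserves finite limits.

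Part (d) is the crux, and I expect it to be the main obstacle. Here $m_{\Psh}(\Psh(\categ C))=\overline{\id_{\Psh(\categ C)}}$ is the colimit-preserving extension of the identity along $i_{\Psh}(\Psh(\categ C))$, that is $\Phi\mapsto\colim\big(\Psh(\categ C)/\Phi\to\Psh(\categ C)\big)$. Since $\Psh(\categ C)$ is lex by (a) and cocomplete, the approximation lemma applies inside it, so a finite diagram $D$ in $\Psh\Psh(\categ C)$ can be written $D\simeq\mathrm{Lan}_{k^{\op}}\circ E$ for the inclusion $k\colon\categ B\hookrightarrow\Psh(\categ C)$ of a small full lex subcategory, with $\mathrm{Lan}_{k^{\op}}\colon\Psh(\categ B)\to\Psh\Psh(\categ C)$ lex and fully faithful, whence $\lim D\simeq\mathrm{Lan}_{k^{\op}}(\lim E)$. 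The point is then that $m_{\Psh}(\Psh(\categ C))\circ\mathrm{Lan}_{k^{\op}}\colon\Psh(\categ B)\to\Psh(\categ C)$ preserves colimits and restricts along $i_{\Psh}(\categ B)$ to $k$, hence equals the colimit-preserving extension of $k$ along $i_{\Psh}(\categ B)$; as $\categ B$ is small and lex, $k$ is lex, and $\Psh(\categ C)$ is cocomplete with pointwise finite limits (so colimits are stable under pullback), the Diaconescu tool makes this extension lex, and then $m_{\Psh}(\Psh(\categ C))(\lim D)\simeq\lim\big(m_{\Psh}(\Psh(\categ C))\circ D\big)$. So the genuine difficulty is left exactness of the colimit functor $\Psh\Psh(\categ C)\to\Psh(\categ C)$ — the one place where "colimits commute with finite limits" must really be used, and the reason for routing everything through small lex subcategories and Diaconescu's theorem; the remaining steps are bookkeeping with pointwise limits and the recalled structure of $\Psh$.
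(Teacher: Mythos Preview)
The paper does not give its own proof of this proposition: it simply cites \cite[Proposition 4.3 and Remark 6.6]{DayLack} and moves on. Your proposal is therefore not competing with an argument in the paper but rather supplying one, and the route you sketch---approximating small presheaves by left Kan extensions from small full lex subcategories $j\colon\categ A\hookrightarrow\categ C$, then using that $\mathrm{Lan}_{j^{\op}}$ is lex and fully faithful---is exactly the Day--Lack strategy the paper is deferring to.

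Your argument is sound. One small point of phrasing: the version of Diaconescu you invoke (``$G$ lex into a cocomplete $E$ with pullback-stable colimits $\Rightarrow$ $\overline G$ lex'') is stated a bit more generally than you actually need or than is obviously true for arbitrary such $E$. In every place you use it, however, the target is either $\Set$ or $\Psh(\categ C)$ with both finite limits and small colimits computed pointwise in $\Set$, so the claim reduces objectwise to the classical $\Set$-valued Diaconescu theorem. You essentially say this (``pointwise finite limits''), but it would be cleaner in part (d) to argue directly that $\mathrm{ev}_X\circ \overline k\colon \Psh(\categ B)\to\Set$ is the cocontinuous extension of the lex functor $b\mapsto k(b)(X)$, hence lex for each $X\in\categ C$, and conclude that $\overline k$ is lex because finite limits in $\Psh(\categ C)$ are detected pointwise. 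That avoids relying on a general form of Diaconescu whose precise hypotheses you have not pinned down.
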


One can notice that for any lex category $\categ C$ and any object $X \in \categ C$, the functor
$$
\hom_{\Psh(\categ C)}(X,-) : \Psh(\categ C) \to \Set
$$
is lex.

We know from \cite[Proposition 2.5]{GarnerLack} and \cite[Corollary 3.3]{Kock}, that a $\Psh$-cocomplete objects are exactly infinitary pretopos (see Definiton \ref{definitionpretopos} below). The proof uses mainly the concept of a postulated colimit. We will actually prove again this result.


\section{The doctrine of equivalence 2-groupoids}

In this section, we introduce a KZ-machine $\mathcal{E}_h$ and a KZ-doctrine $\mathcal{E}_h^{(2)}$ on the 2-category $\mathfrak{Lex}$ of lex categories (that is categories with finite limits), that both encode lex categories so that
\begin{enumerate}
    \item equivalence groupoids are effective;
    \item effective epimorphisms are stable through pullback.
\end{enumerate}
Let $\categ C$ be a lex category (that is a category that admits finite limits).

\subsection{Simplicial objects}

\begin{definition}
Let $\sSet_f$ be the category of finite simplicial sets, that is simplicial sets that are obtained as finite colimits of standard simplicies $\Delta[n]$. The functor
$$
\categ{Fun}_{\mathrm{lex}}\left( \sSet_f^\op, \categ C \right) \to \Fun{\Delta^\op}{\categ{C}} = \categ C^{\Delta^\op}
$$
where "lex" designates left exact functors,
is an equivalence of categories.
\end{definition}

One can notice that the full subcategory $\sSet_f \subset \sSet$ is stable through finite products since this products commutes with colimits in both variables and since finite products of standard simplicies are finite simplicial sets.

\begin{definition}
Let $A \in \categ  C^{\Delta^\op}$ be a simplicial object in $\categ C$ and let $X \in \sSet_f$ be a finite simplicial set. Then, we define $A^{X}$ as the simplicial object in $\categ C$ whose induced left exact functor from $\sSet_f$ to $\categ C$ is
$$
(A^{X})(Y)  = A(X \times Y)
$$
using the fact that $X \times Y$ is still a finite simplicial set. This construction induces a functor
$$
\categ C^{\Delta^\op} \times \sSet_f^\op \to \categ C^{\Delta^\op}
$$
that makes $\categ C^{\Delta^\op}$ cotensored over $\sSet_f$.
\end{definition}

For any finite diagram of finite simplicial sets $F : I \to \sSet$ and any simplicial object $A \in \categ C^{\Delta^\op}$, one has
$$
A^{(\varinjlim_I F)} = \varprojlim_{i \in I^\op} A^{F(i)}.
$$

\begin{definition}
Let $A,B \in \categ C^{\Delta^\op}$ be two simplicial objects. Then, we define $\mathrm{Map}_{\Delta}(A,B)$ as the simplicial set so that
$$
\mathrm{Map}_{\Delta}(A,B)(X) = \hom_{\categ C}(A, B^X)
$$
for any finite simplicial set $X$. This defines an enrichement of $\categ C^{\Delta^\op}$ by $\sSet$.
\end{definition}

\begin{definition}
Let $A$ be an simplicial object. Then we denote $A_{\leq 1}$ the image of $A$ through the forgetful functor 
$$
\sSet = \categ C^{\Delta^\op} \to \categ C^{(\Delta_{\leq 1}^\op)}.
$$
\end{definition}

\begin{definition}
Let $\mathrm{cosk}_1$ be the functor from $\categ C^{\Delta_{\leq 1}^\op}$ to $\categ C^{\Delta^\op}$ that sends an element $A$ to the simplicial object
$$
\mathrm{cosk}_1 (A)(X) = A(X_{\leq 1})
= A\left(\varinjlim_{(\Delta[m], \phi) \in (\Delta_{\leq 1})_{/X}} \Delta[m]\right)
= \varprojlim_{(\Delta[m], \phi) \in ((\Delta_{\leq 1})_{/X})^\op} A_m.
$$
This functor is right adjoint to the forgetful functor
$$
A \mapsto A_{\leq 1} ,
$$
and since the counit of the adjunction is an isomorphism, this functor $\mathrm{cosk}_1$ is fully faithful.
\end{definition}

\begin{definition}
We say that a simplicial object $A$ in $\categ C$ is 1-coskeletal if it belongs to the essential image of $\categ C^{\Delta_{\leq 1}^{\op}}$. The functor $\mathrm{cosk}_1$ is an equivalence from $C^{\Delta_{\leq 1}^\op}$ to 1-cokeletal simplicial objects.
\end{definition}

One can notice that a simplicial object $A$ is 1-coskeletal if and only if
the maps
$$
A^{\Delta[n]} \to A^{\partial \Delta[n]}
$$
is an isomorphism for any $m \geq 2$ and if and only if for any $B$, the simplicial set 
$$
\mathrm{Map}_\Delta(A,B)
$$ is 1-coskeletal.
Moreover, for any 1-coskeletal simplicial object $A = \mathrm{cosk}_1(A')$ and any finite simplicial set, then $A^X$ is 1-coskeletal as
$$
A^X(Y) = A'(X_{\leq 1} \times Y_{\leq 1}) .
$$

\begin{definition}
A simplicial object $B$ in $\categ C$ is Kan if for any $A$, the simplicial set $\mathrm{Map}_{\Delta}(A,B)$ is a Kan complex.
\end{definition}

\begin{lemma}
A 1-coskeletal simplicial object $A$ is Kan if and only if the maps
$$
A(\Delta[2]) = A(\partial\Delta[2]) \to A(\Lambda^i[2]), \quad i=0,1,2;
$$
have sections.
\end{lemma}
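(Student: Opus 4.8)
\emph{Strategy.} I would test the Kan condition of $A$ against all simplicial objects $C$ of $\categ C$ by computing $\mathrm{Map}_{\Delta}(C,A)$, showing it is again $1$-coskeletal, and then translating its horn-filling conditions into split-epimorphism conditions in $\categ C$ via Yoneda.

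First I would check that $\mathrm{Map}_{\Delta}(C,A)$ is $1$-coskeletal. For finite $X$ one has $\hom_{\sSet}(X,\mathrm{Map}_{\Delta}(C,A)) = \hom_{\categ C^{\Delta^\op}}(C,A^X)$: this holds for $X=\Delta[n]$ by definition of $\mathrm{Map}_{\Delta}$, and extends to finite $X = \varinjlim_i \Delta[n_i]$ since $A^{(\varinjlim F)} = \varprojlim A^{F}$. As $A^Y$ is $1$-coskeletal whenever $A$ is, and $(A^Y)^{\Delta[n]} = A^{\Delta[n]\times Y}$, applying $\hom_{\categ C^{\Delta^\op}}(C,-)$ to the isomorphisms $(A^Y)^{\Delta[n]}\to(A^Y)^{\partial\Delta[n]}$ ($n\ge 2$) shows that $\mathrm{Map}_{\Delta}(C,A)^{\Delta[n]}\to\mathrm{Map}_{\Delta}(C,A)^{\partial\Delta[n]}$ is an isomorphism for $n\ge 2$, which is exactly $1$-coskeletality. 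Next I would record the standard fact that a $1$-coskeletal simplicial set $K$ is a Kan complex iff $K_2=\hom_{\sSet}(\Delta[2],K)\to\hom_{\sSet}(\Lambda^i[2],K)$ is surjective for $i=0,1,2$: $1$-dimensional horns are filled by degeneracies, while for $n\ge 3$ every simplex of $\Delta[n]$ of dimension $\le 1$ has image of cardinality $\le 2 < n$, hence lies in $\Lambda^i[n]$, so $(\Lambda^i[n])_{\le 1}=(\Delta[n])_{\le 1}$ and $1$-coskeletality makes $\hom_{\sSet}(\Delta[n],K)\to\hom_{\sSet}(\Lambda^i[n],K)$ bijective; thus only the horns $\Lambda^i[2]$ matter.

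Combining: $A$ is Kan iff $\mathrm{Map}_{\Delta}(C,A)$ is Kan for every $C$, iff $\hom_{\categ C^{\Delta^\op}}(C,A^{\Delta[2]})\to\hom_{\categ C^{\Delta^\op}}(C,A^{\Lambda^i[2]})$ is surjective for every $C$ and every $i$, iff (Yoneda, testing on $C=A^{\Lambda^i[2]}$ and the identity) the morphism $A^{\Delta[2]}\to A^{\Lambda^i[2]}$ is a split epimorphism in $\categ C^{\Delta^\op}$ for $i=0,1,2$. If $A$ is Kan, restricting a section of $A^{\Delta[2]}\to A^{\Lambda^i[2]}$ to the object of $0$-simplices yields a section of $A(\Delta[2])\to A(\Lambda^i[2])$ — one implication of the Lemma. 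For the converse, given sections $\sigma_i:A(\Lambda^i[2])\to A(\Delta[2])$, I must build sections of $A^{\Delta[2]}\to A^{\Lambda^i[2]}$ in $\categ C^{\Delta^\op}$; since both objects are $1$-coskeletal and $\mathrm{cosk}_1$ is an equivalence onto $1$-coskeletal objects, it is enough to build sections of the truncated maps $(A^{\Delta[2]})_{\le 1}\to(A^{\Lambda^i[2]})_{\le 1}$ in $\categ C^{\Delta_{\le 1}^\op}$, whose object of $0$-simplices is $A(\Delta[2])\to A(\Lambda^i[2])$, already split by $\sigma_i$.

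It remains to produce the $1$-simplex part of such a section, and this is the crux. The object of $1$-simplices of $A^X$ is $A(X\times\Delta[1])$, which by left exactness of $A$ is a finite limit of copies of $A(\Delta[0])$ and $A(\Delta[1])$ indexed by the vertices and edges of $X$; and since, below dimension $2$, $\Lambda^i[2]$ differs from $\Delta[2]$ only by the missing edge $e_i$ opposite the vertex $i$, the simplicial set $\Delta[2]\times\Delta[1]$ is obtained from $\Lambda^i[2]\times\Delta[1]$ by gluing on, along their endpoints, exactly three non-degenerate edges: $e_i\times\{0\}$, $e_i\times\{1\}$ and $e_i\times\iota$, where $\iota$ is the non-degenerate edge of $\Delta[1]$. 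So a section on $1$-simplices is a rule assigning to a prism over $\Lambda^i[2]$ these three edges, and each is obtained by applying $\sigma_i$ to the evident sub-$\Lambda^i[2]$-horn read off the prism — at level $0$, at level $1$, and along a diagonal joining the two levels (for $i=0,2$ the diagonal horn is an outer horn, filled by $\sigma_0$, resp.\ $\sigma_2$). Because $\sigma_i$ sections the horn map, this rule automatically commutes with the two face maps that extract the end triangles; and because the three sub-horns coming from a level-constant prism are themselves constant, it commutes with the degeneracy. This produces the section in $\categ C^{\Delta_{\le 1}^\op}$, hence in $\categ C^{\Delta^\op}$, hence $A$ is Kan. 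The main obstacle is precisely this last construction: it is the one place where $1$-coskeletality is used essentially (it is what lets one regard a prism or triangle over $\Delta[2]$ as merely a compatible family of edges), and it takes some care to verify that the three edges supplied by the $\sigma_i$ are compatible with all faces and degeneracies; the rest is bookkeeping or the standard coskeletal reduction of the Kan condition.
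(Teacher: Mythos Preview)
Your argument is correct and follows the same route as the paper: both reduce the Kan condition to the existence of a section of $A^{\Delta[2]}\to A^{\Lambda^i[2]}$ as a map of (necessarily $1$-coskeletal) simplicial objects, and both build the level-$1$ part of that section from the given $\sigma_i$. You are in fact more careful than the paper at the crux: the paper's pullback formula
\[
A(\Lambda^i[2]\times\Delta[1])=A(\Lambda^i[2])\times_{A_0^3}A_1^3\times_{A_0^3}A(\Lambda^i[2])
\]
(and the analogous one for $\partial\Delta[2]$) accounts only for the two horizontal copies of the horn and the three vertical edges, omitting the diagonal edges $e\times\iota$ that are present in the $1$-skeleton of $X\times\Delta[1]$; the map $\sigma_i\times\id\times\sigma_i$ it writes down therefore does not by itself determine a morphism between the correct objects. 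Your explicit identification of the three missing edges $e_i\times\{0\}$, $e_i\times\{1\}$, $e_i\times\iota$, together with your recipe for producing the diagonal by applying $\sigma_i$ to a sub-horn whose two edges already lie in $\Lambda^i[2]\times\Delta[1]$ (an inner horn for $i=1$, an outer one for $i=0,2$), is precisely the missing ingredient, and your checks of compatibility with $d_0,d_1,s_0$ go through as you describe.
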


\begin{proof}
It is clear that $A$ is Kan if and only if the maps
$$
A^{\Delta[2]} = A^{\partial\Delta[2]} \to A^{\Lambda^i[2]}, \quad i=0,1,2;
$$
have sections. In particular, if $A$ is Kan, then the maps
$$
A(\Delta[2]) = A(\partial\Delta[2]) \to A(\Lambda^i[2]), \quad i=0,1,2;
$$
have sections. Conversely, if these maps have section, we get from these sections maps
$$
A(\Lambda^i[2] \times \Delta[1]) = A(\Lambda^i[2]) \times_{A_0^3} A_1^3 \times_{A_0^3} A(\Lambda^i[2]) \to A(\partial\Delta[2]) \times_{A_0^3} A_1^3 \times_{A_0^3} A(\partial\Delta[2])
= A(\partial\Delta[2] \times \Delta[1]) ,
$$
which are sections of the maps
$$
A(\partial\Delta[2] \times \Delta[1]) \to A(\Lambda^i[2] \times \Delta[1]) .
$$
Combined with the sections of the maps 
$$
A(\partial\Delta[2]) \to A(\Lambda^i[2]),
$$
this gives us sections of the maps of simplicial objects $A^{\partial\Delta[2]} \to A^{\Lambda^i[2]}$.
\end{proof}

\begin{definition}
Given a cospan of Kan simplicial objects $\categ C$
$$
A \xrightarrow{f} B \xleftarrow{g} A',
$$
let $A \times^\Delta_B A'$ to be the following Kan simplicial object
$$
A \times^\Delta_B A' = A \times_B B^{\Delta[1]} \times_B A',
$$
where the limit in the formula is the strict limit in the category  $\categ C^{\Delta^\op}$.
\end{definition}

\begin{lemma}\label{lemmahomotopycolimits}
The $\infty$-category described by the simplicial category of Kan simplicial objects in $\categ C$ has finite limits. Moreover, the full $\infty$-subcategory spanned by $1$-coskeletal objects is stable through these finite limits.
\end{lemma}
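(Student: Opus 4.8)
The plan is to realise the $\infty$-category in question as the homotopy coherent nerve $N(\mathcal{K})$ of the simplicial category $\mathcal{K}$ of Kan simplicial objects in $\categ C$; this is a bona fide $\infty$-category because, by the very definition of a Kan simplicial object, every mapping space $\mathrm{Map}_{\Delta}(A,B)$ with $B$ Kan is a Kan complex, so $\mathcal{K}$ is enriched in Kan complexes. I will then invoke two standard facts: an $\infty$-category admits all finite limits as soon as it admits a terminal object and pullbacks, and the corepresentable functors $\mathrm{Map}_{N(\mathcal{K})}(X,-)$ jointly detect limits, i.e.\ a cone is limiting precisely when each of them carries it to a limiting cone of spaces. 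Since $\mathrm{Map}_{N(\mathcal{K})}(X,-)$ corresponds to $\mathrm{Map}_{\Delta}(X,-)$, it suffices to produce a terminal object and pullbacks and to recognise them through the functors $\mathrm{Map}_{\Delta}(X,-)$.

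For the terminal object I would take the constant simplicial object $\underline 1$ on the terminal object $1 \in \categ C$. One checks $(\underline 1)^Y = \underline 1$ for every finite simplicial set $Y$, whence $\mathrm{Map}_{\Delta}(X,\underline 1) = \Delta[0]$ for all $X$; this is a point, so $\underline 1$ is Kan and terminal. For pullbacks, given a cospan $A \xrightarrow{f} B \xleftarrow{g} A'$ of Kan simplicial objects, I would take $P := A \times^{\Delta}_{B} A' = A \times_B B^{\Delta[1]} \times_B A'$ (the strict limit in $\categ C^{\Delta^\op}$, which exists because finite limits there are computed levelwise in the lex category $\categ C$), together with its canonical square over the cospan, which commutes up to the homotopy recorded by the $B^{\Delta[1]}$-factor. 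The crux is that $\mathrm{Map}_{\Delta}(X,-)$ sends $P$ to a mapping path space: since $\mathrm{Map}_{\Delta}(X,-)$ preserves limits in the second variable and $\mathrm{Map}_{\Delta}(X,B^{\Delta[1]}) \cong \mathrm{Map}_{\Delta}(X,B)^{\Delta[1]}$ (both send a finite simplicial set $Y$ to $\hom_{\categ C}(X,B^{\Delta[1]\times Y})$), one obtains
\[
\mathrm{Map}_{\Delta}(X,P) \;\cong\; \mathrm{Map}_{\Delta}(X,A) \times_{\mathrm{Map}_{\Delta}(X,B)} \mathrm{Map}_{\Delta}(X,B)^{\Delta[1]} \times_{\mathrm{Map}_{\Delta}(X,B)} \mathrm{Map}_{\Delta}(X,A').
\]
When $A,A',B$ are Kan, the three outer simplicial sets are Kan complexes, and the right-hand side is the standard mapping-path-space model of the homotopy pullback of $\mathrm{Map}_{\Delta}(X,A) \to \mathrm{Map}_{\Delta}(X,B) \leftarrow \mathrm{Map}_{\Delta}(X,A')$: the evaluation map $\mathrm{Map}_{\Delta}(X,B)^{\Delta[1]} \to \mathrm{Map}_{\Delta}(X,B) \times \mathrm{Map}_{\Delta}(X,B)$ is a Kan fibration, so the limit is a pullback of a fibration, hence both a Kan complex and a homotopy pullback. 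Thus $P$ is Kan, lies in $\mathcal{K}$, and since this identification holds for every $X$ and corepresentables detect limits, the canonical square is a pullback square in $N(\mathcal{K})$.

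For the second assertion, let $\mathcal{K}^{\mathrm{csk}} \subset \mathcal{K}$ be the full $\infty$-subcategory of $1$-coskeletal objects. A full $\infty$-subcategory that is closed under a given limit of the ambient $\infty$-category inherits that limit, and the inclusion preserves it; so it suffices to see that $\underline 1$ and each $P$ above are $1$-coskeletal whenever $A,A',B$ are. The terminal object is $1$-coskeletal because $\mathrm{cosk}_1$ of a constant truncated object is constant. For $P$ I would combine the two facts already recorded in the text: $B^{\Delta[1]}$ is $1$-coskeletal (cotensoring a $1$-coskeletal object by a finite simplicial set stays $1$-coskeletal), and $\mathrm{cosk}_1 \colon \categ C^{\Delta_{\leq 1}^\op} \to \categ C^{\Delta^\op}$ is a fully faithful right adjoint whose source has finite limits, so its essential image --- the $1$-coskeletal objects --- is closed under finite limits in $\categ C^{\Delta^\op}$; hence $P$, being such a finite limit of $A$, $B^{\Delta[1]}$ and $A'$, is $1$-coskeletal.

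The main obstacle I anticipate is the identification $\mathrm{Map}_{\Delta}(X,B^{\Delta[1]}) \cong \mathrm{Map}_{\Delta}(X,B)^{\Delta[1]}$ together with the verification that the resulting mapping path space genuinely models the homotopy pullback of Kan complexes --- this is where the homotopical content sits. Everything else is either formal $\infty$-categorical input (terminal object plus pullbacks yield all finite limits; limits are detected by corepresentables; full subcategories closed under a limit inherit it) or already supplied by the excerpt (levelwise finite limits in $\categ C^{\Delta^\op}$, $1$-coskeletality of $A^X$, full faithfulness and adjointness of $\mathrm{cosk}_1$).
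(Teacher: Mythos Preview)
Your proposal is correct and follows the same approach as the paper: terminal object is the constant simplicial object on $\ast_{\categ C}$, pullbacks are given by $A \times^{\Delta}_B A'$, and $1$-coskeletal objects are stable because they are closed under strict limits and under cotensoring by finite simplicial sets. The paper's proof is a terse two-sentence sketch; you have simply made explicit the $\infty$-categorical verification (via corepresentables and the mapping-path-space model) that the paper leaves implicit.
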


\begin{proof}
The final object is just the functor 
$$
\Delta[n] \in \Delta^\op \mapsto \ast_{\categ C}.
$$
Moreover, given a cospan diagram $A \to B \leftarrow A'$, its homotopy pullback is $A \times^\Delta_B A'$.
Finally, $1$-coskeletal objects are stable through these finite limits since they contain the final object and since they are stable through strict limits and cotensorisation with finite simplicial sets.
\end{proof}

\subsection{Equivalence groupoids}

\begin{definition}
A weak equivalence groupoid is a 1-coskeletal Kan simplicial object. We denote $\mathcal{E}^w(\categ C)$ the category of weak equivalence groupoids, that is the full subcategory of $\categ C^{\Delta_{\leq 1}^\op}$ spanned by Kan objects. Moreover, we denote $\mathcal{E}^w_h(\categ C)$ the homotopy category of weak equivalence groupoids, that is the category whose objects are those of 
$\mathcal{E}^w(\categ C)$ and whose morphisms are homotopy classes of morphisms of $\mathcal{E}^w(\categ C)$. Remember that two morphisms $f,g \in \hom_{\mathcal{E}^w(\categ C)}(X,Y)$ are homotopic $f \sim g$ if the map
$$
(f,g) : X \to Y \times Y
$$
factorises through $Y^{\Delta[1]}$.
\end{definition}

\begin{definition}
An equivalence groupoid in $\categ C$ is a weak equivalence groupoid $A$ so that the map $A_1 \to A_0 \times A_0$ is a monomorphism. We denote respectively $\mathcal{E}(\categ C)$ and $\mathcal{E}_h(\categ C)$ the full subcategories of $\mathcal{E}^w(\categ C)$ and $\mathcal{E}^w_h(\categ C)$ spanned by equivalence groupoids.
\end{definition}

Equivalently, an equivalence groupoid $A$ in $\categ C$ is the data of
\begin{itemize}
    \itemt an object $A_0$;
    \itemt a subobject $A_1 \to A_0 \times A_0$ so that for any $X \in \categ C$, the inclusion map
    $$
    \hom_{\categ C}(X, A_1) \to \hom_{\categ C}(X, A_0) \times \hom_{\categ C}(X, A_0) 
    $$
    is an equivalence relation.
\end{itemize}
Equivalently, this is a groupoid object 
$$
\begin{tikzcd}
    A_1 \ar[r, bend left, "s"] \ar[r, bend right, swap, "t"]
    & A_0 \ar[l]
\end{tikzcd}
$$
so that the map $A_1 \to A_0 \times A_0$ is monic.

\begin{proposition}
The category $\mathcal{E}^w_h(\categ C)$ has finite limits. Moreover, the full subcategories
$$
\categ C \hookrightarrow \mathcal{E}_h(\categ C) 
\hookrightarrow \mathcal{E}^w_h(\categ C) 
$$
are stable through finite limits.
\end{proposition}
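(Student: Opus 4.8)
The plan is to realise $\mathcal{E}^w_h(\categ C)$ as the homotopy category of the category $\mathcal{E}^w(\categ C)$ enriched over $\sSet$ via $\mathrm{Map}_\Delta$, and then to transport the finite limits produced by Lemma~\ref{lemmahomotopycolimits}. The crucial preliminary observation — which is exactly what keeps a homotopy category from generically failing to have limits — is that for weak equivalence groupoids $A,B$ the mapping space $\mathrm{Map}_\Delta(A,B)$ is a $1$-coskeletal Kan complex (Kan because $B$ is Kan, $1$-coskeletal because $A$ is), and a $1$-coskeletal Kan complex $Z$ is weakly equivalent to the discrete simplicial set on its set of components: at any vertex $z$ two loops are homotopic because the triangle realising the homotopy lies in $Z_2=Z(\partial\Delta[2])$, and for $n\ge 2$ the only $n$-simplex with constant boundary is the degenerate one since $Z_n=Z(\partial\Delta[n])$, so $\pi_n(Z,z)=0$. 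Consequently the $\infty$-category presented by $\mathcal{E}^w(\categ C)$ — which by Lemma~\ref{lemmahomotopycolimits} is the finite-limit-closed full $\infty$-subcategory of Kan simplicial objects spanned by the $1$-coskeletal ones — is a $1$-category, namely $\mathcal{E}^w_h(\categ C)$; hence $\mathcal{E}^w_h(\categ C)$ has finite limits, computed by the constructions of that lemma. Explicitly, the terminal object is the image of the terminal object of $\categ C$ (a one-point mapping space), and the pullback of a cospan $X\xrightarrow{[f]}Z\xleftarrow{[g]}Y$ is $X\times^\Delta_Z Y = X\times_Z Z^{\Delta[1]}\times_Z Y$, formed from any choice of representatives $f,g$.

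If one prefers to avoid the $\infty$-categorical statement, the same is obtained by hand: applying the limit-preserving functor $\mathrm{Map}_\Delta(T,-)$ to $X\times_Z Z^{\Delta[1]}\times_Z Y$, and using $\mathrm{Map}_\Delta(T,Z^{\Delta[1]})\cong\mathrm{Map}_\Delta(T,Z)^{\Delta[1]}$, presents $\mathrm{Map}_\Delta(T,X\times^\Delta_Z Y)$ as the homotopy pullback (via the standard path-object formula, legitimate since $\mathrm{Map}_\Delta(T,Z)$ is Kan) of the Kan complexes $\mathrm{Map}_\Delta(T,X)\to\mathrm{Map}_\Delta(T,Z)\leftarrow\mathrm{Map}_\Delta(T,Y)$; since these are $1$-coskeletal, hence homotopy-discrete, taking $\pi_0$ produces precisely the pullback $\hom(T,X)\times_{\hom(T,Z)}\hom(T,Y)$ in $\mathcal{E}^w_h(\categ C)$, naturally in $T$, which is the required universal property.

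For the stability clauses, $\categ C\hookrightarrow\mathcal{E}^w_h(\categ C)$ is the easy case: the terminal object lies in $\categ C$, and if $X,Y,Z$ are discrete then $Z^{\Delta[1]}$ is again discrete on $Z$ with both endpoint maps the identity, so $X\times^\Delta_Z Y$ is the discrete object on the ordinary pullback $X\times_Z Y$, which exists as $\categ C$ is lex. For $\mathcal{E}_h(\categ C)\hookrightarrow\mathcal{E}^w_h(\categ C)$ the only extra point is that $P:=X\times^\Delta_Z Y$ has $P_1\to P_0\times P_0$ monic when $X,Y,Z$ are equivalence groupoids ($P$ is automatically $1$-coskeletal and Kan). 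I would first note that $Z^{\Delta[1]}$ is itself an equivalence groupoid — since an edge of $Z$ is determined by its endpoints, a map $\Delta[1]\times\Delta[1]\to Z$ is determined by the images of its four vertices, which exhibits both $(Z^{\Delta[1]})_1$ and $(Z^{\Delta[1]})_0\times(Z^{\Delta[1]})_0$ as subobjects of $Z_0^{\times 4}$ with the former inside the latter — and then observe that $(P_1\to P_0\times P_0)$ is the limit, in the arrow category of $\categ C$, of the monomorphisms $(X_1\to X_0\times X_0)$, $((Z^{\Delta[1]})_1\to(Z^{\Delta[1]})_0\times(Z^{\Delta[1]})_0)$, $(Y_1\to Y_0\times Y_0)$ over $(Z_1\to Z_0\times Z_0)$; a limit of monomorphisms is a monomorphism, so $P$ is an equivalence groupoid.

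I expect the main obstacle to be the conceptual one: recognising that this particular homotopy category does possess finite limits, which rests entirely on the homotopy-discreteness of $1$-coskeletal Kan complexes — once this is granted, the candidate limits are forced and every verification is a routine manipulation of the cotensor $(-)^{\Delta[1]}$ and of finite limits in $\categ C$. The one mildly technical step is the monomorphism check for $\mathcal{E}_h(\categ C)$, which requires unwinding $X\times^\Delta_Z Y$ far enough to see that the $Z^{\Delta[1]}$ factor introduces nothing non-monic.
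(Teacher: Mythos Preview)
Your proof is correct and follows essentially the same approach as the paper: both argue that the mapping spaces $\mathrm{Map}_\Delta(A,B)$ are $1$-coskeletal Kan complexes and hence homotopy discrete, so that the simplicial category of weak equivalence groupoids is equivalent to its homotopy category $\mathcal{E}^w_h(\categ C)$, and then Lemma~\ref{lemmahomotopycolimits} supplies the finite limits. The paper's own proof is extremely terse (three sentences, with the stability clauses dismissed as ``a straightforward checking''), whereas you have spelled out exactly those checks --- the homotopy-discreteness of $1$-coskeletal Kan complexes, the explicit pullback formula, and the monomorphism verification for $\mathcal{E}_h(\categ C)$ --- so your write-up is strictly more informative.
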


\begin{proof}
The canonical functor from the simplicial category of weak equivalence groupoids to  $\mathcal{E}^w_h(\categ C)$ is an equivalence of simplicial categories since all the mapping spaces are 1-coskeletal. Hence, by Lemma \ref{lemmahomotopycolimits}, $\mathcal{E}^w_h(\categ C)$ has finite limits. The rest of the proposition follows from a straightforward checking.
\end{proof}

\begin{proposition}\label{lemmahomotopy}
Two morphisms of equivalence groupoids $f,g : A \to B$ are homotopic (and hence represent the same morphism in $\mathcal{E}_h(\categ C)$) if and only if the map
$$
(f_0,g_0) : A_0 \to B_0 \times B_0
$$
factorises through $B_1$.
\end{proposition}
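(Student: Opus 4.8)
The plan is to unwind what it means for two morphisms of simplicial objects $f,g\colon A\to B$ to be homotopic in the sense of the previous definitions, and then to exploit the hypothesis that $A$ and $B$ are $1$-coskeletal equivalence groupoids, so that both the homotopy data and the monomorphism condition $B_1\hookrightarrow B_0\times B_0$ kick in. By definition, $f\sim g$ means that the map $(f,g)\colon A\to B\times B$ factorises through $B^{\Delta[1]}$. Since $B$ is $1$-coskeletal, $B^{\Delta[1]}$ is determined by its values on $[0]$ and $[1]$; concretely $B^{\Delta[1]}_0=B(\Delta[1])=B_1$ and the map $B^{\Delta[1]}\to B\times B$ in level $0$ is exactly the inclusion $B_1\to B_0\times B_0$ (evaluating at the two vertices of $\Delta[1]$). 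So the level-$0$ part of a factorisation of $(f,g)$ through $B^{\Delta[1]}$ is precisely a factorisation of $(f_0,g_0)\colon A_0\to B_0\times B_0$ through $B_1$. This immediately gives the ``only if'' direction: a homotopy yields such a factorisation.

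For the ``if'' direction I would argue that, conversely, a factorisation of $(f_0,g_0)$ through $B_1$ automatically upgrades to a full homotopy, using the coskeletality of both source and target together with the equivalence-relation property of $B_1$. First, because $A$ is $1$-coskeletal, a map $A\to B^{\Delta[1]}$ is the same as a map $A_{\leq 1}\to (B^{\Delta[1]})_{\leq 1}$, i.e.\ it is determined by its components in levels $0$ and $1$; and because $B^{\Delta[1]}$ is again $1$-coskeletal (as noted in the excerpt, $A^X$ is $1$-coskeletal whenever $A$ is), one only needs compatible maps $A_0\to B^{\Delta[1]}_0$ and $A_1\to B^{\Delta[1]}_1$. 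The level-$0$ map is the assumed factorisation $h_0\colon A_0\to B_1$. For the level-$1$ map one observes $B^{\Delta[1]}_1 = B(\Delta[1]\times\Delta[1])$, which, $B$ being $1$-coskeletal, is a limit of copies of $B_1$ and $B_0$ indexed over the edges and vertices of $\Delta[1]\times\Delta[1]$; the required map out of $A_1$ is then built from $f_1,g_1\colon A_1\to B_1$ and $h_0\colon A_0\to B_1$ (precomposed with $s,t\colon A_1\to A_0$), and the cocycle/compatibility conditions needed for these to assemble into a point of $B(\Delta[1]\times\Delta[1])$ are exactly instances of reflexivity, symmetry and transitivity of the relation $B_1$ on $B_0$ — which hold on the nose because $B_1\hookrightarrow B_0\times B_0$ is an equivalence relation in $\categ C$, tested representably.

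The main obstacle I anticipate is purely bookkeeping: writing down $B(\Delta[1]\times\Delta[1])$ explicitly as the relevant finite limit in $\categ C$ and checking that the maps induced by $f_1$, $g_1$ and $h_0\circ s$, $h_0\circ t$ are compatible along all the faces of the square. Since $B_1\to B_0\times B_0$ is monic, any two maps into $B_1$ that agree after composing to $B_0\times B_0$ are equal, so all the compatibility checks reduce to easy identities in $B_0$; this is where the monomorphism hypothesis (as opposed to working with a general weak equivalence groupoid) does the real work and makes the verification routine rather than delicate. Once the map $A\to B^{\Delta[1]}$ is constructed, its two composites with $B^{\Delta[1]}\to B$ are $f$ and $g$ by construction, so $f\sim g$, completing the proof. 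One may alternatively phrase the whole argument representably: for each $X\in\categ C$, $\hom_{\categ C}(X,A_1)\rightrightarrows\hom_{\categ C}(X,A_0)$ and likewise for $B$ are honest equivalence groupoids of sets, $f,g$ induce functors between them, and $(f_0,g_0)$ factoring through $B_1$ says exactly that $f(x)$ and $g(x)$ are isomorphic in the target groupoid for every $X$-point $x$ of $A_0$, compatibly in $x$ — which is the classical statement that $f$ and $g$ are naturally isomorphic, hence homotopic; the monomorphism condition guarantees the isomorphism is unique and therefore natural automatically.
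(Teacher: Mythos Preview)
Your proposal is correct and follows essentially the same strategy as the paper. The only cosmetic difference is that, for the ``if'' direction, the paper packages your edge-by-edge construction into the single observation that the restriction map $B(\Delta[1]\times\Delta[1])\to B(\partial\square)$ is an isomorphism when $B$ is an equivalence groupoid (the diagonal edge being filled uniquely by transitivity and the monic hypothesis), so that the four maps $f_1,g_1,h_0 s,h_0 t$ landing in $B(\partial\square)$ already constitute the level-$1$ component of the homotopy; this is exactly your ``cocycle conditions follow from the equivalence-relation axioms'' remark, stated once and for all.
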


\begin{proof}
On the one hand, let us assume that $f \sim g$. Then $(f,g)$ factorises through $B^{\Delta[1]}$. In particular, $(f_0,g_0)$ factorises through $B^{\Delta[1]}_0 = B_1$.

On the other hand, let us assume that $(f_0,g_0)$ factorises through $B_1$. This gives us a factorisation of $(f_1,g_1)$
through $B(\partial \square)$
where $\partial \square$ is the 1-skeletal simplicial set represented by the following graph
$$
\begin{tikzcd}
    10
    \ar[r]
    & 11
    \\
    00
    \ar[r] \ar[u]
    & 01 .
    \ar[u]
\end{tikzcd}
$$
Since $B$ is an equivalence groupoid, the map $(B^{\Delta[1]})_1 = B(\Delta[1] \times \Delta[1]) \to B(\partial \square)$ is an isomorphism. Thus we get a map $A_1 \to (B^{\Delta[1]})_1$. Combined with the map $A_0 \to B_1 = (B^{\Delta[1]})_0$, it forms a morphism
$
A \to B^{\Delta[1]}
$
that is an homotopy linking $f$ and $g$.
\end{proof}

\begin{proposition}\label{cormonicmap}
Let $f : A \to B$ be a morphism of equivalence groupoids. The homotopy class $[f] \in \mathcal{E}_h(\categ C)(A,B)$ is monic if and only if the map
$$
A_1 \to A_0  \times_{B_0} B_1 \times_{B_0} A_0
$$
is an isomorphism.
\end{proposition}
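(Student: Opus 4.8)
The plan is to reduce monicity of $[f]$ to the equality of the two legs of its kernel pair, and then to unwind that equality by means of Proposition~\ref{lemmahomotopy}. Recall first that, as soon as the pullback $A\times_B A$ of the cospan with both legs equal to $[f]$ exists, $[f]$ is monic if and only if its two projections $p_1,p_2\colon A\times_B A\to A$ agree. Now $\mathcal{E}_h(\categ C)$ has finite limits and the full inclusion $\mathcal{E}_h(\categ C)\hookrightarrow\mathcal{E}^w_h(\categ C)$ is stable under them, so this pullback exists in $\mathcal{E}_h(\categ C)$ and is again an equivalence groupoid; by Lemma~\ref{lemmahomotopycolimits} it is represented by the homotopy pullback $A\times_B B^{\Delta[1]}\times_B A$. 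Since $(B^{\Delta[1]})_0 = B(\Delta[1]) = B_1$, its object of $0$-simplices is
\[
P_0 := (A\times_B A)_0 = A_0\times_{B_0}B_1\times_{B_0}A_0 ,
\]
and on $0$-simplices the projections $p_1,p_2$ are the two outer projections $\mathrm{pr}_1,\mathrm{pr}_3\colon P_0\to A_0$.

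Applying Proposition~\ref{lemmahomotopy} to the morphisms of equivalence groupoids $p_1,p_2\colon A\times_B A\to A$, we find that $[p_1]=[p_2]$ in $\mathcal{E}_h(\categ C)$ — that is, $[f]$ is monic — if and only if the map $(\mathrm{pr}_1,\mathrm{pr}_3)\colon P_0\to A_0\times A_0$ factors through the defining monomorphism $A_1\hookrightarrow A_0\times A_0$. Let $\iota\colon A_1\to P_0$ be the canonical map of the statement; its three components are the source $s\colon A_1\to A_0$, the structure map $f_1\colon A_1\to B_1$ of $f$, and the target $t\colon A_1\to A_0$. Then $(\mathrm{pr}_1,\mathrm{pr}_3)\circ\iota = (s,t)$ is exactly the defining monomorphism of $A$, and $\mathrm{pr}_2\circ\iota = f_1$.

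If $\iota$ is an isomorphism, then $(\mathrm{pr}_1,\mathrm{pr}_3) = (s,t)\circ\iota^{-1}$ factors through $A_1$, so $[f]$ is monic. Conversely, suppose $[f]$ monic; then there is a (unique) $\sigma\colon P_0\to A_1$ with $(s,t)\circ\sigma = (\mathrm{pr}_1,\mathrm{pr}_3)$, and since $(s,t)$ is a monomorphism we get $\sigma\circ\iota = \id_{A_1}$. To see that $\sigma$ is moreover a two-sided inverse, that is $\iota\circ\sigma = \id_{P_0}$, it is enough to compare both maps after the monomorphism $P_0\hookrightarrow A_0\times B_1\times A_0$: the outer two components give again $s\circ\sigma = \mathrm{pr}_1$ and $t\circ\sigma = \mathrm{pr}_3$, while for the middle one, writing $s_B,t_B\colon B_1\to B_0$ for the source and target of $B$, one computes
\[
(s_B,t_B)\circ f_1\circ\sigma = (f_0\times f_0)\circ(s,t)\circ\sigma = (f_0\circ\mathrm{pr}_1, f_0\circ\mathrm{pr}_3) = (s_B,t_B)\circ\mathrm{pr}_2 ,
\]
so $f_1\circ\sigma = \mathrm{pr}_2$ because $(s_B,t_B)\colon B_1\to B_0\times B_0$ is a monomorphism ($B$ being an equivalence groupoid). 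Hence $\iota$ is an isomorphism.

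The only non-formal point is this last step: promoting the retraction $\sigma$ of $\iota$ to a two-sided inverse genuinely uses the monomorphism hypothesis on $B$, not merely on $A$; without it, $\iota$ would only be known to be a split monomorphism. Everything else follows from Proposition~\ref{lemmahomotopy} together with the levelwise description of finite limits in $\mathcal{E}_h(\categ C)$ provided by Lemma~\ref{lemmahomotopycolimits}.
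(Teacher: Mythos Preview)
Your proof is correct and follows essentially the same route as the paper's: both reduce to the factorisation criterion of Proposition~\ref{lemmahomotopy}, produce a map $P_0\to A_1$ over $A_0\times A_0$, and then conclude using that $A_1$ and $P_0$ are both subobjects of $A_0\times A_0$ (the latter because $B_1\hookrightarrow B_0\times B_0$). The only cosmetic difference is that the paper tests monicity directly on the object $X=P_0$ viewed as a constant equivalence groupoid rather than passing through the kernel pair, and then dispatches the inverse check with the one-line observation that two subobjects of $A_0\times A_0$ admitting maps in both directions are isomorphic---exactly the content of your final paragraph.
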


\begin{proof}
Let us denote $X = A_0  \times_{B_0} B_1 \times_{B_0} A_0$.

On the one hand, let us suppose that $[f]$ is monic.
The two projections from $X$ onto $A_0$ gives two morphisms $(g,g')$ towards $A$ so that $fg \sim fg'$. Since $[f]$ is monic, $g \sim g'$ and we get a map from $X$ to $A_1$. We thus have two morphisms of subobjects of $A_0 \times A_0$: $X \to A_1$ and $A_1 \to X$. So these two morphisms are inverse to each other.

On the other hand, let us suppose that the canonical map $A_1 \to X$ is an isomorphism and let us consider a morphism $(g,g') : A' \to A \times A$ so that $fg \sim fg'$. Then, $(g_0,g'_0)$ factorises through $X$ and thus through $A_1$. So $g \sim g'$. Hence $[f]$ is monic.
\end{proof}

\subsection{Stacking equivalence groupoids}

\begin{definition}
Let $\categ D$ be a full subcategory of $\categ C$. Then, we define $\mathcal{E}_h(\categ D, \categ C)$ as the full subcategory of $\mathcal{E}_h( \categ C)$ spanned by equivalence groupoids $A$ so that $A_0$ belongs to the full subcategory $\categ D$.
\end{definition}

The category $\categ C$ is canonically a full subcategory of $\mathcal{E}_h(\categ C)$. We are interested in $\mathcal{E}_h(\categ C, \mathcal{E}_h(\categ C))$.

An object $A \in \mathcal{E}_h(\categ C, \mathcal{E}_h(\categ C))$ is in particular defined as an object $A_0 \in \categ C$ and an equivalence groupoid $A_1$ which contains two objects $A_{1,0} \in \categ C$ and $A_{1,1} \in \categ C$. But the fact that the map $A_1 \to A_0 \times A_0$ is monic in $\mathcal E_h(\categ C)$ implies that the map
$$
A_{1,1} \to A_{1,0} \times_{A_0^2} A_0^2 \times_{A_0^2} A_{1,0} = A_{1,0}\times_{A_0^2} A_{1,0}
$$
is an isomorphism by Proposition \ref{cormonicmap}.

\begin{proposition}
One has a canonical functor 
$$
d : \mathcal{E}^w(\categ C)\to
\mathcal{E}_h(\categ C, \mathcal{E}_h(\categ C))
$$
that is essentially surjective.
\end{proposition}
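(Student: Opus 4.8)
The plan is to read $d$ directly off the underlying data of a weak equivalence groupoid, exploiting that a monomorphism in $\mathcal{E}_h(\categ C)$ carries the structure of an equivalence relation as a \emph{property}, not as extra data; essential surjectivity is then obtained by inverting the recipe. In fact I expect slightly more than the statement: every object of $\mathcal{E}_h(\categ C,\mathcal{E}_h(\categ C))$ is reached on the nose, up to a canonical isomorphism.

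\textbf{Construction of $d$.} I will use that a weak equivalence groupoid, being a $1$-coskeletal Kan simplicial object, amounts to a pair $G_0,G_1\in\categ C$ together with face maps $s=d_1,t=d_0\colon G_1\to G_0$ and a degeneracy $e=s_0\colon G_0\to G_1$ with $se=te=\id$, subject only to the existence of sections of the three horn projections $G(\partial\Delta[2])\to G(\Lambda^i[2])$, $i=0,1,2$ (the criterion recalled above for a $1$-coskeletal object to be Kan). Given such a $G$, I set $d(G)_0:=G_0\in\categ C$ and let $d(G)_1$ be the equivalence groupoid $\big(G_1,\,G_1\times_{G_0\times G_0}G_1\big)$ of $\categ C$, i.e.\ the kernel pair of $(s,t)\colon G_1\to G_0\times G_0$. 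The map $(s,t)$ induces a morphism of equivalence groupoids from $d(G)_1$ to the discrete equivalence groupoid $d(G)_0\times d(G)_0$, which is monic in $\mathcal{E}_h(\categ C)$ by Proposition~\ref{cormonicmap} (its comparison map being an identity). It then remains to check that $d(G)_1$, as a subobject of $d(G)_0\times d(G)_0$ in $\mathcal{E}_h(\categ C)$, is reflexive, symmetric and transitive — so that it is an equivalence relation and $d(G)\in\mathcal{E}_h(\categ C,\mathcal{E}_h(\categ C))$: reflexivity is witnessed by $e$, symmetry by any $\iota\colon G_1\to G_1$ with $(s,t)\circ\iota=(t,s)$, and transitivity by any $m$ on composable pairs (those $(f,g)$ with $tf=sg$) with $s\circ m=s\circ\mathrm{pr}_1$ and $t\circ m=t\circ\mathrm{pr}_2$; such $\iota$ and $m$ are extracted from the horn sections of $G$. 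The key point is that $d(G)$ does not depend on the choices of $\iota$ and $m$, precisely because for a monic the equivalence-relation axioms hold automatically. For morphisms: a simplicial map $\phi\colon G\to G'$ restricts to $(\phi_0,\phi_1)$ compatible with $s,t,e$, yielding $\phi_0\colon G_0\to G_0'$ and the factorisation $\phi_1$ of $(\phi_0\times\phi_0)\circ(s,t)$ through $(s',t')$, hence a morphism $d(\phi)$ of $\mathcal{E}_h(\categ C,\mathcal{E}_h(\categ C))$; functoriality is immediate because such morphisms are determined by their effect on the object of objects (the map to $A_0\times A_0$ being monic).

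\textbf{Essential surjectivity.} Let $A\in\mathcal{E}_h(\categ C,\mathcal{E}_h(\categ C))$. By the discussion before the statement, $A_0\in\categ C$ and $A_1\in\mathcal{E}_h(\categ C)$, and since $A_0\times A_0$ is discrete, a morphism $A_1\to A_0\times A_0$ in $\mathcal{E}_h(\categ C)$ is, by Proposition~\ref{lemmahomotopy}, simply a map $u=(s,t)\colon A_{1,0}\to A_0\times A_0$ of $\categ C$ coequalising the two projections $A_{1,1}\rightrightarrows A_{1,0}$, its monicity forcing $A_{1,1}\cong A_{1,0}\times_{A_0\times A_0}A_{1,0}$ by Proposition~\ref{cormonicmap}. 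I put $G_0:=A_0$, $G_1:=A_{1,0}$ with face maps $s,t$, and extract from the structure of $A$: from reflexivity a morphism $A_0\to A_1$ over the diagonal, hence a representative $e\colon G_0\to G_1$ of its underlying object map, with $se=te=\id$; from symmetry a map $\iota\colon G_1\to G_1$ with $(s,t)\circ\iota=(t,s)$; and from transitivity a map $m$ on composable pairs with $s\circ m=s\circ\mathrm{pr}_1$, $t\circ m=t\circ\mathrm{pr}_2$. Here I use that the pullback $A_1\times_{A_0}A_1$ formed in $\mathcal{E}_h(\categ C)$ has the ordinary fibre product $A_{1,0}\times_{A_0}A_{1,0}$ of $\categ C$ as object of objects — which follows from Lemma~\ref{lemmahomotopycolimits} and the fact that, as a simplicial object, $A_0$ is constant, so $A_0^{\Delta[1]}=A_0$. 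Then $G:=\mathrm{cosk}_1(G_0,G_1,s,t,e)$ is Kan: the inner horn projection has section $(f,g)\mapsto(f,g,m(f,g))$, and the two outer horn projections have sections $(f,h)\mapsto(f,m(\iota f,h),h)$ and $(g,h)\mapsto(m(h,\iota g),g,h)$ — in each case one checks that the prescribed vertices and boundary edges agree, which is where the endpoint identities of $e,\iota,m$ enter. Hence $G\in\mathcal{E}^w(\categ C)$, and $d(G)$ has underlying data $(G_0,G_1,(s,t))=(A_0,A_{1,0},u)$, so $d(G)\cong A$ via the isomorphism $A_{1,1}\cong A_{1,0}\times_{A_0\times A_0}A_{1,0}$.

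\textbf{Main obstacle.} The bulk of the work — bookkeeping with face maps and iterated fibre products — is routine. The point requiring genuine care is the dictionary, used both ways, between the Kan condition on the $1$-coskeletal object $G$ (horn fillers with prescribed endpoints, with \emph{no} associativity, unit or inverse coherence on $e,m,\iota$) and the reflexivity/symmetry/transitivity of the corresponding equivalence relation in $\mathcal{E}_h(\categ C)$: this works only because $A_1\to A_0\times A_0$ is monic in $\mathcal{E}_h(\categ C)$, which makes ``being an equivalence relation'' a property; and it rests on the secondary observation that finite limits in $\mathcal{E}_h(\categ C)$ over a discrete object are computed on objects of objects by the naive fibre product in $\categ C$, which is exactly what lets the transitivity of $A$ deliver the composition $m$ on $G_1$.
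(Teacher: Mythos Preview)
Your proof is correct and follows essentially the same approach as the paper's: both define $d(G)_0=G_0$, $d(G)_1=(G_1,\,G_1\times_{G_0^2}G_1)$, invoke Proposition~\ref{cormonicmap} for monicity, and for essential surjectivity extract $G_0=B_0$, $G_1=B_{1,0}$ with a chosen representative of the degeneracy. The difference is only in level of detail: where the paper writes tersely ``the Kan structure on $B$ gives a Kan structure on $A$'' (and vice versa), you spell out the dictionary between horn sections and reflexivity/symmetry/transitivity via explicit $e,\iota,m$, and you correctly isolate the two points that make this work --- that maps into a discrete object of $\mathcal{E}_h(\categ C)$ have strict (not just homotopy) endpoint identities, and that homotopy pullbacks over a discrete object reduce to ordinary fibre products on level~0.
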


\begin{proof}
Let $A \in \mathcal{E}^w_h(\categ C)$ be a weak equivalence groupoid.
Then, let us denote
\begin{align*}
    dA_0 &= A_0;
    \\
    dA_{1, 0} &= A_1;
    \\
    dA_{1, 1} &= A_1  \times_{A_0^2} A_1 .
\end{align*}
Then, the pair $(dA_{1, 0}, dA_{1, 1})$ has the canonical structure of an equivalence groupoid given by the maps
\begin{align*}
& dA_{1,1} = A_1  \times_{A_0^2} A_1 \to A_{1}^2 = dA_{1, 0} \times dA_{1, 0}
\\ 
& dA_{1, 0} = A_1 \xrightarrow{\text{diagonal}} A_1  \times_{A_0^2} A_1 = dA_{1,1}.
\end{align*}
We denote $dA_1$ this weak equivalence groupoid.
Moreover, the pair $(dA_0 , dA_1)$ has the canonical structure of a 1-coskeletal simplicial object in $\mathcal{E}_h(\categ C)$. Indeed, the face morphism $dA_1 \to dA_0 \times dA_0$ is given by the maps
\begin{align*}
    &dA_{0,1} = A_1 \to A_0 \times A_0 = dA_0 \times dA_0;
    \\
    &dA_{1,1} = A_1  \times_{A_0^2} A_1 \to A_0 \times A_0 = dA_0 \times dA_0;
\end{align*}
and the degeneracy is the map
$$
dA_0 = A_0 \to A_1 \to dA_1.
$$
Proposition \ref{cormonicmap} tells us that the map $dA_1 \to dA_0 \times dA_0$ is monic, and the Kan structure on $A$ gives a Kan structure on $dA$. Thus $dA$ is an equivalence groupoid in equivalence groupoids. One can then notice that the construction $A \mapsto dA$ is natural and defines a functor from $\mathcal{E}^w(\categ C)$ to 
$\mathcal{E}_h(\categ C, \mathcal{E}_h(\categ C))$.

Now, let $B$ be an object of $\mathcal{E}_h(\categ C, \mathcal{E}_h(\categ C))$. Let us denote
$$
A_0 = B_0; A_1 = B_{1,0} .
$$
Let us choose a map $s: A_0 \to A_1$ that represents the degeneracy map $B_0 \to B_1$. The face map $B_1 \to B_0 \times B_0$ gives us a map  $A_1 \to A_0 \times A_0$ whom $s$ is a common section. Thus $A$ is a 1-cokeletal simplicial object in $\categ C$. The Kan structure on $B$ gives a Kan structure on $A$. Thus $A$ is a weak equivalence groupoid. Moreover, one has a canonical isomorphism in $\mathcal{E}_h(\categ C, \mathcal{E}_h(\categ C))$
$$
B \simeq dA .
$$
\end{proof}

\begin{definition}
Let $\mathcal{E}_h^{(2)}(\categ C)$ be the category, called the homotopy category of equivalence 2-groupoids, whose
\begin{enumerate}
    \item objects are those of $\mathcal E^w(\categ C)$, that is weak equivalence groupoids also called equivalence 2-groupoids in $\categ C$;
    \item morphisms from $A$ to $B$ are those of $\mathcal{E}_h(\categ C, \mathcal{E}_h(\categ C))$, using the previous functor from $\mathcal{E}_h(\categ C)$ to
$\mathcal{E}_h(\categ C, \mathcal{E}_h(\categ C))$.
\end{enumerate}
In particular this previous functor factorises as
$$
\mathcal{E}^w(\categ C)\to 
\mathcal{E}_h^{(2)}(\categ C) \to
\mathcal{E}_h(\categ C, \mathcal{E}_h(\categ C)).
$$
The first component is the identity on objects and the second one is an equivalence.
\end{definition}

\begin{remark}
We work with the category $\mathcal{E}_h^{(2)}(\categ C)$ instead of the equivalent category 
$\mathcal{E}_h(\categ C, \mathcal{E}_h(\categ C))$ because its objects are easier to handle.
\end{remark}

Let us give another presentation of the the homotopy category of equivalence 2-groupoids $\mathcal{E}_h^{(2)}(\categ C)$.

\begin{definition}
Let $\mathcal{E}^{(2)}(\categ C)$ be the category of equivalence 2-groupoids whose
\begin{enumerate}
    \item objects are weak equivalence groupoids also called equivalence 2-groupoids;
    \item morphisms of equivalence 2-groupoids from $A$ to $B$ are the data of maps $f_0: A_0 \to B_0$ and $f_1: A_1 \to B_1$ so that the following diagram commutes
    $$
    \begin{tikzcd}
        A_1
        \ar[r,"f_1"] \ar[d]
        & B_1
        \ar[d]
        \\
        A_0 \times A_0
        \ar[r, "f_0 \times f_0"']
        & B_0 \times B_0 .
    \end{tikzcd}
    $$
\end{enumerate}
\end{definition}

\begin{remark}
An equivalence 2-groupoid is the same thing as a weak equivalence groupoid. However, a morphism of weak equivalence groupoid is in particular a morphism of equivalence 2-groupoids but there are
morphisms of equivalence 2-groupoids that are not morphisms of weak equivalence groupoids.
\end{remark}

\begin{definition}
Two morphisms of equivalence 2-groupoids $f,g: A \to B$ are called homotopic if the map
$$
(f_0, g_0): A_0 \to B_0 \times B_0
$$
factorises through $B_1$. This is an equivalence  relation on $\hom_{\mathcal{E}^{(2)}(\categ C)}(A,B)$. Moreover, for any pair of morphisms $f', g': B \to B'$:
$$
f \sim g \text{ and } f' \sim g' \implies f' \circ f \sim g' \circ g . 
$$
\end{definition}

\begin{proposition}
The category $\mathcal{E}_h^{(2)}(\categ C)$ is obtained from $\mathcal{E}^{(2)}(\categ C)$ by merging homotopic morphisms. That explains why we called  $\mathcal{E}_h^{(2)}(\categ C)$ the homotopy category of equivalence 2-groupoids.
\end{proposition}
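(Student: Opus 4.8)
The plan is to fix two weak equivalence groupoids $A$ and $B$ and to produce, for each such pair, a surjection
$$
\hom_{\mathcal{E}^{(2)}(\categ C)}(A,B) \twoheadrightarrow \hom_{\mathcal{E}_h^{(2)}(\categ C)}(A,B) = \hom_{\mathcal{E}_h(\categ C,\, \mathcal{E}_h(\categ C))}(dA,dB)
$$
whose fibres are exactly the homotopy classes of the Definition above, and which is compatible with composition and units. Since both categories have the same objects and the comparison is the identity on objects, this is enough. Throughout I would use that $dA = (A_0, dA_1)$ with $dA_1 = (A_1,\, A_1\times_{A_0^2}A_1)$, and likewise for $B$, where $d$ is the essentially surjective functor of the preceding Proposition.

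First I would unwind the right-hand side. By the very definition of $\mathcal{E}_h$ applied to the lex category $\mathcal{E}_h(\categ C)$, a morphism $dA \to dB$ in $\mathcal{E}_h(\categ C,\,\mathcal{E}_h(\categ C)) \subset \mathcal{E}_h(\mathcal{E}_h(\categ C))$ is a homotopy class of maps $u_0 \colon dA_0 \to dB_0$ of $\mathcal{E}_h(\categ C)$ such that $dA_1 \to dA_0\times dA_0 \xrightarrow{u_0\times u_0} dB_0\times dB_0$ factorises through the monomorphism $dB_1 \hookrightarrow dB_0\times dB_0$; that this map is monic holds by Proposition \ref{cormonicmap}, which, as already observed, applies to $d$ of an arbitrary weak equivalence groupoid. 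Because $\categ C$ is a full subcategory of $\mathcal{E}_h(\categ C)$ stable under finite limits, $dA_0 = A_0$ and $dB_0\times dB_0 = B_0\times B_0$ as objects of $\categ C$, so $u_0$ is simply a map $A_0\to B_0$ of $\categ C$.

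Next I would show that the factorisation condition is precisely the datum of an edge component. A morphism of $\mathcal{E}_h(\categ C)$ out of the equivalence groupoid $dA_1$ into an object of $\categ C$ is represented on vertices by a single map of $\categ C$; here the composite in question is represented by $A_1 \to A_0\times A_0 \xrightarrow{u_0\times u_0} B_0\times B_0$. Since $B_1 \to B_0\times B_0$ is monic, factorising through $dB_1 = (B_1,\, B_1\times_{B_0^2}B_1)$ amounts to lifting this vertex map along $B_1 \hookrightarrow B_0\times B_0$ inside $\categ C$ --- the compatibility with the edge object $A_1\times_{A_0^2}A_1$ being automatic, since the latter already lives over $A_0\times A_0$, so two lifts differ by something invisible after composing to $B_0\times B_0$. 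A lift $f_1 \colon A_1\to B_1$ is exactly a second component making $(u_0,f_1)$ a morphism of $\mathcal{E}^{(2)}(\categ C)$, and conversely any $(f_0,f_1)\in\hom_{\mathcal{E}^{(2)}(\categ C)}(A,B)$ supplies such a $u_0 = f_0$ together with the witnessing lift $f_1$. This yields the surjection $(f_0,f_1)\mapsto [u_0 = f_0]$.

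Finally I would compare the two equivalence relations. Applying Proposition \ref{lemmahomotopy} to the equivalence groupoids $dA, dB$ inside the lex category $\mathcal{E}_h(\categ C)$, two morphisms $dA\to dB$ with vertex parts $u_0, u_0'$ become equal in $\mathcal{E}_h(\categ C,\,\mathcal{E}_h(\categ C))$ if and only if $(u_0,u_0')\colon dA_0\to dB_0\times dB_0$ factorises through $dB_1$; by the same unwinding this reads exactly ``$(u_0,u_0')\colon A_0\to B_0\times B_0$ lifts along $B_1\hookrightarrow B_0\times B_0$'', which is the homotopy relation defining $\mathcal{E}^{(2)}(\categ C)$ --- and, as it must, it depends on neither $f_1$ nor $g_1$. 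Hence the fibres of the surjection above are precisely the homotopy classes, and compatibility with composition and with units is immediate, since on both sides composition is computed by composing the vertex maps $f_0$ in $\categ C$. I expect the only genuinely delicate point to be the middle step: checking that ``factorisation up to homotopy in the homotopy category $\mathcal{E}_h(\categ C)$'' collapses to an honest lift in $\categ C$; this rests on the monomorphism supplied by Proposition \ref{cormonicmap} together with the fact that all the edge objects in play are pullbacks over $A_0\times A_0$, so that no coherence data is lost in the reduction.
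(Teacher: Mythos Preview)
Your argument is correct and is precisely the careful unpacking that the paper's one-word proof (``Straightforward'') leaves to the reader: you identify a morphism $dA\to dB$ in $\mathcal{E}_h(\categ C,\mathcal{E}_h(\categ C))$ with a map $u_0\colon A_0\to B_0$ in $\categ C$ subject to the existence of a compatible $f_1\colon A_1\to B_1$, and match the outer homotopy relation with the one defining $\mathcal{E}^{(2)}_h$. One notational slip to clean up: when you write ``since $B_1\to B_0\times B_0$ is monic'' and use the hooked arrow $B_1\hookrightarrow B_0\times B_0$, this is false in $\categ C$ for a general weak equivalence groupoid $B$; what you actually need (and what you correctly established just before via Proposition~\ref{cormonicmap}) is that $dB_1\to B_0\times B_0$ is monic \emph{in $\mathcal{E}_h(\categ C)$}, so that the factorisation through $dB_1$ is a property rather than extra data---the lift $f_1$ itself need not be unique in $\categ C$, but the resulting morphism $dA_1\to dB_1$ in $\mathcal{E}_h(\categ C)$ is.
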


\begin{proof}
Straightforward.
\end{proof}

\begin{corollary}\label{corepicmap}
Let $f: A \to B$ be a morphism of equivalence 2-groupoids. If $f_0$ is an isomorphism, then the induced morphism in $\mathcal E_h^{(2)}(\categ C)$ is epic.
\end{corollary}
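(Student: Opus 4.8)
The plan is to prove epicness directly by unwinding definitions, with essentially no computation. Since the previous proposition tells us that $\mathcal{E}_h^{(2)}(\categ C)$ is obtained from $\mathcal{E}^{(2)}(\categ C)$ by merging homotopic morphisms, every parallel pair of morphisms out of $B$ in $\mathcal{E}_h^{(2)}(\categ C)$ is represented by a pair of morphisms of equivalence 2-groupoids $g,h : B \to C$, and two such represent the same morphism iff $g \sim h$. Hence it suffices to show: if $g,h : B \to C$ are morphisms of equivalence 2-groupoids with $g \circ f \sim h \circ f$, then $g \sim h$. So the first step is to fix such $g,h$ and spell out the hypothesis: since $(g\circ f)_0 = g_0 \circ f_0$ and $(h\circ f)_0 = h_0 \circ f_0$, the relation $g\circ f \sim h\circ f$ means precisely that $(g_0 \circ f_0,\, h_0 \circ f_0) : A_0 \to C_0 \times C_0$ factorises through the monomorphism $C_1 \hookrightarrow C_0 \times C_0$; I would pick a witnessing map $\phi : A_0 \to C_1$.

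The key step is then to use that $f_0$ is an isomorphism: I would form $\phi \circ f_0^{-1} : B_0 \to C_1$ and observe that its composite with $C_1 \to C_0 \times C_0$ equals $(g_0 \circ f_0,\, h_0 \circ f_0)\circ f_0^{-1} = (g_0, h_0)$. Thus $(g_0,h_0) : B_0 \to C_0 \times C_0$ factorises through $C_1$, which is by definition the statement $g \sim h$. This yields $[g] = [h]$ in $\mathcal{E}_h^{(2)}(\categ C)$, so $[f]$ is epic.

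I do not expect a genuine obstacle here; the proof is a one-line manipulation of a factorisation. The only conceptual point worth flagging in the writeup is that the homotopy relation between morphisms of equivalence 2-groupoids depends only on their degree-$0$ components, so the hypothesis on $f_0$ alone is what is used and the component $f_1$ (and indeed the structure maps of $A$ and $B$) play no role in the argument.
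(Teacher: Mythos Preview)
Your proof is correct and is essentially identical to the paper's own argument: both test against a parallel pair $g,g'$ out of $B$, unwind the homotopy $gf \sim g'f$ as a factorisation of $(g_0 f_0, g'_0 f_0)$ through the target's $1$-component, and precompose with $f_0^{-1}$ to get the factorisation witnessing $g \sim g'$. One small terminological slip: for equivalence 2-groupoids the map $C_1 \to C_0 \times C_0$ need not be a monomorphism (that is the extra condition distinguishing equivalence \emph{groupoids}), but your argument never uses monicity, only the existence of a factorisation, so this does not affect correctness.
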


\begin{proof}
Let us consider two maps $g,g' : B \to Y$. If $ gf$ is homotopic to $g'f$, then the map
$$
A_0 \xrightarrow{f_0} B_0 \xrightarrow{(g_0,g'_0)} Y_0 \times Y_0
$$
factorises through $Y_1$. But, since the map $f_0$ is an isomorphism, then the map $(g_0,g'_0)$ itself factorises through $Y_1$. Hence $g$ is homotopic to $g'$.
\end{proof}

\subsection{A remark on strict units}

One can represent some notion of a monoid up to homotopy in a lex category $\categ C$ using simplicial objects in $\categ C$ (for instance weak equivalence groupoids). However, such homotopical monoids and their homotopical morphisms encoded by simplicial objects and morphisms of simplicial objects are always strict unital. For instance, in a simplicial object $X$, the operation of composition with the unit on the left may be encoded as the composition
$$
X_1 \xrightarrow{\sigma_0} X_2 \xrightarrow{\delta_1} X_1
$$
which is strictly the identity of $X_1$. Similarly, a morphism of simplicial objects $f: X \to Y$ commutes strictly with respect to units in the sense that the following square is strictly commutative
$$
\begin{tikzcd}
    X_1
    \ar[r]
    & Y_1
    \\
    X_0
    \ar[u, "\sigma"] \ar[r]
    & Y_0 .
    \ar[u, "\sigma"]
\end{tikzcd}
$$

This is the reason why we use $\mathcal{E}_h^{(2)}(\categ C)$ instead of $\mathcal{E}_h^w(\categ C)$ to describe equivalence groupoids up to homotopy in $\categ C$. Indeed, for any morphism of equivalence 2-groupoids $f:X \to Y$, the diagram
$$
\begin{tikzcd}
    X_1
    \ar[r]
    & Y_1
    \\
    X_0
    \ar[u, "\sigma"] \ar[r]
    & Y_0 .
    \ar[u, "\sigma"]
\end{tikzcd}
$$
commutes only up to homotopy in the sense that the map
from $X_0$ to $Y_1 \times Y_1$ factorises through $Y_{1,1} = Y_1 \times_{Y_0^2} Y_1$.

\subsection{Finite limits in equivalence 2-groupoids}

\begin{definition}
Let us consider a cospan diagram $A \to D \leftarrow B$ in $\mathcal{E}^{(2)}(\categ C)$. Then, we denote $A \times^{h}_D B$ the 1-coskeletal simplicial object in $\categ C$ so that
\begin{align*}
(A \times^{h}_D B)_0 =& A_0 \times_{D_0} D_1 \times_{D_0} B_0
\\
(A \times^{h}_D B)_1 =& A_1 \times_{D_0 \times D_0} (D_1 \times D_1) \times_{D_0 \times D_0} B_1,
\end{align*}
where the second formula actually denotes the limit in $\categ C$ os the following diagram
$$
\begin{tikzcd}
    D_0
    & A_0
    \ar[l]
    & A_1
    \ar[l, "d_0"'] \ar[r, "d_1"]
    & A_0
    \ar[r]
    & D_0
    \\
    D_1
    \ar[u, "d_0"] \ar[d, "d_1"']
    &&&& D_1
    \ar[u, "d_0"'] \ar[d, "d_1"]
    \\
    D_0
    & B_0
    \ar[l]
    & B_1
    \ar[l, "d_0"'] \ar[r, "d_1"]
    & B_0
    \ar[r]
    & D_0 .
\end{tikzcd}
$$
The face map is given by the face maps of $A$ and $B$ and the degeneracy map is given by those of $A$ and $B$ and the diagonal of $D$.

A straightforward checking shows that $A \times^{h}_D B$ is an equivalence 2-groupoid, that is it satisfies the Kan condition.
\end{definition}

\begin{proposition}
The category $\mathcal{E}_h^{(2)}(\categ C)$ admits finite limits.
\end{proposition}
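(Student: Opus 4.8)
The plan is to exhibit a terminal object and all pullbacks in $\mathcal{E}_h^{(2)}(\categ C)$; a category having both has all finite limits.

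For the terminal object, I would take the weak equivalence groupoid $E$ with $E_0=E_1=\ast_{\categ C}$, the face map being the identity of $\ast_{\categ C}$; it is $1$-coskeletal and vacuously Kan, hence an equivalence 2-groupoid. For any equivalence 2-groupoid $A$ the set $\hom_{\mathcal{E}^{(2)}(\categ C)}(A,E)$ is a singleton (there is a unique pair of maps to $\ast_{\categ C}$), so its quotient by the homotopy relation is a singleton too; thus $E$ is terminal in $\mathcal{E}_h^{(2)}(\categ C)$.

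For pullbacks, given a cospan $A\to D\leftarrow B$ in $\mathcal{E}_h^{(2)}(\categ C)$, I would pick representatives $f\colon A\to D$ and $g\colon B\to D$ in $\mathcal{E}^{(2)}(\categ C)$ and claim that the object $A\times^h_D B$ of the preceding definition — which is an equivalence 2-groupoid — represents the pullback, with the evident projections $p_A$ and $p_B$ (onto the $A$- and $B$-factors at levels $0$ and $1$). These are morphisms of equivalence 2-groupoids, and the component $(A\times^h_D B)_0\to D_1$ realises $(f_0(p_A)_0,\,g_0(p_B)_0)\colon (A\times^h_D B)_0\to D_0\times D_0$ as a map factoring through $D_1$, so $f\circ p_A$ and $g\circ p_B$ agree in $\mathcal{E}_h^{(2)}(\categ C)$ and we have a commuting square. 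To check its universal property, let $U$ be an equivalence 2-groupoid with $a\colon U\to A$ and $b\colon U\to B$ in $\mathcal{E}_h^{(2)}(\categ C)$ such that $f\circ a$ and $g\circ b$ agree there; fixing representatives and a map $\theta\colon U_0\to D_1$ witnessing $(f_0a_0,g_0b_0)$ factoring through $D_1$, one checks that $(a_0,\theta,b_0)$ at level $0$ and $(a_1,(\theta d_0,\theta d_1),b_1)$ at level $1$ assemble into a morphism $h\colon U\to A\times^h_D B$ of equivalence 2-groupoids with $p_A\circ h=a$ and $p_B\circ h=b$ on the nose. For uniqueness, if $h,h'\colon U\to A\times^h_D B$ satisfy $p_A h\sim p_A h'$ and $p_B h\sim p_B h'$, the maps $U_0\to A_1$ and $U_0\to B_1$ witnessing these homotopies, together with the $D_1$-components of $h_0$ and $h'_0$, satisfy precisely the constraints defining $(A\times^h_D B)_1$ and hence give a factorisation of $(h_0,h'_0)$ through $(A\times^h_D B)_1$, i.e. $h\sim h'$.

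I expect the single non-formal point to be well-definedness of $h$ on homotopy classes, namely that altering the chosen representatives of $a$, $b$ or the witness $\theta$ only alters $h$ up to homotopy; this reduces to observing that two witnesses of $f\circ a\sim g\circ b$, combined with degeneracies on $A_1$ and $B_1$, again satisfy the constraints defining $(A\times^h_D B)_1$, and it is the one place that uses that $D$ is a (weak) equivalence groupoid. The remaining verifications — the Kan condition for $A\times^h_D B$, compatibility with face maps of the maps just built, and the strict identities $p_A h=a$, $p_B h=b$ — are routine diagram chases in $\categ C$, which is why I would not spell them out. Note that the pullback here is genuinely not the one computed in $\mathcal{E}_h(\mathcal{E}_h(\categ C))$ under the equivalence $\mathcal{E}_h^{(2)}(\categ C)\simeq\mathcal{E}_h(\categ C,\mathcal{E}_h(\categ C))$, since the latter subcategory is not closed under the ambient limits; this is precisely why the explicit formula $A\times^h_D B$ is needed.
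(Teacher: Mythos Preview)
Your proof is correct and follows exactly the paper's approach: exhibit the terminal simplicial object as final and verify that $A\times^h_D B$ is the pullback of a cospan represented in $\mathcal{E}^{(2)}(\categ C)$. The paper's own proof merely asserts that ``one can check'' the universal property of $A\times^h_D B$, whereas you have actually sketched that check (existence via $(a_0,\theta,b_0)$, uniqueness via assembling the homotopy witnesses into a point of $(A\times^h_D B)_1$, and independence of the chosen representatives); nothing further is required.
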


\begin{proof}
It has a final object which is the final simplicial object in $\categ C$.
Moreover, for any cospan diagram $A \to D \leftarrow B$ of equivalence 2-groupoids, one can check that its pullback in $\mathcal{E}_h^{(2)}(\categ C)$ is given by $A \times^{h}_D B$.
\end{proof}

\begin{proposition}
The full subcategory of $\mathcal{E}^{(2)}(\categ C)$ (resp. $\mathcal{E}^{(2)}_h(\categ C)$) spanned by equivalence groupoids identifies canonically with $\mathcal{E}(\categ C)$ (resp. $\mathcal{E}_h(\categ C)$). Moreover, the canonical fully faithful embedding
$$
\mathcal{E}_h(\categ C) \to \mathcal{E}_h^{(2)}(\categ C)
$$
is lex. Subsequently, the canonical fully faithful embedding
$$
\categ C \to \mathcal{E}_h(\categ C) \to \mathcal{E}_h^{(2)}(\categ C)
$$
is also lex.
\end{proposition}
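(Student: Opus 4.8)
The statement gathers three claims; the plan is to prove them in turn, the first and third being formal and the second carrying the only computation.

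\emph{Identification of the full subcategories.} On objects there is nothing to prove: an object of $\mathcal{E}^{(2)}(\categ C)$ (resp.\ of $\mathcal{E}^{(2)}_h(\categ C)$) which is an equivalence groupoid is exactly an object of $\mathcal{E}(\categ C)$ (resp.\ of $\mathcal{E}_h(\categ C)$). Since a morphism of weak equivalence groupoids is in particular a morphism of equivalence 2-groupoids, there is a canonical identity-on-objects functor from $\mathcal{E}(\categ C)$ to the full subcategory of $\mathcal{E}^{(2)}(\categ C)$ spanned by equivalence groupoids, and I would check it is an isomorphism of categories. It is faithful because, $B_1 \to B_0\times B_0$ being a monomorphism, a morphism of weak equivalence groupoids $A \to B$ is determined by its component $f_0$, which is also the underlying map of the associated morphism of equivalence 2-groupoids. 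It is full because, given a morphism $(f_0,f_1)\colon A \to B$ of equivalence 2-groupoids, the map $f_1$ exhibits $A_1 \to A_0\times A_0 \xrightarrow{f_0\times f_0} B_0\times B_0$ as factoring through $B_1$, and the compatibility of $f_0$ with the degeneracy maps is then automatic (the two candidate maps $A_0 \to B_1$ agree after composition with the monomorphism $B_1 \to B_0\times B_0$), so that $f_0$ underlies a morphism of weak equivalence groupoids. Finally, for equivalence groupoids two such morphisms are identified in $\mathcal{E}^{(2)}_h(\categ C)$ precisely when $(f_0,g_0)\colon A_0 \to B_0\times B_0$ factors through $B_1$, which by Proposition \ref{lemmahomotopy} is exactly the homotopy relation of $\mathcal{E}_h(\categ C)$; hence the functor descends to an isomorphism of $\mathcal{E}_h(\categ C)$ onto the full subcategory of $\mathcal{E}^{(2)}_h(\categ C)$ spanned by equivalence groupoids, and this isomorphism is the canonical fully faithful embedding of the statement.

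\emph{Lexness of $\mathcal{E}_h(\categ C) \to \mathcal{E}_h^{(2)}(\categ C)$.} A full subcategory of a category with finite limits that contains the terminal object and is closed under pullbacks inherits finite limits, computed as in the ambient category, and its inclusion is lex; so it suffices to see that the terminal object of $\mathcal{E}_h^{(2)}(\categ C)$ and every pullback $A\times^h_D B$ of a cospan $A \to D \leftarrow B$ of equivalence groupoids lie in $\mathcal{E}_h(\categ C)$. The terminal object is the constant simplicial object at $\ast_{\categ C}$, whose map $\ast \to \ast\times\ast$ is invertible, hence monic, so it is an equivalence groupoid. For the pullback, one must check that $(A\times^h_D B)_1 \to (A\times^h_D B)_0 \times (A\times^h_D B)_0$ is a monomorphism: using the explicit formulas $(A\times^h_D B)_0 = A_0\times_{D_0} D_1\times_{D_0} B_0$ and $(A\times^h_D B)_1 = A_1\times_{D_0\times D_0}(D_1\times D_1)\times_{D_0\times D_0} B_1$, together with the fact that the face maps of $A\times^h_D B$ are induced by those of $A$ and $B$, two points of $(A\times^h_D B)_1$ with the same image agree in the $D_1\times D_1$ coordinate and, since $A_1 \to A_0\times A_0$ and $B_1 \to B_0\times B_0$ are monomorphisms, in the $A_1$ and $B_1$ coordinates as well. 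Thus $A\times^h_D B$ is an equivalence groupoid; being the pullback of the cospan in $\mathcal{E}_h^{(2)}(\categ C)$ and lying in the full subcategory $\mathcal{E}_h(\categ C)$, it is a fortiori the pullback there, so the embedding preserves pullbacks and the terminal object, hence all finite limits.

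\emph{The composite with $\categ C$, and the main obstacle.} The inclusion $\categ C \hookrightarrow \mathcal{E}_h(\categ C)$ is lex because $\categ C$ is stable under finite limits in $\mathcal{E}_h(\categ C)$ (recalled above), so the composite $\categ C \to \mathcal{E}_h(\categ C) \to \mathcal{E}_h^{(2)}(\categ C)$ is a composite of lex functors and therefore lex. The only step that is not purely formal is the monomorphism check for $A\times^h_D B$ in the second part; there the work is just a careful unwinding of the explicit simplicial structure of $A\times^h_D B$, and everything else follows from Proposition \ref{lemmahomotopy}, from fullness, and from the finite-limit statements already established for $\mathcal{E}_h^{(2)}(\categ C)$, $\mathcal{E}_h(\categ C)$ and $\categ C$.
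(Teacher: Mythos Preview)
Your proof is correct and follows essentially the same outline as the paper's. The paper dispatches the identification of full subcategories as a ``straightforward checking'' and handles lexness by asserting the identity $A \times^\Delta_D B = A \times^h_D B$ for cospans of equivalence groupoids; you instead spell out the identification carefully (via fullness, faithfulness, and Proposition~\ref{lemmahomotopy}) and verify lexness by checking directly that $(A\times^h_D B)_1 \to (A\times^h_D B)_0^2$ is monic, then invoking closure of a full subcategory under ambient limits. These are two presentations of the same argument: your monomorphism check is precisely what makes the paper's identity hold at level~1, and both yield that the $\mathcal{E}_h^{(2)}$-pullback already lives in $\mathcal{E}_h(\categ C)$.
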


\begin{proof}
The first part of the proposition follows from a straighforward checking. The lex property follows from the fact that
$$
A \times^\Delta_D B = A \times^h_D B
$$
for any cospan diagram $A \to D \leftarrow B$ of equivalence groupoids.
\end{proof}

\subsection{Quotients in equivalence 2-groupoids}

Let us consider a diagram
$$
D : \Delta_{\leq 1}^\op \to \mathcal{E}^{(2)}(\categ C)
$$
that becomes an equivalence 2-groupoid in the homotopy category $\mathcal{E}_h^{(2)}(\categ C)$.
Let us denote
\begin{align*}
    D_0 &= A;
    \\
    D_1 &= B.
\end{align*}
Then, let us denote
\begin{align*}
    cD_0 &= A_0
    \\
    cD_1 &= A_1 \times_{A_0} B_0 \times_{A_0} A_1
\end{align*}
We have a face map
$$
cD_1 = A_1 \times_{A_0} B_0 \times_{A_0} A_1 \to A_1 \times A_1 \xrightarrow{d_0^A \times d_1^A} A_0 \times A_0 = cD_0 \times cD_0.
$$
Moreover, the composite morphism
$$
A \to B \to A \times A
$$
is homotopic to the diagonal of $A$. Such an homotopy gives a degeneracy map
$$
cD_0 = A_0 \to A_1 \times_{A_0} B_0 \times_{A_0} A_1 = cD_1
$$
that is a section of the two previous face maps from $cD_1$ to $cD_0$.

\begin{definition}
Let $cD$ be the 1-coskeletal simplicial object in $\categ C$ whose structural objects are $cD_0$ and $cD_1$ and whose structural maps are described in the paragraph just above.
\end{definition}

\begin{lemma}
Since, the image of $D$ in $\mathcal{E}_h^{(2)}(\categ C)$ has the Kan property, then $cD$ is an equivalence 2-groupoid.
\end{lemma}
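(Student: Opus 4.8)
Since $cD$ is $1$-coskeletal by construction, it remains to check that it satisfies the Kan condition. The plan is to use the criterion, established above, that a $1$-coskeletal simplicial object $X$ is Kan precisely when the three restriction maps $X(\partial\Delta[2]) \to X(\Lambda^i[2])$, $i = 0,1,2$, admit sections. A morphism $p$ of $\categ C$ is a split epimorphism if and only if $\hom_{\categ C}(T,p)$ is surjective for every object $T$ (take $T$ to be the target of $p$ and lift $\mathrm{id}$), and $\hom_{\categ C}(T,-)$ being left exact commutes with all the finite limits occurring in the $1$-coskeletal objects $cD(\partial\Delta[2])$ and $cD(\Lambda^i[2])$; so I would first reduce to proving that the simplicial set $\hom_{\categ C}(T, cD)$ is a Kan complex for every object $T$ of $\categ C$. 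Applying the lex functor $\hom_{\categ C}(T,-) \colon \categ C \to \Set$ everywhere --- it carries (weak) equivalence groupoids to (weak) equivalence groupoids, homotopies to homotopies and split epimorphisms to split epimorphisms, commutes with the homotopy pullbacks $\times^{h}$ computing the finite limits of $\mathcal{E}_h^{(2)}$, and commutes with the formation of $cD$ --- this reduces the whole statement to the case $\categ C = \Set$, where everything may be checked with honest elements and where, crucially, a $1$-coskeletal simplicial set is Kan as soon as each individual $2$-horn has a filler (no naturality of the filler being required).

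In the case $\categ C = \Set$, I would describe $cD$ explicitly: its vertices are the objects of the groupoid $A = D_0$, and an edge of $cD$ is a zig-zag $(a_1, b, a_2)$ made of two arrows $a_1, a_2$ of $A$ and one object $b$ of the groupoid $B = D_1$, anchored to one another through the face maps $d_0^D, d_1^D \colon B \to A$; it is to be read as ``travel by $a_1$ in $A$, cross the relation encoded by $b$, then travel by $a_2$ in $A$''. The hypothesis that the image of $D$ in $\mathcal{E}_h^{(2)}(\Set)$ has the Kan property supplies, by the same criterion applied inside the lex category $\mathcal{E}_h^{(2)}(\Set)$, morphisms of equivalence $2$-groupoids $\sigma^i \colon D(\Lambda^i[2]) \to D(\partial\Delta[2])$ sectioning the restriction maps up to a prescribed homotopy, where $D(\Lambda^i[2])$ and $D(\partial\Delta[2])$ are the relevant iterated $\times^{h}$-limits; in particular an object of $D(\Lambda^1[2])$ is a pair of objects $b, b'$ of $B$ together with an arrow of $A$ matching the appropriate endpoints, and $\sigma^1$ turns such data into a ``composite'' object of $B$ together with coherence arrows of $B$ and of $A$.

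To fill a $\Lambda^1$-horn of $cD$ --- that is, a composable pair of zig-zags --- I would concatenate the two zig-zags, compose in the groupoid $A$ the two consecutive arrows of $A$ meeting at the junction so as to obtain data of the form ``object of $B$, arrow of $A$, object of $B$'', which is exactly an object of $D(\Lambda^1[2])$, and then apply the object-part of $\sigma^1$ to produce a new object of $B$. The coherence arrows returned by $\sigma^1$ (together with those of the homotopy witnessing that it is a section) live in $B$ and in $A$; pushing those living in $B$ down to $A$ along the $1$-components of $d_0^D, d_1^D$ lets me absorb all of them into the two outer arrows of the concatenated zig-zag, producing an edge of $cD$ with the correct source and target, which --- $cD$ being $1$-coskeletal --- is precisely the filler sought. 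The outer horns $\Lambda^0[2]$ and $\Lambda^2[2]$ are treated the same way, using moreover that $A$ and $B$ have invertible edges (being weak equivalence groupoids) and the outer-horn sections $\sigma^0, \sigma^2$. The main obstacle I anticipate lies entirely in this last step: the bookkeeping needed to follow how the only-up-to-homotopy section $\sigma^1$ interacts with the zig-zag presentation of the morphisms of $cD$, and the verification that every correction arrow really can be absorbed into the flanking arrows without disturbing the endpoints. Conceptually, however, the content is just that the quotient of the Kan object $A$ by the (homotopy-)Kan relation $B$, presented by zig-zags, is again Kan.
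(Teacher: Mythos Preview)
Your approach is correct and, at its core, identical to the paper's: extract from the Kan hypothesis on $D$ in $\mathcal{E}_h^{(2)}(\categ C)$ a composition map $B\times^h_A B\to B$ together with the homotopies making it lie over $A\times A$, and combine these with the Kan structure on $A$ to manufacture the section $cD(\Lambda^i[2])\to cD_1$ over $A_0\times A_0$. The paper carries this out directly in $\categ C$, writing the section as an explicit composite of pullback maps; you instead insert a Yoneda reduction to $\Set$ first. That reduction is valid --- $\hom_{\categ C}(T,-)$ is lex, hence commutes with the construction of $cD$ and with the homotopy pullbacks $\times^h$ defining the Kan hypothesis, and a map in $\categ C$ is split epi iff it is so after $\hom(T,-)$ for $T$ the target --- but it buys no real simplification: the zig-zag manipulation you sketch in $\Set$ is precisely the paper's chain of morphisms read pointwise, and the bookkeeping you identify as the main obstacle is exactly what the paper's displayed composite spells out. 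In short, same proof, different packaging; the paper is more explicit where you are more informal.
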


\begin{proof}
It amounts to show that the map $cD(\partial \Delta[2]) \to cD(\Lambda^i[2])$ has a section for $i=0,1,2$. Let us deal with the case $i=1$.
The fact that $D$ is Kan in $\mathcal{E}_h^{(2)}(\categ C)$ gives us a map
$$
B \times^h_A B \to B
$$
so that the following diagram commutes (up to homotopy)
$$
\begin{tikzcd}
    B \times^h_A B 
    \ar[r] \ar[rd]
    &B
    \ar[d]
    \\
    & A  \times A .
\end{tikzcd}
$$
The related level-0 map together with homotopies that make this diagram commute give us a morphism in $\categ C$
$$
B_0 \times_{A_0} A_1 \times_{A_0} B_0 \to A_1 \times_{A_0} B_0 \times_{A_0} A_1
$$
above $A_0 \times A_0$. Then, using also the Kan property on $A$ we get a map $f$ above $A_0 \times A_0$:
$$
\begin{tikzcd}
{cD(\Lambda^1[2])}
\ar[d, equal]
\\
A_1 \times_{A_0} B_0 \times_{A_0} A_1 \times_{A_0} A_1 \times_{A_0} B_0 \times_{A_0} A_1
\ar[d]
\\
A_1 \times_{A_0} B_0 \times_{A_0} A_1  \times_{A_0} B_0 \times_{A_0} A_1
\ar[d]
\\
A_1 \times_{A_0} A_1 \times_{A_0} B_0 \times_{A_0} A_1 \times_{A_0} A_1
\ar[d]
\\
A_1 \times_{A_0} B_0 \times_{A_0} A_1 
\ar[d, equal]
\\
cD_1
\end{tikzcd}
$$
This gives the expected section map
$$
cD(\Lambda^1[2]) \xrightarrow{(f, \id)} cD_1 \times_{cD_0 \times cD_0} cD(\Lambda^1[2]) = cD(\partial \Delta[2])
$$
The cases $i=0$ and $i=2$ may be proven using the same arguments.
\end{proof}

\begin{proposition}\label{propositioncolimitse}
There exists a map $A \to cD$ in $\mathcal{E}^{(2)}(\categ C)$ that makes $cD$ the coequaliser in $\mathcal{E}_h^{(2)}(\categ C)$ of
the pair of maps $B \rightrightarrows A$.
\end{proposition}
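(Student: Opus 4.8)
The plan is to write down the map $A \to cD$ by hand, check it coequalises $B \rightrightarrows A$, and then verify the universal property by constructing the factorisation explicitly, using the Kan structure of the target to fix all the choices.

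\emph{The structure map.} Write $\sigma \colon A \to B$ for the degeneracy of the diagram $D$ and $s^A \colon A_0 \to A_1$ for the degeneracy of $A$. I would take $\pi \colon A \to cD$ to be the identity at level $0$ (recall $cD_0 = A_0$) and, at level $1$, the map sending $a \in A_1$ to $(a,\ \sigma_0(d_1^A a),\ s^A(d_1^A a)) \in A_1 \times_{A_0} B_0 \times_{A_0} A_1 = cD_1$. The simplicial identities $d_0^D\sigma = d_1^D\sigma = \id_A$ (unitality of $D$) make this land in the fibre product and be compatible with the face map $cD_1 \to cD_0 \times cD_0$, so $\pi$ is a genuine morphism of equivalence $2$-groupoids. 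Since $\pi_0 = \id$, the composites $\pi\circ d_0^D$ and $\pi\circ d_1^D$ are homotopic exactly when the map $B_0 \to A_0\times A_0 = cD_0\times cD_0$ built from the two level-$0$ face maps of $D$ factors through $cD_1$, and the assignment $b \mapsto (s^A (d_0^D)_0(b),\ b,\ s^A (d_1^D)_0(b))$ does this. Hence $\pi$ coequalises the pair.

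\emph{The universal property.} Let $Y$ be an equivalence $2$-groupoid and $\phi \colon A \to Y$ a morphism with $\phi\circ d_0^D \sim \phi\circ d_1^D$. Choose a homotopy witnessing this, i.e.\ a map $k \colon B_0 \to Y_1$ whose face is $\bigl((d_0^D)_0,(d_1^D)_0\bigr)$ post-composed with $\phi_0\times\phi_0$. For $(a,b,a') \in cD_1$ the edges $\phi_1 a$, $k(b)$, $\phi_1 a'$ of $Y$ are composable — the relevant faces agree by the way $cD_1$ is glued and because $\phi_1$ commutes with faces — and I define $\psi \colon cD \to Y$ by $\psi_0 = \phi_0$ and $\psi_1(a,b,a') = $ the composite of this string of three edges, computed with a composition operation $Y_1\times_{Y_0}Y_1 \to Y_1$ fixed once and for all from the $1$-coskeletal Kan structure of $Y$; one then checks $\psi$ is a morphism of equivalence $2$-groupoids. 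Because homotopy of morphisms of equivalence $2$-groupoids depends only on the level-$0$ data and $\psi_0\pi_0 = \phi_0$, the relation $\psi\circ\pi \sim \phi$ holds immediately, via the degeneracy $Y_0 \to Y_1$. For uniqueness, if $\psi'$ also satisfies $\psi'\circ\pi \sim \phi$, then $\psi'\circ\pi \sim \psi\circ\pi$; since $\pi_0 = \id$ is an isomorphism, Corollary \ref{corepicmap} says $\pi$ is an epimorphism in $\mathcal{E}_h^{(2)}(\categ C)$, whence $\psi' \sim \psi$. This is the coequaliser property.

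\emph{Main obstacle.} The only real work is in the middle step: promoting ``compose the string of three edges'' to an honest morphism $\psi_1 \colon cD_1 \to Y_1$ in $\mathcal{E}^{(2)}(\categ C)$. This forces one to fix composition maps for $Y$ coming from its Kan fillers and then to carry out a somewhat tedious but routine diagram chase checking the face compatibility of $\psi$, together with the observation that the homotopy class of $\psi$ in $\mathcal{E}_h^{(2)}(\categ C)$ does not depend on those choices — and it is precisely here that $1$-coskeletality and the Kan property of $Y$ are used.
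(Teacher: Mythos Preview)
Your proof is correct and follows essentially the same route as the paper: define the structure map with the identity at level~$0$, witness the coequalising via an explicit lift $B_0 \to cD_1$, build the factorisation through $Y$ by composing the three edges $\phi_1 a,\ k(b),\ \phi_1 a'$ with a Kan filler, and deduce uniqueness from Corollary~\ref{corepicmap}. The only notable difference is your level-$1$ map $\pi_1(a) = (a,\ \sigma_0(d_1^A a),\ s^A(d_1^A a))$, which is more direct than the paper's --- the paper routes $f_1$ through the degeneracy of $cD$ and two applications of the Kan structure, whereas you exploit the strict simplicial identities of the functor $D \colon \Delta_{\leq 1}^{\op} \to \mathcal{E}^{(2)}(\categ C)$ to write the map down immediately; both are valid and lead to homotopic morphisms since they agree at level~$0$.
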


\begin{proof}
Let us describe such a morphism $f: A \to cD$. On the one hand, one can take $f_0 = \id_{A_0}$. On the other hand the level 1 map $f_1$ is the composition
$$
\begin{tikzcd}
    A_1
    \ar[d, equal]
    \\
    cD_0 \times_{A_0} A_1 \times_{A_0} cD_0
    \ar[d]
    \\
    cD_1 \times_{A_0} A_1 \times_{A_0} cD_1
    \ar[d, equal]
    \\
    A_1 \times_{A_0} B_0 \times_{A_0} A_1 \times_{A_0} A_1 \times_{A_0} A_1 \times_{A_0} B_0 \times_{A_0} A_1
    \ar[d, "{\text{using the Kan structure on }A_1}"]
    \\
    A_1 \times_{A_0} B_0 \times_{A_0} A_1
    \times_{A_0} A_1
    \times_{A_0} B_0 \times_{A_0} A_1
    \ar[d, equal]
    \\
    cD_1 \times_{A_0} cD_1
    \ar[d, "{\text{using the Kan structure on }cD_1}"]
    \\
    cD_1 .
\end{tikzcd}
$$
It is clear that the two composite maps $B \rightrightarrows A \to cD$ are homotopic since the two maps $B_0 \to A_0$ factorise in a trivial way through $cD_1$.

Let $U$ be a weak equivalence groupoid. The function
$$
\hom_{\mathcal{E}^{(2)}_h(\categ C)}(cD, U) \to \mathrm{equaliser}\left(\hom_{\mathcal{E}^{(2)}_h(\categ C)}(A, U) \rightrightarrows \hom_{\mathcal{E}^{(2)}_h(\categ C)}(B, U)\right)
$$
is monic since the map $A \to cD$ is epic (Corollary \ref{corepicmap}). Hence, it is an isomorphism if and only if it has a section. Let us build such a section. Let us consider a map of equivalence 2-groupoids $g : A \to U$ so that the two induced maps from $B$ to $U$ are homotopic. Such an homotopy gives a map $h : B_0 \to U_1$ so that the following square diagram commutes
$$
\begin{tikzcd}
    B_0
    \ar[r, "h"] \ar[d]
    & U_1
    \ar[d]
    \\
    A_0 \times A_0
    \ar[r, swap, "g_0 \times g_0"]
    & U_0 \times U_0 .
\end{tikzcd}
$$
We can build another map
$$
\begin{tikzcd}
cD_1 
\ar[r, equal]
& A_1 \times_{A_0} B_0 \times_{A_0} A_1
\ar[d,"{g_1 \times h \times g_1}"]
\\
&U_1 \times_{U_0} U_1 \times_{U_0} U_1    
\ar[d, "\text{Kan structure of }U"]
\\
&U_1
\end{tikzcd}
$$
above $U_0 \times U_0$. Together with the map $g_0: cD_0 =A_0 \to U_0$, this gives us a morphism $cD \to U$ in $\mathcal{E}^{(2)}(\categ C)$. Moreover, the composition $A \to cD \to U$ is homotopic to $g$ since both morphisms are represented by the same level 0 map $g_0 : A_0 \to U_0$. We thus have built a section of the function
$$
\hom_{\mathcal{E}^{(2)}_h(\categ C)}(cD, U) \to \mathrm{equaliser}\left(\hom_{\mathcal{E}^{(2)}_h(\categ C)}(A, U) \rightrightarrows \hom_{\mathcal{E}^{(2)}_h(\categ C)}(B, U)\right)
$$
which is subsequently an isomorphism. Hence, $cD$ is the coequaliser in $\mathcal{E}^{(2)}_h(\categ C)$ of the pair of maps $B \rightrightarrows A$.
\end{proof}

\begin{corollary}\label{elma}
Let $X$ be an equivalence 2-groupoid. Then $X$ is the coequaliser in the category $\mathcal{E}_h^{(2)}(\categ C)$ of the diagram
$$
X_1 \rightrightarrows X_0 .
$$
\end{corollary}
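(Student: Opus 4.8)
The strategy is to realise $X$ as an instance of the construction $cD$ of Proposition~\ref{propositioncolimitse}. Regard $X$ as a diagram $\Delta_{\leq 1}^\op \to \categ C$, that is, the data of objects $X_0, X_1$ of $\categ C$, face maps $d_0, d_1 \colon X_1 \to X_0$ and a degeneracy $s_0 \colon X_0 \to X_1$ with $d_0 s_0 = d_1 s_0 = \id_{X_0}$, and let $D \colon \Delta_{\leq 1}^\op \to \mathcal{E}^{(2)}(\categ C)$ be its composite with the fully faithful embedding $\categ C \hookrightarrow \mathcal{E}^{(2)}(\categ C)$. Thus $D_0 = X_0$ and $D_1 = X_1$, viewed as discrete equivalence $2$-groupoids, the two morphisms $D_1 \rightrightarrows D_0$ are $d_0$ and $d_1$, and the degeneracy $D_0 \to D_1$ is $s_0$.

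First I would check that $D$ becomes an equivalence $2$-groupoid in $\mathcal{E}_h^{(2)}(\categ C)$. Since $\categ C \hookrightarrow \mathcal{E}_h^{(2)}(\categ C)$ is lex and $\mathrm{cosk}_1$ is computed level-wise by finite limits, $\mathrm{cosk}_1 D$ is the image under this embedding of $\mathrm{cosk}_1 X$, which is Kan in $\categ C$ by hypothesis; applying the embedding to a section in $\categ C$ of each horn projection $X(\partial\Delta[2]) \to X(\Lambda^i[2])$ then produces the section required in $\mathcal{E}_h^{(2)}(\categ C)$. So Proposition~\ref{propositioncolimitse} applies and $cD$ is the coequaliser in $\mathcal{E}_h^{(2)}(\categ C)$ of the pair $d_0, d_1 \colon X_1 \rightrightarrows X_0$. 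It remains to identify $cD$ with $X$. For this one uses that $A := D_0$ is discrete, so $A_0 = A_1 = X_0$ with all structure maps equal to the identity; hence $cD_0 = A_0 = X_0$, and the two outer pullbacks in $cD_1 = A_1 \times_{A_0} B_0 \times_{A_0} A_1$ collapse, giving a canonical isomorphism $cD_1 \cong B_0 = X_1$. Unwinding the definitions shows that, under these identifications, the face maps of $cD$ become $d_0$ and $d_1$ and the degeneracy of $cD$ becomes $s_0$, so $cD \cong X$ in $\mathcal{E}^{(2)}(\categ C)$, and a fortiori in $\mathcal{E}_h^{(2)}(\categ C)$. Combining the two steps, $X$ is the coequaliser of $X_1 \rightrightarrows X_0$.

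The step I expect to demand genuine care is this last identification. The object $cD_1$ is a threefold pullback and the degeneracy of $cD$ is manufactured from a chosen homotopy between $A \to B \to A \times A$ and the diagonal of $A$ (here the trivial homotopy, since $d_0 s_0 = d_1 s_0 = \id_{X_0}$), so one must unwind these formulas patiently and verify that, once the discrete factors have been collapsed, the structure maps of $cD$ coincide on the nose with those of $X$ and not, say, with those of the opposite groupoid. The remaining points — stability of finite limits under the lex embedding $\categ C \hookrightarrow \mathcal{E}_h^{(2)}(\categ C)$ and the transport of Kan sections along it — are routine.
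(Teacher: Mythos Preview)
Your proof is correct and follows exactly the route the paper indicates: the paper's own proof is the single sentence ``This is a direct consequence of Proposition~\ref{propositioncolimitse} but it may also be proven using a straightforward checking,'' and you have spelled out the first of these alternatives in full, taking $D$ to be the image of $X$ under the lex embedding $\categ C \hookrightarrow \mathcal{E}^{(2)}(\categ C)$ and identifying $cD$ with $X$. The identification step you flag as delicate is indeed the only place that requires unwinding, and your account of it (the outer $A_1$-factors collapse since $A$ is discrete, and the trivial homotopy supplies the degeneracy) is accurate.
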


\begin{proof}
This is a direct consequence of Proposition \ref{propositioncolimitse} but it may also be proven using a straightforward checking.
\end{proof}

\begin{corollary}\label{corollaryeffectiveepi}
Let $f: A \to B$ be a morphism of equivalence 2-groupoids. If $f_0$ is an isomorphism, then the image of $f$ the homotopy category $\mathcal{E}_h^{(2)}(\categ C)$ is an effective epimorphism.
\end{corollary}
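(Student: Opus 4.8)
The plan is to show directly that $f$ exhibits $B$ as the coequaliser, in $\mathcal{E}_h^{(2)}(\categ C)$, of its kernel pair $A\times_B A\rightrightarrows A$; this is exactly what it means for $f$ to be an effective epimorphism. Since being an effective epimorphism is invariant under precomposition with an isomorphism of the source, I would first reduce to the case $A_0=B_0$ and $f_0=\id_{B_0}$, by replacing $A$ with the isomorphic equivalence $2$-groupoid $A'$ having $A'_0=B_0$, $A'_1=A_1$, face map $A_1\to A_0\times A_0\xrightarrow{f_0\times f_0}B_0\times B_0$, and the degeneracy and Kan structure transported along $f_0$; then $f_0$ induces an isomorphism $A\simeq A'$ in $\mathcal{E}^{(2)}(\categ C)$ through which $f$ corresponds to a morphism $A'\to B$ that is the identity on objects. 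I would also record at this point that $f$ is epic in $\mathcal{E}_h^{(2)}(\categ C)$, by Corollary \ref{corepicmap}.

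Next I would compute the kernel pair $K:=A\times^h_B A$ of $f$, which is the pullback of $f$ along itself; it is available because $\mathcal{E}_h^{(2)}(\categ C)$ has finite limits, computed via the homotopy pullbacks $\times^h$. Because $f_0=\id$, the object of objects $K_0=A_0\times_{B_0}B_1\times_{B_0}A_0$ identifies canonically with $B_1$, and under this identification the two projections $p,q\colon K\to A$ have level-$0$ maps $p_0=d_0^B$ and $q_0=d_1^B$ from $B_1$ to $B_0=A_0$. The level-$1$ part of $K$ will play no role, since the homotopy relation on morphisms of equivalence $2$-groupoids is detected on objects alone, by its very definition. In particular $fp\simeq fq$ in $\mathcal{E}_h^{(2)}(\categ C)$: their level-$0$ maps are $f_0d_0^B=d_0^B$ and $f_0d_1^B=d_1^B$, and the map $(d_0^B,d_1^B)\colon B_1\to B_0\times B_0$ factors through $B_1$, via the identity.

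Then I would verify the universal property. Given an equivalence $2$-groupoid $U$ and a morphism $g\colon A\to U$ with $gp\simeq gq$ in $\mathcal{E}_h^{(2)}(\categ C)$, a homotopy witnessing $gp\simeq gq$ is a map $h\colon B_1\to U_1$ with $(d_0^U,d_1^U)\circ h=(g_0d_0^B,g_0d_1^B)$; then $\bar g:=(g_0,h)$ is a morphism of equivalence $2$-groupoids $B\to U$ — the defining square commutes precisely by this identity — and $\bar g\circ f$ and $g$ have the same level-$0$ map $g_0$, hence are homotopic. So $g$ factors through $f$ in $\mathcal{E}_h^{(2)}(\categ C)$, and the factorisation is unique because $f$ is epic. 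Hence $f\colon A\to B$ is the coequaliser of $p,q\colon K\rightrightarrows A$, i.e. an effective epimorphism. (Alternatively, one could invoke Proposition \ref{propositioncolimitse} with $D_0=A$ and $D_1=K$ and then identify the quotient $cK$ with $B$, but that requires producing a homotopy inverse and seems longer.)

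The step I expect to be the main obstacle is the bookkeeping needed to identify the kernel pair and, in particular, the level-$0$ maps of its two projections after the reduction $f_0=\id$. The point that keeps the argument short is that the level-$1$ structure of the kernel pair never enters — homotopies between morphisms of equivalence $2$-groupoids are seen on objects only — so it suffices to recognise $K_0\simeq B_1$ with projections $d_0^B$ and $d_1^B$, together with the fact (Corollary \ref{corepicmap}) that $f$ is epic.
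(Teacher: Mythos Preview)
Your argument is correct. The paper's proof is a one-liner: it observes that $B$ is the coequaliser, through $f$, of the pair of maps $B_1 \rightrightarrows B_0 \simeq A_0 \to A$ (where $B_1$ is viewed as an object of $\categ C \subset \mathcal{E}_h^{(2)}(\categ C)$), and then appeals implicitly to the standard fact (Proposition~\ref{propeffectiveepi}) that any map which is a coequaliser of \emph{some} parallel pair is automatically the coequaliser of its kernel pair.

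The substantive difference is only in the choice of parallel pair. You compute the actual kernel pair $K=A\times^h_B A$ and verify the coequaliser property for it directly; the paper instead uses the simpler constant object $B_1$, whose two maps to $A$ have the same level-$0$ components $d_0^B,d_1^B$ as your projections. Since, as you correctly note, homotopies of maps in $\mathcal{E}^{(2)}_h(\categ C)$ are detected on level $0$ alone, the verification of the universal property is literally the same computation in both cases; the paper's packaging just avoids having to describe $K_1$ or invoke the reduction to $f_0=\id$. Your route has the small advantage of being self-contained (it does not rely on Proposition~\ref{propeffectiveepi}), while the paper's is shorter.
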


\begin{proof}
It is straightforward to check that $B$ is the coequaliser through $f$ of the pair of maps 
$$
B_1 \rightrightarrows B_0 \simeq A_0 \to A .
$$
\end{proof}

\begin{corollary}\label{corcompactobject}
Let $X$ be an object of $\categ C$. Then the lex functor
$$
\hom_{\mathcal{E}_h^{(2)}(\categ C)} (X, -)
$$
preserves coequalisers of equivalence 2-groupoids.
\end{corollary}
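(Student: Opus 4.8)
The plan is to make the functor $\hom_{\mathcal{E}_h^{(2)}(\categ C)}(X,-)$ completely explicit on objects and then verify the coequaliser condition by hand in $\Set$. First I would fix $X \in \categ C$, regarded as the discrete equivalence groupoid (so $X_0 = X_1 = X$ with structural map the diagonal). For an equivalence 2-groupoid $Y$, a morphism $X \to Y$ in $\mathcal{E}^{(2)}(\categ C)$ is a map $\phi_0 : X \to Y_0$ together with a lift $\phi_1 : X \to Y_1$ of $(\phi_0,\phi_0) : X \to Y_0\times Y_0$; such a lift always exists (take $\phi_1 = \sigma\circ\phi_0$, with $\sigma : Y_0\to Y_1$ the degeneracy), and two such morphisms are homotopic exactly when $(\phi_0,\psi_0)$ factors through $Y_1$ — a condition on $\phi_0,\psi_0$ alone. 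So, writing $\sim_Y$ for the relation "$(f,g) : X \to Y_0\times Y_0$ factors through $Y_1$" on $\hom_\categ C(X,Y_0)$ (an equivalence relation, being transported along the surjection $(\phi_0,\phi_1)\mapsto\phi_0$ from the homotopy relation, which is one by definition), I obtain a natural bijection
$$
\hom_{\mathcal{E}_h^{(2)}(\categ C)}(X,Y) \;\simeq\; \hom_\categ C(X,Y_0)\,/\sim_Y .
$$

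Next I would invoke Proposition \ref{propositioncolimitse}: the coequaliser in $\mathcal{E}_h^{(2)}(\categ C)$ of an equivalence 2-groupoid $D$ (with $A := D_0$, $B := D_1$) is the explicit object $cD$, where $cD_0 = A_0$, $cD_1 = A_1\times_{A_0}B_0\times_{A_0}A_1$, and $A\to cD$ is the identity on level $0$. Applying the bijection above to $A$, $B$, $cD$ turns $\hom(X,B)\rightrightarrows\hom(X,A)\to\hom(X,cD)$ into
$$
\hom_\categ C(X,B_0)/\sim_B \;\rightrightarrows\; \hom_\categ C(X,A_0)/\sim_A \;\to\; \hom_\categ C(X,A_0)/\sim_{cD},
$$
the two left-hand maps being induced by the level-$0$ face maps $B_0\rightrightarrows A_0$. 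Hence the $\Set$-coequaliser of $\hom(X,B)\rightrightarrows\hom(X,A)$ is $\hom_\categ C(X,A_0)/R$, where $R$ is the equivalence relation on $\hom_\categ C(X,A_0)$ generated by $\sim_A$ together with the pairs $(d_0^B h, d_1^B h)$ for $h : X\to B_0$; so it remains to prove $R = \sim_{cD}$.

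The inclusion $\sim_{cD}\subseteq R$ is a short diagram chase: a map $X\to cD_1 = A_1\times_{A_0}B_0\times_{A_0}A_1$ over a pair $(f,g)$ has three components, whose two $A_1$-parts contribute $\sim_A$-steps and whose $B_0$-part contributes a generating pair, linking $f$ to $g$ within $R$. For the reverse inclusion the key point is that $\sim_{cD}$ is itself an equivalence relation (it is the homotopy relation of the equivalence 2-groupoid $cD$), so it suffices to check that it contains the generators of $R$: given $\xi : X\to A_1$ with faces $(f,g)$, the triple $(\xi,\ \sigma^B g,\ \sigma^A g)$ — built from the degeneracies $\sigma^B : A_0\to B_0$ of $D$ and $\sigma^A : A_0\to A_1$ of $A$, using the simplicial identities $d_i^B\sigma^B = \mathrm{id}_{A_0}$ and $d_i^A\sigma^A = \mathrm{id}_{A_0}$ — is a map $X\to cD_1$ over $(f,g)$, so $f\sim_{cD} g$; and for $h : X\to B_0$ the triple $(\sigma^A d_0^B h,\ h,\ \sigma^A d_1^B h)$ is a map $X\to cD_1$ over $(d_0^B h, d_1^B h)$. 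Thus $R = \sim_{cD}$, and $\hom(X,cD)\simeq\hom_\categ C(X,A_0)/R$ is the desired coequaliser.

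The delicate parts are confined to the first and last steps: verifying that morphisms out of an object of $\categ C$ in the homotopy category are faithfully recorded by their level-$0$ component modulo $\sim_Y$ (and that $\sim_Y$ thereby inherits the equivalence-relation property), and keeping the pullback conventions of the definition of $cD_1$ straight so that the explicit triples genuinely land in the iterated fibre product. Everything else is routine bookkeeping.
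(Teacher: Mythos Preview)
Your argument is correct and follows essentially the same route as the paper's proof: both identify $\hom_{\mathcal{E}_h^{(2)}(\categ C)}(X,-)$ with $\hom_{\categ C}(X,(-)_0)/\sim$, invoke the explicit coequaliser $cD$ of Proposition~\ref{propositioncolimitse}, and then check that the coequaliser in $\Set$ and $\hom(X,cD)$ are both the quotient of $\hom_{\categ C}(X,A_0)$ by the equivalence relation generated by ``factors through $A_1$'' and ``factors through $B_0$''. Your version is simply more explicit about the inclusion $R\subseteq\sim_{cD}$, supplying the concrete triples in $cD_1$ where the paper just asserts the equality.
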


\begin{proof}
Let $D$ be an equivalence 2-groupoid in $\mathcal{E}_h^{(2)}(\categ C)$ and let us use the notations $A, B, cD$ as above. The composite map
$$
\hom(X, A_0) \to \hom(X, A) \to \mathrm{coeq}(\hom(X, B) \rightrightarrows \hom(X, A)) \to \hom(X, cD) 
$$
makes both $\mathrm{coeq}(\hom(X, B) \rightrightarrows \hom(X, A))$ and $\hom(X, cD)$ appear as the quotient of $\hom(X, A_0)$ by the equivalence relation generated by the two relations
\begin{enumerate}
    \item $f \sim_a g$ if the morphism $(f,g): X \to A_0 \times A_0$ factorises through $A_1$;
    \item $f \sim_b g$ if the morphism $(f,g): X \to A_0 \times A_0$ factorises through $B_0$.
\end{enumerate}
Hence the map $\mathrm{coeq}(\hom(X, B) \rightrightarrows \hom(X, A)) \to \hom(X, cD)$ is an isomorphism.
\end{proof}

\subsection{Quotients as left adjoint functors}

\begin{definition}
Let $A$ be an equivalence 2-groupoid in $\categ C$. If it exists, we denote $A_0/A_1$ the quotient of $A$, that is the coequaliser in $\categ C$ of the pair of maps
$$
A_1 \rightrightarrows A_0.
$$
\end{definition}

\begin{proposition}\label{propquotientleftadj}
The three following assertions are equivalent.
\begin{enumerate}
    \item the embedding $\categ C \to \mathcal{E}^{(2)}(\categ C)$ has a left adjoint;
    \item the embedding $\categ C \to \mathcal{E}_h^{(2)}(\categ C)$ has a left adjoint;
    \item the category $\categ C$ admits quotients of equivalence 2-groupoids.
\end{enumerate}
\end{proposition}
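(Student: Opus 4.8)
The plan is to reduce all three assertions to a single computation: the set of morphisms in $\mathcal{E}^{(2)}(\categ C)$, and in $\mathcal{E}_h^{(2)}(\categ C)$, from an equivalence $2$-groupoid $A$ to an object of $\categ C$. Recall that the embedding $\categ C \to \mathcal{E}^{(2)}(\categ C)$ sends an object $X$ to the constant simplicial object $\underline X$, that is $\underline X_0 = \underline X_1 = X$ with $\underline X_1 \to X \times X$ the diagonal, and that $\categ C \to \mathcal{E}_h^{(2)}(\categ C)$ is this functor followed by the projection $\mathcal{E}^{(2)}(\categ C) \to \mathcal{E}_h^{(2)}(\categ C)$. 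Since a functor $G$ admits a left adjoint if and only if for every object $A$ of its codomain the presheaf $X \mapsto \hom(A, GX)$ on its domain is representable, assertions (1) and (2) say exactly that the presheaves $X \mapsto \hom_{\mathcal{E}^{(2)}(\categ C)}(A, \underline X)$, respectively $X \mapsto \hom_{\mathcal{E}_h^{(2)}(\categ C)}(A, \underline X)$, are representable for every equivalence $2$-groupoid $A$.

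First I would compute $\hom_{\mathcal{E}^{(2)}(\categ C)}(A, \underline X)$ for $A$ with face maps $d_0, d_1 : A_1 \to A_0$. Such a morphism is a pair $(f_0, f_1)$ with $f_0 : A_0 \to X$ and $f_1 : A_1 \to X$ making the defining square commute; since $\underline X_1 \to \underline X_0 \times \underline X_0$ is the diagonal, that square reads $(f_0 d_0, f_0 d_1) = (f_1, f_1)$, which forces $f_0 d_0 = f_1 = f_0 d_1$. Hence, naturally in $X$, the set $\hom_{\mathcal{E}^{(2)}(\categ C)}(A, \underline X)$ is the set of maps $f_0 : A_0 \to X$ coequalising $d_0$ and $d_1$; thus this presheaf is representable precisely when the coequaliser $A_0/A_1$ of $A_1 \rightrightarrows A_0$ exists in $\categ C$, and then it is represented by $A_0/A_1$. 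Consequently all these presheaves are representable precisely when $\categ C$ admits quotients of equivalence $2$-groupoids, in which case the left adjoint to $\categ C \to \mathcal{E}^{(2)}(\categ C)$ is $A \mapsto A_0/A_1$. This proves $(1) \Leftrightarrow (3)$.

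Next I would transfer this to $\mathcal{E}_h^{(2)}(\categ C)$. Using that $\mathcal{E}_h^{(2)}(\categ C)$ is obtained from $\mathcal{E}^{(2)}(\categ C)$ by merging homotopic morphisms, one has $\hom_{\mathcal{E}_h^{(2)}(\categ C)}(A, \underline X) = \hom_{\mathcal{E}^{(2)}(\categ C)}(A, \underline X)/{\sim}$. But two morphisms $f, g : A \to \underline X$ are homotopic iff $(f_0, g_0) : A_0 \to X \times X$ factors through $\underline X_1$, and $\underline X_1 \to X \times X$ is the diagonal, so this holds iff $f_0 = g_0$, that is iff $f = g$; hence the projection $\hom_{\mathcal{E}^{(2)}(\categ C)}(A, \underline X) \to \hom_{\mathcal{E}_h^{(2)}(\categ C)}(A, \underline X)$ is a bijection. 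Therefore $X \mapsto \hom_{\mathcal{E}_h^{(2)}(\categ C)}(A, \underline X)$ is again the presheaf represented, when it exists, by $A_0/A_1$, which gives $(2) \Leftrightarrow (3)$; here one also notes that $A \mapsto A_0/A_1$ descends to $\mathcal{E}_h^{(2)}(\categ C)$ because if $f \sim g : A \to B$ then $(f_0, g_0)$ factors through $B_1$, so $f_0$ and $g_0$ become equal after composing with $B_0 \to B_0/B_1$ and induce the same map $A_0/A_1 \to B_0/B_1$.

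I do not anticipate a real obstacle: the whole argument is an unravelling of the definitions of $\mathcal{E}^{(2)}(\categ C)$ and $\mathcal{E}_h^{(2)}(\categ C)$ against the universal property of coequalisers. The two points that require care are the identification of the embedding $\categ C \to \mathcal{E}^{(2)}(\categ C)$ with the constant-simplicial-object functor, and the observation that any homotopy with constant target is trivial, since the $1$-simplices of a constant simplicial object carry the diagonal (equality) relation.
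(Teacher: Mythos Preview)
Your proof is correct and follows essentially the same approach as the paper: both hinge on the observation that the projection $\hom_{\mathcal{E}^{(2)}(\categ C)}(A,\underline X)\to\hom_{\mathcal{E}_h^{(2)}(\categ C)}(A,\underline X)$ is a bijection because homotopies into a constant target are trivial, together with the identification of the former hom-set with maps $A_0\to X$ coequalising $d_0,d_1$. The paper packages this more tersely (declaring $(1)\Leftrightarrow(3)$ ``by definition of colimits'' and then running the cycle $(1)\Rightarrow(2)\Rightarrow(3)$), whereas you unpack the hom-set computation explicitly and prove $(1)\Leftrightarrow(3)$ and $(2)\Leftrightarrow(3)$ in parallel; the content is the same.
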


\begin{proof}
By definition of colimits, (1) is equivalent to (3).

Let us suppose (1). Then, the "quotient" left adjoint functor $A \mapsto A_0/A_1$ from $\mathcal{E}^{(2)}(\categ C)$ to $\categ C$ factorises through $\mathcal{E}_h^{(2)}(\categ C)$. The resulting functor from $\mathcal{E}_h^{(2)}(\categ C)$ to $\categ C$ is left adjoint to the embedding because
the map
$$
\hom_{\mathcal{E}^{(2)}(\categ C)} (A, X) \to 
\hom_{\mathcal{E}_h^{(2)}(\categ C)} (A, X)
$$
is bijective for any equivalence 2-groupoid $A$ and any element $X$ in $\categ C$.

If we suppose (2), then the image through the left adjoint functor from $\mathcal{E}_h^{(2)}(\categ C)$ to $\categ C$ of an equivalence 2-groupoid is its quotient. This shows (3).
\end{proof}

\begin{corollary}
The three following assertions are equivalent.
\begin{enumerate}
    \item the embedding $\categ C \to \mathcal{E}(\categ C)$ has a left adjoint;
    \item the embedding $\categ C \to \mathcal{E}_h(\categ C)$ has a left adjoint;
    \item the category $\categ C$ admits quotients of equivalence groupoids.
\end{enumerate}
\end{corollary}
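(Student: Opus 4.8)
The plan is to reproduce, almost verbatim, the proof of Proposition~\ref{propquotientleftadj}, replacing equivalence 2-groupoids by equivalence groupoids throughout; the only thing to check is that each elementary fact used there survives the restriction to the full subcategories $\mathcal{E}(\categ C) \subset \mathcal{E}^{(2)}(\categ C)$ and $\mathcal{E}_h(\categ C) \subset \mathcal{E}^{(2)}_h(\categ C)$.

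First I would handle $(1) \Leftrightarrow (3)$, which is just the definition of a colimit. For an object $X \in \categ C$ viewed as the discrete equivalence groupoid (with $X_1 = X \hookrightarrow X \times X$ the diagonal) and an equivalence groupoid $A$, a morphism $A \to X$ in $\mathcal{E}(\categ C)$ is the same as a map $f_0 : A_0 \to X$ satisfying $f_0 d_0 = f_0 d_1 : A_1 \to X$, i.e. a cocone over the diagram $A_1 \rightrightarrows A_0$. Hence an object representing $X \mapsto \hom_{\mathcal{E}(\categ C)}(A, X)$ is exactly the quotient $A_0/A_1$, so a left adjoint to $\categ C \to \mathcal{E}(\categ C)$ is the same data as a choice of all such quotients, assembled by naturality.

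Next, $(1) \Rightarrow (2)$: assuming all quotients exist, we get a functor $A \mapsto A_0/A_1$ from $\mathcal{E}(\categ C)$ to $\categ C$, and I would check that it kills homotopies. If $f, g : A \to B$ are homotopic, so that $(f_0, g_0) : A_0 \to B_0 \times B_0$ factors through $B_1$, then the two composites $A_0 \xrightarrow{f_0} B_0 \to B_0/B_1$ and $A_0 \xrightarrow{g_0} B_0 \to B_0/B_1$ coincide because $B_1 \rightrightarrows B_0 \to B_0/B_1$ is a coequaliser; thus the quotient functor factors through $\mathcal{E}_h(\categ C)$. The induced functor $\mathcal{E}_h(\categ C) \to \categ C$ is still left adjoint to the embedding, since for any equivalence groupoid $A$ and any $X \in \categ C$ the map $\hom_{\mathcal{E}(\categ C)}(A, X) \to \hom_{\mathcal{E}_h(\categ C)}(A, X)$ is a bijection: two maps into $X$ are homotopic iff $(f_0, g_0) : A_0 \to X \times X$ factors through the diagonal $X_1 = X$, i.e. iff $f_0 = g_0$. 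Finally, $(2) \Rightarrow (3)$: if $L : \mathcal{E}_h(\categ C) \to \categ C$ is left adjoint to the embedding, then for an equivalence groupoid $A$ the object $L(A)$ represents $X \mapsto \hom_{\mathcal{E}_h(\categ C)}(A, X) \cong \hom_{\mathcal{E}(\categ C)}(A, X)$, which by the first step is the functor of cocones over $A_1 \rightrightarrows A_0$; hence $L(A) = A_0/A_1$ and $\categ C$ admits quotients of equivalence groupoids.

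There is no real obstacle here: the corollary is formal once one knows the earlier facts. The one point deserving a moment's care — and it is entirely routine — is the identification, used twice, of $\hom_{\mathcal{E}(\categ C)}(A, X)$ and $\hom_{\mathcal{E}_h(\categ C)}(A, X)$ with the set of cocones $A_1 \rightrightarrows A_0 \to X$ when $X$ lies in $\categ C$, which rests on the fact that a discrete equivalence groupoid has $X_1 \hookrightarrow X \times X$ the diagonal.
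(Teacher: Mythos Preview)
Your proposal is correct and follows exactly the approach the paper takes: the paper's own proof simply reads ``This follows from the same arguments as those used to prove Proposition~\ref{propquotientleftadj},'' and you have reproduced those arguments with equivalence groupoids in place of equivalence 2-groupoids, supplying more detail than the paper does (in particular the explicit verification that $\hom_{\mathcal{E}(\categ C)}(A,X) \to \hom_{\mathcal{E}_h(\categ C)}(A,X)$ is bijective for $X \in \categ C$).
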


\begin{proof}
This follows from the same arguments as those used to prove Proposition \ref{propquotientleftadj}.
\end{proof}

\subsection{Changing the ground category}

\begin{lemma}\label{kanext}
Let $A$ be an element of $\mathcal{E}_h^{(2)}(\categ C)$. Then the functor
\begin{align*}
\Delta_{\leq 1}^\op & \to \categ C/A
\\
i & \mapsto (A_i \to A)
\end{align*}
is cofinal.
\end{lemma}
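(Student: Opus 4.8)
The plan is to unravel the definition of cofinality and reduce it to the description of morphisms out of an object of $\categ C$ furnished by Corollary \ref{corcompactobject}. Write $F : \Delta_{\leq 1}^\op \to \categ C/A$ for the functor in question. First I note it is well defined: the two composites $A_1 \xrightarrow{d_0} A_0 \to A$ and $A_1 \xrightarrow{d_1} A_0 \to A$ coincide in $\mathcal{E}_h^{(2)}(\categ C)$, because the map $(d_0,d_1) : A_1 \to A_0\times A_0$ is, by the definition of an equivalence groupoid, the monomorphism $A_1\hookrightarrow A_0\times A_0$, so it factors through $A_1$ and the two composites are homotopic. Recall that $F$ is cofinal if and only if, for every object $(X \xrightarrow{x} A)$ of $\categ C/A$, the comma category $(X,x)\downarrow F$ is nonempty and connected; I would describe that comma category concretely: an object is a pair $(i,h)$ with $i\in\{[0],[1]\}$ and $h : X \to A_i$ a morphism of $\categ C$ whose composite with the structural map $A_i \to A$ equals $x$, and a morphism $(i,h)\to(j,h')$ is a morphism $\phi : i \to j$ of $\Delta_{\leq 1}^\op$ with $F(\phi)\circ h = h'$.

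The key step is to identify, for the fixed $X\in\categ C$, the map $\hom_{\categ C}(X,A_0)\to\hom_{\mathcal{E}_h^{(2)}(\categ C)}(X,A)$ given by postcomposition with $A_0\to A$ as the quotient by the equivalence relation $R_X$ consisting of those pairs $(h,h')$ for which $(h,h') : X \to A_0\times A_0$ factors through $A_1\hookrightarrow A_0\times A_0$. This follows from Corollary \ref{elma}, which says $A$ is the coequaliser of $A_1\rightrightarrows A_0$ in $\mathcal{E}_h^{(2)}(\categ C)$, together with Corollary \ref{corcompactobject}, which says $\hom_{\mathcal{E}_h^{(2)}(\categ C)}(X,-)$ sends this coequaliser to the coequaliser in $\Set$ of $\hom_{\categ C}(X,A_1)\rightrightarrows\hom_{\categ C}(X,A_0)$; and since $A$ is an equivalence groupoid, the image of $\hom_{\categ C}(X,A_1)$ inside $\hom_{\categ C}(X,A_0)^2$ is already an equivalence relation, so this coequaliser is exactly $\hom_{\categ C}(X,A_0)/R_X$. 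In particular any representative $h : X \to A_0$ of $x$ yields an object $([0],h)$, so $(X,x)\downarrow F$ is nonempty.

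For connectedness I would argue as follows. The three nontrivial morphisms of $\Delta_{\leq 1}^\op$ (the two faces $[1]\to[0]$ and the degeneracy $[0]\to[1]$) produce, through $F$, morphisms $([1],k)\to([0],d_0k)$, $([1],k)\to([0],d_1k)$ and $([0],h)\to([1],\sigma h)$ in the comma category; hence every object is joined to one of the form $([0],h)$, and it remains to connect two such objects $([0],h)$ and $([0],h')$. Being objects means $[h]=x=[h']$ in $\hom_{\categ C}(X,A_0)/R_X$, hence $(h,h')\in R_X$, so there is $k : X\to A_1$ with $d_0k=h$ and $d_1k=h'$; then $([1],k)$ is an object and $([0],h)\leftarrow([1],k)\rightarrow([0],h')$ is the required zig-zag. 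Thus $(X,x)\downarrow F$ is connected for every $(X,x)$, and $F$ is cofinal.

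I do not anticipate a genuine obstacle: the proof is essentially a translation of cofinality into the explicit comma-category condition, the one delicate point being the observation --- built into the definition of an equivalence groupoid --- that the equivalence relation generated by the subobject $A_1\hookrightarrow A_0\times A_0$ on $X$-points is that subobject itself, which is exactly what turns Corollary \ref{corcompactobject} into the clean quotient description used above.
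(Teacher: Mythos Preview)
Your argument follows the same strategy as the paper's (terse) proof---check that each comma category $(X,x)\downarrow F$ is nonempty and connected---and the overall structure is sound. There is, however, a recurring slip: you treat $A$ as an \emph{equivalence groupoid}, i.e.\ you assume $A_1 \to A_0\times A_0$ is a monomorphism. But $A$ is only an element of $\mathcal{E}_h^{(2)}(\categ C)$, that is, a weak equivalence groupoid (an equivalence $2$-groupoid), and that map need not be monic. This does not invalidate your conclusions, only your stated justifications.

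Concretely: in the well-definedness step, the factorisation of $(d_0,d_1):A_1\to A_0\times A_0$ through $A_1$ is just the identity of $A_1$, no monicity required. In the key step, your appeal to Corollary~\ref{corcompactobject} gives the coequaliser in $\Set$ of $\hom_{\categ C}(X,A_1)\rightrightarrows\hom_{\categ C}(X,A_0)$, which a priori is the quotient by the equivalence relation \emph{generated} by $R_X$; to conclude it equals $\hom_{\categ C}(X,A_0)/R_X$ you need $R_X$ itself to be an equivalence relation. That is true, but the reason is the Kan property of $A$ (reflexivity from the degeneracy, symmetry and transitivity from the horn-filling sections), not the definition of an equivalence groupoid. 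Alternatively, you can bypass Corollaries~\ref{elma} and~\ref{corcompactobject} entirely: any $x:X\to A$ in $\mathcal{E}_h^{(2)}(\categ C)$ is represented by a morphism of equivalence $2$-groupoids whose level-$0$ part $x_0:X\to A_0$ gives $([0],x_0)$ directly, and two lifts $h,h'$ agree in $\mathcal{E}_h^{(2)}(\categ C)$ precisely when $(h,h')$ factors through $A_1$, by the very definition of homotopy in $\mathcal{E}^{(2)}(\categ C)$. This direct route is closer to what the paper has in mind and avoids the extra lemma about $R_X$.
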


\begin{proof}
Let us consider a map $f : U \to A$ where $U \in \categ C$. Then, one can show that the category $f/(\Delta_{\leq 1}^\op)$ is nonempty and connected. This proves the result.
\end{proof}

\begin{lemma}\label{kanexthree}
Let us consider a lex functor $F : \categ C \to \categ D$ and let us suppose that $\categ D$ has quotients of equivalence 2-groupoids. Then, the functor
\begin{align*}
    \overline F : \mathcal{E}_h^{(2)}(\categ C) & \to \categ D
    \\
    (A_0, A_1) & \mapsto F(A_0)/F(A_1)
\end{align*}
is the pointwise left Kan extension of $F$
along the inclusion $\categ C \to \mathcal{E}_h(\categ C)$.
The same phenomenon happens if we replace $\mathcal{E}_h^{(2)}$ by $\mathcal{E}_h$ and equivalence 2-groupoids by equivalence groupoids. 
\end{lemma}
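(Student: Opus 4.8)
The plan is to recognise $\overline F(A)$ as the value at $A$ of the pointwise colimit formula for the left Kan extension, and to let Lemma \ref{kanext} do the computation. Write $i\colon \categ C \hookrightarrow \mathcal{E}_h^{(2)}(\categ C)$ for the inclusion.

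First I would check that $\overline F$ is a well-defined functor together with a canonical (in fact invertible) natural transformation $\alpha\colon F \Rightarrow \overline F \circ i$. Since $F$ is lex it preserves the finite limits encoding the internal groupoid structure of a weak equivalence groupoid, so $(F(A_0),F(A_1))$ is an equivalence 2-groupoid in $\categ D$, and as $\categ D$ has quotients of equivalence 2-groupoids the object $F(A_0)/F(A_1)$ exists. A morphism of equivalence 2-groupoids $f=(f_0,f_1)\colon A \to B$ induces $(F(f_0),F(f_1))$ and hence a map on quotients $\overline F(f)$; if $f \sim g$, a lift $h\colon A_0 \to B_1$ of $(f_0,g_0)$ through $B_1$ yields, after composing with the epimorphism $F(B_0) \to F(B_0)/F(B_1)$ coequalising the two face maps, that the two induced maps $F(A_0)\to F(B_0)/F(B_1)$ agree; since $F(A_0)\to F(A_0)/F(A_1)$ is epic this gives $\overline F(f)=\overline F(g)$. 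Hence $\overline F$ descends to $\mathcal{E}_h^{(2)}(\categ C)$. Finally, $i(X)$ is the discrete equivalence 2-groupoid on $X$, so $\overline F(i(X))=F(X)/F(X)=F(X)$ canonically, furnishing $\alpha$.

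Next I would fix $A$ and compute $\colim_{(U\to A)\in \categ C/A} F(U)$. By Lemma \ref{kanext} the functor $j_A\colon \Delta_{\leq 1}^{\op} \to \categ C/A$, $i \mapsto (A_i \to A)$, is cofinal, so this colimit coincides with the colimit over $\Delta_{\leq 1}^{\op}$ of the composite $\Delta_{\leq 1}^{\op} \xrightarrow{j_A} \categ C/A \to \categ C \xrightarrow{F} \categ D$, which is exactly the diagram $F(A_1) \rightrightarrows F(A_0)$ equipped with its degeneracy. A colimit over $\Delta_{\leq 1}^{\op}$ of such a diagram is precisely the coequaliser of the two face maps, i.e. $F(A_0)/F(A_1)=\overline F(A)$, and one checks the comparison cocone is the one induced by $\alpha$. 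Since these colimits exist for every $A$, and the identification is natural in $A$ (all constructions in sight being functorial), the pair $(\overline F,\alpha)$ is the pointwise left Kan extension of $F$ along $i$. The variant with $\mathcal{E}_h$ in place of $\mathcal{E}_h^{(2)}$ is proved verbatim, using the evident analogue of Lemma \ref{kanext} (again via the fact that $f/(\Delta_{\leq 1}^{\op})$ is nonempty and connected for any $f\colon U\to A$).

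I expect the only mildly delicate points to be the well-definedness of $\overline F$ on \emph{homotopy classes} of morphisms and the naturality in $A$ of the colimit cocones; neither is a genuine obstacle, since Lemma \ref{kanext} carries the essential content and the rest is a routine diagram chase together with the lex-ness of $F$.
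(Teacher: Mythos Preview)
Your proposal is correct and follows exactly the paper's own line: the paper's proof is the single sentence ``This is a direct consequence of Lemma~\ref{kanext}'', and what you have written is precisely the unpacking of that sentence --- use the cofinality of $\Delta_{\leq 1}^{\op}\to \categ C/A$ to reduce the pointwise Kan extension colimit to the coequaliser $F(A_0)/F(A_1)$, with the preliminary checks on well-definedness of $\overline F$ modulo homotopy. For the $\mathcal{E}_h$ variant note that no separate analogue of Lemma~\ref{kanext} is needed: since $\mathcal{E}_h(\categ C)\hookrightarrow\mathcal{E}_h^{(2)}(\categ C)$ is fully faithful, the comma category $\categ C/A$ is the same in either setting when $A$ is an equivalence groupoid, so Lemma~\ref{kanext} applies verbatim.
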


\begin{proof}
This is a direct consequenc of Lemma \ref{kanext}.
\end{proof}

\begin{corollary}\label{kanexttwo}
Let us consider a lex functor $F : \categ C \to \categ D$. Then, the functor
\begin{align*}
    \mathcal{E}_h^{(2)}(F) : \mathcal{E}_h^{(2)}(\categ C) & \to \mathcal{E}_h^{(2)}(\categ D)
    \\
    (A_0, A_1) & \mapsto (F(A_0), F(A_1))
\end{align*}
is the pointwise left Kan extension of the composite functor
$$
\categ C \to \categ D \to \mathcal{E}_h^{(2)}(\categ D)
$$
along the inclusion $\categ C \to \mathcal{E}_h(\categ C)$.
The same phenomenon happens if we replace $\mathcal{E}_h^{(2)}$ by $\mathcal{E}_h$. 
\end{corollary}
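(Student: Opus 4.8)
The plan is to deduce the statement from Lemma \ref{kanexthree} by enlarging the target category, and then to identify the resulting left Kan extension explicitly using Corollary \ref{elma}. Concretely, I would apply Lemma \ref{kanexthree} not to $F$ but to the composite functor $G : \categ C \xrightarrow{F} \categ D \hookrightarrow \mathcal{E}_h^{(2)}(\categ D)$, which is lex because $F$ is lex and the embedding $\categ D \hookrightarrow \mathcal{E}_h^{(2)}(\categ D)$ has already been shown to be lex. To invoke Lemma \ref{kanexthree} in this form, one needs to know that $\mathcal{E}_h^{(2)}(\categ D)$ admits quotients of equivalence 2-groupoids; this follows from Proposition \ref{propositioncolimitse}, since an equivalence 2-groupoid internal to $\mathcal{E}_h^{(2)}(\categ D)$ is, after choosing representatives for its face and degeneracy maps, a diagram $\Delta_{\leq 1}^\op \to \mathcal{E}^{(2)}(\categ D)$ that becomes an equivalence 2-groupoid in the homotopy category, and Proposition \ref{propositioncolimitse} then produces the coequaliser of the associated pair $X_1 \rightrightarrows X_0$. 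With this in place, Lemma \ref{kanexthree} gives at once that the functor $(A_0,A_1) \mapsto G(A_0)/G(A_1)$, the quotient now being taken in $\mathcal{E}_h^{(2)}(\categ D)$, is the pointwise left Kan extension of $G$ along $\categ C \to \mathcal{E}_h^{(2)}(\categ C)$.

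It then remains to identify this left Kan extension with $\mathcal{E}_h^{(2)}(F)$, i.e. to check that the coequaliser in $\mathcal{E}_h^{(2)}(\categ D)$ of the pair $F(A_1) \rightrightarrows F(A_0)$ is the equivalence 2-groupoid $(F(A_0),F(A_1))$. Here I would first observe that, being lex, $F$ preserves monomorphisms together with the finite limits and the sections that witness the Kan condition, so that $(F(A_0),F(A_1))$ is again a weak equivalence groupoid, now in $\categ D$. Corollary \ref{elma}, applied inside $\categ D$, says precisely that such an object is the coequaliser in $\mathcal{E}_h^{(2)}(\categ D)$ of $F(A_1) \rightrightarrows F(A_0)$. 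Combining this with the previous paragraph yields $G(A_0)/G(A_1) = (F(A_0),F(A_1)) = \mathcal{E}_h^{(2)}(F)(A_0,A_1)$, which is the claim; functoriality and the compatibility with precomposition are part of what "pointwise left Kan extension" already provides. The case of $\mathcal{E}_h$ is obtained by the identical argument, using the $\mathcal{E}_h$-versions of Lemma \ref{kanexthree} and Corollary \ref{elma} and the fact that $F$ lex preserves the defining monomorphism $A_1 \hookrightarrow A_0 \times A_0$ of an equivalence groupoid.

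I expect the only genuine subtlety to be the verification that $\mathcal{E}_h^{(2)}(\categ D)$ admits quotients of equivalence 2-groupoids in exactly the form required by Lemma \ref{kanexthree}: one must be slightly careful that a weak equivalence groupoid internal to $\mathcal{E}_h^{(2)}(\categ D)$ can be strictified to an honest diagram over $\Delta_{\leq 1}^\op$ valued in $\mathcal{E}^{(2)}(\categ D)$, so that Proposition \ref{propositioncolimitse} applies, and that the coequaliser so produced is independent of the representatives chosen. Everything else is a matter of unwinding definitions and quoting results already in hand — ultimately the cofinality statement of Lemma \ref{kanext} that underlies Lemma \ref{kanexthree}, and Corollary \ref{elma}.
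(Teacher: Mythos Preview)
Your proposal is correct and follows exactly the route the paper takes: apply Lemma \ref{kanexthree} to the composite $\categ C \to \categ D \to \mathcal{E}_h^{(2)}(\categ D)$ and then invoke Corollary \ref{elma} to identify the resulting quotient $G(A_0)/G(A_1)$ in $\mathcal{E}_h^{(2)}(\categ D)$ with the equivalence 2-groupoid $(F(A_0),F(A_1))$. Your detour through Proposition \ref{propositioncolimitse} to verify that $\mathcal{E}_h^{(2)}(\categ D)$ has \emph{all} quotients of equivalence 2-groupoids (and the strictification worry it raises) is harmless but unnecessary: the only coequalisers that actually arise are those of the diagrams $F(A_1)\rightrightarrows F(A_0)$ with $F(A_i)\in\categ D$, and Corollary \ref{elma} supplies exactly these.
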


\begin{proof}
This follows from Lemma \ref{kanexthree} combined with Corollary \ref{elma}.
\end{proof}

\begin{proposition}
The construction $\categ C \in \mathfrak{Lex} \mapsto \mathcal{E}_h^{(2)}(\categ C)$ and the canonical fully faithful embeddings $\categ C \to \mathcal{E}_h^{(2)}(\categ C)$ make $\mathcal{E}_h^{(2)}$ a pointwise KZ-machine on the 2-category $\mathfrak{Lex}$.
\end{proposition}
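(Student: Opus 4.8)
The plan is to assemble the statement from the structural results on $\mathcal{E}_h^{(2)}$ already obtained. First I would record the pieces of data: $\mathcal{E}_h^{(2)}(\categ C)$ is a lex category, and the canonical fully faithful embedding $i_{\mathcal{E}_h^{(2)}}(\categ C)\colon \categ C \to \mathcal{E}_h^{(2)}(\categ C)$, sending an object $X$ to the constant simplicial object $(X,X)$, is lex; moreover $\mathcal{E}_h^{(2)}(\categ C)$ is a legitimate (locally small, $\mathcal{U}$-sized) category, its objects being weak equivalence groupoids in $\categ C$ and its morphisms homotopy classes of pairs of morphisms of $\categ C$, so that $\mathcal{E}_h^{(2)}(\categ C)$ is genuinely an object of $\mathfrak{Lex}$.

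Next, for a lex functor $f\colon \categ C \to \categ D$ I would define $\mathcal{E}_h^{(2)}(f)$ by $(A_0,A_1)\mapsto (f(A_0),f(A_1))$, using the presentation of $\mathcal{E}_h^{(2)}(\categ C)$ as $\mathcal{E}^{(2)}(\categ C)$ with homotopic morphisms merged: since $f$ preserves the finite-limit diagrams that express the Kan condition it sends weak equivalence groupoids to weak equivalence groupoids, and since it sends a homotopy (a factorisation of $(g_0,h_0)$ through $B_1$) to a homotopy, the assignment descends to $\mathcal{E}_h^{(2)}$. I would then check that $\mathcal{E}_h^{(2)}(f)$ is lex, using the explicit formulas for the final object $(\ast,\ast)$ and for pullbacks $A\times^{h}_D B$, both of which are finite limits in $\categ C$ and are therefore preserved by $f$; thus $\mathcal{E}_h^{(2)}(f)$ is a morphism of $\mathfrak{Lex}$.

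The left Kan extension property is then handed to us: Corollary \ref{kanexttwo} identifies $\mathcal{E}_h^{(2)}(f)$ with the pointwise left Kan extension in $\mathfrak{Cat}$ of $i_{\mathcal{E}_h^{(2)}}(\categ D)\circ f$ along $i_{\mathcal{E}_h^{(2)}}(\categ C)$, and, since this functor is lex, Lemma \ref{lemmafromcattolex} promotes it to a (still pointwise) left Kan extension in $\mathfrak{Lex}$. Because $\mathcal{E}_h^{(2)}(f)\circ i_{\mathcal{E}_h^{(2)}}(\categ C) = i_{\mathcal{E}_h^{(2)}}(\categ D)\circ f$ on the nose, the structural $2$-morphism $i_{\mathcal{E}_h^{(2)}}(\categ D)\circ f \to \mathcal{E}_h^{(2)}(f)\circ i_{\mathcal{E}_h^{(2)}}(\categ C)$ is the identity, hence invertible; and for composable lex functors $f\colon\categ C\to\categ D$, $g\colon\categ D\to\categ E$ the equality $\mathcal{E}_h^{(2)}(g\circ f)(A_0,A_1)=(gf(A_0),gf(A_1))=\mathcal{E}_h^{(2)}(g)\bigl(\mathcal{E}_h^{(2)}(f)(A_0,A_1)\bigr)$ (and its analogue on morphisms) shows the comparison $\mathcal{E}_h^{(2)}(g\circ f)\to\mathcal{E}_h^{(2)}(g)\circ\mathcal{E}_h^{(2)}(f)$ is the identity, as is $\mathcal{E}_h^{(2)}(\id_{\categ C})\to\id$.

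Finally I would check the remaining pointwiseness requirement: when the target $\categ D$ is $\mathcal{E}_h^{(2)}$-cocomplete, the Kan extension $\overline f$ of $f\colon\categ C\to\categ D$ along $i_{\mathcal{E}_h^{(2)}}(\categ C)$ must be pointwise. Combining Proposition \ref{proppcomp} with Proposition \ref{propquotientleftadj}, a lex category $\categ D$ is $\mathcal{E}_h^{(2)}$-cocomplete precisely when it admits quotients of equivalence $2$-groupoids---the counit being automatically invertible since the quotient of the constant equivalence $2$-groupoid $(X,X)$ is $X$---and in that case Lemma \ref{kanexthree} exhibits $\overline f\colon(A_0,A_1)\mapsto f(A_0)/f(A_1)$ as a pointwise left Kan extension. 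The only genuine work is the verification that $\mathcal{E}_h^{(2)}(f)$ is lex together with the bookkeeping that the $\mathfrak{Cat}$-level Kan extensions of Corollary \ref{kanexttwo} and Lemma \ref{kanexthree} transfer to $\mathfrak{Lex}$; everything else---cofinality of $\Delta_{\leq 1}^\op$ over the relevant comma categories, existence of finite limits in $\mathcal{E}_h^{(2)}(\categ C)$, and existence of the quotients $cD$---has already been established, so I do not expect any substantial obstacle.
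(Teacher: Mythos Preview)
Your proposal is correct and follows essentially the same approach as the paper, which merely cites Lemma~\ref{kanexthree} and Corollary~\ref{kanexttwo} together with ``straightforward checkings''; you have spelled out those checkings (lexness of $\mathcal{E}_h^{(2)}(f)$, transfer via Lemma~\ref{lemmafromcattolex}, strict compatibility with composition and identities) in the expected way. One small imprecision: $\mathcal{E}_h^{(2)}$-cocompleteness in $\mathfrak{Lex}$ is not \emph{equivalent} to merely having quotients of equivalence $2$-groupoids---it also requires the quotient functor to be lex---but only the forward implication is needed for your pointwiseness argument, so this does not affect the proof.
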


\begin{proof}
This is a consequence of Lemma \ref{kanexthree} and of Corollary \ref{kanexttwo}, and of straightforward checkings.
\end{proof}

\begin{proposition}
The construction $\categ C \in \mathfrak{Lex} \mapsto \mathcal{E}_h(\categ C)$ and the canonical fully faithful embeddings $\categ C \to \mathcal{E}_h(\categ C)$ make $\mathcal{E}_h$ a pointwise KZ-machine on the 2-category $\mathfrak{Lex}$. Moreover, $\mathcal{E}_h$ is simpler than $\mathcal{E}_h^{(2)}$ through the canonical inclusion $\mathcal{E}_h(\categ C) \to
\mathcal{E}_h^{(2)}(\categ C)$.
\end{proposition}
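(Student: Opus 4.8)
The plan is to first establish that $\mathcal{E}_h$ is a pointwise KZ-machine by repeating verbatim the argument used for $\mathcal{E}_h^{(2)}$ in the preceding proposition, and then to produce the comparison data witnessing that $\mathcal{E}_h$ is simpler than $\mathcal{E}_h^{(2)}$ and to check its axioms by hand.

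For the first part, given a lex functor $f: \categ C \to \categ D$, I would invoke the $\mathcal{E}_h$-version of Corollary \ref{kanexttwo}: the functor $\mathcal{E}_h(f): \mathcal{E}_h(\categ C) \to \mathcal{E}_h(\categ D)$, $(A_0, A_1) \mapsto (f(A_0), f(A_1))$, is the pointwise left Kan extension in $\mathfrak{Cat}$ of $i_{\mathcal{E}_h}(\categ D) \circ f$ along $i_{\mathcal{E}_h}(\categ C)$. Since finite limits in $\mathcal{E}_h(\categ C)$ are computed by the levelwise formula for $\times^h$, the functor $\mathcal{E}_h(f)$ is lex as soon as $f$ is, so by Lemma \ref{lemmafromcattolex} it is also the left Kan extension in $\mathfrak{Lex}$; pointwiseness of $\mathcal{E}_h(f)$, and of $\overline f$ when $\categ D$ is $\mathcal{E}_h$-cocomplete (via the $\mathcal{E}_h$-version of Lemma \ref{kanexthree}), is exactly what those statements record. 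The unit $i_{\mathcal{E}_h}(\categ D) \circ f \to \mathcal{E}_h(f) \circ i_{\mathcal{E}_h}(\categ C)$ is invertible because $i_{\mathcal{E}_h}$ sends an object to its constant simplicial object, and the coherences $\mathcal{E}_h(g \circ f) \simeq \mathcal{E}_h(g) \circ \mathcal{E}_h(f)$ and $\mathcal{E}_h(\id) \simeq \id$ are immediate from the explicit formula; that $\categ C \hookrightarrow \mathcal{E}_h(\categ C)$ is fully faithful and lex was recorded earlier.

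For the second part, I would take as candidate for $i_{\mathcal{E}_h, \mathcal{E}_h^{(2)}}(f)$ the composite $\mathcal{E}_h(\categ C) \xrightarrow{\mathcal{E}_h(f)} \mathcal{E}_h(\categ D) \hookrightarrow \mathcal{E}_h^{(2)}(\categ D)$, that is $(A_0, A_1) \mapsto (f(A_0), f(A_1))$ seen in $\mathcal{E}_h^{(2)}(\categ D)$; this is well defined since a lex $f$ preserves the monomorphism $A_1 \hookrightarrow A_0 \times A_0$ and the Kan sections, so $(f(A_0), f(A_1))$ is again an equivalence groupoid. The key step is to recognise this as the pointwise left Kan extension of $i_{\mathcal{E}_h^{(2)}}(\categ D) \circ f$ along $i_{\mathcal{E}_h}(\categ C)$: since $\categ C$ is a full subcategory of $\mathcal{E}_h(\categ C) \subset \mathcal{E}_h^{(2)}(\categ C)$, Lemma \ref{kanext} applies to each $A \in \mathcal{E}_h(\categ C)$ and makes $\Delta_{\leq 1}^\op \to \categ C/A$ cofinal, so the pointwise left Kan extension at $A$ is the coequaliser in $\mathcal{E}_h^{(2)}(\categ D)$ of $f(A_1) \rightrightarrows f(A_0)$, which by Corollary \ref{elma} is precisely $(f(A_0), f(A_1))$.

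The remaining ``simpler than'' axioms are then formal, given the explicit description: the unit map $i_{\mathcal{E}_h, \mathcal{E}_h^{(2)}}(f) \circ i_{\mathcal{E}_h}(\categ C) \to i_{\mathcal{E}_h^{(2)}}(\categ D) \circ f$ is the identity on objects, and for composable $f, g$ both comparison maps $i_{\mathcal{E}_h, \mathcal{E}_h^{(2)}}(g \circ f) \to i_{\mathcal{E}_h, \mathcal{E}_h^{(2)}}(g) \circ \mathcal{E}_h(f)$ and $i_{\mathcal{E}_h, \mathcal{E}_h^{(2)}}(g \circ f) \to \mathcal{E}_h^{(2)}(g) \circ i_{\mathcal{E}_h, \mathcal{E}_h^{(2)}}(f)$ are identities of the functor $(A_0, A_1) \mapsto (g f(A_0), g f(A_1))$; taking $f = \id_{\categ C}$ identifies $i_{\mathcal{E}_h, \mathcal{E}_h^{(2)}}(\categ C)$ with the canonical inclusion $\mathcal{E}_h(\categ C) \to \mathcal{E}_h^{(2)}(\categ C)$, as asserted. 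The only genuinely non-formal point is the colimit identification in the previous paragraph, but it is literally the cofinality-plus-Corollary-\ref{elma} computation already used to construct $\mathcal{E}_h^{(2)}$, so I do not expect any new obstacle.
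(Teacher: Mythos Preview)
Your proposal is correct and follows essentially the same route as the paper's proof, which simply cites Lemma~\ref{kanexthree} and Corollary~\ref{kanexttwo} for the KZ-machine part and Lemma~\ref{kanext} for the ``simpler than'' part. You have spelled out the details the paper leaves implicit---in particular the use of Corollary~\ref{elma} to identify the coequaliser in $\mathcal{E}_h^{(2)}(\categ D)$ and of Lemma~\ref{lemmafromcattolex} to pass from $\mathfrak{Cat}$ to $\mathfrak{Lex}$---so your argument is in fact a faithful expansion of the paper's terse proof.
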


\begin{proof}
Again, the fact that $\mathcal{E}_h$ is a pointwise KZ-machine is a consequence of Lemma \ref{kanexthree} and of Corollary \ref{kanexttwo}, and of straightforward checkings. The fact that it is simpler that $\mathcal{E}_h^{(2)}$ follows from Lemma \ref{kanext}.
\end{proof}

\begin{proposition}\label{propcommdiagr}
Let $f: \categ C \to \categ D$ be a lex functor between $\mathcal{E}_h^{(2)}$-cocomplete lex categories (in particular, they have quotients of equivalence 2-groupoids). The following assertions are equivalent
\begin{enumerate}
    \item $f$ is $\mathcal{E}_h^{(2)}$-linear;
    \item $f$ preserves quotients of equivalence 2-groupoids.
\end{enumerate}
Similarly, if $f: \categ C \to \categ D$ is a lex functor between $\mathcal{E}_h$-cocomplete lex categories (in particular, they have quotients of equivalence groupoids), the following assertions are equivalent
\begin{enumerate}
    \item $f$ is $\mathcal{E}_h$-linear;
    \item $f$ preserves quotients of equivalence groupoids.
\end{enumerate}
\end{proposition}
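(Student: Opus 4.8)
The plan is to unwind both sides of each equivalence to the single assertion that one canonical comparison map is invertible. Since $\categ C$ and $\categ D$ are $\mathcal{E}_h^{(2)}$-cocomplete, they admit quotients of equivalence $2$-groupoids (Proposition \ref{propquotientleftadj}), so Lemma \ref{kanexthree} applies: the functor $m := m_{\mathcal{E}_h^{(2)}}(\categ C) = \overline{\id_{\categ C}}$ is the quotient functor $(A_0,A_1) \mapsto A_0/A_1$, and the left Kan extension $\overline f : \mathcal{E}_h^{(2)}(\categ C) \to \categ D$ of $f$ along the embedding $\categ C \to \mathcal{E}_h^{(2)}(\categ C)$ sends $(A_0,A_1)$ to the quotient $f(A_0)/f(A_1)$ computed in $\categ D$. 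Evaluated at an equivalence $2$-groupoid $A = (A_0,A_1)$, the canonical $2$-morphism $\overline f \to f \circ m$ whose invertibility defines $\mathcal{E}_h^{(2)}$-linearity is the comparison morphism
$$
c_A : f(A_0)/f(A_1) \longrightarrow f(A_0/A_1)
$$
obtained by applying $f$ to the coequaliser cocone $A_1 \rightrightarrows A_0 \to A_0/A_1$ and invoking the universal property of its source. Hence $f$ is $\mathcal{E}_h^{(2)}$-linear if and only if $c_A$ is an isomorphism for every equivalence $2$-groupoid $A$ in $\categ C$.

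Next I would identify condition (2) with the same assertion. As $f$ is lex it preserves finite limits, hence monomorphisms and sections; consequently $(f(A_0),f(A_1))$ together with the images of the face and degeneracy maps is again an equivalence $2$-groupoid in $\categ D$, whose quotient is by definition $\mathrm{coeq}(f(A_1) \rightrightarrows f(A_0)) = f(A_0)/f(A_1)$. Thus $f$ preserves the quotient of $A$ exactly when the image of the coequaliser cocone $A_1 \rightrightarrows A_0 \to A_0/A_1$ is again a coequaliser, i.e.\ exactly when $c_A$ is invertible. Ranging over all $A$ gives (1) $\Leftrightarrow$ (2). The second equivalence in the statement is obtained by the identical argument, using the variant of Lemma \ref{kanexthree} for $\mathcal{E}_h$ and noting that a lex functor also preserves the monomorphism condition defining equivalence groupoids.

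The only point demanding care — rather than a genuine obstacle — is the identification, above, of the canonical $2$-morphism $\overline f \to f \circ m$ with the pointwise colimit-comparison map $c_A$. Once this is pinned down, both (1) and (2) literally say ``$c_A$ is an isomorphism for every $A$'', and there is nothing further to check.
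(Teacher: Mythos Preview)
Your proposal is correct and follows essentially the same approach as the paper. The paper's proof is a one-liner invoking pointwiseness of the KZ-machines together with Lemma~\ref{kanext}; you unpack this by going through Lemma~\ref{kanexthree} (itself an immediate corollary of Lemma~\ref{kanext}) to identify $\overline f$ and $m_{\mathcal{E}_h^{(2)}}(\categ C)$ explicitly as quotient functors, and then observe that both conditions reduce to invertibility of the comparison map $c_A$.
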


\begin{proof}
Knowing that the KZ-machines $\mathcal{E}_h^{(2)}$ and $\mathcal{E}_h$ are pointwise, this is a consequence of Lemma \ref{kanext}.
\end{proof}

\subsection{The KZ-machine of equivalence groupoids}

\begin{proposition}\label{propgiraudax}
The following assertions are equivalent:
\begin{enumerate}
    \item $\categ C$ is $\mathcal{E}_h$-cocomplete in $\mathfrak{Lex}$;
    \item equivalence groupoids in $\categ C$ are effective and coequalisers of equivalence groupoids are universal, in the sense that for any equivalence groupoid $X$ and any cospan diagram
    $X \to Z \leftarrow Y$ where $Y,Z \in \categ C$, the map
    $$
    \mathrm{coeq}\left(X_1 \times_Z Y \rightrightarrows X_0 \times_Z Y \right) \to (X_0/X_1) \times_Z Y
    $$
    is invertible;
    \item equivalence groupoids in $\categ C$ are effective and effective epimorphisms are stable through pullbacks.
\end{enumerate}
\end{proposition}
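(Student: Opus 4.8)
The plan is to prove the cycle $(1)\Rightarrow(2)\Rightarrow(3)\Rightarrow(1)$, after first reducing $(1)$ to a statement about the quotient functor. By Proposition~\ref{proppcomp} applied in $\mathfrak{Lex}$, $\categ C$ is $\mathcal{E}_h$-cocomplete exactly when the fully faithful embedding $i\colon\categ C\to\mathcal{E}_h(\categ C)$ has a left adjoint $m$ in $\mathfrak{Lex}$ with invertible counit; since $i$ is fully faithful the counit is automatically invertible, so $(1)$ says precisely that $i$ admits an ordinary left adjoint $m$ which is moreover lex. By the corollary following Proposition~\ref{propquotientleftadj}, such an $m$ exists if and only if $\categ C$ has quotients of equivalence groupoids, and then $m(A)=A_0/A_1=:\bar A$ (cf.\ Lemma~\ref{kanexthree} with $F=\id$). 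Thus the content of $(1)$ is: quotients of equivalence groupoids exist, and $A\mapsto\bar A$ preserves finite limits.

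For $(1)\Rightarrow(2)$ I would argue as follows. A strict morphism $i(T)\to A$ with $T\in\categ C$ is the same as a map $T\to A_0$, and two of them are identified in $\mathcal{E}_h(\categ C)$ precisely when the induced map $T\to A_0\times A_0$ factors through the monomorphism $A_1\hookrightarrow A_0\times A_0$; since every object of $\mathcal{E}_h(\categ C)$ is a coequaliser of objects of $\categ C$ (Lemma~\ref{kanext}, Corollary~\ref{elma}), this identifies the pullback $i(A_0)\times_A i(A_0)$ with $i(A_1)$. Applying the lex functor $m$, which sends $i(A_0)\to A$ to the quotient map $A_0\to\bar A$, yields $A_1\simeq A_0\times_{\bar A}A_0$, i.e.\ effectiveness. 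For universality, the explicit homotopy-pullback formula gives $X\times_{i(Z)}i(Y)=(X_0\times_Z Y,\ X_1\times_Z Y)$ for $X$ an equivalence groupoid and $Y\to Z$ in $\categ C$, and applying $m$ gives $\mathrm{coeq}(X_1\times_Z Y\rightrightarrows X_0\times_Z Y)\simeq\bar X\times_Z Y$, which is the universality clause of $(2)$. For $(2)\Rightarrow(3)$: kernel pairs are equivalence groupoids (equivalence relations injecting into the square) and an effective epimorphism is the coequaliser of its kernel pair; so, given an effective epi $f\colon X\to Y$ and a map $Z\to Y$, setting $A=(X,X\times_Y X)$ (so $\bar A=Y$) and applying the universality of $(2)$ to $A$ and the cospan $A\to Y\leftarrow Z$ gives $Z\simeq\bar A\times_Y Z\simeq\mathrm{coeq}\big((X\times_Y X)\times_Y Z\rightrightarrows X\times_Y Z\big)$; since $(X\times_Y Z)\times_Z(X\times_Y Z)\simeq(X\times_Y X)\times_Y Z$, this exhibits $X\times_Y Z\to Z$ as the coequaliser of its kernel pair, hence as an effective epimorphism.

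For $(3)\Rightarrow(1)$, effectiveness already forces quotients of equivalence groupoids to exist, so $m$ exists, and it remains to show $m$ is lex; as $m$ preserves the terminal object, this reduces to preservation of pullbacks. Fix a cospan $A\xrightarrow{f}D\xleftarrow{g}B$ of equivalence groupoids. Using that $D_1\hookrightarrow D_0\times D_0$ is monic, that $D$ is effective (so $D_1\simeq D_0\times_{\bar D}D_0$), and that $f$ and $g$ carry $A_1$ and $B_1$ into $D_1$, the explicit formulas for $A\times^h_D B$ collapse to $(A\times^h_D B)_0\simeq A_0\times_{\bar D}B_0$ and $(A\times^h_D B)_1\simeq A_1\times_{\bar D}B_1$, with the two face maps the evident ones; hence $m(A\times_D B)=\mathrm{coeq}(A_1\times_{\bar D}B_1\rightrightarrows A_0\times_{\bar D}B_0)$. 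On the other hand, the canonical map $A_0\times_{\bar D}B_0\to\bar A\times_{\bar D}\bar B$ factors as a base change of the effective epi $A_0\to\bar A$ followed by a base change of the effective epi $B_0\to\bar B$; by $(3)$ these are effective epimorphisms with kernel pairs $A_1\times_{\bar D}B_0$ and $\bar A\times_{\bar D}B_1$, and composing the two resulting universal coequalisers exhibits $\bar A\times_{\bar D}\bar B$ as the quotient of $A_0\times_{\bar D}B_0$ by the equivalence relation generated by $A_1\times_{\bar D}B_0$ and $A_0\times_{\bar D}B_1$, which — $A_1$ and $B_1$ being transitive — equals $A_1\times_{\bar D}B_1$. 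Therefore $m(A\times_D B)\simeq\bar A\times_{\bar D}\bar B=m(A)\times_{m(D)}m(B)$, so $m$ is lex and $(1)$ holds.

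I expect the main obstacle to be this last computation: proving $m(A\times_D B)\simeq\bar A\times_{\bar D}\bar B$ uses all three hypotheses in a coordinated way — the monomorphism condition together with effectiveness of $D$ to collapse the strict simplicial formulas for the homotopy pullback, and pullback-stability of effective epimorphisms to recognise the target as an iterated universal coequaliser — and it requires care in passing between the strict simplicial objects that compute pullbacks in $\mathcal{E}_h(\categ C)$ and the honest colimits computed in $\categ C$. The auxiliary facts that composites and base changes of effective epimorphisms behave well, and that coequalising a reflexive relation is the same as coequalising the equivalence relation it generates, will also need short separate verifications.
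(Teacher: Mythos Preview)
Your proposal is correct and the implications $(1)\Rightarrow(2)$ and $(2)\Rightarrow(3)$ are essentially the paper's arguments. The difference lies in how the cycle closes.

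The paper does not go directly from $(3)$ to $(1)$; it first proves $(3)\Rightarrow(2)$ and then $(2)\Rightarrow(1)$. For $(2)\Rightarrow(1)$ the paper uses Lemma~\ref{lemmaeffective} to replace $X\times^h_Z Y$ by $X\times^h_{qZ}Y$ (this is exactly your ``collapse'' via effectiveness of $D$), and then computes the quotient by observing that $\Delta_{\leq 1}^{\op}$ is sifted and invoking the universality hypothesis of $(2)$ to interchange the $\Delta_{\leq 1}^{\op}$-colimit with the pullback one variable at a time. Your route $(3)\Rightarrow(1)$ reaches the same reduced pullback $(A_0\times_{\bar D}B_0,\ A_1\times_{\bar D}B_1)$ but finishes by regular-category methods: under $(3)$ the category is Barr-exact, so the composite of the two pulled-back effective epimorphisms $A_0\times_{\bar D}B_0\to\bar A\times_{\bar D}B_0\to\bar A\times_{\bar D}\bar B$ is again effective, and its kernel pair is $(A_0\times_{\bar A}A_0)\times_{\bar D}(B_0\times_{\bar B}B_0)=A_1\times_{\bar D}B_1$ by effectiveness of $A$ and $B$. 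The paper's siftedness argument is more uniform and would generalise verbatim to other sifted diagram shapes; yours is more elementary and stays entirely within standard exact-category tools, at the cost of the auxiliary fact that effective epimorphisms compose.

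One small suggestion: your phrasing in terms of ``the equivalence relation generated by $A_1\times_{\bar D}B_0$ and $A_0\times_{\bar D}B_1$'' is unnecessary and a little slippery in a general lex category. It is cleaner to compute the kernel pair of the composite map directly as $A_1\times_{\bar D}B_1$ (as above) and then cite closure of effective epimorphisms under composition; this avoids having to speak of generated equivalence relations or of coequalising reflexive relations.
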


\begin{proof}
In any of these cases, $\categ C$ is supposed to have coequalisers of equivalence groupoids. Hence the canonical embedding functor $\categ C \to \mathcal{E}_h(\categ C)$ has a left adjoint that we denote $q$ and that sends an equivalence groupoid $X$ to its quotient $qX = X_0/X_1$. Moreover, $q$ preserves the final object.

Let us suppose (1). One can notice that for any equivalence groupoid $X$, one has
$$
X_0 \times_X^h X_0 = X_1
$$
Hence,
$$
X_1 = q (X_1) = q (X_0 \times_X^h X_0)
= q  (X_0) \times_{qX} q(X_0)
= X_0 \times_{qX} X_0,
$$
which means that the equivalence groupoid $X$ is effective.

It is then straightforward to show that (2) is a consequence of (1) and (3) is a consequence of (2).

Let us assume (3) and let us prove (2). Let us consider a cospan diagram $X \to Y \leftarrow Z$ in $\mathcal{E}_h(\categ C)$, so that $Y$ and $Z$ belong to $\categ C$. Since effective epimorphisms are stable through pullbacks, then the map
$$
X_0 \times_{Z} Y \to qX \times_Z Y
$$
is an effective epimorphism. Its related equivalence relation on $X_0 \times_{Z} Y$ is
$$
(X_0 \times_{Z} Y) \times_{qX \times_{Z} Y} (X_0 \times_{Z} Y) = (X_0 \times_{qX} X_0) \times_Z Y
$$
which is $X_1 \times_Z Y$ since equivalence groupoids are effective. Thus, $qX \times_Z Y$ is the coequaliser of the maps
$$
X_1 \times_Z Y \rightrightarrows X_0 \times_Z Y.
$$
This shows (2).

Finally, let us assume (2) and let us prove (1). Let us consider a cospan diagram $X \to Y \leftarrow Z$ in $\mathcal{E}_h(\categ C)$. Since the equivalence groupoid $Z$ is effective, then by Lemma \ref{lemmaeffective}, the map
$$
X \times^h_Z Y \to X \times_{qZ}^h Y
$$
is an isomorphism. Moreover, since coequalisers of equivalence groupoids are universal and since the category $\Delta_{\leq 1}^\op$ is sifted, the map
$$
q(X \times_{qZ}^h Y) \to qX \times_{qZ} qY
$$
is the following composite isomorphism
\begin{align*}
    q(X \times_{qZ}^h Y)
    &=\varinjlim_{k \in \Delta_{\leq 1}^\op}\left( X \times_{qZ}^h Y\right)_k
    \\
    &= \varinjlim_{k \in \Delta_{\leq 1}^\op} X_k \times_{qZ} Y_k
    \\
    & \simeq 
    \varinjlim_{(i,j) \in \Delta_{\leq 1}^\op \times \Delta_{\leq 1}^\op}X_i \times_{qZ} Y_j
    \\
    &= \varinjlim_{i \in \Delta_{\leq 1}^\op}
    \varinjlim_{j \in \Delta_{\leq 1}^\op}
    X_i \times_{qZ} Y_j
    \\
    &= \varinjlim_{i \in \Delta_{\leq 1}^\op}
    X_i \times_{qZ} qY
    \\
    &= qX \times_{qZ} qY.
\end{align*}
So $q$ is lex. This shows (1).
\end{proof}

\begin{lemma}\label{lemmaintermedefunctor}
Let us suppose that the lex category $\categ C$ is $\mathcal{E}_h$-cocomplete. Then the composite functor
$$
e : \mathcal{E}_h^{(2)}(\categ C) \xrightarrow{\sim} \mathcal{E}_h(\categ C, \mathcal{E}_h(\categ C))  \hookrightarrow \mathcal{E}_h\mathcal{E}_h(\categ C) \xrightarrow{\mathcal{E}_h(m_{\mathcal{E}_h}(\categ C))} \mathcal{E}_h(\categ C)
$$
is left adjoint to the canonical fully faithful embedding $\mathcal{E}_h(\categ C)\hookrightarrow \mathcal{E}_h^{(2)}(\categ C)$.
\end{lemma}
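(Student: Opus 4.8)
The plan is to obtain the adjunction by transporting, along the pseudofunctor $\mathcal{E}_h$, the adjunction that witnesses the $\mathcal{E}_h$-cocompleteness of $\categ C$, and then restricting it along the equivalence $\mathcal{E}_h^{(2)}(\categ C)\simeq\mathcal{E}_h(\categ C,\mathcal{E}_h(\categ C))$. Concretely, set $i=i_{\mathcal{E}_h}(\categ C):\categ C\to\mathcal{E}_h(\categ C)$ and $q=m_{\mathcal{E}_h}(\categ C):\mathcal{E}_h(\categ C)\to\categ C$. Since $\categ C$ is $\mathcal{E}_h$-cocomplete in $\mathfrak{Lex}$, Proposition \ref{proppcomp} tells us that $q$ is a lex functor which is left adjoint to $i$, with invertible counit. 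As $\mathcal{E}_h$ is a KZ-machine on $\mathfrak{Lex}$, it is a pseudofunctor, and pseudofunctors preserve adjunctions; hence
$$
\mathcal{E}_h(q):\mathcal{E}_h\mathcal{E}_h(\categ C)\rightleftarrows\mathcal{E}_h(\categ C):\mathcal{E}_h(i)
$$
is an adjunction, so that $\hom_{\mathcal{E}_h(\categ C)}(\mathcal{E}_h(q)(B),Y)\simeq\hom_{\mathcal{E}_h\mathcal{E}_h(\categ C)}(B,\mathcal{E}_h(i)(Y))$ naturally in $B\in\mathcal{E}_h\mathcal{E}_h(\categ C)$ and $Y\in\mathcal{E}_h(\categ C)$.

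The heart of the argument is to identify the right adjoint $\mathcal{E}_h(i)$ with the canonical embedding $j:\mathcal{E}_h(\categ C)\hookrightarrow\mathcal{E}_h^{(2)}(\categ C)$. By Corollary \ref{kanexttwo}, $\mathcal{E}_h(i)$ sends an equivalence groupoid $(Y_0,Y_1)$ in $\categ C$ to the equivalence groupoid $(i(Y_0),i(Y_1))$ in $\mathcal{E}_h(\categ C)$; as $i(Y_0)$ lies in $\categ C\subseteq\mathcal{E}_h(\categ C)$, this object belongs to the full subcategory $\mathcal{E}_h(\categ C,\mathcal{E}_h(\categ C))$. Unwinding the equivalence $d:\mathcal{E}_h^{(2)}(\categ C)\xrightarrow{\sim}\mathcal{E}_h(\categ C,\mathcal{E}_h(\categ C))$, which on objects sends a weak equivalence groupoid $A$ to the object with $0$-part $A_0$ and $1$-part the equivalence groupoid $(A_1,A_1\times_{A_0^2}A_1)$, and using that for a genuine equivalence groupoid $Y$ the map $Y_1\to Y_0\times Y_0$ is monic so that $Y_1\times_{Y_0^2}Y_1=Y_1$, one checks that $\mathcal{E}_h(i)(Y)=d(j(Y))$; the corresponding identification on morphisms is routine from the explicit description of the morphisms of $\mathcal{E}_h^{(2)}(\categ C)$. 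Thus $\mathcal{E}_h(i)=\iota\circ d\circ j$, where $\iota:\mathcal{E}_h(\categ C,\mathcal{E}_h(\categ C))\hookrightarrow\mathcal{E}_h\mathcal{E}_h(\categ C)$ is the full inclusion.

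Finally, substitute $B=\iota(d(A))$ into the natural isomorphism above, for $A\in\mathcal{E}_h^{(2)}(\categ C)$. By definition $\mathcal{E}_h(q)(\iota(d(A)))=e(A)$, so the left-hand side becomes $\hom_{\mathcal{E}_h(\categ C)}(e(A),Y)$. On the right-hand side, $\mathcal{E}_h(i)(Y)=\iota(d(j(Y)))$ lies in the (full) image of $\iota$, so the hom-set equals $\hom_{\mathcal{E}_h(\categ C,\mathcal{E}_h(\categ C))}(d(A),d(j(Y)))$, which, $d$ being an equivalence, is $\hom_{\mathcal{E}_h^{(2)}(\categ C)}(A,j(Y))$. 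As all identifications are natural in $A$ and $Y$, this exhibits $e$ as left adjoint to $j$.

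The main obstacle is the bookkeeping in the middle step: tracing a genuine equivalence groupoid through $\mathcal{E}_h(i)$, the inclusion $\iota$, and the equivalence $d$, and checking the identity $\mathcal{E}_h(i)=\iota\circ d\circ j$ at the level of morphisms as well as objects. The remaining ingredients are purely formal: the adjunction $q\dashv i$ is the hypothesis, its transport along the pseudofunctor $\mathcal{E}_h$ is standard, and the concluding restriction uses only the full faithfulness of $\iota$ and of $d$.
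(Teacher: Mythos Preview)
Your argument is correct and takes a genuinely different route from the paper. The paper proceeds concretely: it unwinds the composite $e$ to the explicit formula $eA_0=A_0$, $eA_1=A_1/(A_1\times_{A_0^2}A_1)$, writes down the unit $A\to eA$ (identity on level $0$, quotient map on level $1$), and observes that when $A$ is already an equivalence groupoid the unit is invertible, yielding the counit. You instead argue formally: transport the adjunction $q\dashv i$ along the pseudofunctor $\mathcal{E}_h$, identify $\mathcal{E}_h(i)$ with $\iota\circ d\circ j$, and then restrict along the fully faithful $\iota$ and the equivalence $d$. Your approach has the virtue of making transparent why the statement should hold and requires essentially no computation beyond the identification $\mathcal{E}_h(i)\simeq\iota\circ d\circ j$ (which, as you note, reduces to $Y_1\times_{Y_0^2}Y_1\simeq Y_1$ for an equivalence groupoid $Y$). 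The paper's approach, on the other hand, yields the explicit formula for $e$ directly; this formula is used in the very next proof (Proposition \ref{lemmaintermediateleftadjoint}) and in Lemma \ref{LemmaMoritamor}, so with your argument one would still need to extract it afterwards.
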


\begin{proof}
This functor sends an equivalence 2-groupoid $A$ to the equivalence groupoid $eA$
so that
\begin{align*}
    eA_0 &= A_0
    \\
    eA_1 &= A_1/ A_{1,1}, \text{ where }A_{1,1} = A_1 \times_{A_0^2} A_1.
\end{align*}
The unit of the adjunction is the canonical natural morphism from $A$ to $eA$ induced by the equality $eA_0 = A_0$ and the quotient map $A_1 \to A_1 /A_{1,1}$. If $A$ is an equivalence groupoid, this map $A \to eA$ is an isomorphism. The inverse map is the counit of the adjunction.
\end{proof}

\begin{proposition}
\label{lemmaintermediateleftadjoint}
If the lex category $\categ C$ is $\mathcal{E}_h$-cocomplete, then it is $\mathcal{E}_h^{(2)}$-cocomplete.
\end{proposition}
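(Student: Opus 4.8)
The plan is to invoke Proposition \ref{proppcomp}: to prove that $\categ C$ is $\mathcal{E}_h^{(2)}$-cocomplete it suffices to exhibit a left adjoint to the canonical embedding $i_{\mathcal{E}_h^{(2)}}(\categ C) \colon \categ C \to \mathcal{E}_h^{(2)}(\categ C)$ whose counit is invertible. Since this embedding is by construction the composite
$$
\categ C \xrightarrow{\ i_{\mathcal{E}_h}(\categ C)\ } \mathcal{E}_h(\categ C) \xhookrightarrow{\ j\ } \mathcal{E}_h^{(2)}(\categ C),
$$
the strategy is to equip each of the two factors with a left adjoint having invertible counit and then compose the two adjunctions.

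For the outer factor $j$, Lemma \ref{lemmaintermedefunctor} already provides a left adjoint $e \colon \mathcal{E}_h^{(2)}(\categ C) \to \mathcal{E}_h(\categ C)$; the proof of that lemma moreover identifies the counit of $e \dashv j$ with the inverse of the natural map $A \to eA$, which is an isomorphism whenever $A$ is an equivalence groupoid, so this counit is invertible. For the inner factor $i_{\mathcal{E}_h}(\categ C)$, the hypothesis that $\categ C$ is $\mathcal{E}_h$-cocomplete, fed back into Proposition \ref{proppcomp}, furnishes the left adjoint $q = m_{\mathcal{E}_h}(\categ C) \colon \mathcal{E}_h(\categ C) \to \categ C$, again with invertible counit.

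It then remains to assemble these: $q \circ e$ is left adjoint to $j \circ i_{\mathcal{E}_h}(\categ C) = i_{\mathcal{E}_h^{(2)}}(\categ C)$, and the counit of this composite adjunction is the composite
$$
q \circ e \circ i_{\mathcal{E}_h^{(2)}}(\categ C) \xrightarrow{\ \simeq\ } q \circ i_{\mathcal{E}_h}(\categ C) \xrightarrow{\ \simeq\ } \id_{\categ C},
$$
where the first map is induced by the counit of $e \dashv j$ and the second is the counit of $q \dashv i_{\mathcal{E}_h}(\categ C)$; as a composite of isomorphisms it is invertible. Proposition \ref{proppcomp} then yields that $\categ C$ is $\mathcal{E}_h^{(2)}$-cocomplete.

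I do not expect a genuine obstacle here: the substantial content — constructing the functor $e$ and recognising it as a left adjoint with invertible counit — is precisely Lemma \ref{lemmaintermedefunctor}, and what is left is the routine bookkeeping of composing adjunctions (and noting that all functors involved are lex, so that the adjunctions live in $\mathfrak{Lex}$). The only point deserving a moment's care is that the composite left adjoint $q \circ e$ is adjoint to the \emph{composite} embedding, which is legitimate exactly because $i_{\mathcal{E}_h^{(2)}}(\categ C)$ is defined to be that composite.
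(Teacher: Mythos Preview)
Your factorisation and the composition of adjunctions are exactly how the paper begins, and you correctly obtain a left adjoint $qe$ to $i_{\mathcal{E}_h^{(2)}}(\categ C)$ with invertible counit. The gap is in your parenthetical remark that ``all functors involved are lex, so that the adjunctions live in $\mathfrak{Lex}$''. Proposition~\ref{proppcomp} is being applied in the 2-category $\mathfrak{Lex}$, so the left adjoint $qe$ must itself be a lex functor; this is precisely what is \emph{not} automatic. The functor $q$ is lex by hypothesis, but the functor $e$ of Lemma~\ref{lemmaintermedefunctor} is defined through the composite
\[
\mathcal{E}_h^{(2)}(\categ C)\ \simeq\ \mathcal{E}_h(\categ C,\mathcal{E}_h(\categ C))\ \hookrightarrow\ \mathcal{E}_h\mathcal{E}_h(\categ C)\ \xrightarrow{\mathcal{E}_h(q)}\ \mathcal{E}_h(\categ C),
\]
and the middle inclusion is not closed under finite limits: a homotopy pullback $A\times_D^h B$ in $\mathcal{E}_h\mathcal{E}_h(\categ C)$ has level-$0$ object $A_0\times_{D_0} D_1\times_{D_0} B_0$, which need not lie in $\categ C$ even when $A_0,B_0,D_0$ do. So there is no cheap argument that $e$, or even $qe$, preserves pullbacks.

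In the paper this is exactly the point where the real work happens: after recording the adjunction, the entire remainder of the proof is devoted to showing directly that $qe$ preserves pullbacks, via a careful comparison of $qe(A\times_D^h B)$ with $q(eA\times_{eD}^h eB)$ using the explicit description of homotopy pullbacks, the stability of effective epimorphisms under pullback (Proposition~\ref{propgiraudax}), and a hands-on chase through level-$0$ and level-$1$ maps. You have identified the skeleton of the argument but have relegated its substantive content to a parenthesis; that content needs to be supplied.
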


\begin{proof}
Let us denote $q$ the quotient functor
$$
q = m_{\mathcal{E}_h}(\categ C) : \mathcal{E}_h(\categ C) \to \categ C.
$$
From Lemma \ref{lemmaintermedefunctor} and from the fact that $q$ is left adjoint to the fully faithful embedding of $\categ C$ into $\mathcal{E}_h(\categ C)$, we know that the composite functor
$$
qe : \mathcal{E}_h^{(2)}(\categ C) \xrightarrow{q} \mathcal{E}_h(\categ C) \xrightarrow{e} \categ C
$$
is left adjoint to the embedding of $\categ C$ into $\mathcal{E}_h^{(2)}(\categ C)$. The counit of the adjunction is invertible since the right adjoint is fully faithful.

It remains to show that $qe$ is lex.
It is clear that it preserves the final object. Let us show that it preserves pullbacks. Let $A \to D \leftarrow B$ be a cospan diagram in $\mathcal{E}^{(2)}(\categ C)$.
We need to show that the composite map
$$
qe (A \times^{h}_D B) \xrightarrow{f} q(eA \times^{h}_{eD} eB) \xrightarrow{f'} qeA \times_{qeD} qeB
$$
is invertible. Since $q$ is lex, $f'$ is invertible. So, let us prove that $f$ is also an isomorphism. Let us denote
\begin{enumerate}
    \item $X = A \times^h_D B$
    \item $Y = e(A) \times^h_{e(D)} e(B) $
    \item $Z = A_0 \times_{D_0}^h {dD_1} \times_{D_0}^h B_0$ where $dD$ is the equivalence groupoid so that
    \begin{align*}
        dD_{1,0} &= D_1;
        \\
        dD_{1,1} &=  D_1 \times_{D_0^2} D_1.
    \end{align*}
\end{enumerate}
The map $f$ fits in the following commutative diagram in $\mathcal{E}^{(2)}(\categ C)$:
$$
\begin{tikzcd}
    Z 
    \ar[r, "\tilde g"]
    \ar[d]
    &X
    \ar[r, "\tilde f"] \ar[d]
    & Y
    \ar[d]
    \\
    Y_0
    \ar[r, "g"']
    &qeX
    \ar[r, "f"']
    & qY
\end{tikzcd}
$$
where the vertical maps are projections onto quotients. The quotient of 
$Z$ is $Y_0$ since $q$ is lex 
$$
Y_0 = A_0 \times_{D_0} eD_1 \times_{D_0} B_0 =A_0 \times_{D_0} qdD_1 \times_{D_0} B_0
= q(A_0 \times_{D_0}^h dD_1 \times_{D_0}^h B_0).
$$
We denote $p_Z$ the composite map $Z_0 \to Z \to Y_0$ and we define similarly $p_X: X_0 \to X \to qeX$ and $p_Y : Y_0 \to Y \to qY$. One can notice that the composition $Y_0 \to Y \to qY$ is equal to the horizontal map $f \circ g$ and that the map
$$
\tilde f_1:A_1 \times_{D_0^2} D_1^2 \times_{D_0^2} B_1
\to eA_1 \times_{D_0^2} eD_1^2 \times_{eD_0^2} eB_1
$$
is an effective epimorphism since effective epimorphisms in $\categ C$ are stable through pullbacks (Proposition \ref{propgiraudax}).

The map $\tilde g_0 = Z_0 \to X_0$ is invertible (this is actually the equality of $A_0 \times_{D_0} D_1 \times_{D_0} B_0$). Moreover, the map $p_Z \circ \tilde g_0^{-1} : X_0 \simeq Z_0 \to Y_0$ is equal to $\tilde f_0$. Then, the pair of maps $X_1 \rightrightarrows X_0 \to qeX$ factorises as
$$
X_1 \rightrightarrows X_0 \xrightarrow{\tilde g_0^{-1}} Z_0 \xrightarrow{p_Z} Y_0 \xrightarrow{g} qeX
$$
which rewrites as
$$
X_1 \xrightarrow{\tilde f_1} Y_1 \rightrightarrows Y_0 \xrightarrow{g} qeX.
$$
Since these two composite maps from $X_1$ to $qeX$ are equal and since the map $\tilde f_1$ is an effective epimorphism, the two maps
$$
Y_1 \rightrightarrows Y_0 \to qeX
$$
are equal. This gives us a map $i : qY \to qeX$ so that, by definition of quotients, $g =i \circ p_Y$ and $f\circ i = \id_{qY}$. Finally,
$$
g = i \circ p_Y = i \circ  f\circ i \circ p_Y = i \circ  f\circ g.
$$
Since $g$ is epic (because $p_X = g \circ p_Z \circ \tilde g_0^{-1}$ is epic), then $i \circ f = \id$.
So $i$ is inverse to $f$.
\end{proof}

\begin{lemma}\label{LemmaMoritamor}
Let $f: \categ C \to \categ D$ be a lex functor between $\mathcal{E}_h$-cocomplete categories (in particular they are $\mathcal{E}_h^{(2)}$-cocomplete by Proposition \ref{lemmaintermediateleftadjoint}). If $f$ is $\mathcal{E}_h$-linear, then it is $\mathcal{E}_h^{(2)}$-linear.
\end{lemma}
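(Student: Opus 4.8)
The plan is to invoke Proposition~\ref{propcommdiagr} at both ends: it reduces the claim to showing that an $\mathcal{E}_h$-linear lex functor $f$ between $\mathcal{E}_h$-cocomplete categories preserves quotients of equivalence $2$-groupoids, i.e. that the square
$$
\begin{tikzcd}
    \mathcal{E}_h^{(2)}(\categ C)
    \ar[r, "m_{\mathcal{E}_h^{(2)}}(\categ C)"] \ar[d, "\mathcal{E}_h^{(2)}(f)"']
    & \categ C
    \ar[d, "f"]
    \\
    \mathcal{E}_h^{(2)}(\categ D)
    \ar[r, "m_{\mathcal{E}_h^{(2)}}(\categ D)"']
    & \categ D
\end{tikzcd}
$$
commutes up to a canonical isomorphism. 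To get a handle on $m_{\mathcal{E}_h^{(2)}}$ I would use the factorisation obtained in the proof of Proposition~\ref{lemmaintermediateleftadjoint}, valid for any $\mathcal{E}_h$-cocomplete lex category: $m_{\mathcal{E}_h^{(2)}}(\categ C) = q_{\categ C}\circ e_{\categ C}$, where $q_{\categ C} = m_{\mathcal{E}_h}(\categ C)$ is the quotient of equivalence groupoids and $e_{\categ C}$ is the functor of Lemma~\ref{lemmaintermedefunctor}, sending $A$ to the equivalence groupoid with $e_{\categ C}(A)_0 = A_0$ and $e_{\categ C}(A)_1 = A_1/A_{1,1}$, $A_{1,1} = A_1\times_{A_0^2}A_1$.

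The core step is to prove that $e$ is natural with respect to $f$, that is $\mathcal{E}_h(f)\circ e_{\categ C}\simeq e_{\categ D}\circ \mathcal{E}_h^{(2)}(f)$. On level-$0$ objects both sides give $f(A_0)$. On level-$1$ objects, note that $e_{\categ C}(A)_1 = A_1/A_{1,1}$ is the quotient $q_{\categ C}(dA_1)$ of the "inner" equivalence groupoid $dA_1$ with $(dA_1)_0 = A_1$ and $(dA_1)_1 = A_1\times_{A_0^2}A_1$. Here lexness of $f$ (not just $\mathcal{E}_h$-linearity) enters: since $f$ preserves the fibre product $A_1\times_{A_0^2}A_1 \simeq fA_1\times_{fA_0^2}fA_1$, the functor $\mathcal{E}_h(f)$ sends $dA_1$ to $d\bigl(\mathcal{E}_h^{(2)}(f)(A)\bigr)_1$ (recalling from Corollary~\ref{kanexttwo} that $\mathcal{E}_h^{(2)}(f)(A)$ has level-$1$ object $fA_1$). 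Then $\mathcal{E}_h$-linearity of $f$ gives $f\bigl(e_{\categ C}(A)_1\bigr) = f\bigl(q_{\categ C}(dA_1)\bigr)\simeq q_{\categ D}\bigl(\mathcal{E}_h(f)(dA_1)\bigr)\simeq e_{\categ D}\bigl(\mathcal{E}_h^{(2)}(f)(A)\bigr)_1$, and one checks these identifications are compatible with the face and degeneracy maps, so they assemble into the desired isomorphism of equivalence groupoids, natural in $A$.

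Granting this, the conclusion is a pasting of two naturality squares: the one for $e$ just obtained, and the square $f\circ q_{\categ C}\simeq q_{\categ D}\circ \mathcal{E}_h(f)$, which is exactly the $\mathcal{E}_h$-linearity of $f$ via Proposition~\ref{propcommdiagr}. Composing them,
$$
f\circ m_{\mathcal{E}_h^{(2)}}(\categ C) = f\circ q_{\categ C}\circ e_{\categ C}\simeq q_{\categ D}\circ \mathcal{E}_h(f)\circ e_{\categ C}\simeq q_{\categ D}\circ e_{\categ D}\circ \mathcal{E}_h^{(2)}(f) = m_{\mathcal{E}_h^{(2)}}(\categ D)\circ \mathcal{E}_h^{(2)}(f),
$$
so $f$ preserves quotients of equivalence $2$-groupoids, hence is $\mathcal{E}_h^{(2)}$-linear by Proposition~\ref{propcommdiagr}. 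I expect the only genuine work to be the bookkeeping in the naturality step for $e$: identifying $e_{\categ C}(A)_1$ as the quotient of the inner equivalence groupoid and tracking that it is precisely lexness of $f$ that makes $\mathcal{E}_h(f)$ respect that inner structure; once that is seen, everything is formal.
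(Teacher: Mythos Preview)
Your proof is correct and follows essentially the same approach as the paper: both use the factorisation $m_{\mathcal{E}_h^{(2)}} = q \circ e$ from Proposition~\ref{lemmaintermediateleftadjoint} and show that $f$ commutes with each factor, invoking lexness for the identification $f(A_{1,1}) \simeq (fA)_{1,1}$ and $\mathcal{E}_h$-linearity twice for the inner and outer quotients. The paper simply writes this out as the chain of isomorphisms $f(A_0)/f(A_1) = f(A_0)/(f(A_1)/f(A_{1,1})) \simeq f(A_0)/f(A_1/A_{1,1}) \simeq f(A_0/(A_1/A_{1,1})) = f(A_0/A_1)$ rather than packaging it as a pasting of naturality squares, but the content is identical.
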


\begin{proof}
Let $A$ be an equivalence 2-groupoid in $\categ C$. Then the map $f(A_0)/f(A_1) \to f(A_0/A_1)$ is an isomorphism as it is equal to the sequence of canonical maps
$$
\begin{tikzcd}
    f(A_0)/f(A_1)
    \ar[r, equal]
    &f(A_0)/(f(A_1)/f(A_{1,1}))
    \ar[d]
    \\
    & f(A_0)/ f(A_1/A_{1,1})
    \ar[d]
    \\
    &f(A_0/(A_1/A_{1,1}))
    \ar[r, equal]
    &f(A_0/A_1)
\end{tikzcd}
$$
which are all isomorphisms.
\end{proof}

\begin{proposition}
The two KZ-machines $\mathcal{E}_h$ and $\mathcal{E}_h^{(2)}$ on $\mathfrak{Lex}$ are Morita equivalent.
\end{proposition}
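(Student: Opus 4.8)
The plan is simply to combine the one-directional comparisons that are already available. By definition, Morita equivalence of $\mathcal{E}_h$ and $\mathcal{E}_h^{(2)}$ amounts to two assertions: a lex category $\categ C$ is $\mathcal{E}_h$-cocomplete if and only if it is $\mathcal{E}_h^{(2)}$-cocomplete; and a lex functor $f : \categ C \to \categ D$ between such categories is $\mathcal{E}_h$-linear if and only if it is $\mathcal{E}_h^{(2)}$-linear. So there are four implications to collect, two for objects and two for morphisms.

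For the implications from $\mathcal{E}_h^{(2)}$ to $\mathcal{E}_h$, I would invoke the fact recorded just above that $\mathcal{E}_h$ is simpler than $\mathcal{E}_h^{(2)}$ via the canonical inclusion $\mathcal{E}_h(\categ C) \hookrightarrow \mathcal{E}_h^{(2)}(\categ C)$. Proposition \ref{propsimpler} then yields that $\mathcal{E}_h$ is Morita simpler than $\mathcal{E}_h^{(2)}$; concretely, every $\mathcal{E}_h^{(2)}$-cocomplete lex category is $\mathcal{E}_h$-cocomplete, and every $\mathcal{E}_h^{(2)}$-linear lex functor between $\mathcal{E}_h^{(2)}$-cocomplete categories is $\mathcal{E}_h$-linear.

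For the reverse implications, Proposition \ref{lemmaintermediateleftadjoint} shows that an $\mathcal{E}_h$-cocomplete lex category is $\mathcal{E}_h^{(2)}$-cocomplete, and Lemma \ref{LemmaMoritamor} shows that an $\mathcal{E}_h$-linear lex functor between $\mathcal{E}_h$-cocomplete categories (which are automatically $\mathcal{E}_h^{(2)}$-cocomplete) is $\mathcal{E}_h^{(2)}$-linear. Putting the two directions together, the two classes of cocomplete objects coincide and, over that common class, the two classes of linear morphisms coincide; this is precisely the definition of Morita equivalence.

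I do not expect any genuine obstacle at this stage: the substantive content has already been isolated in Proposition \ref{lemmaintermediateleftadjoint}, whose crux is the left exactness of the composite left adjoint $qe$ and ultimately the stability of effective epimorphisms under pullback provided by Proposition \ref{propgiraudax}, together with the diagram chase of Lemma \ref{LemmaMoritamor}. The argument sketched here is therefore pure bookkeeping, and the write-up can be left as ``straightforward'' or reduced to the two citations above.
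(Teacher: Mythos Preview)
Your proposal is correct and follows essentially the same route as the paper's proof: one direction uses that $\mathcal{E}_h$ is simpler than $\mathcal{E}_h^{(2)}$ together with Proposition~\ref{propsimpler}, and the other direction cites Proposition~\ref{lemmaintermediateleftadjoint} and Lemma~\ref{LemmaMoritamor}. The paper's own proof is just the two-sentence version of exactly this bookkeeping.
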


\begin{proof}
The KZ-machine $\mathcal{E}_h$ is simpler than $\mathcal{E}_h^{(2)}$, so it is Morita simpler.
Conversely, Proposition \ref{lemmaintermediateleftadjoint} and Lemma \ref{LemmaMoritamor} tell us that $\mathcal{E}_h^{(2)}$ is Morita simpler that $\mathcal{E}_h$. Hence they are Morita equivalent.
\end{proof}

\subsection{The KZ-doctrine of equivalence 2-groupoids}

\begin{theorem}
The KZ-machine of equivalence 2-groupoids $\mathcal{E}^{(2)}_h$ on $\mathfrak{Lex}$ is a KZ-doctrine.
\end{theorem}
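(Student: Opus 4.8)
The plan is to check the two defining conditions of a KZ-doctrine for $M=\mathcal{E}_h^{(2)}$: that $\mathcal{E}_h^{(2)}(\categ C)$ is $\mathcal{E}_h^{(2)}$-cocomplete for every lex $\categ C$, and that $\mathcal{E}_h^{(2)}(f)$ is $\mathcal{E}_h^{(2)}$-linear for every lex $f$. I would first reduce both to statements about the simpler machine $\mathcal{E}_h$. By Proposition \ref{lemmaintermediateleftadjoint} it is enough to show that $\mathcal{E}_h^{(2)}(\categ C)$ is $\mathcal{E}_h$-cocomplete; and, granting this, $\mathcal{E}_h^{(2)}(f)$ is a lex functor between $\mathcal{E}_h$-cocomplete categories, so by Proposition \ref{propcommdiagr} it is $\mathcal{E}_h$-linear as soon as it preserves quotients of equivalence groupoids, and then it is $\mathcal{E}_h^{(2)}$-linear by Lemma \ref{LemmaMoritamor}. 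That $\mathcal{E}_h^{(2)}(f)$ preserves quotients of equivalence groupoids is immediate from its levelwise description $(A_0,A_1)\mapsto(f A_0,f A_1)$ (Corollary \ref{kanexttwo}): the quotient of an equivalence groupoid in $\mathcal{E}_h^{(2)}(\categ C)$ is the $cD$-construction of Proposition \ref{propositioncolimitse}, whose ingredients (the $\times^h$ and the formula $cD_1=A_1\times_{A_0}B_0\times_{A_0}A_1$) are finite limits in $\categ C$ and hence preserved by the lex functor $f$.

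The heart of the proof is therefore that $\mathcal{E}_h^{(2)}(\categ C)$ is $\mathcal{E}_h$-cocomplete for an arbitrary lex category $\categ C$. I would establish this through Proposition \ref{propgiraudax}, in the form of its condition (2): $\mathcal{E}_h^{(2)}(\categ C)$ has quotients of equivalence groupoids, these are effective, and coequalisers of equivalence groupoids are universal. The first point is Proposition \ref{propositioncolimitse} (an equivalence groupoid in $\mathcal{E}_h^{(2)}(\categ C)$ is, after choosing representatives, a diagram $\Delta_{\leq 1}^{\op}\to\mathcal{E}^{(2)}(\categ C)$ that becomes a weak equivalence groupoid in $\mathcal{E}_h^{(2)}(\categ C)$, so the $cD$-quotient exists and lies again in $\mathcal{E}_h^{(2)}(\categ C)$).

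For the remaining two points the key device is the family of functors $\hom_{\mathcal{E}_h^{(2)}(\categ C)}(X,-)$ indexed by the objects $X$ of $\categ C\subset\mathcal{E}_h^{(2)}(\categ C)$: each is lex, each preserves quotients of equivalence groupoids (Corollary \ref{corcompactobject}), and the family is jointly conservative — this last property follows from the fact that every object $Y$ receives an effective epimorphism $Y_0\to Y$ with $Y_0\in\categ C$ (Corollary \ref{corollaryeffectiveepi} and Corollary \ref{elma}), so that a morphism inverted by all these functors is both monic and epic and admits an inverse constructed by descent along $Y_0\to Y$. Granting this, "equivalence groupoids are effective" and "coequalisers of equivalence groupoids are universal" each assert that a canonical comparison map is an isomorphism; applying every $\hom_{\mathcal{E}_h^{(2)}(\categ C)}(X,-)$ turns it into the corresponding canonical map in $\Set$ (here one must note that the relevant pairs $X_1\rightrightarrows X_0$ and $X_1\times_Z Y\rightrightarrows X_0\times_Z Y$ are again equivalence groupoids, so that Corollary \ref{corcompactobject} applies to their quotients), and in $\Set$ equivalence relations are effective and their coequalisers are universal. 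Joint conservativity then upgrades this to $\mathcal{E}_h^{(2)}(\categ C)$, and Proposition \ref{lemmaintermediateleftadjoint} concludes that $\mathcal{E}_h^{(2)}(\categ C)$ is $\mathcal{E}_h^{(2)}$-cocomplete.

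The step I expect to be the main obstacle is this second stage: setting up the joint conservativity of the functors $\hom_{\mathcal{E}_h^{(2)}(\categ C)}(X,-)$ and verifying that each of the two Giraud-type conditions is genuinely a statement about a canonical map preserved by them. An alternative, avoiding joint conservativity altogether, is to verify effectiveness and universality by a direct computation with the explicit formulas for $\times^h$ and $cD$, in the spirit of the proof of Proposition \ref{lemmaintermediateleftadjoint}; this is longer but self-contained. Once $\mathcal{E}_h^{(2)}(\categ C)$ is known to be $\mathcal{E}_h$-cocomplete, the two halves assemble as described in the first paragraph.
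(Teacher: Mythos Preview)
Your overall strategy matches the paper's: reduce to showing $\mathcal{E}_h^{(2)}(\categ C)$ is $\mathcal{E}_h$-cocomplete via Proposition~\ref{propgiraudax}, then invoke Proposition~\ref{lemmaintermediateleftadjoint}. You also address the $\mathcal{E}_h^{(2)}$-linearity of $\mathcal{E}_h^{(2)}(f)$ explicitly (via Proposition~\ref{propcommdiagr} and Lemma~\ref{LemmaMoritamor}), which the paper leaves implicit.

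The genuine difference is in how you verify Proposition~\ref{propgiraudax}. The paper uses condition~(3) and proves both sub-conditions by direct computation: stability of effective epimorphisms under pullback is almost immediate once one notes (via the $cD$-construction) that every effective epimorphism is represented by a map whose level-$0$ component is an isomorphism, and effectiveness of equivalence relations is checked by writing down an explicit homotopy inverse to the comparison map $B \to A\times^h_K A$. You instead aim for condition~(2) and argue by reduction to $\Set$ through the jointly conservative family $\{\hom_{\mathcal{E}_h^{(2)}(\categ C)}(X,-):X\in\categ C\}$, each member of which is lex and preserves the relevant coequalisers (Corollary~\ref{corcompactobject}). This is correct and more uniform; the paper's route is shorter for stability but longer for effectiveness, and avoids the extra conservativity lemma. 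You even flag the paper's method as your fallback alternative.

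One small simplification for your conservativity step: the ``descent'' phrasing is unnecessary. Since every $Y\in\mathcal{E}_h^{(2)}(\categ C)$ is the coequaliser of $Y_1\rightrightarrows Y_0$ with $Y_0,Y_1\in\categ C$ (Corollary~\ref{elma}), one has $\hom(Y,-)=\mathrm{eq}\bigl(\hom(Y_0,-)\rightrightarrows\hom(Y_1,-)\bigr)$, and an equaliser of bijections is a bijection; joint conservativity follows at once by Yoneda.
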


\begin{proof}
This is a consequence of the two following lemmas combined with Proposition \ref{propgiraudax}.
\end{proof}

\begin{lemma}
Effective epimorphisms in $\mathcal{E}^{(2)}_h(\categ C)$ are stable through pullbacks.
\end{lemma}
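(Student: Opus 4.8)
The plan is to first give a concrete characterisation of the effective epimorphisms in $\mathcal{E}_h^{(2)}(\categ C)$ and then to observe that the characterising property is manifestly stable under pullback. For an equivalence $2$-groupoid $X$, write $\epsilon_X : X_0 \to X$ for the canonical morphism whose level-$0$ component is the identity and whose level-$1$ component is the degeneracy $X_0 \to X_1$; it is an effective epimorphism by Corollary~\ref{corollaryeffectiveepi}, and the cofork $X_1 \rightrightarrows X_0 \xrightarrow{\epsilon_X} X$ is a coequaliser diagram by Corollary~\ref{elma}. The first goal is to show that a morphism $q : X \to Y$ in $\mathcal{E}_h^{(2)}(\categ C)$ is an effective epimorphism if and only if $\epsilon_Y$ factors through $q$, i.e.\ there is a morphism $s : Y_0 \to X$ with $q \circ s = \epsilon_Y$.

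To prove this, represent $q$ by a morphism $(q_0, q_1)$ of equivalence $2$-groupoids and note that the kernel pair of $q$ is $R := X \times^h_Y X$, which is an equivalence groupoid in $\mathcal{E}_h^{(2)}(\categ C)$, so that $q$ is an effective epimorphism precisely when the canonical comparison $c : cD \to Y$ is an isomorphism, where $D_0 = X$ and $D_1 = R$ and $cD$ is the coequaliser of $R \rightrightarrows X$ given by Proposition~\ref{propositioncolimitse}. For the ``only if'' direction, apply $\hom_{\mathcal{E}_h^{(2)}(\categ C)}(Y_0, -)$: by Corollary~\ref{corcompactobject} the resulting map $\hom(Y_0, X_0) \to \hom(Y_0, cD) \cong \hom(Y_0, Y)$ is surjective and equal to $a \mapsto q \circ \epsilon_X \circ a$, so $\epsilon_Y = q \circ \epsilon_X \circ a$ for some $a$ and one may take $s := \epsilon_X \circ a$. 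For the ``if'' direction, assume $q s = \epsilon_Y$; since isomorphisms in $\mathcal{E}_h^{(2)}(\categ C)$ are detected by the functors $\hom(U,-)$ with $U \in \categ C$ (a density argument using the cofinality statement of Lemma~\ref{kanext}), it suffices to check that $\hom(U, cD) \to \hom(U, Y)$ is bijective for every such $U$. By Corollary~\ref{corcompactobject} together with the description of homotopy, $\hom(U, cD)$ is $\hom(U, X_0)$ modulo the relation identifying $a, a'$ when $(q_0 a, q_0 a')$ factors through $Y_1$, and the comparison map sends the class of $a$ to the class of $q_0 a$; this is visibly injective, and it is surjective since for $b : U \to Y$ represented by $b_0 : U \to Y_0$ the element $a := s_0 \circ b_0$, with $s_0$ the level-$0$ component of $s$, satisfies $q \circ \epsilon_X \circ a = b$, the hypothesis $q s = \epsilon_Y$ supplying the needed factorisation of $(q_0 s_0 b_0, b_0)$ through $Y_1$.

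Granting the characterisation, stability is immediate: given an effective epimorphism $q : X \to Y$ with $q \circ s = \epsilon_Y$ and any morphism $g : Z \to Y$, form the pullback $q' : P := X \times^h_Y Z \to Z$. Representing $g$ by $(g_0, g_1)$, both $g \circ \epsilon_Z$ and $\epsilon_Y \circ g_0$ have level-$0$ component $g_0$ and hence agree in $\mathcal{E}_h^{(2)}(\categ C)$, so $g \circ \epsilon_Z = q \circ (s \circ g_0)$ factors through $q$; the universal property of the pullback then produces $\tilde s : Z_0 \to P$ with $q' \circ \tilde s = \epsilon_Z$, so $\epsilon_Z$ factors through $q'$ and the characterisation gives that $q'$ is an effective epimorphism.

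The main obstacle is the characterisation, and within it the bookkeeping required to match the explicit description of $\hom(U, cD)$ from Corollary~\ref{corcompactobject} with the homotopy relation defining $\hom(U, Y)$ --- in particular verifying injectivity of the comparison map and that $a = s_0 b_0$ yields a genuine preimage. It is also worth spelling out at the outset why isomorphisms in $\mathcal{E}_h^{(2)}(\categ C)$ are detected on objects of $\categ C$, which follows from Lemma~\ref{kanext} (equivalently Corollary~\ref{elma}) by the usual density argument.
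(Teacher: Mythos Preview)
Your argument is correct, and the characterisation you prove --- that $q:X\to Y$ is an effective epimorphism in $\mathcal{E}_h^{(2)}(\categ C)$ if and only if the canonical map $\epsilon_Y:Y_0\to Y$ factors through $q$ --- is a pleasant and reusable statement. The verifications you sketch all go through; in particular, the identification of the equivalence relation coming from Corollary~\ref{corcompactobject} (generated by ``factors through $X_1$'' and ``factors through $R_0=X_0\times_{Y_0}Y_1\times_{Y_0}X_0$'') with the single relation ``$(q_0a,q_0a')$ factors through $Y_1$'' is valid, since the latter visibly contains the former and conversely any witness $U\to Y_1$ with faces $q_0a,q_0a'$ assembles with $a,a'$ into a map $U\to R_0$. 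Likewise, the density claim (isomorphisms detected on $U\in\categ C$) is justified by Lemma~\ref{kanext} and Corollary~\ref{elma}, which exhibit every object as a canonical colimit of objects of $\categ C$, so that the restricted Yoneda functor is fully faithful and hence conservative.

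That said, the paper's proof is considerably shorter and uses a different idea. Rather than characterising effective epimorphisms abstractly, it observes that by Proposition~\ref{propositioncolimitse} any effective epimorphism $f:A\to B$ may be replaced (up to replacing $B$ by the isomorphic object $cD$, where $D$ is the kernel pair diagram) by a morphism of equivalence $2$-groupoids whose level-$0$ component $f_0$ is an isomorphism. One then computes directly from the formula for $A\times^h_E D$ that pulling back along any $D\to E\leftarrow B$ again yields a morphism whose level-$0$ component is an isomorphism, and concludes via Corollary~\ref{corollaryeffectiveepi}. In effect the paper uses the concrete normal form ``$f_0$ is an isomorphism'' as its pullback-stable witness, where you use the more categorical witness ``$\epsilon_Y$ factors through $q$''. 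Your approach buys a clean intrinsic characterisation of effective epimorphisms in $\mathcal{E}_h^{(2)}(\categ C)$; the paper's approach buys a two-line proof that stays entirely at the level of explicit formulas.
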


\begin{proof}
Let us consider a morphism of equivalence 2-groupoids $f: A \to B$ that becomes an effective epimorphism in $\mathcal{E}_h^{(2)}(\categ C)$. From Proposition \ref{propositioncolimitse}, we can assume that the level 0 map $f_0 : A_0 \to B_0$ is an isomorphism. Also, let us consider a cospan diagram $B \to E \leftarrow D$. The map
$$
A \times^h_E D \to B \times^h_E D
$$
is represented by the level-0 map
$$
A_0 \times_{E_0} E_1 \times_{E_0} D_0 \simeq B_0 \times_{E_0} E_1 \times_{E_0} D_0
$$
which is an isomorphism.
Hence, by Corollary \ref{corollaryeffectiveepi}, it is an effective epimorphism.
\end{proof}

\begin{lemma}
Equivalence relations in $\mathcal{E}^{(2)}_h(\categ C)$ are effective.
\end{lemma}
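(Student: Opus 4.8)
The plan is to test the effectiveness of an equivalence relation against the functors $\hom_Z := \hom_{\mathcal{E}^{(2)}_h(\categ C)}(Z,-)$ for $Z \in \categ C$. These are lex and preserve coequalisers of equivalence 2-groupoids by Corollary~\ref{corcompactobject}, so they should reduce the claim to the classical fact that every equivalence relation in $\Set$ is effective. Before that, I would record that the family $\{\hom_Z\}_{Z \in \categ C}$ is jointly conservative on $\mathcal{E}^{(2)}_h(\categ C)$. Indeed, let $g \colon A \to B$ be a morphism with every $\hom_Z(g)$ bijective. By Corollary~\ref{elma}, $A$ and $B$ are the coequalisers of $A_1 \rightrightarrows A_0$ and $B_1 \rightrightarrows B_0$ respectively, with $A_0, A_1, B_0, B_1 \in \categ C$, so the quotient maps $a \colon A_0 \to A$ and $b \colon B_0 \to B$ are epimorphisms (Corollary~\ref{corepicmap}). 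Surjectivity of $\hom_{B_0}(g)$ provides $\beta \colon B_0 \to A$ with $g\beta = b$; injectivity of $\hom_{B_1}(g)$ forces $\beta$ to coequalise $B_1 \rightrightarrows B_0$, so $\beta = hb$ for a unique $h \colon B \to A$, and $ghb = b$ together with $b$ epic gives $gh = \id_B$. Symmetrically, $g(hga) = ga$ and injectivity of $\hom_{A_0}(g)$ give $hga = a$, so $hg = \id_A$ since $a$ is epic; hence $g$ is an isomorphism.

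Then I would take an equivalence relation $R \to X \times X$ in $\mathcal{E}^{(2)}_h(\categ C)$, with legs $d_0, d_1 \colon R \to X$ and reflexivity $X \to R$, and first build its quotient. After a routine strictification of this reflexive pair (for instance replacing $R$ by the pullback $R \times^h_{X \times X}(X \times X)$ along the identity, which is isomorphic to $R$ and makes reflexivity a strict common section of the two legs), the pair underlies an honest diagram $\Delta_{\leq 1}^\op \to \mathcal{E}^{(2)}(\categ C)$ that is an equivalence 2-groupoid in $\mathcal{E}^{(2)}_h(\categ C)$. Proposition~\ref{propositioncolimitse} then yields the coequaliser $q \colon X \to Q$ of $R \rightrightarrows X$ in $\mathcal{E}^{(2)}_h(\categ C)$. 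Writing $\varphi \colon R \to X \times_Q X$ for the canonical comparison morphism, I must show $\varphi$ is an isomorphism, and by the conservativity above it suffices to show $\hom_Z(\varphi)$ is a bijection for every $Z \in \categ C$.

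To finish, I would fix $Z \in \categ C$ and apply $\hom_Z$. Since $\hom_Z$ is lex it carries the equivalence relation $R \rightrightarrows X$ to an equivalence relation $\hom_Z(R) \rightrightarrows \hom_Z(X)$ in $\Set$ (it preserves monomorphisms, finite products, and the reflexivity, symmetry and transitivity maps), and by Corollary~\ref{corcompactobject} it carries $q$ to the coequaliser of this pair; being lex it also gives $\hom_Z(X \times_Q X) = \hom_Z(X) \times_{\hom_Z(Q)} \hom_Z(X)$. As every equivalence relation in $\Set$ is effective, the comparison map $\hom_Z(R) \to \hom_Z(X) \times_{\hom_Z(Q)} \hom_Z(X)$ is a bijection, and under the identifications just described it is exactly $\hom_Z(\varphi)$. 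Hence $\varphi$ is an isomorphism and $R$ is an effective equivalence relation.

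I expect the core argument to be essentially formal; the work will be in the two supporting points. The less routine of these is the strictification step that lets Proposition~\ref{propositioncolimitse} apply to an arbitrary equivalence relation in $\mathcal{E}^{(2)}_h(\categ C)$ (checking that one can indeed realise the reflexive pair by a genuine $\Delta_{\leq 1}^\op$-diagram in $\mathcal{E}^{(2)}(\categ C)$ and that the degeneracy becomes a strict section at both levels $0$ and $1$), together with the verification that $\hom_Z$ sends the comparison morphism $\varphi$ to the set-theoretic effectiveness comparison. The joint conservativity of the $\categ C$-representables is short but is the other ingredient that is not literally stated in the preceding material.
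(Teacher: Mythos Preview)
Your argument is correct and takes a genuinely different route from the paper's. The paper proceeds by direct construction: given an equivalence groupoid $B \rightrightarrows A$ with colimit $K$ as in Proposition~\ref{propositioncolimitse}, it writes out $(A \times^h_K A)_0$ and $(A \times^h_K A)_1$ explicitly and then manufactures a homotopy inverse $r\colon A \times^h_K A \to B$ to the comparison map, using the monicity of $B \to A \times A$ (which furnishes a level-$0$ retraction via the equivalence $dB_1 \simeq dA_1 \times_{A_0^2} (B_0 \times B_0) \times_{A_0^2} dA_1$) together with the Kan structure on $A$. Your approach instead tests the comparison map against the functors $\hom_Z$ for $Z \in \categ C$: you prove these are jointly conservative, note they are lex and preserve the relevant coequalisers by Corollary~\ref{corcompactobject}, and thereby reduce the question to the effectiveness of equivalence relations in $\Set$. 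Your route is more conceptual and the conservativity lemma is an independently useful observation about $\mathcal{E}^{(2)}_h(\categ C)$; the paper's is entirely self-contained and avoids that auxiliary statement. Both proofs share the strictification step you single out---realising an equivalence groupoid in $\mathcal{E}^{(2)}_h(\categ C)$ by a genuine $\Delta_{\leq 1}^\op$-diagram in $\mathcal{E}^{(2)}(\categ C)$ so that Proposition~\ref{propositioncolimitse} applies---and the paper passes over it just as silently, so your caution there is well placed rather than a defect of your approach.
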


\begin{proof}
Let us consider an equivalence groupoid $d_0, d_1:B \rightrightarrows A$ in $\mathcal{E}^{(2)}_h(\categ C)$, let us denote $K$ its colimit as in Proposition \ref{propositioncolimitse}, that is
\begin{align*}
    K_0 &= A_0
    \\
    K_1 &= A_1 \times_{A_0} B_0 \times_{A_0} A_1.
\end{align*}
We need to show that the map $i: B \to A \times^h_{K} A$ is an isomorphism.
We have
\begin{align*}
    (A \times^h_{K} A)_0 &= K_1
    \\
    (A \times^h_{K} A)_1 &= A_1 \times_{A_0^2}(K_1 \times K_1) \times_{A_0^2} A_1.
\end{align*}

The fact that the map $B \to A \times A$ is monic tells us that the morphism of equivalence groupoids in $\categ C$
$$
dB_1 \to dA_1 \times_{A_0^2} (B_0 \times B_0) \times_{A_0^2} dA_1
$$
is an equivalence. The homotopy inverse map may be represented by a level 0 map above $B_0 \times B_0$
$$
f : A_1 \times_{A_0^2} (B_0 \times B_0) \times_{A_0^2} A_1 \to B_1 .
$$
Moreover, the Kan structure on $A$ gives us another map above $B_0 \times B_0$
$$
g : (A \times^h_{K} A)_1 = A_1 \times_{A_0^2}(K_1 \times K_1) \times_{A_0^2} A_1 \to 
A_1 \times_{A_0^2} (B_0 \times B_0) \times_{A_0^2} A_1.
$$
The data of $r_1 = gf$ and of the projection
$$
r_0 : K_1 = A_1 \times_{A_0} B_0 \times_{A_0} A_1 \to B_0
$$
gives a morphism of equivalence 2-groupoids $r : A \times^h_{K} A \to B$. Since $r_0 \circ i_0 = \id$, $r \circ i$ is homotopic to the identity of $B$. Conversely, it is clear that the composition
$$
A \times^h_K A \xrightarrow{r} B  \xrightarrow{i} A \times^h_K A \to A \times A
$$
is homotopic to the canonical inclusion $(\pi_0, \pi_1) :A \times^h_K A \to A \times A$. Since this inclusion is monic in $\mathcal{E}_h^{(2)}(\categ C)$, then $i \circ r$ is homotopic to the identity of $A \times^h_K A$. Thus $r$ is inverse to $i$ in $\mathcal{E}_h^{(2)}(\categ C)$.
\end{proof}


\section{Small presheaves as equivalence 2-groupoids of sums}

In this last section we compose the KZ-doctrines $\mathcal{E}_h^{(2)}$ and $\mathcal{S}$ that act on the 2-category $\mathfrak{Lex}$ of lex categories in order to recover the KZ-doctrine $\Psh$ of small presheaves.

\subsection{The KZ machines of equivalence 2-groupoids and sums}

\begin{proposition}
The KZ-machine $\mathcal{S}$ on $\mathfrak{Lex}$ extends $\mathcal{E}_h$ and $\mathcal{E}_h^{(2)}$. Conversely, $\mathcal{E}_h$ and $\mathcal{E}_h^{(2)}$ extend $\mathcal S$.
\end{proposition}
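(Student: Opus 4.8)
The plan is to unwind the definition of ``$Q$ extends $P$'' --- which is a separate condition for each morphism $f\colon\categ C\to\categ D$, namely that the unit functor $i_Q(P(\categ D))$ preserves the left Kan extension defining $P(f)$ --- and, using that $\mathcal S$, $\mathcal E_h$ and $\mathcal E_h^{(2)}$ are all \emph{pointwise} KZ-machines, to reduce the four assertions to two concrete colimit-preservation statements. First I would record the shape of the relevant colimits. By Corollary \ref{kanexttwo}, Lemma \ref{kanexthree}, Lemma \ref{kanext} and Corollary \ref{elma}, for a lex functor $f\colon\categ C\to\categ D$ the value $\mathcal E_h^{(2)}(f)(A)$ (resp. $\mathcal E_h(f)(A)$) is the colimit, taken in $\mathcal E_h^{(2)}(\categ D)$ (resp. $\mathcal E_h(\categ D)$), over the \emph{connected} category $\Delta_{\leq 1}^\op$ of the diagram $i\mapsto f(A_i)$; this colimit exists there, being the equivalence $2$-groupoid $(f(A_0),f(A_1))$ itself. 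By Definition \ref{definensemble}, the value $\mathcal S(f)(X)$ --- and more generally the pointwise left Kan extension along $i_{\mathcal S}(\categ C)$ of any functor, evaluated at $X$ --- is the small coproduct $\coprod_{i\in\mathcal I_X}(\cdots)$, a colimit over the discrete category $\mathcal I_X$.

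For the claim that $\mathcal S$ extends $\mathcal E_h$ and $\mathcal E_h^{(2)}$, writing $\categ E$ for $\mathcal E_h(\categ D)$ or $\mathcal E_h^{(2)}(\categ D)$, it then suffices to show that the fully faithful embedding $i_{\mathcal S}(\categ E)\colon\categ E\to\mathcal S(\categ E)$ preserves the connected colimits that exist in $\categ E$. I would deduce this directly from the description in Definition \ref{definensemble}: a morphism in $\mathcal S(\categ E)$ out of an object of $\categ E$ into a $\categ E$-sum $(Y(j))_{j\in\mathcal I_Y}$ amounts to the choice of a single index $j$ together with a morphism of $\categ E$, so a cocone in $\mathcal S(\categ E)$ under a connected diagram valued in $\categ E$ factors through exactly one summand of its apex; hence such cocones agree with cocones in $\categ E$, and a colimit computed in $\categ E$ remains one in $\mathcal S(\categ E)$.

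For the converse, that $\mathcal E_h$ and $\mathcal E_h^{(2)}$ extend $\mathcal S$, I would write $\categ E$ for $\mathcal S(\categ D)$ --- which has small coproducts --- and reduce, via the first paragraph, to showing that the embeddings $i_{\mathcal E_h^{(2)}}(\categ E)\colon\categ E\to\mathcal E_h^{(2)}(\categ E)$ and $i_{\mathcal E_h}(\categ E)\colon\categ E\to\mathcal E_h(\categ E)$ preserve small coproducts. For $X\in\categ E$ and an equivalence $2$-groupoid $A$ in $\categ E$, Corollary \ref{corcompactobject} together with Corollary \ref{elma} identifies $\hom_{\mathcal E_h^{(2)}(\categ E)}(X,A)$ with the coequaliser of $\hom_{\categ E}(X,A_1)\rightrightarrows\hom_{\categ E}(X,A_0)$, that is with $\pi_0$ of the groupoid $G_X$ whose objects are $\hom_{\categ E}(X,A_0)$ and whose morphisms are $\hom_{\categ E}(X,A_1)$ (a quotient of $\hom_{\categ E}(X,A_0)$ by an equivalence relation when $A$ is an equivalence groupoid). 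Since $\hom_{\categ E}(-,A_0)$ and $\hom_{\categ E}(-,A_1)$ carry $\coprod_\alpha X_\alpha$ to the corresponding products, one gets $G_{\coprod_\alpha X_\alpha}\cong\prod_\alpha G_{X_\alpha}$; and $\pi_0$ of a product of groupoids is the product of the $\pi_0$'s (isomorphisms in a product groupoid being detected componentwise), and likewise for quotients by equivalence relations. Hence $\hom_{\mathcal E_h^{(2)}(\categ E)}(\coprod_\alpha X_\alpha,A)\cong\prod_\alpha\hom_{\mathcal E_h^{(2)}(\categ E)}(X_\alpha,A)$ naturally in $A$, so $\coprod_\alpha X_\alpha$ carries the universal property of $\coprod_\alpha i_{\mathcal E_h^{(2)}}(\categ E)(X_\alpha)$; the analogous statement for $i_{\mathcal E_h}(\categ E)$ follows since $\mathcal E_h(\categ E)\hookrightarrow\mathcal E_h^{(2)}(\categ E)$ is fully faithful and contains the objects at issue.

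All of this is carried out on underlying categories, which is harmless: since $\mathcal S$, $\mathcal E_h$ and $\mathcal E_h^{(2)}$ are lex machines, every functor appearing is lex, so the left Kan extensions produced, being lex, are left Kan extensions in $\mathfrak{Lex}$ by Lemma \ref{lemmafromcattolex} and Proposition \ref{proplexmachine}. I expect the only genuinely non-formal step to be the third paragraph: pinning down the morphisms of $\mathcal E_h^{(2)}(\categ E)$ out of an object of $\categ E$ (which is exactly Corollaries \ref{corcompactobject} and \ref{elma}) and checking that $\pi_0$ commutes with the possibly infinite products that arise --- which works only because these are products of \emph{groupoids}, where isomorphy is componentwise --- is what makes the direction ``$\mathcal E_h$ (and $\mathcal E_h^{(2)}$) extends $\mathcal S$'' a little more delicate than its converse.
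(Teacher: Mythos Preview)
Your proof is correct and follows the same strategy as the paper's (very terse) proof: reduce ``$Q$ extends $P$'' to the preservation by $i_Q$ of the particular pointwise colimits computing $P(f)$, then observe that $i_{\mathcal S}$ preserves connected colimits while $i_{\mathcal E_h}$ and $i_{\mathcal E_h^{(2)}}$ preserve coproducts. You supply considerably more detail than the paper --- in particular your use of Corollaries~\ref{elma} and~\ref{corcompactobject} together with the $\pi_0$-of-products-of-groupoids argument makes explicit the coproduct-preservation step that the paper only asserts.
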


\begin{proof}
Let $E$ be a KZ-machine on $\mathfrak{Lex}$ which is either $\mathcal{E}_h^{(2)}$ or $\mathcal{E}_h^{(2)}$.

On the one hand, $\mathcal{S}$ extends $E$ because the functor $i_{\mathcal S}(\categ D) :\categ D \to \mathcal{S}(\categ D)$ preserves colimits of connected diagrams that may exist in $\categ D$ for any lex category $\categ D$.

On the other hand, $E$ extends $\mathcal{S}$ because the functor
$$
i_E(\categ D) : \categ D \to E(\categ D)
$$
preserves coproducts that may appear in $\categ D$ for any lex category $\categ D$.
\end{proof}

\begin{corollary}
The pairs of KZ-doctrines 
\begin{enumerate}
    \item $(\mathcal{E}_h, \mathcal S)$;
    \item $(\mathcal{E}_h^{(2)}, \mathcal S)$;
    \item $(\mathcal S, \mathcal{E}_h)$;
    \item $(\mathcal S, \mathcal{E}_h^{(2)})$;
\end{enumerate}
are composable. Moreover, the four resulting KZ-machines $\mathcal{E}_h \mathcal S, \mathcal{E}_h^{(2)} \mathcal S, \mathcal{S} \mathcal{E}_h, \mathcal{S} \mathcal{E}_h^{(2)}$ are all Morita equivalent and
\begin{enumerate}
    \item their algebras are lex categories that are both $\mathcal{S}$-cocomplete and $\mathcal{E}_h$-cocomplete;
    \item their linear morphisms are lex functors that preserves quotients of equivalence groupoids and small coproducts.
\end{enumerate}
\end{corollary}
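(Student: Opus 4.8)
The plan is to assemble the statement from results already in hand; the argument is essentially bookkeeping.

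\emph{Composability.} The preceding proposition states that $\mathcal S$ extends both $\mathcal{E}_h$ and $\mathcal{E}_h^{(2)}$, and that each of $\mathcal{E}_h$ and $\mathcal{E}_h^{(2)}$ extends $\mathcal S$. By Proposition \ref{propcomposekzextends}, whenever $Q$ extends $P$ the pair $(Q,P)$ is composable; applying this to the four cases shows that $(\mathcal{E}_h,\mathcal S)$, $(\mathcal{E}_h^{(2)},\mathcal S)$, $(\mathcal S,\mathcal{E}_h)$ and $(\mathcal S,\mathcal{E}_h^{(2)})$ are composable, so the four KZ-machines $\mathcal{E}_h\mathcal S$, $\mathcal{E}_h^{(2)}\mathcal S$, $\mathcal S\mathcal{E}_h$, $\mathcal S\mathcal{E}_h^{(2)}$ are well defined.

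\emph{Algebras and linear morphisms.} In each of the four pairs $(Q,P)$ the outer machine $Q$ extends the inner machine $P$, so the proposition on algebras over a composable pair applies in its ``if and only if'' form: an object is $QP$-cocomplete precisely when it is both $P$-cocomplete and $Q$-cocomplete, and a morphism between two such objects is $QP$-linear precisely when it is both $P$-linear and $Q$-linear. Next, the Morita equivalence of $\mathcal{E}_h$ and $\mathcal{E}_h^{(2)}$ established above gives that $\mathcal{E}_h$-cocompleteness coincides with $\mathcal{E}_h^{(2)}$-cocompleteness, and $\mathcal{E}_h$-linearity with $\mathcal{E}_h^{(2)}$-linearity. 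Combining these two facts, for all four composites the cocomplete objects are exactly the lex categories that are simultaneously $\mathcal S$-cocomplete and $\mathcal{E}_h$-cocomplete, while the linear morphisms are exactly the lex functors that are simultaneously $\mathcal S$-linear and $\mathcal{E}_h$-linear. Translating the latter through the characterisation of $\mathcal S$-linearity as preservation of small coproducts (Proposition \ref{proplexmachine} reducing the statement in $\mathfrak{Lex}$ to the one in $\mathfrak{Cat}$) and through Proposition \ref{propcommdiagr} identifying $\mathcal{E}_h$-linearity with preservation of quotients of equivalence groupoids yields the asserted description of the algebras and their linear morphisms.

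\emph{Morita equivalence of the four machines.} Since the four KZ-machines now have literally the same cocomplete objects and, between them, the same linear morphisms, they are pairwise Morita equivalent straight from the definition of Morita equivalence of KZ-machines. I do not expect a genuine obstacle: all the content sits in Proposition \ref{propcomposekzextends}, the proposition on algebras over a composable pair, the Morita equivalence of $\mathcal{E}_h$ and $\mathcal{E}_h^{(2)}$, and Proposition \ref{propcommdiagr}. The single point that needs care is keeping the orientation in ``$Q$ extends $P$'' aligned with the composition order $QP$, so that for each of the four pairs the ``only if'' direction of the composable-pair proposition --- the one available precisely because the outer doctrine extends the inner one --- is the one being invoked.
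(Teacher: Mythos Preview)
Your argument is correct and is exactly the intended one: the paper states this result as a corollary with no proof, leaving the reader to assemble it from the preceding proposition, Proposition \ref{propcomposekzextends}, the proposition on algebras over a composable pair, the Morita equivalence of $\mathcal{E}_h$ and $\mathcal{E}_h^{(2)}$, and Proposition \ref{propcommdiagr}, precisely as you do. Your closing remark about keeping the orientation ``$Q$ extends $P$'' aligned with the composition order $QP$ is the right point of vigilance, and you have handled it correctly in all four cases.
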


\begin{definition}\label{definitionpretopos}
An infinitary pretopos is an $\mathcal{E}_h\mathcal S$-cocomplete object in $\mathfrak{Lex}$. In other words, this is a category with small coproducts and finite limits so that
\begin{enumerate}
    \item equivalence groupoids are effective;
    \item effective epimorphisms are stable through pullbacks;
    \item coproducts are disjoint;
    \item coproducts are universal.
\end{enumerate}
\end{definition}

\subsection{An informal description of equivalence 2-groupoids on sums}

For any lex category $\categ C$, $\mathcal{E}_h^{(2)} \mathcal{S} (\categ C)$ is canonically equivalent to the category:
\begin{enumerate}
    \item whose objects are pairs $(O, X)$ of a 1-cokeletal Kan complex (that is a Kan 1-cokeletal simplicial object in sets) $O$ and a functor $X : \left(\Delta_{\leq 1 /O}\right)^\op \to \categ C$ that satisfies an additional Kan condition, that is
    \begin{itemize}
        \itemt for any $o,o' \in O_1$ so that $d_1(o) = d_0(o')= b$, there exists an element $o'' \in O_1$ so that $d_0(o'')= d_0(o)$, $d_1(o'')= d_1(o')$ together with a morphism 
        $$
        X(o) \times_{X(b)} X(o') \to X(o'')
        $$
        so that the following diagram commutes
        $$
        \begin{tikzcd}
            X(o) \times_{X(b)} X(o')
            \ar[r] \ar[d]
            &
            X(o'')
            \ar[d]
            \\
            X(d_0(o)) \times X(d_1(o'))
            \ar[r, equal]
            & X(d_0(o'')) \times X(d_1(o''));
        \end{tikzcd}
        $$
        \itemt for any $o \in O_1$, there exists an element $o' \in O_1$ so that $d_0(o') = d_1(o)$ and $d_1(o') = d_0(o)$ together with a morphism $X(o) \to X(o')$ so that the following diagram commutes
        $$
        \begin{tikzcd}
            X(o)
            \ar[r] \ar[d]
            &
            X(o')
            \ar[d]
            \\
            X(d_0(o)) \times X(d_1(o))
            \ar[r, "\tau"]
            & X(d_0(o')) \times X(d_1(o'));
        \end{tikzcd}
        $$
    \end{itemize}
    \item whose morphisms from $(O, X)$ to $(Q, Y)$ are equivalence classes of
    the data of a morphism of simplicial sets $f : O \to Q$ together with maps
    $$
    f_o : X(o) \to Y(f(o)), \quad o \in O_0 \text{ or }o \in O_1
    $$
    so that for any $o \in O_1$, the following diagram commutes
    $$
    \begin{tikzcd}
        X(o)
        \ar[r] \ar[d]
        & Y(f(o))
        \ar[d]
        \\
        X(d_0(o)) \times X(d_1(o))
        \ar[r]
        & Y(d_0f(o)) \times Y(d_1f(o)) .
    \end{tikzcd}
    $$
    Two such data $(f, (f_o)_{o \in O_0, O_1})$ and $(g, (g_o)_{o \in O_0, O_1})$ are equivalent if for any $o \in O_0$ there exists an element $q \in Q_1$ relating $f(o)$ to $g(o)$ and so that the map
    $$
    (f_o,  g_o) : X(o) \to Y(f(o)) \times Y(g(o))
    $$
    factorises through $Y(q)$.
\end{enumerate}

\subsection{The KZ doctrine of equivalence 2-groupoids and sums}

\begin{lemma}\label{lemmadecomposition}
Let $\categ C$ be a $\mathcal{S}$-cocomplete lex category and let $f : \categ C \to \categ D$ be a $\mathcal{S}$-linear lex functor. Then $\mathcal{E}_h^{(2)}(\categ C)$ is $\mathcal{S}$-cocomplete and $\mathcal{E}_h^{(2)}(f)$ is $\mathcal{S}$-linear.
\end{lemma}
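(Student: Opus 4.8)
The plan is to build the coproduct in $\mathcal{E}_h^{(2)}(\categ C)$ level by level, check it is again an equivalence $2$-groupoid with the right exactness, and then read off the $\mathcal{S}$-linearity of $\mathcal{E}_h^{(2)}(f)$ from the levelwise description. Since $\categ C$ is $\mathcal{S}$-cocomplete in $\mathfrak{Lex}$, it has small coproducts that are universal and disjoint. Given a small family $(A^{(i)})_{i\in I}$ of equivalence $2$-groupoids in $\categ C$, set $B_0 = \coprod_i A^{(i)}_0$ and $B_1 = \coprod_i A^{(i)}_1$, take for the degeneracy the coproduct of the degeneracies $A^{(i)}_0 \to A^{(i)}_1$, and for the face map the composite
$$
B_1 = \coprod_i A^{(i)}_1 \longrightarrow \coprod_i \left(A^{(i)}_0 \times A^{(i)}_0\right) \longrightarrow \left(\coprod_i A^{(i)}_0\right)\times\left(\coprod_j A^{(j)}_0\right) = B_0 \times B_0 ,
$$
the second arrow being the canonical block-diagonal inclusion, using universality and disjointness to identify the product of coproducts with $\coprod_{i,j} A^{(i)}_0 \times A^{(j)}_0$. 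Write $B$ for the resulting $1$-coskeletal simplicial object; the coproduct inclusions $A^{(i)} \to B$ are morphisms of $\mathcal{E}^{(2)}(\categ C)$ by construction.

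Next I would check that $B$ lies in $\mathcal{E}^{(2)}(\categ C)$, i.e. that it is Kan: by the criterion recalled in the excerpt it suffices that $B(\partial\Delta[2]) \to B(\Lambda^k[2])$ has a section for $k=0,1,2$. Distributing the finite limits that define $B(\Lambda^k[2])$ and $B(\partial\Delta[2])$ over the coproducts — legitimate because coproducts in $\categ C$ are universal — and killing the cross-index pullbacks by disjointness (the face maps of $B$ land in the diagonal blocks), one gets canonical isomorphisms $B(\Lambda^k[2]) \simeq \coprod_i A^{(i)}(\Lambda^k[2])$ and $B(\partial\Delta[2]) \simeq \coprod_i A^{(i)}(\partial\Delta[2])$ compatible with the restriction maps. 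Hence the coproduct of sections of the maps $A^{(i)}(\partial\Delta[2]) \to A^{(i)}(\Lambda^k[2])$ (which exist since each $A^{(i)}$ is Kan) is a section for $B$. This distribution of limits over coproducts is the one place where real work is required; everything else is bookkeeping.

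For the universal property I would use the presentation $\mathcal{E}_h^{(2)}(\categ C) = \mathcal{E}^{(2)}(\categ C)/{\sim}$. For $Y \in \mathcal{E}^{(2)}(\categ C)$, a morphism $B \to Y$ of $\mathcal{E}^{(2)}(\categ C)$ is a pair $(h_0,h_1)$ of maps over the face maps; since $B_j = \coprod_i A^{(i)}_j$ and the face map of $B$ restricts on the $i$-th summand to that of $A^{(i)}$, such a pair is exactly a family $(h^{(i)}: A^{(i)} \to Y)_i$, so that $\hom_{\mathcal{E}^{(2)}(\categ C)}(B,Y) = \prod_i \hom_{\mathcal{E}^{(2)}(\categ C)}(A^{(i)},Y)$. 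Two morphisms $h,h': B \to Y$ are homotopic iff $(h_0,h'_0): B_0 \to Y_0 \times Y_0$ lifts along $Y_1 \to Y_0 \times Y_0$; restricting such a lift $B_0 \to Y_1$ to each summand $A^{(i)}_0$, and conversely copairing a family of such lifts along $B_0 = \coprod_i A^{(i)}_0$, shows $h \sim h'$ iff $h^{(i)} \sim h'^{(i)}$ for all $i$. Therefore $\hom_{\mathcal{E}_h^{(2)}(\categ C)}(B,Y) = \prod_i \hom_{\mathcal{E}_h^{(2)}(\categ C)}(A^{(i)},Y)$ naturally in $Y$, so $B$ is the coproduct $\coprod_i A^{(i)}$ in $\mathcal{E}_h^{(2)}(\categ C)$, which thus has all small coproducts, computed levelwise.

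Finally, exactness and linearity. Pullbacks in $\mathcal{E}_h^{(2)}(\categ C)$ are given by the explicit formula $A \times^h_D B$, built out of finite limits in $\categ C$ applied to the level-$0$ and level-$1$ objects; since coproducts are levelwise, universality and disjointness of coproducts in $\mathcal{E}_h^{(2)}(\categ C)$ reduce to the same properties in $\categ C$, so by the characterisation of $\mathcal{S}$-cocompleteness in $\mathfrak{Lex}$ recalled in the excerpt, $\mathcal{E}_h^{(2)}(\categ C)$ is $\mathcal{S}$-cocomplete. Moreover $\mathcal{E}_h^{(2)}(f)$ sends $(A_0,A_1)$ to $(f A_0, f A_1)$, and $f$ preserves small coproducts because it is $\mathcal{S}$-linear; hence $\mathcal{E}_h^{(2)}(f)$ carries the levelwise coproduct $\coprod_i A^{(i)}$ to $\bigl(f\coprod_i A^{(i)}_0,\, f\coprod_i A^{(i)}_1\bigr) = \bigl(\coprod_i f A^{(i)}_0,\, \coprod_i f A^{(i)}_1\bigr) = \coprod_i \mathcal{E}_h^{(2)}(f)(A^{(i)})$, the comparison map being the canonical one by naturality. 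Thus $\mathcal{E}_h^{(2)}(f)$ is $\mathcal{S}$-linear.
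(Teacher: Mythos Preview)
Your proof is correct and follows essentially the same approach as the paper's: build the coproduct levelwise as $Y_0 = \coprod_i A^{(i)}_0$, $Y_1 = \coprod_i A^{(i)}_1$, verify the Kan property, check the universal property, and deduce universality/disjointness in $\mathcal{E}_h^{(2)}(\categ C)$ from the explicit pullback formula $A \times^h_D B$ together with the same properties in $\categ C$. The paper's own proof is a terse sketch that merely asserts these steps (``has the Kan property'', ``one can check'', ``imply in a straightforward way''), whereas you actually carry them out, in particular the Kan verification via the decompositions $B(\Lambda^k[2]) \simeq \coprod_i A^{(i)}(\Lambda^k[2])$ and the universal-property argument through $\mathcal{E}^{(2)}(\categ C)/{\sim}$.
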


\begin{proof}
Let us consider a small family of object $(X_i)_{i \in I}$ in $\mathcal{E}_h^{(2)}(\categ C)$. The 1-coskeletal simplicial object $Y$ in $\categ C$ so that
\begin{align*}
Y_0 &= \coprod_i X_{i, 0} ;
\\
Y_1 &= \coprod_i X_{i, 1}.
\end{align*}
has the Kan property and is thus an equivalence 2-groupoid. One can check that $Y$ is the coproduct of the objects $(X_i)_i$. Then, the fact that coproducts are universal and disjoint in $\categ C$ imply in a straightforward way that this is also the case in $\mathcal{E}_h^{(2)}(\categ C)$. Finally, the $\mathcal{S}$-linearity of $\mathcal{E}_h^{(2)}(f)$ just follows from the formula of coproducts.
\end{proof}

\begin{theorem}
The KZ-machine $\mathcal{E}_h^{(2)}\mathcal{S}$ is a KZ-doctrine.
\end{theorem}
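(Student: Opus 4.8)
The plan is to deduce this from the facts, both established above, that $\mathcal{E}_h^{(2)}$ and $\mathcal{S}$ are themselves KZ-doctrines on $\mathfrak{Lex}$, that the pair $(\mathcal{E}_h^{(2)},\mathcal{S})$ is composable, and that $\mathcal{E}_h^{(2)}$ interacts well with coproducts (Lemma~\ref{lemmadecomposition}). Since $(\mathcal{E}_h^{(2)},\mathcal{S})$ is composable, $\mathcal{E}_h^{(2)}\mathcal{S}$ is already a genuine KZ-machine, with $\mathcal{E}_h^{(2)}\mathcal{S}(f)=\mathcal{E}_h^{(2)}(\mathcal{S}(f))$ for every lex functor $f$, and it remains only to verify the two defining properties of a KZ-doctrine: that $\mathcal{E}_h^{(2)}\mathcal{S}(\categ C)$ is $\mathcal{E}_h^{(2)}\mathcal{S}$-cocomplete for every lex $\categ C$, and that $\mathcal{E}_h^{(2)}\mathcal{S}(f)$ is $\mathcal{E}_h^{(2)}\mathcal{S}$-linear for every lex $f$. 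For this I would invoke the proposition above that, for a composable pair $(Q,P)$, an object is $QP$-cocomplete as soon as it is both $P$- and $Q$-cocomplete, and likewise a morphism is $QP$-linear as soon as it is both $P$- and $Q$-linear.

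For cocompleteness, fix a lex category $\categ C$. Because $\mathcal{S}$ is a lex doctrine, $\mathcal{S}(\categ C)$ is $\mathcal{S}$-cocomplete as an object of $\mathfrak{Lex}$; applying Lemma~\ref{lemmadecomposition} with $\mathcal{S}(\categ C)$ in the role of $\categ C$ then shows that $\mathcal{E}_h^{(2)}(\mathcal{S}(\categ C))$ is $\mathcal{S}$-cocomplete. It is also $\mathcal{E}_h^{(2)}$-cocomplete, simply because $\mathcal{E}_h^{(2)}$ is a KZ-doctrine. Hence $\mathcal{E}_h^{(2)}\mathcal{S}(\categ C)=\mathcal{E}_h^{(2)}(\mathcal{S}(\categ C))$ is both $\mathcal{S}$- and $\mathcal{E}_h^{(2)}$-cocomplete, and by the composable-pair criterion it is $\mathcal{E}_h^{(2)}\mathcal{S}$-cocomplete. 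For linearity, fix a lex functor $f: \categ C \to \categ D$. Since $\mathcal{S}$ is a KZ-doctrine, $\mathcal{S}(f)$ is $\mathcal{S}$-linear, and the second half of Lemma~\ref{lemmadecomposition} (applied to $\mathcal{S}(f)$) shows that $\mathcal{E}_h^{(2)}(\mathcal{S}(f))$ is $\mathcal{S}$-linear; it is $\mathcal{E}_h^{(2)}$-linear because $\mathcal{E}_h^{(2)}$ is a KZ-doctrine. So $\mathcal{E}_h^{(2)}\mathcal{S}(f)=\mathcal{E}_h^{(2)}(\mathcal{S}(f))$ is both $\mathcal{S}$- and $\mathcal{E}_h^{(2)}$-linear, hence $\mathcal{E}_h^{(2)}\mathcal{S}$-linear by the composable-pair criterion. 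This establishes both properties, so $\mathcal{E}_h^{(2)}\mathcal{S}$ is a KZ-doctrine.

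I do not expect a genuine obstacle here: the argument is a formal assembly of earlier results, and all the substantive content is packaged into Lemma~\ref{lemmadecomposition}. The one step worth double-checking is the very first invocation: one must use that $\mathcal{S}$ is a \emph{lex} doctrine (not merely a doctrine on $\mathfrak{Cat}$) in order to know that $\mathcal{S}(\categ C)$ is $\mathcal{S}$-cocomplete \emph{inside} $\mathfrak{Lex}$ — equivalently, that in $\mathcal{S}(\categ C)$ coproducts are disjoint and universal — since this is precisely the hypothesis that Lemma~\ref{lemmadecomposition} requires of its input category.
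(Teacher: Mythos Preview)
Your proof is correct and is exactly the argument the paper has in mind: the paper's proof reads simply ``This is a direct consequence of Lemma~\ref{lemmadecomposition},'' and you have faithfully unpacked what that entails, namely combining the lemma with the composable-pair criterion and the fact that $\mathcal{S}$ and $\mathcal{E}_h^{(2)}$ are each KZ-doctrines on $\mathfrak{Lex}$. Your closing remark about needing $\mathcal{S}$ to be a lex doctrine (so that $\mathcal{S}(\categ C)$ is $\mathcal{S}$-cocomplete in $\mathfrak{Lex}$) is exactly the right point to flag.
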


\begin{proof}
This is a direct consequence of Lemma \ref{lemmadecomposition}.
\end{proof}

\subsection{Presheaves as equivalence 2-groupoids in sums}

\begin{theorem}
The KZ-doctrines $\mathcal{E}_h^{(2)}\mathcal{S}$ and $\Psh$ on $\mathfrak{Lex}$ are equivalent.
\end{theorem}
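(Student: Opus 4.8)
The plan is to invoke Proposition \ref{propsimpsimpimplieseq}: since both $\Psh$ and $\mathcal{E}_h^{(2)}\mathcal{S}$ are KZ-doctrines on $\mathfrak{Lex}$, it is enough to prove that they are \emph{Morita} equivalent, that is, that they have the same cocomplete objects and the same linear morphisms. On the $\mathcal{E}_h^{(2)}\mathcal{S}$ side this is already settled: by composability of $(\mathcal{E}_h^{(2)},\mathcal{S})$ together with the Morita equivalence $\mathcal{E}_h\sim\mathcal{E}_h^{(2)}$, an $\mathcal{E}_h^{(2)}\mathcal{S}$-cocomplete object is a lex category that is simultaneously $\mathcal{S}$-cocomplete and $\mathcal{E}_h$-cocomplete; by the characterisation of $\mathcal{S}$-cocompleteness (disjoint and universal coproducts) and by Proposition \ref{propgiraudax}, this is precisely an infinitary pretopos in the sense of Definition \ref{definitionpretopos}, and the $\mathcal{E}_h^{(2)}\mathcal{S}$-linear morphisms are exactly the lex functors preserving small coproducts and quotients of equivalence groupoids. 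Thus the theorem amounts to re-proving that the $\Psh$-cocomplete objects of $\mathfrak{Lex}$ are precisely the infinitary pretopoi and that the $\Psh$-linear morphisms are precisely the lex functors preserving small coproducts and quotients of equivalence groupoids.

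For the implication "infinitary pretopos $\Rightarrow$ $\Psh$-cocomplete" I would first check that an infinitary pretopos $\categ C$ is cocomplete: it has small coproducts, and a parallel pair $f,g\colon X\rightrightarrows Y$ has a coequaliser, namely the quotient of the equivalence groupoid generated by $(f,g)\colon X\to Y\times Y$, this generated equivalence relation being obtained as the colimit of the transfinite chain of composites, which exists because $\categ C$ has small coproducts and effective-epi/mono factorisations. Then I would check that the colimit functor $m_{\Psh}(\categ C)\colon\Psh(\categ C)\to\categ C$ is lex: by Lemma \ref{kanexthree} every small presheaf is a quotient of an equivalence groupoid in a coproduct of representables, and the universality of coproducts together with the universality of coequalisers of equivalence groupoids (Proposition \ref{propgiraudax}) and the siftedness argument already used in the proof of Proposition \ref{propgiraudax} show that these particular colimits commute with finite limits; hence $\categ C$ is $\Psh$-cocomplete. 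For the converse, "$\Psh$-cocomplete $\Rightarrow$ infinitary pretopos", I would transport the pretopos axioms along the retraction $m_{\Psh}(\categ C)$: the category $\Psh(\categ C)$ is itself an infinitary pretopos, being closed under finite limits and small colimits inside the Grothendieck topos $\Fun{\categ C^\op}{\Set}$ (the inclusion being lex by \cite{DayLack}); the Yoneda embedding is lex and fully faithful; and $m_{\Psh}(\categ C)$ is lex and colimit-preserving with $m_{\Psh}(\categ C)\circ i_{\Psh}(\categ C)\simeq\id$. So disjointness, universality of coproducts, effectivity of equivalence groupoids and stability of effective epimorphisms each descend from $\Psh(\categ C)$ to $\categ C$ by applying $m_{\Psh}(\categ C)$ to the relevant (co)limit diagrams. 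Finally, for morphisms: a lex functor between infinitary pretopoi is $\Psh$-linear iff it preserves all small colimits, and since every small colimit in a pretopos is built from finite limits, small coproducts and quotients of equivalence groupoids, this holds iff the functor preserves coproducts and quotients of equivalence groupoids, which is exactly $\mathcal{E}_h^{(2)}\mathcal{S}$-linearity.

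Having matched algebras and linear morphisms, $\Psh$ and $\mathcal{E}_h^{(2)}\mathcal{S}$ are Morita equivalent, and Proposition \ref{propsimpsimpimplieseq} upgrades this to an equivalence of KZ-doctrines. I expect the main obstacle to be the re-proof that an infinitary pretopos is cocomplete with a \emph{lex} colimit functor — that is, the construction of generated equivalence relations and the verification that the colimits so produced are postulated (van Kampen); the transport of the pretopos axioms along $m_{\Psh}(\categ C)$ in the other direction is routine but requires slightly delicate bookkeeping about which limits the Yoneda embedding and the functor $m_{\Psh}(\categ C)$ preserve.
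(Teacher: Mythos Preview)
Your proposal is correct but takes a different route from the paper. You argue via Morita equivalence: characterise the algebras of $\mathcal{E}_h^{(2)}\mathcal{S}$ as infinitary pretopoi (already done in the paper) and then re-prove that the $\Psh$-algebras in $\mathfrak{Lex}$ are exactly the infinitary pretopoi, matching linear morphisms as well, and conclude by Proposition~\ref{propsimpsimpimplieseq}. The paper explicitly acknowledges this route (citing Garner--Lack and Kock) but deliberately chooses another one: it shows that $\mathcal{E}_h^{(2)}\mathcal{S}$ is simpler than $\Psh$ (Lemma~\ref{lemmasimplerpresheaves}) and then proves \emph{directly} that for every lex $\categ C$ the comparison functor
\[
i_{\mathcal{E}_h^{(2)}\mathcal{S},\,\Psh}(\categ C)\;:\;\mathcal{E}_h^{(2)}\mathcal{S}(\categ C)\longrightarrow\Psh(\categ C)
\]
is fully faithful (Lemma~\ref{lemmafullyfaith}) and essentially surjective (Lemma~\ref{lemmaessentiallysurjective}).

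What each approach buys: your Morita argument is the classical ``identify the algebras'' strategy, and its nontrivial core is exactly the step you flag as the main obstacle---showing that an infinitary pretopos is cocomplete with a lex colimit functor (postulated colimits). The paper's appendix does contain the ingredients for this (construction of $G_X(A)$, cocompleteness, universality of colimits), so your plan can be carried out within the paper. The paper's own route avoids this global detour: instead of verifying lex-ness of $m_{\Psh}(\categ C)$ for an arbitrary pretopos $\categ C$, it works only with the free objects and exploits the concrete description of $\mathcal{E}_h^{(2)}\mathcal{S}(\categ C)$ together with the fact that $\hom_{\Psh(\categ C)}(X,-)$ preserves coequalisers of equivalence 2-groupoids for $X\in\mathcal S(\categ C)$ (Lemma~\ref{lemmacoprodeq}). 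This gives the added payoff of an explicit model of small presheaves as equivalence 2-groupoids of sums, which is the point of the paper. One minor correction: your invocation of Lemma~\ref{kanexthree} for ``every small presheaf is a quotient of an equivalence groupoid in a coproduct of representables'' is not what that lemma says; the relevant statement is Lemma~\ref{lemmaessentiallysurjective}.
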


One can just say that they are Morita equivalent (see \cite[Proposition 2.5]{GarnerLack} and \cite[Corollary 3.3]{Kock}) since two KZ-doctrines are equivalent if and only if they are Morita equivalent (Proposition \ref{propsimpsimpimplieseq}).
We will prove this theorem in a different way.

\begin{proof}
We know from Lemma \ref{lemmasimplerpresheaves} that the KZ-doctrine $\mathcal{E}_h^{(2)}\mathcal{S}$ is simpler than $\Psh$. Then, for any lex category $\categ C$, the canonical functor
$$
\mathcal{E}_h^{(2)}\mathcal{S}(\categ C) \to \Psh(\categ C)
$$
is fully faithful (Lemma \ref{lemmafullyfaith}) and essentially surjective (Lemma \ref{lemmaessentiallysurjective}).
\end{proof}

\begin{lemma}\label{lemmasimplerpresheaves}
The KZ-doctrines $\mathcal{E}_h^{(2)}\mathcal{S}$ and $\mathcal S$ are simpler than $\Psh$. Moreoverr, the following square diagram is commutative up to a canonical homotopy
$$
\begin{tikzcd}
    \mathcal{S}(\categ C)
    \ar[r,"i_{\mathcal{S}, \mathcal{E}_h^{(2)}\mathcal{S}}"'] \ar[rr, bend left, "i_{\mathcal{S}, \Psh}"]
    & \mathcal{E}_h^{(2)}\mathcal{S}(\categ C)
    \ar[r, "i_{\mathcal{S}, \mathcal{E}_h^{(2)}\mathcal{S}}"']
    &\Psh(\categ C)
\end{tikzcd}
$$
for any lex category $\categ C$.
\end{lemma}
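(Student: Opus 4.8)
The plan is to prove, for $P$ equal to $\mathcal{S}$ or to $\mathcal{E}_h^{(2)}\mathcal{S}$, that $P$ is simpler than $\Psh$, and I would reduce this to a single ingredient: for every lex category $\categ D$ and every lex functor $g$, the category $\Psh(\categ D)$ is $P$-cocomplete and the functor $\Psh(g)$ is $P$-linear. Indeed, inspecting the proof of Proposition \ref{proprecsimp}, one sees that it only ever uses that $\Psh(\categ Z)$ is $P$-cocomplete (to form the left Kan extensions $i_{P,\Psh}(f)=\overline{i_\Psh(\categ D)\circ f}$, which then have invertible unit) and that $\Psh(g)$ is $P$-linear (to get the composition coherences) — it does \emph{not} use the full strength of "$P$ is Morita simpler than $\Psh$", i.e. it does not require knowing that arbitrary $\Psh$-algebras are $P$-cocomplete. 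So once these two facts are established for all lex $\categ D$ and $g$, the very same computation shows that $P$ is simpler than $\Psh$.

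To verify the two facts I would use the concrete description of small presheaves: $\Psh(\categ D)$ is the full subcategory of the presheaf category $[\categ D^{\op},\Set]$ consisting of small colimits of representables, hence is closed in $[\categ D^{\op},\Set]$ under small colimits, and — since $\categ D$ is lex, by \cite{DayLack} — under finite limits, both being computed pointwise. Therefore $\Psh(\categ D)$ is a cocomplete lex category with disjoint and universal coproducts, effective equivalence groupoids, and pullback-stable effective epimorphisms, i.e. it is an infinitary pretopos in the sense of Definition \ref{definitionpretopos}; and $\Psh(g)$, being $\Psh$-linear and hence cocontinuous, preserves coproducts, quotients of equivalence groupoids, and all the relevant colimits. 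Thus $\Psh(\categ D)$ is $\mathcal{S}$-cocomplete and, by Proposition \ref{propgiraudax}, $\mathcal{E}_h$-cocomplete, hence by Proposition \ref{lemmaintermediateleftadjoint} also $\mathcal{E}_h^{(2)}$-cocomplete, hence (combining with $\mathcal{S}$-cocompleteness and composability of $(\mathcal{E}_h^{(2)},\mathcal{S})$) $\mathcal{E}_h^{(2)}\mathcal{S}$-cocomplete. Similarly, $\Psh(g)$ is $\mathcal{S}$-linear by definition and $\mathcal{E}_h$-linear by Proposition \ref{propcommdiagr}, hence $\mathcal{E}_h^{(2)}$-linear by Lemma \ref{LemmaMoritamor}, hence $\mathcal{E}_h^{(2)}\mathcal{S}$-linear. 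This gives the first assertion.

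For the commuting triangle, I would observe that the comparison functors $i_{\mathcal{S},\Psh}(\categ C)$, $i_{\mathcal{S},\mathcal{E}_h^{(2)}\mathcal{S}}(\categ C)$ and $i_{\mathcal{E}_h^{(2)}\mathcal{S},\Psh}(\categ C)$ are free extensions, so they are $\mathcal{S}$-linear, $\mathcal{S}$-linear and $\mathcal{E}_h^{(2)}\mathcal{S}$-linear respectively by Proposition \ref{propositionkzdoctrine}; since $\mathcal{E}_h^{(2)}$ extends $\mathcal{S}$, the last one is in particular $\mathcal{S}$-linear, so both $i_{\mathcal{E}_h^{(2)}\mathcal{S},\Psh}(\categ C)\circ i_{\mathcal{S},\mathcal{E}_h^{(2)}\mathcal{S}}(\categ C)$ and $i_{\mathcal{S},\Psh}(\categ C)$ are $\mathcal{S}$-linear functors $\mathcal{S}(\categ C)\to\Psh(\categ C)$. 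Their restrictions along $i_{\mathcal{S}}(\categ C)\colon \categ C\to\mathcal{S}(\categ C)$ are both canonically isomorphic to the Yoneda embedding $i_\Psh(\categ C)$: for the composite this uses $i_{\mathcal{S},\mathcal{E}_h^{(2)}\mathcal{S}}(\categ C)\circ i_{\mathcal{S}}(\categ C)\simeq i_{\mathcal{E}_h^{(2)}\mathcal{S}}(\categ C)$ (Proposition \ref{propcomposekzextends}) and $i_{\mathcal{E}_h^{(2)}\mathcal{S},\Psh}(\categ C)\circ i_{\mathcal{E}_h^{(2)}\mathcal{S}}(\categ C)\simeq i_\Psh(\categ C)$ (the part just proved), while for $i_{\mathcal{S},\Psh}(\categ C)$ it is the defining isomorphism of "$\mathcal{S}$ is simpler than $\Psh$". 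Since $\Psh(\categ C)$ is $\mathcal{S}$-cocomplete, Proposition \ref{propfreealg} says an $\mathcal{S}$-linear functor out of $\mathcal{S}(\categ C)$ is determined up to canonical isomorphism by its restriction along $i_{\mathcal{S}}(\categ C)$; the two functors therefore agree up to a canonical $2$-isomorphism, which is the asserted homotopy.

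The main obstacle, in my view, is purely one of bookkeeping non-circularity: one must not invoke the cited characterization "$\Psh$-cocomplete $=$ infinitary pretopos" (which this paper sets out to reprove), and this is exactly why the argument is organized around the concrete category $\Psh(\categ D)$ — whose pretopos properties are read off directly from the pointwise computation of limits and colimits in $[\categ D^{\op},\Set]$ — and around the \emph{proof} of Proposition \ref{proprecsimp} rather than its statement; everything else is routine.
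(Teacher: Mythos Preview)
Your proof is correct and follows essentially the same approach as the paper's: show that $\Psh(\categ D)$ is an infinitary pretopos via the pointwise computation of colimits and finite limits, and deduce simplicity from (the proof of) Proposition \ref{proprecsimp}; for the triangle, observe that all three functors preserve coproducts and agree on $\categ C$, hence coincide by the universal property of $\mathcal{S}(\categ C)$. Your explicit remark that only the \emph{proof} of Proposition \ref{proprecsimp} is invoked (so that one never needs the as-yet-unproved implication ``$\Psh$-cocomplete $\Rightarrow$ infinitary pretopos''), and your more detailed argument for the commutativity via Proposition \ref{propfreealg}, are welcome clarifications of points the paper handles tersely.
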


\begin{proof}
This amounts to prove that they are Morita simpler. Actually, $\Psh(\categ C)$ is an infinitary pretopos for any lex category $\categ C$ as a consequence of the two following facts
\begin{enumerate}
    \item the category of sets is an infinitary pretopos
    \item colimits and finite limits in $\Psh(\categ C)$ are computed stalkwise; in other words for any object $X \in \categ C$, the functor $F \in  \Psh(\categ C) \mapsto F(X)$ preserves colimits and finite limits.
\end{enumerate}

The fact that the diagram commutes follows from the fact that all of its functors preserve coproducts.
\end{proof}

Let $\categ C$ be a lex category.

\begin{lemma}\label{lemmafullyfaithcoprod}
The canonical functor 
$$
i_{\mathcal{S}, \Psh}(\categ C):
\mathcal{S}(\categ C)
\to \Psh(\categ C)
$$
is fully faithful.
\end{lemma}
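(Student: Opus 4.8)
The plan is to compute both hom-sets explicitly and to recognize the functor-induced map between them as the evident bijection.

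First I would make the functor $i_{\mathcal{S},\Psh}(\categ C)$ concrete. By Lemma~\ref{lemmasimplerpresheaves} it is $\mathcal{S}$-linear and its restriction along the embedding $\categ C \hookrightarrow \mathcal{S}(\categ C)$ is the Yoneda embedding $y$. Since a $\categ C$-sum $X$ is by construction the coproduct $\coprod_{i \in \mathcal{I}_X} X(i)$ in $\mathcal{S}(\categ C)$ of the objects $X(i) \in \categ C$, $\mathcal{S}$-linearity forces a natural isomorphism $i_{\mathcal{S},\Psh}(\categ C)(X) \simeq \coprod_{i \in \mathcal{I}_X} y(X(i))$ in $\Psh(\categ C)$; note that this coproduct of representables is indeed a small presheaf, being a small colimit of representables.

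Next, for $\categ C$-sums $X$ and $Y$ I would compute
\begin{align*}
\hom_{\Psh(\categ C)}\!\left(\coprod_{i} y(X(i)),\ \coprod_{j} y(Y(j))\right)
&\simeq \prod_{i} \hom_{\Psh(\categ C)}\!\left(y(X(i)),\ \coprod_{j} y(Y(j))\right)
\\
&\simeq \prod_{i} \left(\coprod_{j} y(Y(j))\right)(X(i))
\\
&\simeq \prod_{i} \coprod_{j} \hom_{\categ C}(X(i), Y(j)),
\end{align*}
the three steps being the universal property of the coproduct, the Yoneda lemma, and the fact (recorded just after the introduction of $\Psh$ as a KZ-doctrine, and reused in Lemma~\ref{lemmasimplerpresheaves}) that colimits in $\Psh(\categ C)$ are computed stalkwise. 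On the other side, unwinding Definition~\ref{definensemble} shows that a morphism $X \to Y$ in $\mathcal{S}(\categ C)$ is exactly a function $f_{\mathcal{I}} : \mathcal{I}_X \to \mathcal{I}_Y$ together with maps $X(i) \to Y(f_{\mathcal{I}}(i))$ for $i \in \mathcal{I}_X$, i.e.\ precisely an element of $\prod_{i} \coprod_{j} \hom_{\categ C}(X(i), Y(j))$.

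Finally I would check that the composite of the two identifications with $\prod_{i} \coprod_{j} \hom_{\categ C}(X(i), Y(j))$ is the map on hom-sets induced by $i_{\mathcal{S},\Psh}(\categ C)$; this is a routine verification, component by component. This yields full faithfulness. The only genuinely delicate point is the first step — being certain that $i_{\mathcal{S},\Psh}(\categ C)$ is the coproduct-of-representables functor — which hinges on its description as the $\mathcal{S}$-linear left Kan extension of the Yoneda embedding; once that is secured, everything else is a formal manipulation of hom-sets.
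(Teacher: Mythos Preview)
Your proposal is correct and follows essentially the same route as the paper: both arguments identify each of $\hom_{\mathcal{S}(\categ C)}(X,Y)$ and $\hom_{\Psh(\categ C)}(i(X),i(Y))$ with $\prod_i \coprod_j \hom_{\categ C}(X(i),Y(j))$ using coproduct-preservation of $i$ and of the hom-functors out of representables, and then observe that the induced map is the evident bijection. The paper packages this as a single commutative triangle, while you spell out the chain of isomorphisms, but the content is the same.
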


\begin{proof}
Let us denote $i$ this functor. It preserves coproducts as well as the functors
\begin{align*}
    \hom_{\mathcal{S}(\categ C)}(X, -) : \mathcal{S}(\categ C) \to \Set;
    \\
    \hom_{\Psh(\categ C)}(X', -) : \Psh(\categ C) \to \Set ;
\end{align*}
for any $X,X' \in \categ C$. Moreover, the two functors
\begin{align*}
    i_{\mathcal{S}}(\categ C): \categ C \to \mathcal{E}_h^{(2)}\mathcal{S}(\categ C) ;
    \\
    i_{\Psh}(\categ C) : \categ C \to \Psh(\categ C);
\end{align*}
are fully faithful.

Let us consider two objects $X = \coprod_i X_i$ and $Y = \coprod_j Y_j$ in $\mathcal{S}(\categ C)$ where the objects $X_i$ and $Y_j$ belong to $\categ C$. The following diagram is commutative
$$
\begin{tikzcd}
    && \hom_{\mathcal{S}(\categ C)} (X,Y)
  \ar[dd]
  \\
  \prod_i \coprod_i \hom_{\categ C}(X_i,Y_j)
  \ar[rru, "\simeq"] \ar[rrd, "\simeq"']
  \\
  && \hom_{\Psh(\categ C)}(i(X),i(Y)).
\end{tikzcd}
$$
The facts gathered just above tell us that the maps from the left to the right are isomorphic. Subsequently, the vertical map is also isomorphic.
\end{proof}

\begin{lemma}\label{lemmacoprodeq}
Let $X$ be an element of $\mathcal{S}(\categ C)$. Then the lex functor
$$
\hom_{\Psh(\categ C)}(X, -) : \Psh(\categ C) \to \Set
$$
preserves coequalisers of equivalence 2-groupoids.
\end{lemma}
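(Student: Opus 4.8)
The plan is to reduce to the evaluation functors $\mathrm{ev}_{X'} = \hom_{\Psh(\categ C)}(X', -)$ attached to objects $X' \in \categ C$ — which are already known to preserve \emph{all} colimits — and then to control the passage from an object to a coproduct of objects by an interchange of arbitrary products with quotients by equivalence relations in $\Set$.

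First I would write $X = \coprod_{i \in I} X_i$ with $I$ a small set and $X_i \in \categ C$, so that $\hom_{\Psh(\categ C)}(X, F) \simeq \prod_{i \in I} F(X_i)$ naturally in $F \in \Psh(\categ C)$. Fix an equivalence $2$-groupoid $\mathcal D$ in $\Psh(\categ C)$ and let $Q = \mathrm{coeq}(\mathcal D_1 \rightrightarrows \mathcal D_0)$ be its coequaliser. The corollary that $F \mapsto F(X')$ preserves colimits gives $Q(X_i) = \mathrm{coeq}\bigl(\mathcal D_1(X_i) \rightrightarrows \mathcal D_0(X_i)\bigr)$ in $\Set$ for each $i$; and since this functor is moreover lex, $\mathrm{ev}_{X_i}(\mathcal D)$ is an equivalence $2$-groupoid in $\Set$ (a lex functor preserves the finite limits and the sections of maps defining that notion). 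Now for any equivalence $2$-groupoid $E$ in $\Set$ the image $R_E := \mathrm{im}\bigl(E_1 \to E_0 \times E_0\bigr)$ is an equivalence relation on $E_0$ — reflexivity from the degeneracy, symmetry and transitivity from the horn fillers — and, since $E_1 \twoheadrightarrow R_E$, one has $\mathrm{coeq}(E_1 \rightrightarrows E_0) = E_0/R_E$. Hence $Q(X_i) = \mathcal D_0(X_i)/R_i$ with $R_i := \mathrm{im}\bigl(\mathcal D_1(X_i) \to \mathcal D_0(X_i)^2\bigr)$ an equivalence relation.

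It then remains to identify $\hom_{\Psh(\categ C)}(X, Q) = \prod_{i} \bigl(\mathcal D_0(X_i)/R_i\bigr)$ with the coequaliser in $\Set$ of $\prod_i \mathcal D_1(X_i) \rightrightarrows \prod_i \mathcal D_0(X_i)$, i.e.\ of $\hom_{\Psh(\categ C)}(X, \mathcal D_1) \rightrightarrows \hom_{\Psh(\categ C)}(X, \mathcal D_0)$. By the axiom of choice the image of the product map $\prod_i \mathcal D_1(X_i) \to \prod_i \mathcal D_0(X_i)^2$ is the product relation $\prod_i R_i$, which is again an equivalence relation since every $R_i$ is, so that coequaliser equals $\bigl(\prod_i \mathcal D_0(X_i)\bigr)/\prod_i R_i$. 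Finally one invokes the elementary fact that the comparison map
\[
\Bigl(\prod_{i \in I} \mathcal D_0(X_i)\Bigr)\Big/ \prod_{i \in I} R_i \;\longrightarrow\; \prod_{i \in I}\bigl(\mathcal D_0(X_i)/R_i\bigr)
\]
is a bijection — surjective by the axiom of choice, injective by the definition of the product relation — so that the two descriptions of $\hom_{\Psh(\categ C)}(X, Q)$ coincide. The single genuinely delicate point, and the one place the argument could fail, is precisely this interchange: arbitrary products do \emph{not} commute with reflexive coequalisers in $\Set$ in general, and what saves us here is that the Kan structure forces the relations in play to be honest equivalence relations, for which the interchange does hold; everything else is bookkeeping (and the same reasoning applies with "equivalence $2$-groupoid" replaced throughout by "equivalence groupoid").
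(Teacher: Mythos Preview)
Your proof is correct and follows the same route as the paper: write $X=\coprod_i X_i$, reduce $\hom_{\Psh(\categ C)}(X,-)$ to the product of the evaluation functors $\mathrm{ev}_{X_i}$ (which preserve colimits and are lex), and then interchange an arbitrary product with a quotient by an equivalence relation in $\Set$. The only difference is in how the final interchange is packaged: where you verify by hand, using the axiom of choice, that $(\prod_i A_i)/(\prod_i R_i)\to\prod_i(A_i/R_i)$ is a bijection, the paper observes instead that the quotient functor $m_{\mathcal{E}_h^{(2)}}(\Set)\colon \mathcal{E}_h^{(2)}(\Set)\to\Set$ is an \emph{equivalence} of categories and therefore preserves products --- which is the same fact stated more conceptually.
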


\begin{proof}
One can check that for any coproduct $X = \coprod_i X_i$ of elements of $\categ C$ and any equivalence 2-groupoid $A$ in $\Psh(\categ C)$, the morphism
$$
\hom_{\Psh(\categ C)}(X, A_0)/\hom_{\Psh(\categ C)}(X, A_1) \to \hom_{\Psh(\categ C)}(X, A_0/A_1)
$$
rewrites as the map
$$
(\prod_i A_0(X_i))/ (\prod_i A_1(X_i)) \to \prod_i A_0(X_i)/A_1(X_i)
$$
which is an isomorphism since the quotient functor
$$
m_{\mathcal{E}_h^{(2)}}(\Set) : \mathcal{E}_h^{(2)}(\Set) \to \Set
$$
is an equivalence and so preserves products.
\end{proof}

\begin{lemma}\label{lemmafullyfaith}
The canonical functor 
$$
i_{\mathcal{E}_h^{(2)}\mathcal{S}, \Psh}(\categ C)
:\mathcal{E}_h^{(2)}\mathcal{S}(\categ C)
\to \Psh(\categ C)
$$
is fully faithful.
\end{lemma}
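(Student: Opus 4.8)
Throughout, write $i := i_{\mathcal{E}_h^{(2)}\mathcal{S}, \Psh}(\categ C)$ for the functor under consideration. The plan is to present both $\hom_{\mathcal{E}_h^{(2)}\mathcal{S}(\categ C)}(A,B)$ and $\hom_{\Psh(\categ C)}(i(A), i(B))$ as coequalisers of one and the same diagram of sets, for equivalence $2$-groupoids $A, B$ in $\mathcal{S}(\categ C)$, and then to observe that the comparison map respects these presentations.

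First I would record what $i$ does. By definition (as in the proof of Proposition~\ref{proprecsimp}) $i$ is the left Kan extension of $i_\Psh(\categ C)$ along $i_{\mathcal{E}_h^{(2)}\mathcal{S}}(\categ C)$; since $\mathcal{E}_h^{(2)}$ extends $\mathcal{S}$ this unit is computed in two stages, so $i$ is the left Kan extension of $i_{\mathcal{S},\Psh}(\categ C)$ along the inclusion $\mathcal{S}(\categ C) \hookrightarrow \mathcal{E}_h^{(2)}\mathcal{S}(\categ C)$, and Lemma~\ref{kanexthree} (applied to the lex functor $i_{\mathcal{S},\Psh}(\categ C)$ into the $\mathcal{E}_h^{(2)}$-cocomplete category $\Psh(\categ C)$) identifies it with
$$
A \longmapsto i_{\mathcal{S},\Psh}(\categ C)(A_0)\big/i_{\mathcal{S},\Psh}(\categ C)(A_1) .
$$
Two consequences I will use: the restriction of $i$ to $\mathcal{S}(\categ C)$ is $i_{\mathcal{S},\Psh}(\categ C)$, which is fully faithful by Lemma~\ref{lemmafullyfaithcoprod} (this is also the content of the triangle in Lemma~\ref{lemmasimplerpresheaves}); and, by the displayed formula, $i$ sends the coequaliser presentation $A = \mathrm{coeq}(A_1 \rightrightarrows A_0)$ of Corollary~\ref{elma} to the presentation $i(A) = \mathrm{coeq}\bigl(i(A_1) \rightrightarrows i(A_0)\bigr)$ in $\Psh(\categ C)$.

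Now fix $A$ and $B$. Applying $\hom_{\mathcal{E}_h^{(2)}\mathcal{S}(\categ C)}(-,B)$ to $A=\mathrm{coeq}(A_1\rightrightarrows A_0)$ and $\hom_{\Psh(\categ C)}(-,i(B))$ to $i(A)=\mathrm{coeq}(i(A_1)\rightrightarrows i(A_0))$ turns both into equalisers, so it suffices to show, naturally in $X \in \mathcal{S}(\categ C)$, that $\hom_{\mathcal{E}_h^{(2)}\mathcal{S}(\categ C)}(X, B) \to \hom_{\Psh(\categ C)}(i(X), i(B))$ is a bijection. Writing $B = \mathrm{coeq}(B_1 \rightrightarrows B_0)$ with $B_0, B_1 \in \mathcal{S}(\categ C)$ (Corollary~\ref{elma} again), Corollary~\ref{corcompactobject} applied over the ground category $\mathcal{S}(\categ C)$ with $X \in \mathcal{S}(\categ C)$, together with full faithfulness of $\mathcal{S}(\categ C) \hookrightarrow \mathcal{E}_h^{(2)}\mathcal{S}(\categ C)$, gives
$$
\hom_{\mathcal{E}_h^{(2)}\mathcal{S}(\categ C)}(X, B) = \mathrm{coeq}\bigl(\hom_{\mathcal{S}(\categ C)}(X, B_1) \rightrightarrows \hom_{\mathcal{S}(\categ C)}(X, B_0)\bigr),
$$
while Lemma~\ref{lemmacoprodeq} applied to $i(X) = i_{\mathcal{S},\Psh}(\categ C)(X)$ and the equivalence $2$-groupoid $i_{\mathcal{S},\Psh}(\categ C)(B)$ in $\Psh(\categ C)$, together with full faithfulness of $i_{\mathcal{S},\Psh}(\categ C)$, gives
$$
\hom_{\Psh(\categ C)}(i(X), i(B)) = \mathrm{coeq}\bigl(\hom_{\mathcal{S}(\categ C)}(X, B_1) \rightrightarrows \hom_{\mathcal{S}(\categ C)}(X, B_0)\bigr).
$$
The comparison map is induced on the terms of these presentations by $i_{\mathcal{S},\Psh}(\categ C)$ on hom-sets between objects of $\mathcal{S}(\categ C)$---a bijection---and is compatible with the two maps coming from $d_0,d_1\colon B_1\to B_0$, hence it is a bijection; passing back through the equalisers over $A$ finishes the argument.

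The point I expect to need the most care is the identification of $i$ in the second step: making precise that $i$ restricts to $i_{\mathcal{S},\Psh}(\categ C)$ on $\mathcal{S}(\categ C)$ and carries coequalisers of equivalence $2$-groupoids to coequalisers, so that the two displayed presentations are literally indexed by the same parallel pair $d_0,d_1$ and the comparison map is the identity on presentations. Alternatively one may avoid Lemma~\ref{kanexthree} here and argue that $i=\overline{i_\Psh(\categ C)}$ is $\mathcal{E}_h^{(2)}\mathcal{S}$-linear by Proposition~\ref{propositionkzdoctrine}, a fortiori $\mathcal{E}_h^{(2)}$-linear, hence preserves quotients of equivalence $2$-groupoids by Proposition~\ref{propcommdiagr}. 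Everything else is a direct assembly of Corollaries~\ref{elma} and~\ref{corcompactobject}, Lemma~\ref{lemmacoprodeq} and Lemma~\ref{lemmafullyfaithcoprod}.
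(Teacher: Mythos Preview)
Your proof is correct and follows essentially the same approach as the paper: both arguments use Corollary~\ref{elma} to present $A$ and $B$ as coequalisers, then invoke Corollary~\ref{corcompactobject} and Lemma~\ref{lemmacoprodeq} together with the full faithfulness of $\mathcal{S}(\categ C)\hookrightarrow\mathcal{E}_h^{(2)}\mathcal{S}(\categ C)$ and Lemma~\ref{lemmafullyfaithcoprod} to identify both hom-sets with $\varprojlim_{i\in\Delta_{\leq 1}}\varinjlim_{j\in\Delta_{\leq 1}^\op}\hom_{\mathcal{S}(\categ C)}(A_i,B_j)$. The paper packages this in a single limit--colimit diagram while you unfold it in two steps (equaliser over $A$, then coequaliser over $B$), and you are more explicit about why $i$ preserves the relevant coequalisers, but the content is the same.
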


\begin{proof}
This may be proven using the same arguments as those used in Lemma \ref{lemmafullyfaithcoprod}.
Indeed, let us denote $i$ this lex functor.
It preserves coequalisers of equivalence 2-groupoids as well as the lex functors
\begin{align*}
    \hom_{\mathcal{E}_h^{(2)}\mathcal{S}(\categ C)}(X, -) : \mathcal{E}_h^{(2)}\mathcal{S}(\categ C) \to \Set;
    \\
    \hom_{\Psh(\categ C)}(X', -) : \Psh(\categ C) \to \Set ;
\end{align*}
for any $X,X' \in \mathcal{S}(\categ C)$ (Corollary \ref{corcompactobject} and Lemma \ref{lemmacoprodeq}). Moreover, the two functors
$$
\Psh(\categ C) \leftarrow \mathcal S(\categ C) \to \mathcal{E}_h^{(2)}\mathcal{S}(\categ C) .
$$
are fully faithful.

Let us consider two objects $X,Y$ in $\mathcal{E}_h^{(2)}\mathcal{S}(\categ C)$. The following diagram is commutative.
$$
\begin{tikzcd}
    && \hom_{\mathcal{E}_h^{(2)}\mathcal{S}(\categ C)} (X,Y)
  \ar[dd]
  \\
  \varprojlim_{i \in \Delta_{\leq 1}^\op} \varinjlim_{j \in \Delta_{\leq 1}^\op} \hom_{\mathcal S(\categ C)}(X_i,Y_j)
  \ar[rru, "\simeq"] \ar[rrd, "\simeq"']
  \\
  && \hom_{\Psh(\categ C)}(i(X),i(Y))
\end{tikzcd}
$$
The facts gathered just above tell us that the maps from the left to the right are isomorphic. Subsequently, the vertical map is also isomorphic.
\end{proof}

\begin{definition}
An object $Z \in \Psh(\categ C)$ is simple if for any $X,Y \in \categ C$ and any cospan diagram $X \to Z \leftarrow Y$, then the pullback $X \times_Z Y$ belongs to $\mathcal{S}(\categ C)\subseteq \Psh(\categ D)$.
\end{definition}

\begin{remark}
One can show that such a pullback either belongs to $\categ D \subseteq \Psh(\categ D)$ or is the initial object.
\end{remark}

\begin{lemma}
A subobject of a simple object is simple.
\end{lemma}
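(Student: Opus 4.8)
The plan is to reduce simplicity of the subobject to simplicity of the ambient object via the standard fact that a pullback along a monomorphism can be computed over the larger object. Let $m : W \hookrightarrow Z$ be a monomorphism in $\Psh(\categ C)$ representing the given subobject, and assume $Z$ is simple. Fix an arbitrary cospan $X \xrightarrow{f} W \xleftarrow{g} Y$ with $X, Y \in \categ C$; the goal is to show $X \times_W Y \in \mathcal{S}(\categ C)$.

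First I would check that the canonical comparison morphism
$$
X \times_W Y \longrightarrow X \times_Z Y,
$$
where the right-hand pullback is formed along the composites $mf$ and $mg$, is an isomorphism. Since $\categ C$ is lex, $\Psh(\categ C)$ has finite limits, and these are computed stalkwise (as recalled in the proof of Lemma \ref{lemmasimplerpresheaves}); so it suffices to verify this after applying each evaluation functor $F \mapsto F(U)$, $U \in \categ C$. There the map $X(U) \times_{W(U)} Y(U) \to X(U) \times_{Z(U)} Y(U)$ is a bijection because $m(U)$ is injective, hence $mf(x) = mg(y)$ in $Z(U)$ if and only if $f(x) = g(y)$ in $W(U)$. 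Alternatively one may simply invoke the general categorical statement that pulling back along a monomorphism does not change the fibre product.

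Now the composites $X \to W \xrightarrow{m} Z$ and $Y \to W \xrightarrow{m} Z$ form a cospan with $X, Y \in \categ C$, so simplicity of $Z$ gives $X \times_Z Y \in \mathcal{S}(\categ C)$; combined with the isomorphism above, $X \times_W Y \in \mathcal{S}(\categ C)$. As the cospan was arbitrary, $W$ is simple. There is no serious obstacle here: the only point requiring care is the isomorphism $X \times_W Y \cong X \times_Z Y$, which rests on monomorphisms in $\Psh(\categ C)$ being stalkwise injective, a consequence of limits being computed stalkwise; everything else is unwinding the definition of a simple object.
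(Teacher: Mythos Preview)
Your proof is correct and follows exactly the same approach as the paper: both reduce to the identity $X \times_W Y \simeq X \times_Z Y$ for a monomorphism $W \hookrightarrow Z$ and then invoke simplicity of $Z$. The paper simply asserts this equality in one line, whereas you spell out the stalkwise justification, but the argument is the same.
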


\begin{proof}
Let us consider a subobject $S \hookrightarrow Z$ of a simple object $Z$. Then, for any cospan diagram $x \to S \leftarrow y$ ($x,y \in \categ D$), its pullback is
$$
x \times_S y = x \times_Z y \in \mathcal{S}(\categ C).
$$
\end{proof}

\begin{proposition}
An object $Z \in \Psh(\categ C)$ is simple if and only if it belongs to the essential image of the canonical fully faithful embedding
$$
\mathcal{E}_h\mathcal{S}(\categ C) \to
\mathcal{E}_h^{(2)}\mathcal{S}(\categ C) \to \Psh(\categ C).
$$
\end{proposition}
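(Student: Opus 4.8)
The plan is to prove the two inclusions separately, using the fully faithful lex embedding $\iota := i_{\mathcal{E}_h^{(2)}\mathcal{S}, \Psh}(\categ C) : \mathcal{E}_h^{(2)}\mathcal{S}(\categ C) \to \Psh(\categ C)$ of Lemma~\ref{lemmafullyfaith}. By Lemma~\ref{kanexthree}, this embedding --- being the left Kan extension along $\mathcal{S}(\categ C)\hookrightarrow\mathcal{E}_h^{(2)}\mathcal{S}(\categ C)$ of the embedding $\mathcal{S}(\categ C)\hookrightarrow\Psh(\categ C)$, whose target has quotients of equivalence $2$-groupoids --- sends an equivalence $2$-groupoid $(A_0,A_1)$ with $A_0, A_1 \in \mathcal{S}(\categ C)$ to the coequaliser $A_0/A_1$ computed in $\Psh(\categ C)$; in particular, on the subcategory $\mathcal{E}_h\mathcal{S}(\categ C)$ it is the quotient of an honest equivalence relation $A_1 \hookrightarrow A_0 \times A_0$ in $\mathcal{S}(\categ C) \subseteq \Psh(\categ C)$. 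Two standing facts will be used repeatedly: first, that $\Psh(\categ C)$ is an infinitary pretopos (Lemma~\ref{lemmasimplerpresheaves}), so equivalence relations are effective and coproducts are universal and disjoint; second, that for $X \in \categ C$ the functor $\hom_{\Psh(\categ C)}(X,-)$ preserves all colimits, so any map $X \to \coprod_i W_i$ in $\Psh(\categ C)$ factors through a unique summand $W_{i_0}$.

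For the direction ``essential image $\Rightarrow$ simple'', let $Z = \iota(A)$ with $A = (A_0, A_1) \in \mathcal{E}_h\mathcal{S}(\categ C)$, and write $A_0 = \coprod_i A_0(i)$, $A_1 = \coprod_k A_1(k)$ with $A_0(i), A_1(k) \in \categ C$. Effectivity of the equivalence relation $A_1 \hookrightarrow A_0 \times A_0$ in $\Psh(\categ C)$ gives that $A_0 \to Z$ is an effective epimorphism whose kernel pair is $A_1$, so $Z$ is its coequaliser and $A_1 = A_0 \times_Z A_0$. Given a cospan $X \to Z \leftarrow Y$ with $X, Y \in \categ C$: since $\hom_{\Psh(\categ C)}(X,-)$ preserves the coequaliser presenting $Z$, the map $X \to Z$ lifts to $X \to A_0$, hence factors through a single summand $X \to A_0(i)$; likewise $Y \to A_0(j)$. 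From $A_1 = A_0 \times_Z A_0$ one gets $X \times_Z Y \cong X \times_{A_0} A_1 \times_{A_0} Y$, and expanding $A_1$ into its summands --- each $A_1(k) \hookrightarrow A_0 \times A_0$ landing in a single $A_0(i_k) \times A_0(j_k)$ --- universality and disjointness of coproducts collapse this to $\coprod X \times_{A_0(i)} A_1(k) \times_{A_0(j)} Y$ over the $k$ with $(i_k,j_k) = (i,j)$. Each summand is a finite limit in $\categ C$, so $X \times_Z Y \in \mathcal{S}(\categ C)$ and $Z$ is simple.

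For the converse, let $Z$ be simple. As $Z$ is a small presheaf it is the colimit of a small diagram $F : \categ D \to \categ C \hookrightarrow \Psh(\categ C)$. Put $A_0 := \coprod_{d} F(d) \in \mathcal{S}(\categ C)$; computing pointwise, $A_0 \to Z$ is a surjection of presheaves, hence an effective epimorphism. Set $A_1 := A_0 \times_Z A_0 = \coprod_{d,d'} F(d) \times_Z F(d')$ by universality of coproducts; simplicity of $Z$ gives $F(d) \times_Z F(d') \in \mathcal{S}(\categ C)$, and $\mathcal{S}(\categ C)$ is closed under small coproducts, so $A_1 \in \mathcal{S}(\categ C)$. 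The subobject $A_1 \hookrightarrow A_0 \times A_0$ is an equivalence relation in $\mathcal{S}(\categ C)$: this is a property of the hom-sets $\hom_{\mathcal{S}(\categ C)}(W, A_\bullet) = \hom_{\Psh(\categ C)}(W, A_\bullet)$, and $A_0 \times A_0$ agrees in $\mathcal{S}(\categ C)$ and in $\Psh(\categ C)$, so it is inherited from the kernel-pair structure in $\Psh(\categ C)$. Hence $A := (A_0, A_1)$ lies in $\mathcal{E}_h\mathcal{S}(\categ C)$, and $\iota(A)$ is the coequaliser in $\Psh(\categ C)$ of $A_1 \rightrightarrows A_0$, which is $Z$ because $A_0 \to Z$ is an effective epimorphism with kernel pair $A_1$. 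So $Z$ lies in the essential image.

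The one place where something genuinely has to be unwound is the ``image $\Rightarrow$ simple'' computation: lifting $X \to Z$ through the effective cover $A_0 \to Z$, pushing the lift into a single summand of $A_0$, and then tracking which summands of $A_1$ survive the double pullback. All of this is routine once universality and disjointness of coproducts and the colimit-preservation of $\hom_{\Psh(\categ C)}(X,-)$ are in hand; the remainder is a bookkeeping translation between the quotient description of $\iota$ and the effectivity of equivalence relations in the pretopos $\Psh(\categ C)$.
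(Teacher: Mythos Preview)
Your proof is correct and follows essentially the same approach as the paper's: for the forward direction you lift $X\to Z$ and $Y\to Z$ to $A_0$ and use effectivity to identify $X\times_Z Y$ with $X\times_{A_0}A_1\times_{A_0}Y\in\mathcal{S}(\categ C)$, and for the converse you cover $Z$ by some $A_0\in\mathcal{S}(\categ C)$ and use simplicity (plus universality of coproducts) to show the kernel pair $A_1=A_0\times_Z A_0$ lies in $\mathcal{S}(\categ C)$. The only difference is that you spell out more of the bookkeeping (factoring through single summands, checking the equivalence relation structure transfers along the fully faithful lex embedding) where the paper leaves these implicit.
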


\begin{proof}
On the one hand, let $A$ be an element of $\mathcal{E}_h\mathcal{S}(\categ C)$ and let us consider a cospan diagram $X \to A \leftarrow Y$ in $\Psh(\categ C)$, where $X,Y$ actually belong to $\categ C$. Both maps $X \to A$ and $Y \to A$ lift to $A_0$. Since $\Psh(\categ C)$ is a pretopos, we have
$$
X \times_{A} Y = X \times_{A_0} A_1 \times_{A_0} Y \in \mathcal{S}(\categ D).
$$
Hence, the image of $A$ in $\Psh(\categ C)$ is simple.

On the other hand, let $Z$ be a simple object. One can find an object $Y$ in $\mathcal S (\categ C)$ together with an effective epimorphism $Y \to Z$. Then the equivalence relation $R_Y = Y \times_Z Y$ belongs to $\mathcal S (\categ C)$. Moreover, $Z = Y / R_Y$. It is thus the image of the equivalence groupoid $(Y, R_Y) \in \mathcal{E}_h\mathcal{S}(\categ C)$.
\end{proof}

\begin{lemma}\label{lemmaessentiallysurjective}
The embedding
$$
i: \mathcal{E}_h^{(2)}\mathcal{S}(\categ C) \to \Psh(\categ C)
$$
is essentially surjective.
\end{lemma}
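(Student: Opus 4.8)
The plan is to identify the essential image $\mathcal I$ of $i$ inside $\Psh(\categ C)$ as a subcategory of $\Psh(\categ C)$ that is stable under two colimit operations and large enough to reach every small presheaf. The two operations are: small coproducts, and quotients of internal equivalence groupoids.

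First I would record why $\mathcal I$ is stable under these operations. Since $\mathcal{E}_h^{(2)}\mathcal S$ is a KZ-doctrine and $\Psh(\categ C)$ is $\mathcal{E}_h^{(2)}\mathcal S$-cocomplete (Lemma \ref{lemmasimplerpresheaves}), the functor $i$ is $\mathcal{E}_h^{(2)}\mathcal S$-linear (Proposition \ref{propositionkzdoctrine}), hence also $\mathcal S$-linear and $\mathcal{E}_h$-linear; and, being a free algebra over a composable pair, $\mathcal{E}_h^{(2)}\mathcal S(\categ C)$ is $\mathcal S$-cocomplete and $\mathcal{E}_h$-cocomplete. Combining this with the full faithfulness of $i$ (Lemma \ref{lemmafullyfaith}) and the fact that $i$ is lex, one checks that a small family with $W_a \simeq i(Z_a)$ has $\coprod_a W_a \simeq i(\coprod_a Z_a) \in \mathcal I$, and that an internal equivalence groupoid $(W_0, W_1)$ in $\Psh(\categ C)$ with $W_0 \simeq i(Z_0)$ and $W_1 \simeq i(Z_1)$ lifts: the face, degeneracy and composition maps lift uniquely along the fully faithful $i$, the defining monomorphism $W_1 \to W_0 \times W_0$ and the pullback squares witnessing reflexivity, symmetry and transitivity are reflected since $i$ is lex and fully faithful, so $(Z_0, Z_1)$ is an equivalence groupoid in $\mathcal{E}_h^{(2)}\mathcal S(\categ C)$, and $i$ carries its quotient to $W_0/W_1$ because $i$ is $\mathcal{E}_h$-linear (Proposition \ref{propcommdiagr}). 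Since $\mathcal I$ also contains the representables, it then suffices to present an arbitrary $F \in \Psh(\categ C)$ as a coproduct of representables followed by a quotient of an equivalence groupoid whose vertices lie in $\mathcal I$.

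Next I would build that presentation. Because $F$ is small it is the colimit of a small diagram of representables, so $Y_0 := \coprod_i y(c_i) \to F$ is an epimorphism with $Y_0 \in \mathcal S(\categ C) \subseteq \mathcal I$. Every epimorphism in $\Psh(\categ C)$ is effective (colimits and finite limits are computed objectwise, where this holds in $\Set$), so $F = Y_0/R$ for $R := Y_0 \times_F Y_0$ the kernel pair, a genuine internal equivalence relation on $Y_0$. Writing $Y_0 = \coprod_i y(c_i)$ gives $R = \coprod_{i,j} R_{ij}$ with $R_{ij} := y(c_i) \times_F y(c_j)$; each $R_{ij}$ is a finite limit of small presheaves, hence small, and, since $\categ C$ has binary products, a subobject of the representable $y(c_i \times c_j)$. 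Choosing a small cover $T_0^{ij} := \coprod_k y(d^{ij}_k) \to R_{ij}$ by representables and using that $R_{ij} \hookrightarrow y(c_i \times c_j)$ is monic, the kernel pair of this cover is $T_1^{ij} := \coprod_{k,l} y(d^{ij}_k) \times_{y(c_i \times c_j)} y(d^{ij}_l) \simeq \coprod_{k,l} y(d^{ij}_k \times_{c_i \times c_j} d^{ij}_l)$, again a coproduct of representables (here the Yoneda embedding of the lex category $\categ C$ preserves pullbacks). Thus $R_{ij} = T_0^{ij}/T_1^{ij}$ is a quotient of an equivalence groupoid with vertices in $\mathcal I$, so $R_{ij} \in \mathcal I$, whence $R = \coprod_{i,j} R_{ij} \in \mathcal I$ and finally $F = Y_0/R \in \mathcal I$, so $i$ is essentially surjective.

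The hard part will be the stability of $\mathcal I$ under quotients of equivalence groupoids in the first step: one must verify carefully that a fully faithful lex functor not only lifts all structure maps of an internal equivalence groupoid but also reflects the monomorphism $W_1 \to W_0 \times W_0$ and the pullbacks encoding the equivalence-groupoid axioms, so that the lift $(Z_0, Z_1)$ is genuinely such an object of $\mathcal{E}_h^{(2)}\mathcal S(\categ C)$, and that $i$ really carries its quotient to $W_0/W_1$. A subsidiary point, used repeatedly, is the identity $y(d) \times_{y(c)} y(d') \simeq y(d \times_c d')$; it is what keeps the kernel pairs above inside $\mathcal S(\categ C)$, and it is also why trying instead to write down by hand an equivalence $2$-groupoid in $\mathcal S(\categ C)$ representing $F$ is awkward — the diagonal, symmetry and composition on $R$ need not lift strictly to a chosen cover of $R$, which is exactly the kind of defect the passage to $\mathcal{E}_h^{(2)}$ is designed to absorb.
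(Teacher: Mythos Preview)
Your proof is correct and follows essentially the same strategy as the paper: cover $F$ by a coproduct $Y_0$ of representables, show the kernel pair $R = Y_0 \times_F Y_0$ lies in the essential image, lift the equivalence groupoid $(Y_0,R)$ along the fully faithful lex functor $i$, and use $\mathcal E_h$-linearity of $i$ to conclude. The only difference is packaging: the paper isolates the notion of a \emph{simple} object of $\Psh(\categ C)$ (one over which pullbacks of representables land in $\mathcal S(\categ C)$), proves that subobjects of simple objects are simple and that simple objects are exactly the essential image of $\mathcal E_h\mathcal S(\categ C)$, and then observes $R \hookrightarrow Y_0 \times Y_0$ is simple; you instead unpack this argument inline by decomposing $R = \coprod_{i,j} R_{ij}$ and presenting each $R_{ij}$ as a quotient of an equivalence groupoid in $\mathcal S(\categ C)$ via the mono $R_{ij} \hookrightarrow y(c_i \times c_j)$. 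Your direct route avoids the extra definitions but makes the proof longer; the paper's abstraction also yields the reusable characterisation of $\mathcal E_h\mathcal S(\categ C)$ inside $\Psh(\categ C)$.
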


\begin{proof}
Let $Z$ be an object of $\Psh(\categ C)$. Since $Z$ is a colimit of representables, one can find a coproduct of representables $Y \in \mathcal{S}(\categ C)$ and an effective epimorphism $Y \to Z$. The equivalence relation $R_Y = Y \times_Z Y$ is simple as a subobject of $Y \times Y \in \mathcal S (\categ D)$. Let $A_0, A_1$ be antecedents in $\mathcal{E}_h\mathcal{S}(\categ C) \subset
\mathcal{E}_h^{(2)}\mathcal{S}(\categ C)$ of the simple objects $(Y, R_Y)$. Let us denote $A$ the quotient in $\mathcal{E}_h^{(2)}\mathcal{S}(\categ C)$ of $A_0$ by its equivalence relation $A_1$. Since the embedding $i$ preserves coequalisers of equivalence groupoids, then $Z \simeq i(A)$.
\end{proof}

\appendix


\section{Some results about pretopos}

In this subsection, we recall some results about pretopos. In particular, the fact that they are cocomplete and that colimits are universal. The reader can refer to \cite{johnstone} for more details.

\subsection{Effective epimorphism}

Let $\categ C$ be a category that admits finite limits.

\begin{proposition}\label{propeffectiveepi}
Let us consider a morphism $p: X \to Y$ in $\categ C$. The following assertions are equivalent.
\begin{enumerate}
    \item there exists a pair of arrows $f,g: U \to X$ so that the map $p: X \to Y$ is the coequaliser of $f,g$;
    \item the following diagram is colimiting
    $$
    X \times_Y X \rightrightarrows X \xrightarrow{p} Y .
    $$
\end{enumerate}
\end{proposition}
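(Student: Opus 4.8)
The plan is to prove the two implications separately. The implication $(2)\Rightarrow(1)$ is essentially a tautology: one takes $U = X\times_Y X$ together with the two projections $f,g : X\times_Y X \to X$, and then the hypothesis that $X\times_Y X \rightrightarrows X \xrightarrow{p} Y$ is colimiting is literally the assertion that $p$ is the coequaliser of $f$ and $g$.

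For $(1)\Rightarrow(2)$, suppose $p$ is the coequaliser of some pair $f,g : U \to X$. First I would note that $pf = pg$, so the morphism $(f,g) : U \to X\times X$ factors through the pullback, yielding a morphism $u : U \to X\times_Y X$ with $\pi_0 u = f$ and $\pi_1 u = g$, where $\pi_0,\pi_1$ denote the two projections of the kernel pair. Then, given any morphism $h : X \to Z$ with $h\pi_0 = h\pi_1$, composing with $u$ gives $hf = h\pi_0 u = h\pi_1 u = hg$; hence by the universal property of the coequaliser $p$ there is a unique $\bar h : Y \to Z$ with $\bar h p = h$. It remains to check that this factorisation is unique among all maps out of $Y$, which is immediate since $p$, being a coequaliser, is an epimorphism. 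This exhibits $X\times_Y X \rightrightarrows X \xrightarrow{p} Y$ as colimiting.

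I do not expect any genuine obstacle here: this is the standard characterisation identifying ``being the coequaliser of some pair'' with ``being the coequaliser of one's own kernel pair'' (i.e. being an effective/regular epimorphism). The only point deserving a moment of attention is the uniqueness clause in the last step, and that reduces at once to $p$ being epic because it is a coequaliser.
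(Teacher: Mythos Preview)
Your proposal is correct and follows essentially the same approach as the paper. Both treat $(2)\Rightarrow(1)$ as immediate and prove $(1)\Rightarrow(2)$ by factoring $(f,g)$ through the kernel pair and observing that any morphism coequalising the projections $\pi_0,\pi_1$ must coequalise $f,g$ and hence factor uniquely through the coequaliser $p$; the paper merely phrases this as a three-way equivalence of conditions on a test map $X\to Z$ rather than spelling out the factorisation $u$ explicitly.
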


\begin{proof}
It is clear that (2) implies (1). Let us suppose (1) and let us denote $R = X \times_Y X$ and $(\pi_1, \pi_2)$ the two maps from $R$ to $X$. Then (2) follows from the fact that, for a map $p: X \to Z$, the following assertions are equivalent:
\begin{enumerate}
\item $p\circ \pi_1 = p\circ \pi_2$
    \item $p\circ f = p\circ g$ 
    \item $p$ factorises as
    $$
    X \xrightarrow{f} Y \xrightarrow{p'} Z .
    $$
\end{enumerate}
\end{proof}

\begin{definition}
A morphism $p$ that satisfies the above conditions of Proposition \ref{propeffectiveepi} is called an effective epimorphism.
\end{definition}

\begin{remark}
Let $F : \categ D \to \categ E$ be a functor from a small category to a category $\categ E$ that admits limits of diagrams indexed by the cocone category $\categ D^{\triangleright}$ and colimits indexed by the cone category $\categ D^{\triangleleft}$.
Such colimits and limits induce an adjunction
$$
\begin{tikzcd}
    \categ C_{/F}
    \ar[rr, shift left, "\varinjlim_{\categ D^{\triangleleft}}"]
    &&
    \categ C_{F/}
    \ar[ll, shift left, "\varprojlim_{\categ D^{\triangleright}}"]
\end{tikzcd}
$$
so that any set $\hom_{\categ C_{/F}}(X, \lim Y)$
either is empty or has a unique element. Thus the adjunction is idempotent.
\end{remark}

\begin{definition}
We say that an equivalence groupoid $X$ of $\categ C$ is effective if the diagram $X_1 \rightrightarrows X_0$, has a coequaliser $X_0/X_1$ and the map
$$
X_1 \to X_0 \times_{X_0/R_X} X_0
$$
is an isomorphism. Equivalently, the limit functor
\begin{align*}
    \text{Effective epimorphisms from }X
    & \to 
    \text{Equivalence relations above }X 
    \\
    Q &\mapsto X \times_Q X
\end{align*}
is an equivalence of categories.
\end{definition}

\begin{lemma}\label{lemmaeffective}
An equivalence groupoid $Z$ of $\categ C$ is effective if and only if it has a coequaliser and, for any cospan diagram $X \to Z \leftarrow Y$ in $\mathcal{E}_h(\categ C)$, the map
$$
X \times^h_Z Y \to X \times^h_{(Z_1/Z_0)} Y
$$
is an isomorphism.
\end{lemma}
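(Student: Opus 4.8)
The plan is to reduce the statement to the explicit levelwise description of homotopy pullbacks of equivalence groupoids. Under the identification of equivalence groupoids with a full subcategory of equivalence $2$-groupoids, along which the homotopy pullback $\times^h$ is computed by the formula for $A\times^h_D B$ from the definition of that construction, a cospan $X \to D \leftarrow Y$ of equivalence groupoids has
$$
(X \times^h_D Y)_0 = X_0 \times_{D_0} D_1 \times_{D_0} Y_0, \qquad (X \times^h_D Y)_1 = X_1 \times_{D_0 \times D_0} (D_1 \times D_1) \times_{D_0 \times D_0} Y_1 .
$$
Write $qZ = Z_0/Z_1$ for the coequaliser, which exists under either hypothesis of the statement. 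First I would observe that $Z \to qZ$ is an honest morphism of equivalence groupoids: on level $1$ the composite $Z_1 \to Z_0 \times Z_0 \to qZ\times qZ$ factors through the diagonal, because the two legs of $Z_1 \rightrightarrows Z_0$ are identified in $qZ$. Hence for every cospan $X \to Z \leftarrow Y$ there is an induced comparison morphism $c\colon X\times^h_Z Y \to X\times^h_{qZ} Y$, and since $qZ$ is a discrete groupoid the formulas above give $(X\times^h_{qZ}Y)_0 = X_0\times_{qZ} Y_0$ and $(X\times^h_{qZ}Y)_1 = X_1\times_{qZ\times qZ} Y_1$.

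For the implication "$Z$ effective $\Rightarrow$ all comparison morphisms are isomorphisms" I would use that effectiveness means the canonical map $Z_1 \to Z_0\times_{qZ}Z_0$ is an isomorphism (and that, by definition, effectiveness already includes the existence of the coequaliser, which is the remaining clause of the statement). Substituting $Z_1 \simeq Z_0\times_{qZ}Z_0$ into the two displayed formulas and contracting the iterated fibre products over $Z_0$, resp.\ over $Z_0\times Z_0$, yields
$$
(X\times^h_Z Y)_0 \simeq X_0\times_{qZ}Y_0, \qquad (X\times^h_Z Y)_1 \simeq X_1\times_{qZ\times qZ} Y_1 ,
$$
and a check of the universal properties of these limits identifies the two isomorphisms with the components of $c$. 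Thus $c$ is a levelwise isomorphism, hence an isomorphism in $\mathcal{E}(\categ C)$, a fortiori in $\mathcal{E}_h(\categ C)$.

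For the converse I would assume the coequaliser $qZ$ exists and that $c$ is an isomorphism for every cospan, and apply this to the cospan $Z_0 \xrightarrow{(\id,s_0)} Z \xleftarrow{(\id,s_0)} Z_0$ built from the unit/degeneracy map $s_0\colon Z_0 \to Z_1$. The level-$0$ formula together with the fact $Z_0\times^h_Z Z_0 \simeq Z_1$ (the computation recorded in the proof of Proposition \ref{propgiraudax}) identifies $Z_0\times^h_Z Z_0$ with the object $Z_1$, while $Z_0\times^h_{qZ}Z_0 \simeq Z_0\times_{qZ}Z_0$. Under these identifications $c$ becomes the canonical morphism $Z_1 \to Z_0\times_{qZ}Z_0$, so the hypothesis says precisely that this morphism is invertible, i.e.\ that $Z$ is effective.

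The routine but genuinely delicate step is the fibre-product bookkeeping in the second paragraph: after the substitution $Z_1 = Z_0\times_{qZ}Z_0$ one must keep careful track of which projection of $Z_1 \to Z_0\times Z_0$ plays the role of a source map and which plays the role of a target map of $Z$ (and similarly for the two copies of $D_1$ in the level-$1$ formula) in order to see both that the resulting object is $X_0\times_{qZ}Y_0$, resp.\ $X_1\times_{qZ\times qZ}Y_1$, and that the arrow induced by $c$ is the evident one. All of this is elementary manipulation of finite limits in $\categ C$; the rest of the argument is formal.
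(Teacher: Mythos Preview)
Your proof is correct and follows the same approach as the paper. The paper proves the direction ``property $\Rightarrow$ effective'' exactly as you do, by specialising to the cospan $Z_0 \to Z \leftarrow Z_0$ and reading off $Z_1 = Z_0 \times^h_Z Z_0 \simeq Z_0 \times^h_{qZ} Z_0 = Z_0 \times_{qZ} Z_0$; for the converse it simply writes ``follows from a straightforward checking'', which is precisely the levelwise fibre-product computation you spell out. One minor remark: your forward reference to the computation in Proposition~\ref{propgiraudax} is harmless (that identity $Z_0 \times^h_Z Z_0 \simeq Z_1$ is an elementary limit computation independent of the proposition), but since Lemma~\ref{lemmaeffective} is invoked in that proof you may prefer to justify it directly from the formula $(Z_0 \times^h_Z Z_0)_0 = Z_0 \times_{Z_0} Z_1 \times_{Z_0} Z_0 = Z_1$ together with the monicity of $Z_1 \to Z_0^2$.
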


\begin{proof}
Let us suppose that $Z$ has a coequaliser $Q$ and that it satisfies the property described above with respect to pullbacks. Then
$$
Z_1 = Z_0 \times_Z^h Z_0 = Z_0 \times_Q Z_0
$$
Hence $Z$ is effective. The converse assertions follows from a straightforward checking.
\end{proof}

\subsection{Structure of a regular category}

\begin{proposition}\label{propmonoeffepi}
Let $\categ C$ be a category that admits finite limits and that is $\mathcal{E}_h$-cocomplete. Then, a morphism of $\categ C$ is an isomorphism if and only if it is an effective epimorphism and a monomorphism.
\end{proposition}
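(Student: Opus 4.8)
The plan is to establish the two implications separately. The forward direction is purely formal: if $p \colon X \to Y$ is an isomorphism then it is obviously a monomorphism, and it is an effective epimorphism because it realises $Y$ as the coequaliser of the parallel pair $\mathrm{id}_X, \mathrm{id}_X \colon X \rightrightarrows X$. Indeed, any $q \colon X \to Z$ trivially equalises this pair, and it factors through $p$ in exactly one way, namely as $q = (q \circ p^{-1}) \circ p$, uniqueness being guaranteed since $p$, being invertible, is epic. Thus the first condition of Proposition \ref{propeffectiveepi} holds, and $p$ is an effective epimorphism.

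For the converse, suppose $p \colon X \to Y$ is simultaneously a monomorphism and an effective epimorphism. Since $\categ C$ has finite limits the kernel pair $X \times_Y X$ exists, and I would first record that, because $p$ is monic, the diagonal $X \to X \times_Y X$ is an isomorphism; in particular the two projections $\pi_1, \pi_2 \colon X \times_Y X \rightrightarrows X$ coincide. Now invoke Proposition \ref{propeffectiveepi}: being an effective epimorphism means exactly that $X \times_Y X \rightrightarrows X \xrightarrow{p} Y$ is a colimiting diagram, i.e. $(Y, p)$ is a coequaliser of $(\pi_1, \pi_2)$. But the coequaliser of a pair of equal arrows is the identity of their target, so $(X, \mathrm{id}_X)$ is also such a coequaliser; comparing the two via the universal property yields a unique isomorphism $X \to Y$ which is forced to equal $p$. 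Hence $p$ is an isomorphism.

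I do not expect a genuine obstacle here: the argument uses only the existence of kernel pairs and the description of effective epimorphisms from Proposition \ref{propeffectiveepi}, and makes no essential use of $\mathcal{E}_h$-cocompleteness beyond what is already built into the existence of finite limits. The one point to state with care is the behaviour of the kernel pair of a monomorphism — it is "trivial", with both legs equal — together with the observation that the coequaliser of a degenerate parallel pair is the identity of its target; both follow immediately from the relevant universal properties.
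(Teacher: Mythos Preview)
Your proof is correct and follows essentially the same line as the paper's: both use that a monomorphism has trivial kernel pair and that an effective epimorphism is the coequaliser of its kernel pair. The paper phrases the converse as a factorisation $X \xrightarrow{f_1} X/R \xrightarrow{f_2} Y$ and shows each factor is invertible, whereas you compare the two coequalisers $(X,\mathrm{id}_X)$ and $(Y,p)$ directly; these are the same argument in slightly different clothing. Your observation that $\mathcal{E}_h$-cocompleteness is not actually needed is valid --- the paper invokes it only to form the intermediate quotient $X/R$, which your phrasing renders superfluous.
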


\begin{proof}
Let $f: X \to Y$ be a morphism that is both an effective epimorphism and a monomorphism. Let $R = X \times_Y X$. The morphism $f$ factorises as
$$
X \xrightarrow{f_1} X/R \xrightarrow{f_2} Y .
$$
Since $f$ is a monomorphism, the map $X \to R$ is an isomorphism. Subsequently $f_1$ is invertible. Since $f$ is an effective epimorphism related to the effective equivalence relation $R$, then $f_2$ is an isomorphism.
Finally, $f = f_2 \circ f_1$ is an isomorphism.
\end{proof}

\begin{proposition}
Let $\categ C$ be a category that admits finite limits and that is $\mathcal{E}_h$-cocomplete. Then, any morphism of $\categ C$ may be decomposed, in a natural and essentially unique way, into an effective epimorphism followed by a monomorphism.
\end{proposition}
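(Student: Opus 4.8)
The plan is to carry out the classical kernel-pair/coequaliser construction and to exploit the pullback-stability of effective epimorphisms. Let $f : X \to Y$ be a morphism of $\categ C$. Since $\categ C$ has finite limits, I would first form the kernel pair $R = X \times_Y X$ with its two projections $\pi_1, \pi_2 : R \to X$; being a kernel pair, $R$ is an equivalence groupoid on $X$ (the map $R \to X \times X$ is monic, being a pullback of the diagonal of $Y$, and reflexivity, symmetry and transitivity are automatic). By Proposition \ref{propgiraudax}, the $\mathcal{E}_h$-cocompleteness of $\categ C$ provides a coequaliser $q : X \to X/R$ of the pair $\pi_1, \pi_2$. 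As $f$ coequalises $\pi_1$ and $\pi_2$, it factors uniquely as $f = m \circ q$ for some $m : X/R \to Y$, and $q$ is an effective epimorphism in the sense of Proposition \ref{propeffectiveepi}, being the coequaliser of $\pi_1, \pi_2 : R \rightrightarrows X$.

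The main step is to prove that $m$ is a monomorphism, equivalently that the diagonal $\Delta : X/R \to (X/R) \times_Y (X/R)$ is an isomorphism. Since $q \pi_1 = q \pi_2$ (by definition $q$ coequalises $\pi_1$ and $\pi_2$), the canonical comparison map $R \to (X/R) \times_Y (X/R)$ induced by $q$ factors as $\Delta \circ h$, where $h = q \pi_1 : R \to X/R$. I would then observe that this comparison map is an epimorphism: it decomposes as
\[
R \;\longrightarrow\; X \times_Y (X/R) \;\longrightarrow\; (X/R) \times_Y (X/R),
\]
where the first arrow is the pullback of $q$ along the projection $X \times_Y (X/R) \to X/R$ and the second is the pullback of $q$ along the projection $(X/R) \times_Y (X/R) \to X/R$; by the pullback-stability of effective epimorphisms (Proposition \ref{propgiraudax}) both arrows are effective epimorphisms, hence epimorphisms, hence so is their composite. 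Consequently, if $u \Delta = v \Delta$ then $u (\Delta h) = v (\Delta h)$, and cancelling the epimorphism $\Delta h$ gives $u = v$; thus $\Delta$ is an epimorphism. Being also a split monomorphism (split by either projection), $\Delta$ is an isomorphism. Hence $m$ is a monomorphism and $f = m q$ is an (effective epimorphism, monomorphism) factorisation.

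For essential uniqueness and naturality, I would note that in any factorisation $f = m' e'$ with $e' : X \to I'$ an effective epimorphism and $m'$ a monomorphism, the kernel pair of $e'$ equals $R$ (since $m'$ is monic, $e' x = e' x'$ is equivalent to $f x = f x'$), and $e'$ is the coequaliser of its kernel pair; hence $e'$ and $q$ are both coequalisers of $R \rightrightarrows X$, so there is a unique isomorphism $\theta : X/R \to I'$ with $e' = \theta q$, and then $m = m' \theta$. Given a commutative square from $f$ to another morphism $f'$, the induced map on kernel pairs $R \to R'$ yields, after passing to coequalisers, a unique map $X/R \to X'/R'$ compatible with the factorisations, and the uniqueness statement makes this assignment functorial; this is the asserted naturality.

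The only non-formal input, and the main obstacle, is the proof that $m$ is monic: it relies essentially on the stability of effective epimorphisms under pullback, which is one of the equivalent characterisations of $\mathcal{E}_h$-cocompleteness established in Proposition \ref{propgiraudax}. One could alternatively conclude by invoking Proposition \ref{propmonoeffepi} once $m$ is shown to be simultaneously an effective epimorphism and a monomorphism, but the pullback-stability ingredient cannot be avoided.
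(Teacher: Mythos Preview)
Your proof is correct and follows the same overall strategy as the paper: form the kernel pair $R = X \times_Y X$, take its coequaliser $q : X \to X/R$, factor $f = m q$, and use stability of effective epimorphisms under pullback to show that $m$ is a monomorphism.

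The execution differs in two small but noteworthy ways. For monicity of $m$, the paper pulls back the map $X \times X \to X/R \times X/R$ (shown to be an effective epimorphism because the quotient functor is lex, i.e.\ preserves products) along an arbitrary pair $(g_1,g_2) : A \to X/R \times X/R$ with $mg_1 = mg_2$; you instead work directly with the relative diagonal $\Delta : X/R \to (X/R) \times_Y (X/R)$ and exhibit $R \to (X/R) \times_Y (X/R)$ as a composite of two pullbacks of $q$. Your route is a bit more economical: it only appeals to characterisation (3) of Proposition~\ref{propgiraudax} (pullback-stability), whereas the paper also invokes characterisation (1) (lexness of the quotient). For essential uniqueness, the paper constructs the comparison $g : X/R \to U$ and shows it is both an effective epimorphism and a monomorphism, then cites Proposition~\ref{propmonoeffepi}; you observe more directly that the kernel pair of $e'$ coincides with $R$, so $e'$ and $q$ are coequalisers of the same parallel pair and hence canonically isomorphic. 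This is cleaner and avoids the auxiliary proposition. Your closing remark about invoking Proposition~\ref{propmonoeffepi} is slightly garbled (it is the comparison map in the uniqueness step, not $m$ itself, that would be shown to be mono and effective epi), but this does not affect the argument.
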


\begin{proof}
Let us consider a morphism $f: X \to Y$ and let us denote $R = X \times_Y X$. This is an equivalence relation on $X$. The map $f$ decomposes as
$$
X \to X/R \to Y
.$$
The first map is an effective epimorphism. Let us show that the second map is a monomorphism. Let us consider two morphisms $g_1, g_2 : A \to X/R$ whose compositions with $X/R \to Y$ are equal. Let $B$ be the following  pullback
$$
\begin{tikzcd}
    B
    \ar[r] \ar[d]
    & X \times X
    \ar[d]
    \\
    A
    \ar[r]
    &X/R \times X/R .
\end{tikzcd}
$$
Since the two maps $B \rightrightarrows X \to Y$ are equal, then the map $B \to X \times X$ factorises through $R$, and so the two maps from $B$ to $X/R$ are equal.

Besides, the map $X \times X \to X/R \times X/R$ is an effective epimorphism since $X/R \times X/R$ is the quotient of $X\times X$ by the equivalence relation $R \times R$ by $\mathcal{E}_h$-cocompleteness of $\categ C$. Since effective epimorphisms are stable through pullbacks, then the map $B \to A$ is also an effective epimorphism. 

Combining these two results, then $g_1 = g_2$. Finally, the map $X/R \to Y$ is a monomorphism. 

Now, let us consider another factorisation
$$
X \xrightarrow{e} U \xrightarrow{m} Y
$$
of $f$ by an effective epimorphism $e$ followed by a monomorphism $m$. Since $m$ is a monomorphism, then the two maps $R \rightrightarrows X \to U$ are equal. So $e$ factorises in a unique way through the quotient $X/R$. Let us denote $g$ the resulting morphism from $X/R$ to $U$. This is an effective epimorphism as $U$ is the coequaliser of the maps
$$
X \times_U X \rightrightarrows X \to X/R .
$$
This is also a monomorphism since the composite map $X/R \to U \to Y$ is a monomorphism. So $g$ is an isomorphism by Proposition \ref{propmonoeffepi}.
\end{proof}

\subsection{Cocompleteness}

Let $\categ C$ be a pretopos, that is a category with finite limits that is $\mathcal{S}$-cocomplete and $\mathcal{E}_h$-cocomplete. Let $X \in \categ C$ be an object.

\begin{definition}
Let us consider an element $A\in \categ C_{/X \times X}$. We denote $A^{\op}$ the object of this same category whose underlying element of $\categ C$ is the same as $A$ and whose structural map targeting $X \times X$ is the composition
$$
A \to X \times X \xrightarrow{\tau} X \times X
$$
where the endomorphism of $X \times X$ denoted $\tau$ is the commutator of the cartesian monoidal structure on $\categ C$. This defines an endofunctor $(-)^\op$ of $\categ C_{/X \times X}$.
\end{definition}

\begin{definition}
For any two objects $A, B \in\categ C_{/X \times X}$ we define $A \otimes_X B$ as the element of $\categ C_{/X \times X}$ whose underlying object in $\categ C$ is the pullback $A \times_X  B$:
$$
\begin{tikzcd}
    A \times_X B
    \ar[r] \ar[d]
    & A
    \ar[d, "d_1"]
    \\
    B
    \ar[r, "d_0"']
    & X .
\end{tikzcd}
$$
and whose structural map targeting $X \times X$ is the composition
$$
A \times_X B \to A \times B \xrightarrow{(d_0, d_1)} X \times X.
$$
This defines a monoidal structure on $\categ C_{/X\times X}$ whose unit is $X$ and whose unitors and associator proceed from the cartesian monoidal structure on $\categ C$.
\end{definition}

One can notice that the endofunctor $(-)^\op$ of $\categ C_{/X\times X}$ has the canonical structure of a monoidal functor from $(\categ C_{/X\times X}, \otimes_X)$ to the same category equipped with the opposite monoidal structure $(\categ C_{/X\times X}, \otimes_X^\op)$. In particular, we have a canonical isomorphism
$$
(A \otimes_X B)^\op = B^\op \otimes_X A^\op
$$
for any $A,B \in \categ C_{/X\times X}$.

\begin{definition}
Let $A$ be an element of $\categ C_{/X\times X}$. Then we denote $T_X(A)$ the free $\otimes_X$-monoid obtained from $A$, that is
$$
T_X(A) = \coprod_{n \geq 0} A^{\otimes_X n} .
$$
Moreover, we denote
$$
G_X(A) = T_X(A \sqcup A^\op) .
$$
\end{definition}

We have a canonical natural isomorphism of $\otimes_X$-monoids
$$
G_X(A)^\op \simeq G_X(A)
$$
given by the map
$$
G_X(A)^\op = \left(\coprod_{n \geq 0} (A \sqcup A^\op)^{\otimes_X n}\right)^\op
= \coprod_{n \geq 0} (A^\op \sqcup A)^{\otimes_X^\op n}
= \coprod_{n \geq 0} (A^\op \sqcup A)^{\otimes_X n} ,
$$
for any $A \in \categ C_{/X \times X}$.

\begin{lemma}
For any element $A \in \categ C_{/X \times X}$, the 1-coskeletal simplicial object $R_A = (X, G_X(A))$
in $\categ C$ is an equivalence 2-groupoid. 
\end{lemma}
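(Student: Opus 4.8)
The plan is to check the two conditions defining an equivalence $2$-groupoid: that $R_A$ is $1$-coskeletal and that it is Kan. The first is nearly automatic. The monoid unit $\eta\colon X\to G_X(A)$ is, by definition of the monoidal unit of $(\categ C_{/X\times X},\otimes_X)$, a morphism over the diagonal $X\to X\times X$, hence a common section of the two components $d_0,d_1\colon G_X(A)\to X$ of the structural map; so $(X,G_X(A),d_0,d_1,\eta)$ is a reflexive pair equipped with a section and $R_A=\mathrm{cosk}_1(X,G_X(A))$ is a bona fide $1$-coskeletal simplicial object, as the statement already anticipates. The real content is the Kan condition, and by the lemma characterising which $1$-coskeletal objects are Kan it suffices to produce sections of the three restriction maps $R_A(\partial\Delta[2])\to R_A(\Lambda^i[2])$ for $i=0,1,2$. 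Unwinding these limits, $R_A(\Lambda^1[2])$ is the object underlying $G_X(A)\otimes_X G_X(A)$, while $R_A(\Lambda^0[2])$ and $R_A(\Lambda^2[2])$ are the analogous fibre products of two copies of $G_X(A)$ over $X$, amalgamated this time along the common source, respectively the common target.

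I would then write down the three sections. For the inner horn, use the monoid multiplication $\mu\colon G_X(A)\otimes_X G_X(A)\to G_X(A)$: to a composable pair $e_{01},e_{12}$ assign the edge $e_{02}:=\mu(e_{12}\otimes_X e_{01})$ (the order being dictated by the convention that makes one face map the target); since $\mu$ lies over $X\times X$, this $e_{02}$ has precisely the endpoints required of the face $02$, and $(e_{01},e_{12},e_{02})$ restricts back to the given horn. For the two outer horns one additionally uses the involutive morphism $\sigma\colon G_X(A)\to G_X(A)$ underlying the isomorphism $G_X(A)^{\op}\simeq G_X(A)$ of $\otimes_X$-monoids recalled just above; since it covers the swap $\tau$ of $X\times X$ one has $d_0\sigma=d_1$ and $d_1\sigma=d_0$. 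Given $e_{01},e_{02}$ with common source $v_0$, the pair $(e_{02},\sigma e_{01})$ is composable and $e_{12}:=\mu(e_{02}\otimes_X\sigma e_{01})$ has the endpoints $v_1,v_2$ needed to complete the horn $\Lambda^0[2]$; symmetrically, given $e_{12},e_{02}$ with common target, $e_{01}:=\mu(\sigma e_{12}\otimes_X e_{02})$ completes $\Lambda^2[2]$. In all three cases the construction leaves the edges of the horn untouched, so it is genuinely a section, and the formulas are natural in $\categ C$, hence honest morphisms rather than assignments on generalised elements.

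The only real work should be bookkeeping: keeping the variance of $d_0,d_1$ and the order of tensor factors consistent across the inner and the two outer horns, and checking the relevant composability conditions as actual equalisers in $\categ C$ so that the sections land where claimed. No coherence of $\mu$ or $\sigma$ (associativity, unitality, anti-multiplicativity) is used, only that $\mu$ lies over $X\times X$ and $\sigma$ over $\tau$; this is exactly why a free monoid already fills the outer horns and not merely the inner one. For the write-up I would also flag that the multiplication $\mu$ exists because $\otimes_X$ distributes over the coproducts defining $T_X$, which holds since coproducts are universal in the pretopos $\categ C$, and that the identifications of the $R_A(\Lambda^i[2])$ with the stated fibre products should be made explicit.
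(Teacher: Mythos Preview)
Your proposal is correct and follows essentially the same approach as the paper: the paper simply exhibits the three horn-filling maps as the multiplication $m\colon G_X(A)\otimes_X G_X(A)\to G_X(A)$ for the inner horn and the composites $G_X(A)^{\op}\otimes_X G_X(A)\simeq G_X(A)\otimes_X G_X(A)\xrightarrow{m} G_X(A)$ and $G_X(A)\otimes_X G_X(A)^{\op}\simeq G_X(A)\otimes_X G_X(A)\xrightarrow{m} G_X(A)$ for the outer horns, which is exactly your $\mu$ together with the involution $\sigma$. Your write-up is more detailed (explicitly identifying the horns as fibre products, checking the section property, noting that universality of coproducts is needed for $\mu$ to exist), but the underlying argument is the same; one small imprecision is your closing remark that ``a free monoid already fills the outer horns'' --- it is not freeness per se but the involution $G_X(A)^{\op}\simeq G_X(A)$, coming from the symmetry of the generating object $A\sqcup A^{\op}$, that does the work.
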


\begin{proof}
The Kan structure is given for instance by the following canonical morphisms in $\categ C_{X \times X}$
\begin{align*}
    G_X(A) \otimes_X G_X(A) & \xrightarrow{m} G_X(A);
    \\
    G_X(A)^\op \otimes_X G_X(A) & \simeq G_X(A) \otimes_X G_X(A)  \xrightarrow{m} G_X(A);
    \\
    G_X(A) \otimes_X G_X(A)^\op & \simeq G_X(A) \otimes_X G_X(A)  \xrightarrow{m} G_X(A).
\end{align*}
where $m$ is the structural map that makes $G_X(A)$ the free $\otimes_X$-monoid from $A \sqcup A^\op$.
\end{proof}

Since $R_A$ is an equivalence 2-groupoid, it admits a quotient in $\categ C$.

\begin{lemma}\label{lemmacoeq}
For any element $A \in \categ C_{/X \times X}$, the quotient of the equivalence 2-groupoid $R_A = (X, G_X(A))$ is also the coequaliser of the pair of maps $A \rightrightarrows X$.
\end{lemma}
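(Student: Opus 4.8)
The plan is to verify directly that the quotient of $R_A$ has the universal property of the coequaliser of the pair $A\rightrightarrows X$. By definition, the quotient of the equivalence 2-groupoid $R_A=(X,G_X(A))$ is the coequaliser in $\categ C$ of the two maps $d_0,d_1\colon G_X(A)\rightrightarrows X$ obtained as the components of the structural morphism $G_X(A)\to X\times X$, and it exists since $R_A$ is an equivalence 2-groupoid. Hence it suffices to show that, for every object $Z$ of $\categ C$, a morphism $f\colon X\to Z$ satisfies $f\circ d_0=f\circ d_1\colon G_X(A)\to Z$ if and only if it satisfies $f\circ d_0=f\circ d_1\colon A\to Z$; the two coequalisers then corepresent the same functor on $\categ C$ and so coincide.

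One implication is immediate: the inclusion of generators $A\hookrightarrow A\sqcup A^\op\hookrightarrow G_X(A)$ is a morphism over $X\times X$, so precomposing along it shows that any $f$ coequalising $G_X(A)\rightrightarrows X$ also coequalises $A\rightrightarrows X$.

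For the converse, assume $f\circ d_0=f\circ d_1$ on $A$. Since $A^\op$ has the same underlying object as $A$ with structural map postcomposed with the twist $\tau$, the restriction of this equation to the summand $A^\op$ is again $f\circ d_0=f\circ d_1$; writing $B=A\sqcup A^\op$ with structural maps $e_0,e_1\colon B\to X$, this gives $f\circ e_0=f\circ e_1\colon B\to Z$. Now $G_X(A)=\coprod_{n\ge 0}B^{\otimes_X n}$, so it is enough to check the equality on each summand $B^{\otimes_X n}=B\times_X\cdots\times_X B$ ($n$ factors), where consecutive factors are glued along the identification of $e_1$ on the $i$-th factor with $e_0$ on the $(i+1)$-th factor, and where the two face maps restrict to $d_0=e_0\circ p_1$ and $d_1=e_1\circ p_n$ for the projections $p_i\colon B^{\otimes_X n}\to B$. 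For $n=0$ both restrictions are $\id_X$. For $n\ge 1$ one telescopes:
$$
f\circ e_0\circ p_1=f\circ e_1\circ p_1=f\circ e_0\circ p_2=\cdots=f\circ e_1\circ p_n,
$$
alternately using $f\circ e_0=f\circ e_1$ on $B$ and the gluing identities $e_1\circ p_i=e_0\circ p_{i+1}$. Hence $f\circ d_0=f\circ d_1$ on each $B^{\otimes_X n}$, and therefore on $G_X(A)=\coprod_{n\ge 0}B^{\otimes_X n}$.

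Combining the two implications finishes the argument. The only delicate point is the bookkeeping: correctly identifying, inside $G_X(A)=T_X(B)$, the two face maps with $e_0$ on the first factor and $e_1$ on the last factor of each tensor power $B^{\otimes_X n}$, and keeping track of the gluing of consecutive factors; all the finite limits and small coproducts invoked exist because $\categ C$ is a pretopos. I do not anticipate any substantive obstacle beyond this bookkeeping.
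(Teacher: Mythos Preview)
The proposal is correct and follows exactly the approach of the paper's proof: both reduce to showing that a map $f\colon X\to Z$ coequalises $A\rightrightarrows X$ if and only if it coequalises $G_X(A)\rightrightarrows X$. The paper records this equivalence in a single sentence; your telescoping argument through the summands $B^{\otimes_X n}$ is simply the explicit verification of that claim.
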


\begin{proof}
This follows from the fact that for any morphism $f: X \to Y$, the two induced maps from $A$ to $Y$ are equal if and only if the two induced maps from $G_X(A)$ to $Y$ are equal.
\end{proof}

\begin{proposition}
The pretopos $\categ C$ is cocomplete.
\end{proposition}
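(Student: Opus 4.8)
The plan is to reduce cocompleteness of $\categ C$ to the existence of coequalisers of arbitrary parallel pairs. Indeed, $\categ C$ already has small coproducts, since it is $\mathcal S$-cocomplete, and a category with small coproducts together with coequalisers of all parallel pairs has all small colimits: the colimit of a functor $D : I \to \categ C$ is the coequaliser of the two canonical maps
$$
\coprod_{(\alpha : i \to j) \in \Mor I} D(i) \rightrightarrows \coprod_{i \in \Ob I} D(i) .
$$
So it suffices to build, for every pair of maps $f, g : Y \rightrightarrows X$ in $\categ C$, a coequaliser.

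First I would package the pair $(f,g)$ as the object $A = (Y, (f,g))$ of $\categ C_{/X \times X}$ and form $G_X(A) = T_X(A \sqcup A^\op)$, using the small coproducts of $\categ C$. By the preceding lemma, $R_A = (X, G_X(A))$ is an equivalence 2-groupoid in $\categ C$. Since $\categ C$ is $\mathcal E_h$-cocomplete, Proposition~\ref{lemmaintermediateleftadjoint} tells us it is also $\mathcal E_h^{(2)}$-cocomplete, so the embedding $\categ C \to \mathcal E_h^{(2)}(\categ C)$ has a left adjoint and $R_A$ admits a quotient $X/G_X(A)$ in $\categ C$. By Lemma~\ref{lemmacoeq}, this quotient is precisely the coequaliser of $A \rightrightarrows X$, that is of $f$ and $g$.

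Putting the two steps together yields all small colimits in $\categ C$. The only point that requires a little care is that the reduction genuinely needs coequalisers of \emph{arbitrary} parallel pairs, not merely coequalisers of equivalence groupoids; this is exactly what Lemma~\ref{lemmacoeq} supplies, since there $A$ ranges over all of $\categ C_{/X \times X}$ and not only over subobjects of $X \times X$. Everything else is the standard ``coproducts plus coequalisers imply cocompleteness'' argument combined with the monoid constructions set up earlier in this subsection.
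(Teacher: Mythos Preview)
Your proof is correct and follows the same route as the paper: reduce cocompleteness to coproducts plus coequalisers of arbitrary parallel pairs, and obtain the latter from Lemma~\ref{lemmacoeq}. The paper's proof is the one-line version of yours; your explicit appeal to Proposition~\ref{lemmaintermediateleftadjoint} to guarantee that $R_A$ has a quotient is exactly what the paper leaves implicit in the sentence ``Since $R_A$ is an equivalence 2-groupoid, it admits a quotient in $\categ C$''.
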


\begin{proof}
It has coequaliser of any pair of maps (Lemma \ref{lemmacoeq}) and coproducts.
\end{proof}

\begin{corollary}
The full subcategory of $\categ{C}_{X \times X}$ spanned by equivalence relations is reflective (that is, the inclusion functor has a left adjoint).
\end{corollary}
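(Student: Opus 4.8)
The plan is to construct the left adjoint explicitly. Given an object $(a_0,a_1) : A \to X \times X$ of $\categ{C}_{/X \times X}$, let $q_A : X \to Q_A$ be the coequaliser of the pair $a_0, a_1 : A \rightrightarrows X$, which exists by Lemma \ref{lemmacoeq} (or because $\categ{C}$ is cocomplete), and set $\overline{A} := X \times_{Q_A} X$, the kernel pair of $q_A$. This is an equivalence relation on $X$; moreover, since $q_A$ coequalises $a_0$ and $a_1$, the map $(a_0,a_1)$ factors through $\overline{A} \hookrightarrow X \times X$, and this factorisation $\eta_A : A \to \overline{A}$ in $\categ{C}_{/X \times X}$ is the candidate unit.

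The first step is to show that $q_A$ is an \emph{effective} epimorphism. I would factor it as $q_A = m \circ e$ with $e : X \to I$ an effective epimorphism and $m : I \to Q_A$ a monomorphism, which is possible because $\categ{C}$ is $\mathcal{E}_h$-cocomplete. Since $m$ is monic, $e$ again coequalises $a_0, a_1$, so there is a unique $h : Q_A \to I$ with $h q_A = e$; then $m h q_A = q_A$, and $q_A$ is an epimorphism (being a coequaliser), so $m h = \id$. Thus $m$ is a split epimorphism and a monomorphism, hence invertible, and $q_A$ is an effective epimorphism. In particular $q_A$ is the coequaliser of $\overline{A} \rightrightarrows X$, and $\overline{A}$ is an effective equivalence relation.

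The second step is the universal property of $\eta_A$. Let $R \hookrightarrow X \times X$ be any equivalence relation. Since $\categ{C}$ is $\mathcal{E}_h$-cocomplete, $R$ is effective (Proposition \ref{propgiraudax}), so writing $\pi : X \to X/R$ for its quotient we have $R = X \times_{X/R} X = \ker \pi$. Both $\hom_{\categ{C}_{/X \times X}}(A, R)$ and $\hom_{\categ{C}_{/X \times X}}(\overline{A}, R)$ have at most one element, because $R$ is a subobject of $X \times X$. I would then check that each of these hom-sets is nonempty exactly when $\pi$ coequalises $a_0, a_1$ --- equivalently, when there is a (necessarily unique) morphism $Q_A \to X/R$ under $X$: for $A$ this is immediate from $R = \ker \pi$ together with the universal property of $q_A$, and for $\overline{A}$ one uses in addition that $q_A$ is the coequaliser of $\overline{A} \rightrightarrows X$ from the first step. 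Since $\eta_A$ lies over $X \times X$, composition with $\eta_A$ induces the resulting bijection
$$
\hom_{\categ{C}_{/X \times X}}(\overline{A}, R) \xrightarrow{\ \sim\ } \hom_{\categ{C}_{/X \times X}}(A, R),
$$
which is natural in $A$ and in $R$. Hence $A \mapsto \overline{A}$ is left adjoint to the inclusion, and the subcategory of equivalence relations is reflective.

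The main obstacle is the first step: one must see that $q_A$ is an effective epimorphism and not merely an epimorphism, after which everything reduces to the fact --- guaranteed by $\mathcal{E}_h$-cocompleteness --- that equivalence relations in $\categ{C}$ are effective. Alternatively, the first step can be read off from the description of $Q_A$ as the quotient of the equivalence 2-groupoid $R_A = (X, G_X(A))$ in Lemma \ref{lemmacoeq}.
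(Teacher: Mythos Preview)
Your proof is correct but takes a different route from the paper. The paper's one-line argument simply names the reflection of $A$ as the image of $G_X(A) \to X \times X$ (using the effective-epi/mono factorisation available in an $\mathcal{E}_h$-cocomplete lex category), leaving the verification implicit. You instead build the reflection as the kernel pair $\overline{A} = X \times_{Q_A} X$ of the coequaliser $q_A$, and then check the universal property by reducing both $\hom(A,R)$ and $\hom(\overline{A},R)$ to the existence of a map $Q_A \to X/R$ under $X$. The two constructions agree: by Lemma~\ref{lemmacoeq}, $Q_A$ is the quotient of the equivalence 2-groupoid $(X, G_X(A))$, and since equivalence relations are effective here, the kernel pair of that quotient is exactly the image of $G_X(A)$ in $X \times X$ (as you note in your final remark). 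Your approach is longer but has the advantage of making explicit where effectiveness of equivalence relations and the regular factorisation are used; the paper's version is terser but relies on the reader supplying the check that the image of $G_X(A)$ really is an equivalence relation and is universal among those containing $A$.
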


\begin{proof}
Such a left adjoint sends an object $A$ to the image into $X \times X$ of $G_X (A)$.
\end{proof}

\begin{proposition}
Colimits in $\categ C$ are universal.
\end{proposition}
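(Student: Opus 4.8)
The plan is to reduce universality of all colimits to two facts that are already at our disposal: that coproducts are universal (part of Definition \ref{definitionpretopos}) and that coequalisers of equivalence relations are universal (this is exactly assertion (2) of Proposition \ref{propgiraudax}, available since $\categ C$ is $\mathcal{E}_h$-cocomplete). A pullback functor $\categ C_{/Y} \to \categ C_{/X}$ preserves every colimit as soon as it preserves coproducts and coequalisers of pairs of parallel maps, because every colimit in $\categ C$ is the coequaliser of a pair of maps between coproducts; so the whole statement comes down to showing that the coequaliser of a parallel pair $(u,v) : A \rightrightarrows X$ is universal.

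First I would recall, using Lemma \ref{lemmacoeq} together with the corollary that follows the cocompleteness of $\categ C$, that $\mathrm{coeq}(A \rightrightarrows X) = X/R$, where $R \hookrightarrow X \times X$ is the equivalence relation generated by $(u,v)$, that is the image of $G_X(A)$ into $X \times X$, the map $G_X(A) \to R$ being an effective epimorphism. Since equivalence relations in $\categ C$ are effective, one has $R = X \times_{X/R} X$. Now fix a map $g : W \to X/R$ and set $X' = X \times_{X/R} W$ and $A' = A \times_{X/R} W$, where the structure map $A \to X/R$ is the common composite of $u$ and $v$ with $X \to X/R$. From $R = X \times_{X/R} X$ one gets $R' := R \times_{X/R} W = X' \times_W X'$, and Proposition \ref{propgiraudax} gives that $W = \mathrm{coeq}(R' \rightrightarrows X')$, i.e. $W$ is the coequaliser of the pulled-back equivalence relation.

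It then remains to see that $\mathrm{coeq}(A' \rightrightarrows X') = \mathrm{coeq}(R' \rightrightarrows X')$, which amounts to proving that $R'$ is the equivalence relation generated by $A'$, i.e. equals the image of $G_{X'}(A')$ into $X' \times X'$. For this I would establish a base-change identification $G_{X'}(A') \simeq G_X(A) \times_{X/R} W$ by unwinding $G_X = T_X\bigl((-)\sqcup(-)^{\op}\bigr) = \coprod_{n\geq 0}\bigl((-)\sqcup(-)^{\op}\bigr)^{\otimes_X n}$ and observing that pullback along $g$ carries coproducts to coproducts (universality) and carries $\otimes_X$-powers over $X$ to $\otimes_{X'}$-powers over $X'$. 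Pulling back the effective epimorphism $G_X(A) \to R$ along $g$ then yields an effective epimorphism $G_{X'}(A') \to R'$; since $R' = X' \times_W X' \hookrightarrow X' \times X'$ is a monomorphism, this is precisely the image factorisation of the structure map of $G_{X'}(A')$, so the image is $R'$. Hence $\mathrm{coeq}(A' \rightrightarrows X') = X'/R' = W$, with coequalising map the pullback of $X \to X/R$, so the coequaliser cocone of $A \rightrightarrows X$ pulls back to a coequaliser cocone. Reassembling arbitrary colimits from coproducts and coequalisers of pairs, all colimits are universal.

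The main obstacle is the base-change isomorphism $G_{X'}(A') \simeq G_X(A) \times_{X/R} W$: it requires checking that the monoidal structure $\otimes_X$ on $\categ C_{/X \times X}$ is compatible with pullback along $g$ and carefully tracking the various structure maps to $X \times X$ and to $X/R$ — in particular the fact that the source and the target of every element of $G_X(A)$ have the same image in $X/R$, which is what makes all these pullbacks land where expected. Everything else is a routine assembly of the pretopos axioms and of Lemma \ref{lemmacoeq}.
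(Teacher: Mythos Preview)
Your proof is correct, and it shares with the paper the crucial step: the base-change isomorphism for the free-groupoid construction $G$, obtained by commuting pullback with the coproducts and with the iterated $\otimes_X$-products defining $G$. The difference lies in which quotient you show to be universal. The paper stays with the equivalence \emph{2-groupoid} $(X_0, G_{X_0}(X_1))$: after establishing $G_{X_0 \times_Z Y}(X_1 \times_Z Y)\simeq G_{X_0}(X_1)\times_Z Y$ over an arbitrary base $Z$, it invokes directly the universality of this 2-groupoid quotient (available since $\mathcal E_h$-cocompleteness implies $\mathcal E_h^{(2)}$-cocompleteness, Proposition~\ref{lemmaintermediateleftadjoint}) and is done in one line. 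You instead factor $G_X(A)\twoheadrightarrow R\hookrightarrow X\times X$ through its image, use only universality of the equivalence \emph{relation} $R$ (Proposition~\ref{propgiraudax}(2)), and then recover the image factorisation after pullback via stability of effective epimorphisms. Your route adds one step but appeals only to the $\mathcal E_h$-axioms and the regular-category structure already established in the appendix, while the paper's route is shorter but leans on the equivalence $\mathcal E_h\Leftrightarrow\mathcal E_h^{(2)}$. Your reduction to maps $W\to X/R$ into the coequaliser, rather than over an arbitrary $Z$, is harmless: the general case factors through this one since $X\times_Z Y = X\times_{X/R}(X/R\times_Z Y)$.
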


\begin{proof}
It suffices to show that coproducts and coequalisers are universal. We already know that coproducts are universal. Let us show that it is also the case for coequalisers. Let us consider a pair of maps $X_1 \rightrightarrows X_0$ and a cospan diagram $X_0 \to Z \leftarrow Y$.
One can check that the canonical map
$$
G_{X_0 \times_Z Y}(X_1 \times_Z Y)
\to G_{X_0}(X_1) \times_Z Y
$$
is an isomorphism.
Moreover, since $\categ C$ is a pretopos, then the map
$$
(X_0 \times_Z Y)/(G_{X_0}(X_1) \times_Z Y) \to(X_0 /G_{X_0}(X_1)) \times_Z Y
$$
is an isomorphism.
Hence, the canonical map
$$
(X_1 \times_Z Y)/ (X_0 \times_Z Y)
\to 
(X_1  / X_0) \times_Z Y
$$
rewrites as the following sequence of isomorphisms
\begin{align*}
    (X_1 \times_Z Y)/ (X_0 \times_Z Y)
    & \simeq (X_0 \times_Z Y)/(G_{X_0 \times_Z Y}(X_1 \times_Z Y))
    \\
    & \simeq (X_0 \times_Z Y)/(G_{X_0}(X_1) \times_Z Y)
    \\
    & \simeq (X_0 /G_{X_0}(X_1)) \times_Z Y
    \\
    & \simeq (X_0 /X_1) \times_Z Y .
\end{align*}
It is thus invertible.
\end{proof}


\bibliographystyle{unsrt}
\bibliography{bib}

\end{document}